\newcommand{\id}{\mathbbm{1}}
\newcommand{\R}{\mathbb{R}}
\newcommand{\Pp}{\mathbb{P}}
\newcommand{\DD}{\mathcal{D}}
\newcommand{\RR}{\mathcal{R}}
\newcommand{\PP}{\mathcal{P}}
\newcommand{\EE}{\mathcal{E}}
\newcommand{\E}{\mathbb{E}}
\newcommand{\N}{\mathbb{N}}
\newcommand{\Ff}{\mathcal{F}}
\newcommand{\Kk}{\mathcal{K}}
\newcommand{\X}{\mathcal{X}}
\newcommand{\Zz}{\mathcal{Z}}
\newcommand{\core}{G^{\copyright}}
\newcommand{\pr}[1]{\mathbb{P}\left( #1 \right)}
\newcommand{\p}{\mathbb{P}}
\newcommand{\diam}{\mathrm{diam}}
\newcommand*{\ind}[1]{\mathbbm{1}_{\{#1\}}}
\newcommand*{\Ind}[1]{\mathbbm{1}_{#1}}
\newcommand{\rbr}[1]{\left(#1\right)}
\newcommand{\orb}{\mathrm{orb}}
\newcommand{\cbr}[1]{\{#1\}}
\newcommand{\edges}[1]{#1^{\uparrow}}
\newcommand{\crw}{\mathcal{X}}
\newcommand{\filG}{\mathcal{G}}
\newcommand{\filF}{\mathcal{F}}
\newcommand{\badSet}{\Upsilon}
\newcommand{\whp}{with high probability}
\newcommand{\wsp}{C e^{-c\log^2 n}}
\newcommand{\vx}[2]{\mathcal{C}_{#1}(#2)}
\newcommand{\vg}[2]{\mathcal{G}_{#1}(#2)}
\newcommand{\cE}{\mathbf{E}}
\newcommand{\cp}{\mathbf{P}}
\newcommand{\fil}{\mathcal{F}}
\newcommand{\interval}{I}
\def\fff#1{{\hyperlink{#1}{\pageref*{#1}}}}
\def\hfff#1{\label{#1}\hypertarget{#1}}
\newtheorem{theorem}{Theorem}[section]
\newtheorem{lemma}[theorem]{Lemma}
\newtheorem{assumption}[theorem]{Assumption}
\newtheorem{corollary}[theorem]{Corollary}
\newtheorem{cor}[theorem]{Corollary}
\newtheorem{proposition}[theorem]{Proposition}
\newtheorem*{mainresult}{Main result}
\theoremstyle{remark}
\title{Phase transition for the interchange and quantum Heisenberg models on the Hamming graph}
\author[$\star$]{Radosław Adamczak}
\author[$\dagger$]{Michał Kotowski}
\author[$\dagger$]{Piotr Miłoś}
\affil[$\dagger$]{Institute of Mathematics of the Polish Academy of Sciences}
\affil[$\star$]{Institute of Mathematics, University of Warsaw}
\begin{document}
	
\maketitle

\abstract{
We study a family of random permutation models on the Hamming graph $H(2,n)$ (i.e., the $2$-fold Cartesian product of complete graphs), containing the interchange process and the cycle-weighted interchange process with parameter $\theta > 0$. This family contains the random walk representation of the quantum Heisenberg ferromagnet. We show that in these models the cycle structure of permutations undergoes a \textit{phase transition} -- when the number of transpositions defining the permutation is $\leq c n^2$, for small enough $c > 0$, all cycles are microscopic, while for more than $\geq C n^2$ transpositions, for large enough $C > 0$, macroscopic cycles emerge with high probability.

We provide bounds on values $C,c$ depending on the parameter $\theta$ of the model, in particular for the interchange process we pinpoint exactly the critical time of the phase transition. Our results imply also the existence of a phase transition in the quantum Heisenberg ferromagnet on $H(2,n)$, namely for low enough temperatures spontaneous magnetization occurs, while it is not the case for high temperatures.

At the core of our approach is a novel application of the cyclic random walk, which might be of independent interest. By analyzing explorations of the cyclic random walk, we show that sufficiently long cycles of a random permutation are uniformly spread on the graph, which makes it possible to compare our models to the mean-field case, i.e., the interchange process on the complete graph, extending the approach used earlier by Schramm.

\tableofcontents

\begin{section}{Introduction}

In this paper we investigate the cycle structure of random permutations in the interchange process (sometimes called the random stirring process) and its generalizations. For a finite graph $G = (V,E)$ the interchange process $\sigma = (\sigma_{t})_{t \geq 0}$ on $G$ is defined as follows: put particles numbered from $1$ to $|V|$ on vertices of the graph and Poisson clocks of rate $1 / |E|$ on each edge. Whenever the clock on an edge $e \in E$ rings, the particles at the endpoints of $e$ are swapped. In this way for each $t \geq 0$ we obtain a permutation $\sigma_t \colon V \to V$, which is determined by the sequence of transpositions corresponding to swaps occurring up to time $t$.

The model has attracted considerable attention, in particular one is interested in how the cycle structure of $\sigma_t$ changes with $t$, especially in the asymptotic case where $G = G_n$ belongs to a family of graphs with $|V| = n \to \infty$. The starting point of our work is a remarkable result due to Schramm (\cite{Schramm:2011fk}), which shows that in the case of the complete graph $G = K_n$ the model exhibits a phase transition. Suppose the interchange process is run for time $c n$, then if $c > 1/2$, the resulting permutation will, with high probability, contain a macroscopic cycle (i.e., of size comparable to $n$), while for $c < 1/2$ all cycles will have size $o(n)$. Furthermore, for $c > 1 /2$ after proper rescaling the joint distribution of macroscopic cycle sizes converges to the Poisson-Dirichlet distribution with parameter $1$ (which is also the limiting distribution of macroscopic cycles for permutations chosen from the uniform measure on $S_n$). This should be contrasted with the classical result of Diaconis and Shahshahani (\cite{Diaconis:1981aa}) that the mixing time of the random transposition process on $K_n$ is $\frac{1}{2} n \log n$, in particular Schramm's result shows that long cycles equilibrate long before the distribution of the whole permutation.

Our main interest is twofold -- first, to move beyond the complete graph and extend these results to graphs with non-trivial geometry, and second, to obtain similar results for a certain generalization of the interchange process, motivated by studies of models in statistical physics.

Namely, we will be interested in the \emph{cycle-weighted interchange process}, depending on an additional parameter $\theta > 0$, in which the probability of a sequence of transpositions is weighted depending on the number of cycles in the resulting permutation (a more precise definition will be given shortly). The physical importance of this model is that for $\theta = 2$ it corresponds to the random walk representation of the quantum Heisenberg ferromagnet.

We will first state our main result informally and just for the case of the interchange process, with more precise statements given afterwards.

\begin{mainresult}
Let $G$ be the Hamming graph $H(2,n) = K_n \times K_n$, where $K_n$ is the complete graph on $n$ vertices. Consider the interchange process on $G$ run up to time $t = \beta n^2$. Then the permutation $\sigma_t$ obtained at time $t$ exhibits a phase transition: for $\beta > 1/2$ with high probability $\sigma_t$ contains macroscopic cycles, while for $\beta < 1/2$ all cycles of $\sigma_t$ are with high probability of size $o(n)$.
\end{mainresult}

We will now introduce the more general setup, which will allow us to formulate our results rigorously.

We consider the \hfff{def:hammingGraph}{Hamming graph $H = H_n = H(2,n) = (V,E)$}. The vertices $V = \{0,\dots,n-1\}^2$ are given by a subset of the square lattice and an edge is present between a pair of vertices if they are either in the same row or the same column, where for $i\in\{0,\ldots,n-1\}$ the sets $L_i = \{0,\dots,n-1\}\times \{i\}$ are called rows and $D_i = \{i\}\times \{0,\ldots, n-1\}$ columns. One can check that $|V| = n^2$ and $|E| = n^2(n-1)$. In the whole paper, we assume implicitly that $n\geq 2$.

Let \hfff{def:configurations}{$\mathfrak{X}$ be the space of finite subsets of $E\times [0,1)$}, which we will call \emph{configurations}. Given $X \in \mathfrak{X}$ we denote by \hfff{def:edges}{$\edges{X}$ the sequence $(e_1, \ldots, e_{|X|})$, where $(e_1, t_1), \ldots, (e_{|X|}, t_{|X|})$} are all points of $X$ ordered with respect to the second coordinate (and an arbitrary fixed order on $E$ if $t_i = t_j$). We define $\sigma(X)$, a permutation associated with the configuration $X$, by
\begin{equation}\label{eq:permutation_associated}
	\sigma(X) := {e_{|X|}} \circ {e_{|X|-1}} \circ \ldots \circ {e_1},
\end{equation}
where any edge $e_i \in E$ is identified with the transposition of its endpoints.

We call a function $\mathcal{C} \colon \mathfrak{X}\mapsto \R_{+}$ \emph{admissible} if for any $X,Y\in \mathfrak{X}$ we have $\mathcal{C}(X) = \mathcal{C}(Y)$ whenever $\edges{X} = \edges{Y}$ and the following Lipschitz condition holds
\begin{equation}\label{eq:Lipschitz}
	|\mathcal{C}(X) - \mathcal{C}(Y) | \leq |X \triangle Y|,
\end{equation}
where $\triangle$ is the symmetric difference of sets.

Fix $\beta > 0$. \hfff{def:PPPbridges}{Let $\mathcal{B}$ be the law of a Poisson point process} on $E\times [0,1)$ with intensity measure given by $\frac{\beta}{n-1}\#(\cdot)\otimes{\rm Leb}$, where $\#(\cdot)$ is the counting measure and ${\rm Leb}$ is the Lebesgue measure. Here we treat the Poisson process as a random element of $\mathfrak{X}$ (we endow this space with a $\sigma$-field $\mathcal{S}$, being a completion with respect to the measure $\mathcal{B}$ of the $\sigma$-field generated by all functions of the form $X \mapsto X \cap A$, where $A \in 2^E \otimes Bor([0,1))$).

By rescaling time and changing intensity on the edges to $1 / |E|$ one easily sees that if a configuration $X$ is drawn at random according to $\mathcal{B}$, the resulting sequence of transpositions has the same distribution as the interchange process on $H$ after time $\beta n^2$. In particular the intensity is chosen so that $X$ has size $|E| \cdot \frac{\beta}{n-1} = \beta n^2$ on average.

Fix $\theta > 0$ and an admissible function $\mathcal{C}$. We define a probability distribution \hfff{def:mu}{$\mu_{\beta, \theta, \mathcal{C}}$} on $\mathfrak{X}$ which will be the main object of our study
\begin{equation}\label{eq:measure}
	  \mu_{\beta, \theta, \mathcal{C}}(U ) := Z_{\beta, \theta, \mathcal{C}}^{-1} \int_{\mathfrak{X}}\Ind{U}(X) \theta^{\mathcal{C}(X)} \mathcal{B}(dX),
\end{equation}
where $Z_{\beta, \theta, \mathcal{C}} = \int_{\mathfrak{X}} \theta^{\mathcal{C}(X)} \mathcal{B}(dX)$ is the partition function normalizing the measure to $1$. Note that the condition \eqref{eq:Lipschitz} ensures that the measure is well defined. Throughout the paper we set $\Theta := \max(\theta^{-1}, \theta)$.

The process defined by $\mu_{\beta, \theta, \mathcal{C}}$ will be called the \emph{weighted interchange process} in general and we will use the name \emph{cycle-weighted interchange process} if $\mathcal{C}(X)$ is the number of cycles in $\sigma(X)$ (which is easily seen to be an admissible function). Note that $\theta = 1$ corresponds simply to the interchange process.

The main result of our paper states that for $\beta$ large enough the random permutation induced by the measure $\mu_{\beta, \theta, \mathcal{C}}$ has macroscopic cycles with high probability

\begin{theorem}\label{thm:main_theorem_upperbound}
	Let $\beta,\theta>0$ be such that $\beta>\Theta/2$ and let $\mathcal{C}$ be an admissible function. Let $X$ be randomly sampled from $\mu_{\beta, \theta, \mathcal{C}}$. Then
		\begin{equation*}
			\lim_{\varepsilon\to 0} \liminf\limits_{n\to +\infty} \p(\text{ there exists a cycle of } \sigma(X) \text{ of length at least $\varepsilon n^2$}) = 1.
		\end{equation*}
\end{theorem}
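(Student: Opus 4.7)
The overall strategy is to reduce the problem to the unweighted interchange process ($\theta = 1$) on the Hamming graph and then adapt Schramm's approach from the complete graph. The reduction exploits the Lipschitz property \eqref{eq:Lipschitz}: adding or removing a single edge from $X$ changes $\theta^{\mathcal{C}(X)}$ by a factor in $[\Theta^{-1}, \Theta]$, so the density of $\mu_{\beta, \theta, \mathcal{C}}$ with respect to a suitable Poisson reference measure is comparable, up to the appropriate normalization, to that of a Poisson process with modified intensity $\beta/\Theta$. In this way the hypothesis $\beta > \Theta/2$ translates into an effective density strictly above the mean-field critical threshold $1/2$, which is the regime in which macroscopic cycles appear on the complete graph.

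With the reduction in place, the argument has two main ingredients. The first is geometric: using the cyclic random walk that is central to the paper's approach, one shows that any cycle of $\sigma(X)$ of length at least $L_n$ for some threshold $L_n = o(n^2)$ is approximately uniformly spread over the vertices of $H(2,n)$, in the sense that it intersects each row $L_i$ and each column $D_i$ with approximately the expected frequency. The cyclic random walk couples the combinatorial exploration of a cycle with a random walk on the Hamming graph, so that rapid mixing of the latter forces the former to look like a uniformly chosen subset of vertices. The second ingredient is a mean-field comparison: once uniform spreading is established, the joint distribution of long-cycle lengths of $\sigma(X)$ can be coupled with that of the interchange process on $K_{n^2}$ at the same effective density, and Schramm's theorem on the complete graph then yields a macroscopic cycle of size at least $\varepsilon n^2$ with high probability.

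The main obstacle I expect is the first ingredient, uniform spreading of long cycles. In Schramm's setting on $K_n$, every vertex is a neighbor of every other, so cycle vertices are trivially uniform. On the Hamming graph, transpositions are confined to rows and columns, and a priori one could imagine a long cycle concentrated on a bounded number of rows or columns. Ruling this out requires a careful analysis of the cyclic random walk: one must show that explorations of the walk equilibrate on sublinear time scales relative to the cycle length, and that the resulting concentration estimates survive the $\theta^{\mathcal{C}(X)}$ tilt (which is where the $\Theta$ factor in the threshold ultimately arises). Once this is achieved, the remaining steps, namely coupling with $K_{n^2}$ and invoking Schramm's result, should follow by adapting the partition function and comparison arguments from the complete-graph case.
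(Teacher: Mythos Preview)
Your outline captures the right intuitions---uniform spreading via the cyclic random walk, mean-field behavior of split/merge, and a Schramm-type endgame---but two of your three steps, as stated, do not work, and a fourth step that the paper needs is missing entirely.

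First, the reduction to $\theta=1$ is not correct as you describe it. The Lipschitz condition \eqref{eq:Lipschitz} gives only a \emph{local} bound: changing a single bridge moves $\theta^{\mathcal{C}(X)}$ by at most a factor $\Theta$. Globally, $\mathcal{C}(X)$ ranges over an interval of width comparable to $n^2$, so the Radon--Nikodym derivative of $\mu_{\beta,\theta,\mathcal{C}}$ with respect to any Poisson process varies over a range like $\Theta^{\pm n^2}$; there is no global comparability to an unweighted process with modified intensity. The paper never attempts such a reduction. Instead it works directly under $\mu_{\beta,\theta,\mathcal{C}}$ and uses the local bound to control \emph{conditional} intensities: the rate at which the CRW discovers a new bridge, given the past, is within a factor $\Theta$ of the unweighted rate (Lemma~\ref{le:intensity}), and the conditional law of the next transposition given the history is within $\Theta^2$ of uniform on $E$ (Lemma~\ref{lem:almost_uniform}). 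The factor $\Theta$ in the threshold $\beta>\Theta/2$ enters through these step-by-step estimates, not through a measure change.

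Second, there is no coupling of cycles on $H(2,n)$ with cycles on $K_{n^2}$ that would let you cite Schramm's theorem as a black box: the sets of allowed transpositions are different, and no map preserves cycle structure. What the paper does is extract from Schramm's proof an \emph{abstract} split--merge lemma (Lemma~\ref{le:Schramm_key_lemma}) whose hypotheses are inequalities on splitting and merging probabilities, and then verify those hypotheses on the Hamming graph (Proposition~\ref{cor:splitting}) using the isoperimetry bounds $\iota\le C\log^2 n$ and $\chi\ge c\log^2 n$ for orbit fragments of length $n\log^2 n$. This is substantially more than ``coupling and invoking''; the Schramm argument has to be rerun in the abstract setting.

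Third, you omit the mesoscopic step. The abstract Schramm lemma needs as input that a positive fraction of vertices already lie in cycles of length at least $n\log^2 n$. The paper obtains this separately (Proposition~\ref{le:interval}) by coupling the transposition process with a random graph process, using supercritical percolation on $H(2,n)$ to produce large clusters, and then controlling the discrepancy between clusters and cycles via the isoperimetry bounds (Lemmas~\ref{lemma:schramm_estimate} and~\ref{lemma:sprinkle}). Without this input the split--merge argument has nothing to act on.
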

This contrasts with the situation when $\beta$ is small.
\begin{theorem}\label{thm:main_theorem_lowerbound}
	Let $\beta,\theta>0$ be such that $\beta< \Theta^{-1}/2$ and let $\mathcal{C}$ be an admissible function. Let $X$ be randomly sampled from $\mu_{\beta, \theta, \mathcal{C}}$. Then for some $C>0$
	\begin{equation*}
		\lim_{n \to +\infty} \p(\text{ all cycles of } \sigma(X) \text{ are shorter than $C\log n$}) = 1.
	\end{equation*}
\end{theorem}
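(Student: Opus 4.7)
The plan is a two-step reduction. First, any cycle of $\sigma(X)$ of length $L$ is contained in a single connected component of the simple graph $G(X)$ on $V$ with edge set $\{e \in E : |X_e| \geq 1\}$, where $X_e := X \cap (\{e\} \times [0,1))$: indeed $\sigma(X)$ is a product of transpositions along edges of $G(X)$, so each orbit of $\sigma(X)$ lies in one component. It therefore suffices to show that \whp every connected component of $G(X)$ has fewer than $C\log n$ vertices.

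The core estimate is a pointwise conditional edge bound. Fix $e \in E$ and an arbitrary configuration $X^0$ on $(E \setminus \{e\}) \times [0,1)$. Since $\mathcal{C}$ is admissible, $\theta^{\mathcal{C}(X^0 \cup S) - \mathcal{C}(X^0)} \in [\Theta^{-|S|}, \Theta^{|S|}]$ for every $S$ supported on $\{e\} \times [0,1)$. Dividing the resulting bounds on the numerator and denominator of the Radon--Nikodym ratio, and using the Laplace transform of the $\mathrm{Poisson}(\beta/(n-1))$ marginal of $|X_e|$ under $\mathcal{B}$, a short computation will give
\begin{equation*}
	\mu_{\beta, \theta, \mathcal{C}}\bigl(|X_e| \geq 1 \,\big\vert\, X_{E \setminus \{e\}} = X^0\bigr) \leq \frac{\Theta \beta}{n-1}\bigl(1 + O(1/n)\bigr),
\end{equation*}
uniformly in $X^0$. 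By the tower property the same bound persists conditionally on any coarsening of $\sigma(X_{E\setminus\{e\}})$, in particular on the history of a breadth-first exploration started at a fixed vertex.

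The conclusion is then routine. Starting a BFS from a vertex $v$, each newly processed vertex $u$ issues at most $2(n-1)$ queries of edges to unvisited vertices, each answered ``used'' with conditional probability at most $p' := \Theta\beta(1+o(1))/(n-1)$. Hence the size of the connected component containing $v$ is stochastically dominated by the total progeny of a Galton--Watson tree with $\mathrm{Bin}(2(n-1), p')$ offspring distribution, whose mean $2\Theta\beta(1+o(1))$ is strictly less than $1$ for all large $n$ under the assumption $\beta < \Theta^{-1}/2$. Standard subcritical branching estimates then give an exponential tail $\leq \kappa_1 e^{-\kappa_2 L}$ for the probability that the component has size at least $L$, with $\kappa_1, \kappa_2 > 0$ depending only on $\beta$ and $\Theta$. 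A union bound over the $n^2$ choices of $v$ together with $L = C \log n$ for $C > 2/\kappa_2$ closes the proof.

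The hard part is the conditional edge bound, not the subsequent branching analysis: the weight $\theta^{\mathcal{C}(X)}$ couples all arrivals of $X$, and a direct global comparison of $\mu_{\beta, \theta, \mathcal{C}}$ to a Poisson reference of intensity $\Theta \beta/(n-1)$ via the same Lipschitz estimate introduces a prefactor $e^{(\Theta - \Theta^{-1})\beta n^2}$, which completely overwhelms the polynomial-in-$n$ savings that subcritical percolation on $H$ can provide. Confining the Lipschitz step to a single edge at a time replaces this global exponential cost by the per-edge factor $e^{(\Theta-\Theta^{-1})\beta/(n-1)} = 1 + o(1)$, which only perturbs the branching parameter and lets the argument close.
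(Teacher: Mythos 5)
Your proposal is correct and follows essentially the same route as the paper's proof. The paper's Proposition 5.9 (in Section 5.4) reduces to the same percolation graph $\bar{X}$, uses the same per-edge conditional bound via the Lipschitz property of $\mathcal{C}$ (this is exactly the first part of Lemma B.1, which gives $\p(|X_e|\geq 1\mid X_{E\setminus\{e\}})\leq 1-e^{-\Theta\beta/(n-1)}\leq \Theta\beta/(n-1)$), establishes stochastic domination by subcritical Bernoulli percolation (the paper invokes Holley's theorem; your BFS/tower-property argument accomplishes the same comparison), and then concludes by subcritical branching. Your remark correctly identifies the essential point — that the Lipschitz comparison must be applied one edge at a time rather than globally — which is exactly what Lemma B.1 is designed for.
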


Together our results imply the existence of a phase transition from microscopic cycles when $\beta <\Theta^{-1}/2$ to macroscopic ones when $\beta >\Theta/2$. We expect that the point of the phase transition is unique, possibly under some mild assumptions on $\mathcal{C}$.

The two most important cases in which our results apply are $\theta = 1$ and $\theta = 2$ (with $\mathcal{C}$ being the number of cycles).
\paragraph{Interchange process.}

In the special case of the interchange process, corresponding to $\theta = 1$, we have $\Theta^{-1}/2 = \Theta/2 = 1/2$, so the above theorems determine precisely the transition point for the occurrence of large cycles: for $\beta < 1/2$ (and large $n$) with high probability all the cycles are of logarithmic size, while for $\beta > 1/2$ we get cycles of length comparable to the size of the graph with probability arbitrarily close to one.

\paragraph{Quantum Heisenberg model.}
The case $\theta = 2$ is particularly interesting from the point of view of statistical physics, since it corresponds to the random walk representation of the quantum Heisenberg ferromagnet. In this representation, introduced by T\'{o}th in \cite{Toth1993}, the existence of macroscopic cycles translates to nonvanishing spontaneous magnetization in the model. We describe the connection very briefly here, referring the reader to the survey \cite{Goldschmidt:2011aa} for more details on this model.

The quantum Heisenberg ferromagnet is a model of a spin system whose physical properties depend on a parameter $\beta > 0$ called the \emph{inverse temperature}. One of the major questions about the model is whether a spontaneous ordering of spins occurs at low enough temperatures (corresponding to high $\beta$). \emph{T\'{o}th's  random walk representation} of the Heisenberg model is given by the measure $\mu_{\beta, \theta, \mathcal{C}}$ with $\beta$ equal to the inverse temperature, $\theta = 2$ and $\mathcal{C}(X)$ being the number of cycles in the permutation $\sigma(X)$. One can then express all physical quantities of interest in terms of the cycle-weighted interchange process given by $\mu_{\beta, \theta, \mathcal{C}}$. For example, the correlation between spins at sites $u$ and $v$ corresponds to the probability that $u$ and $v$ are in the same cycle of $\sigma(X)$ when $X$ is sampled from $\mu_{\beta, \theta, \mathcal{C}}$.

Crucially, our results in Theorem \ref{thm:main_theorem_upperbound} and Theorem \ref{thm:main_theorem_lowerbound} imply the existence of a phase transition for the model on the Hamming graph -- for $\beta < 1/4$ there is no magnetic ordering as $n \to \infty$, while it emerges for $\beta > 1$. Our methods do not imply sharpness of the phase transition, although we conjecture that it is indeed sharp, with the critical value being $\beta = 1$. It is a major open problem to determine whether a similar phase transition occurs for $G_n =  [-n, n]^d \cap \mathbb{Z}^{d}$, $d \geq 2$, as $n \to \infty$.

For a more precise relation between the existence of macroscopic cycles and the phase transition for spontaneous magnetization see Section 4 of \cite{Goldschmidt:2011aa}.

\paragraph{Outline of proof strategy.}

We now outline the main ideas behind the proof of Theorem \ref{thm:main_theorem_upperbound} (the proof of Theorem \ref{thm:main_theorem_lowerbound} is much less involved). First, however, we would like to stress that the novelty of our paper comes not only from the result itself but also from the methods developed for this purpose. Our new techniques establish a precise geometric picture of cycles which is believed to hold for a much broader family of graphs. We also note that the tools developed in this paper proved useful in the analysis of loop models related to the XXZ model on the complete graph (\cite{Bjornberg:2018aa}).

Broadly speaking, we would like to follow the approach used by Schramm for the complete graph, which consists of showing that after a long enough time cycles of mesoscopic size appear and then quickly merge into macroscopic ones. This in turn relies on analyzing a split-merge process of cycles in the complete graph, with each new transposition either causing two cycles to split or to merge. In the case of the complete graph it is easy to give an upper bound on the rate at which cycles split and a lower bound on the rate at which (long enough) cycles merge.

The key difficulty which appears on any graph with non-trivial geometry, in particular in the case of the Hamming graph $H_n$, is that, unlike on the complete graph, the split-merge probabilities depend not only on sizes of the cycles, but also on their spatial structure, more precisely on their isoperimetric properties. We are able to prove that long enough fragments of cycles on $H_n$ are typically ``uniformly spread'' on the graph, resembling an exploration of a simple random walk and thus making their isoperimetric properties easy to analyze. In particular, the split-merge probabilities (and thus the behavior of the interchange process) can be approximated by the mean-field (complete graph) case.

This intermediate result is at the core of our arguments and we believe it might be of independent interest when analyzing random transposition processes on other graphs. The crucial tool that we employ is the so-called \emph{cyclic random walk} (abbreviated by CRW), introduced in \cite{Toth1993} and later used by Angel in \cite{Angel:2003aa} under the name cyclic time random walk. This is an exploration process which, given a configuration $X$ of transpositions and a starting vertex $v \in V$, visits subsequent vertices of the cycle of $\sigma(X)$ containing $v$. The fundamental difficulty in the analysis of the CRW is that it is a non-Markovian process, involving interactions of the random walk with vertices visited in the past. The bulk of our effort is devoted to the analysis of these interactions.

We analyze the behavior of the CRW at a mesoscopic timescale which is:
\begin{itemize}
\item short enough so that the interactions with the history are tractable and it is possible to exploit methods similar to excursion theory for random walks,
\item long enough so that the trace of the CRW occupies the vertices of the graph in a uniform way, and probabilistic bounds we obtain are strong enough to extend results (by union bound) to longer timescales, including macroscopic (i.e., of the order of $n^2$).
\end{itemize}
A more detailed outline of this part of the proof is given in Section \ref{sec:isoperimetry-setting}. Once we know that the trace of the CRW with high probability occupies the graph in a uniform way, we can extend this property to cycles of $\sigma(X)$ and carry out the analysis of the corresponding split-merge process described above. We note that the assumption $\beta > \Theta / 2$ is crucial in our approach, as it enables us to show that the explorations of the CRW are sufficiently long.

Schramm's approach requires as a prerequisite the existence of mesoscopic cycles. To prove that indeed they exist with high probability, we employ a natural coupling between the random transposition process and a percolation process, with the quality of the coupling on a fixed timescale depending on the isoperimetric bound.

We use this coupling twice to obtain cycles of mesoscopic length, and then employ the argument by Schramm relying on the mean-field behavior of the split-merge probabilities. An abstract version of Schramm's argument that we use is presented in Section \ref{sec:schramm-argument} and we believe that this part of the paper might be of independent interest, as the results are formulated in a way convenient for application to general transposition processes (e.g., the interchange process on more general graphs).

It is worth noting that an additional difficulty is present in both parts of the proof in the case of models with $\theta \neq 1$, as subsequent transpositions appear there in a non-i.i.d. fashion. Our methods are based on the observation that on small timescales having $\theta\neq 1$ tilts the measure in a controllable way. Roughly speaking, adding or removing a transposition from a configuration changes the number of cycles only by one, which can change the relative probability of the configuration by at most a factor of $\Theta^2$. Thus we can compare this process with an i.i.d process, which makes the analysis of the cyclic random walk and emergence of macroscopic cycles still possible. For $\theta \neq 1$ this approach allows to analyze the size of cycles for small and large $\beta$, however it does not give the critical value of the phase transition.

We end this part with a plan of the rest of the paper. The whole Section \ref{sec:isoperimetry} is devoted to the analysis of the cyclic random walk. The main results of this part are encapsulated in Proposition \ref{prop:isoperimetry-log2n} and Proposition \ref{prop:isoperimetry-log2n-lower-bound}, providing an upper and a lower bound on the typical isoperimetry of the trace of the CRW. In Section \ref{sec:transposition_process} we flesh out the connection between the CRW and the cycles of the transposition process. The crucial property that split-merge probabilities are comparable to the mean-field case is stated in Proposition \ref{cor:splitting}. In Section \ref{sec:schramm-argument} we provide an abstract formulation of Schramm's argument regarding macroscopic cycles. It is given in Lemma \ref{le:Schramm_key_lemma} and we believe it might of independent interest. We then use it together with results from Section \ref{sec:transposition_process} to prove Theorem \ref{thm:main_theorem_upperbound}. The (much simpler) proof of Theorem \ref{thm:main_theorem_lowerbound} is given in Section \ref{sec:proof-of-subcritical}.

\paragraph{Related works.}

By now the interchange process and its generalizations have attracted considerable attention, both from the point of view of probability theory and mathematical physics. Here we mention some of the work that is most closely related to the topic of this paper.

\begin{itemize}

\item The existence of a phase transition for the appearance of macroscopic cycles in the interchange process on the complete graph, together with convergence of the law of macroscopic cycles to the Poisson-Dirichlet distribution, is due to Schramm (\cite{Schramm:2011fk}). An alternative, simpler proof of the statement that large cycles appear after $cn$ transpositions, for $c > 1/2$, was given by Berestycki (\cite{Berestycki:2011aa}).

To the best of our knowledge, the only rigorous results concerning finite graphs other than the complete graph, all in the case $\theta = 1$, are due to Koteck\'{y}, Mi\l o\'{s} and Ueltschi (\cite{Kotecky:2016aa}) and Mi\l o\'{s} and \c{S}eng\"{u}l (\cite{milos-sengul}). In \cite{Kotecky:2016aa} it is proved that on the hypercube $\{0,1\}^n$ with $N = 2^n$ vertices for any $\varepsilon > 0$ and large enough times a positive fraction of vertices is contained in cycles of length at least $N^{\frac{1}{2} - \varepsilon}$.

In \cite{milos-sengul} it is proved that on the Hamming graph $H(2,n)$ for $\beta > 1/2$ and any $\varepsilon > 0$ asymptotically almost surely a constant fraction of vertices is contained in cycles of length at least $n^{2 - \varepsilon}$. In this work we build on the approach of \cite{milos-sengul} and obtain significantly stronger results, proving the existence of truly macroscopic cycles and also considering the case $\theta \neq 1$, which, as mentioned above, poses an additional difficulty. The techniques of \cite{milos-sengul} enabled the authors to analyze only relatively short explorations of the cyclic random walk (on a timescale of the order of $n$) and thus obtain rather crude bounds on the isoperimetry of the cycles. The main improvement in our work is a detailed analysis of the structure of explorations of the cyclic random walk (in particular its interactions with its history) on a long timescale, enabling us to give tight isoperimetric bounds on the cycles' spatial structure.

\item Another approach to the analysis of the cycle structure of random permutations, based on representation theory of the symmetric group, was developed in \cite{Alon:2013aa} and \cite{Berestycki:2015aa}, with the second paper providing yet another proof of the existence of macroscopic cycles in the interchange process on the complete graph. This approach was recently extended in \cite{alon-kozma-heisenberg} to prove a sharp phase transition in the $\theta = 2$ case on the complete graph (mean-field Heisenberg ferromagnet). Another recent result by the same authors (\cite{alon-kozma-octopi}) implies that in the case $\theta = 1$ macroscopic cycles emerge with positive probability on the Hamming graph for large enough $\beta$ (note, however, that their result does not identify the critical value of $\beta$ and concerns only cycles of length at least $n^2 / 2$, instead of $\varepsilon n^2$ for any $\varepsilon > 0$).

\item In \cite{Bjornberg:2016aa} Bj\"{o}rnberg, also using representation theory, computed the free energy and the critical temperature in a family of quantum spin models on the complete graph corresponding to the cycle-weighted interchange process with $\theta = 2, 3, 4, \ldots$. This extends previous results obtained by Penrose (\cite{Penrose1991}) and T\'{o}th (\cite{Toth1990}) for $\theta = 2$.

The existence of large cycles was established for arbitary $\theta > 1$ by a different method in \cite{Bjornberg:2015aa}, where it is proved that macroscopic cycles appear on the complete graph for $\theta > 1$ as soon as $\beta > \theta$ (note, however, that apart from the case $\theta =2$ this is strictly larger than the critical value $\beta_c$ determined in \cite{Bjornberg:2016aa} for $\theta = 2, 3, 4, \ldots$).

\item The interchange process can be defined in a natural way also on infinite bounded-degree graphs. Here one asks whether infinite orbits appear almost surely when time exceeds certain critical value. It is conjectured that a phase transition occurs if the underlying infinite graph is transient. The case of a $d$-regular infinite tree was first considered by Angel (\cite{Angel:2003aa}), who proved that infinite orbits exist in an appropriate bounded time interval, and then in subsequent work by Hammond (\cite{Hammond:2013aa}, \cite{Hammond:2015aa}), where an actual phase transition was established for large enough $d$. These results were recently extended to more general random loop models (see below).

\item Another generalization of the interchange process are the so-called random loop models, corresponding to a family of quantum spin models containing, among others, the quantum Heisenberg antiferromagnet (\cite{Goldschmidt:2011aa}). Recently it has been proved by Hammond and Hegde (\cite{Hammond:2018aa}), building upon earlier work by Bj\"{o}rnberg and Ueltschi (\cite{Ueltschi:2016aa}), that there exists a phase transition for the appearance of infinite loops on the infinite $d$-regular tree for $d$ large enough. In the case of finite graphs, it is proved \cite{Bjornberg:2018aa} that on the complete graph the distribution of macroscopic loops for $\beta > 1$ converges to the Poisson-Dirichlet distribution with parameter $1/2$.

\end{itemize}

\paragraph{Further research and open questions.}
There are a number of open questions closely related to our paper. We believe that techniques we have developed here could be useful in approaching some of them.
\begin{itemize}
\item We expect that the same techniques as for the Hamming graph $H(2,n) $could be used to analyze other families of Hamming graphs $\{H(d, n)\}_{n\geq 2}$, for fixed $d\in \mathbb{N}$. Here $H(d,n)$ has vertex set $\{0, \ldots, n-1\}^d$ and an edge is present between any two vertices which differ in exactly one coordinate. In this paper we decided to focus only on the case $d=2$, so as not to obfuscate already long proofs.

On the other hand, it would be of interest to extend our results to Hamming graphs $H(d,n)$ which satisfy $d \to \infty$ (as well as possibly $n \to \infty$). An extreme example is the hypercube $\{H(d, 2)\}_{d\geq 2}$, which is interesting as the degree of each of its vertices diverges as $d \to \infty$, but only slowly (as it is logarithmic in the number of vertices of the graph). We believe that many ideas from our paper should be applicable to this case, although some new insights will also be required, as the geometry of the hypercube is more complicated that of $\{H(2, n)\}_{n\geq 2}$. We expect that understanding the hypercube would essentially enable one to analyze any Hamming graph.

\item The results of this paper do not establish the critical value of $\beta$ at which the phase transition occurs (apart from the case $\theta = 1$). It is conjectured (and partially proved, see results and discussion in \cite{Bjornberg:2016aa}) that on the complete graph the critical value is given by
\[
\beta_{c}(\theta) =
\begin{cases}
\theta &\quad \mbox{if } 0 < \theta \leq 2, \\ 2 \left(\frac{\theta - 1}{\theta - 2}\right) \log(\theta - 1) &\quad \mbox{if } \theta > 2,
\end{cases}
\]
which coincides with the critical parameter of the random-cluster model on the complete graph with $q=\theta$ (\cite{Bollobas1996}). It would be an interesting question to explore the possible connection between the two models further and determine the critical value of $\beta$ for the Hamming graph.
\item We conjecture that the properly normalized list of macroscopic cycle lengths obtained in the weighted interchange process with parameter $\theta$ should converge to the Poisson-Dirichlet distribution $PD(\theta)$. This would extend the convergence to $PD(1)$ in the case of $\theta = 1$ on the complete graph proved in \cite{Schramm:2011fk}.

\end{itemize}

\end{section}

\subsection*{Acknowledgments} We would like to thank Roman Kotecký and Daniel Ueltschi for useful discussions. We thank Wojtek Samotij for providing us with the idea of the proof of Lemma \ref{lm:core-bad-ball-is-small}. Research partially supported by the National Science Centre, Poland, grants no. 2015/18/E/ST1/00214 (RA), no. 2014/15/B/ST1/02165 (PM) and no. 2019/32/C/ST1/00525 (MK).

\newpage

\subsection*{Glossary}
To help the reader we include the glossary of notation used in the paper.
\begin{center}
	\begin{tabular}{ l l r }
	$H = H_n$ & the Hamming graph on $n^2$ vertices & \fff{def:hammingGraph} \\
	$L_i, D_i$ & rows and columns of the Hamming graph $H$ & \fff{def:hammingGraph} \\
	$\mathfrak{X}$ & the space of configurations (finite subsets of $E \times [0,1)$) & \fff{def:configurations} \\
	$\edges{X}$ & sequence of edges corresponding to a configuration $X$ & \fff{def:edges} \\
	$\mu_{\beta, \theta, \mathcal{C}}$ & measure defining the weighted interchange process with parameters $\beta, \theta, \mathcal{C}$ & \fff{def:mu} \\
	$\crw_s$ & the cyclic random walk at time $s$ & \fff{def:crw} \\
	$\crw_I, \Zz_I$ & the path and trace of the cyclic random walk on interval $I$ & \fff{def:crw_notions_interval} \\
	$\mathcal{Z}_s, Z_k$ & trace of the cyclic random walk (up to times $s$, resp. first $k$ vertices)& \fff{def:crw_notions_interval} \\
	$T_k$ & time of entering the $k$-th new vertex by the CRW  & \fff{def:crw_notions} \\
	$\mathcal{O}, \mathcal{O}_{k}$ &  the orbit of $v$ under the permutation $\sigma(X)$ and its first $k$ vertices & \fff{eq:very-big-ooo} \\
	$\filF_s, \filG_k$ & filtrations of the cyclic random walk  & \fff{def:crw_filtrations} \\
	$\iota, \chi$ & isoperimetry upper and lower bound of a given set & \fff{eq:iotaDefinition} \\
	$T$ & $T \leq n\log^2n$, timescale on which we study the trace of the CRW  & \fff{def:Ttime} \\
	$G_t$ & graph induced by the cyclic random walk  & \fff{def:gtcrw} \\
	$\badSet_{t}$ & the bad set at time $t$  & \fff{def:badset} \\
	$\tau_{iso}^\delta $ & time until which the trace of the cyclic random walk has small $\iota$  & \fff{def:tauiso} \\
	$\tau_c $ & time when the cyclic random walk closes into a cycle  & \fff{def:tauc} \\
	$I, I^b, I^d$ & processes counting internal, bad and direct jumps  & \fff{l:jumps} \\
	$\lambda, \lambda^b, \lambda^d$ & intensities of internal, bad and direct jumps  & \fff{l:intensities} \\
	$\PP_t$ & potential of the path $\crw_{[0,t)}$  & \fff{def:potential} \\
	$\mathcal{E}_t(k)$ &  event that the CRW makes an excursion of length $k$ at time $t$ & \fff{def:excursion} \\
	$\core_t$ & core of the graph $G_t$  & \fff{def:core} \\
	$\mathcal{D}^\delta_t$ & event that the graph $G_t$ does not contain too many high degree vertices & \fff{lab:event-d-t} \\
	$\mathcal{Q}_t$ & event that all straight paths of length $\log^2 n$ in $\core_t$ have good potential & \fff{eq:straight-paths} \\
	$\tau_k$ & time of the visit to the $k$-th new vertex in $L_{0}$ & \fff{eq:tau_k} \\
	$\orb_{s}(v), \orb_{s}^{\ell}(v)$ & orbit of vertex $v$ in the permutation $\sigma_{s}$ and its $\ell$ first elements & \fff{def:orbits} \\
	$\mathcal{I}$ & event that that orbits of $\sigma_t$ have good isoperimetric properties & \fff{def:assumptionI} \\
	$\vx{k}{\ell}$ & set of vertices belonging to components of $\sigma_k$ of size at least $\ell$ & \fff{def:cycleSize} \\
	$\{ G^{s}_{u} \}_{u = 0, \ldots, |X| - s}$ & random graph process coupled to $\sigma_{s+u}$ & \fff{def:graphProcess} \\
	$\vg{s, u}{\ell}$ & vertices belonging to components of size at least $\ell$ in $G^{s}_{u}$ & \fff{def:graphComponents}
\end{tabular}

\end{center}

\begin{section}{The cyclic random walk -- preliminaries}\label{sec:setup}

Let us now introduce the \emph{cyclic random walk} (abbreviated as CRW), which will be the crucial tool in our analysis of permutations arising from the distribution $\mu_{\beta, \theta, \mathcal{C}}$.

Recall that $\mathfrak{X}$ consists of finite subsets of $E \times [0,1)$. For a configuration $X = \{(e_1, t_1), \ldots, (e_k, t_k)\} \in \mathfrak{X}$ the pairs $(e_i, t_i)$ will be called \emph{bridges}. For a vertex $v \in V$ the set $\{v\}\times [0,1)$ will be called the \emph{bar} of vertex $v$. If $e_{i} = \{v,w\}$, we think of a bridge $(e_i, t_i)$ as joining the bars of a vertex $v$ and a vertex $w$ at time $t_i \in [0,1)$.

We note that in the sequel $X$ will always be sampled from a distribution for which almost surely all $t_{i}$ are pairwise different and there are no bridges at prescribed deterministic times, hence there is no ambiguity in how the process is defined.

\begin{figure}[h]
	\centering
	\begin{subfigure}[b]{0.35\textwidth}
		\includegraphics[width=\linewidth]{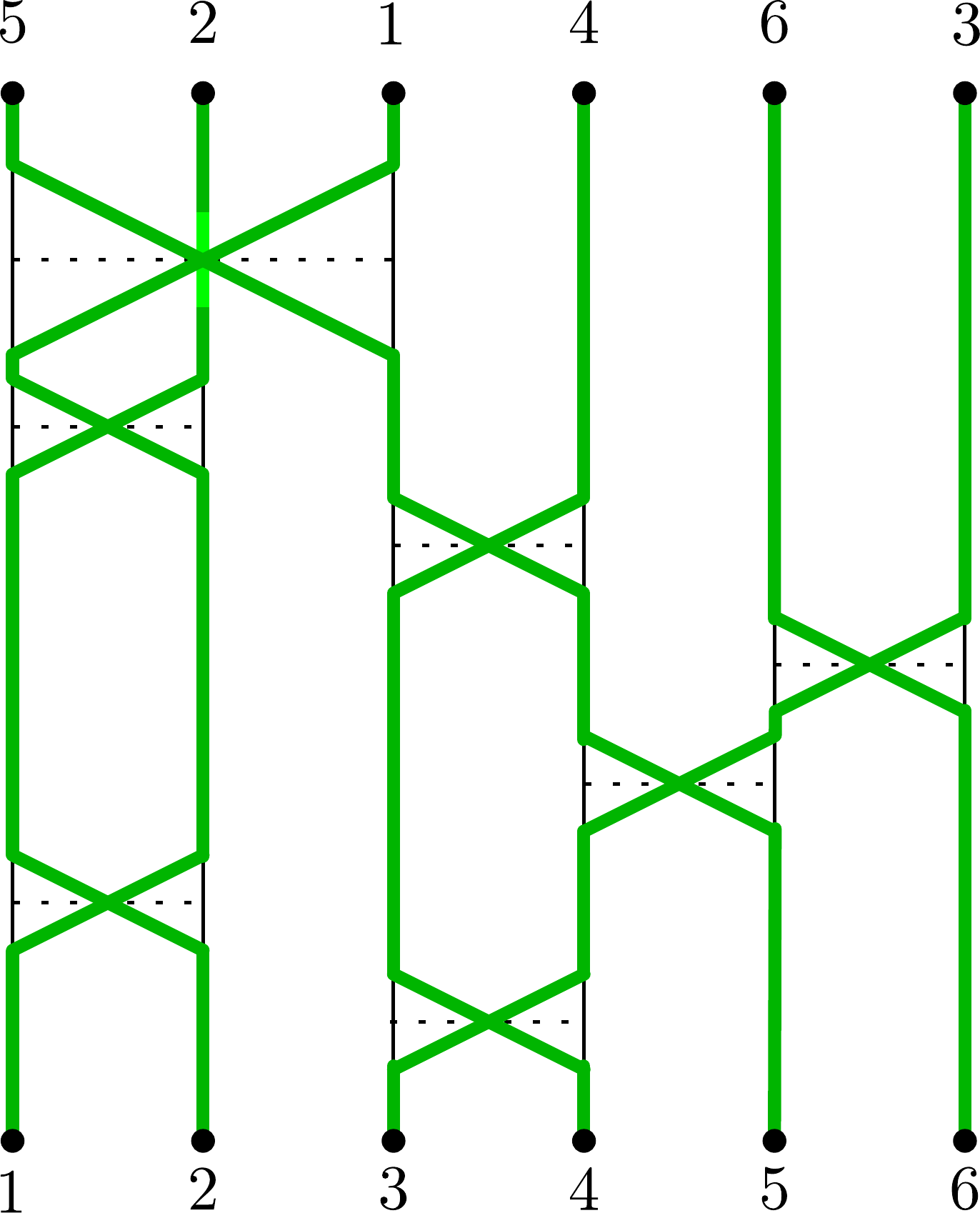}
    \subcaption{A configuration $X$ of bridges}
	\end{subfigure}
	\hfill
	\hspace{3pc}
	\begin{subfigure}[b]{0.35\linewidth}
		\includegraphics[width=\textwidth]{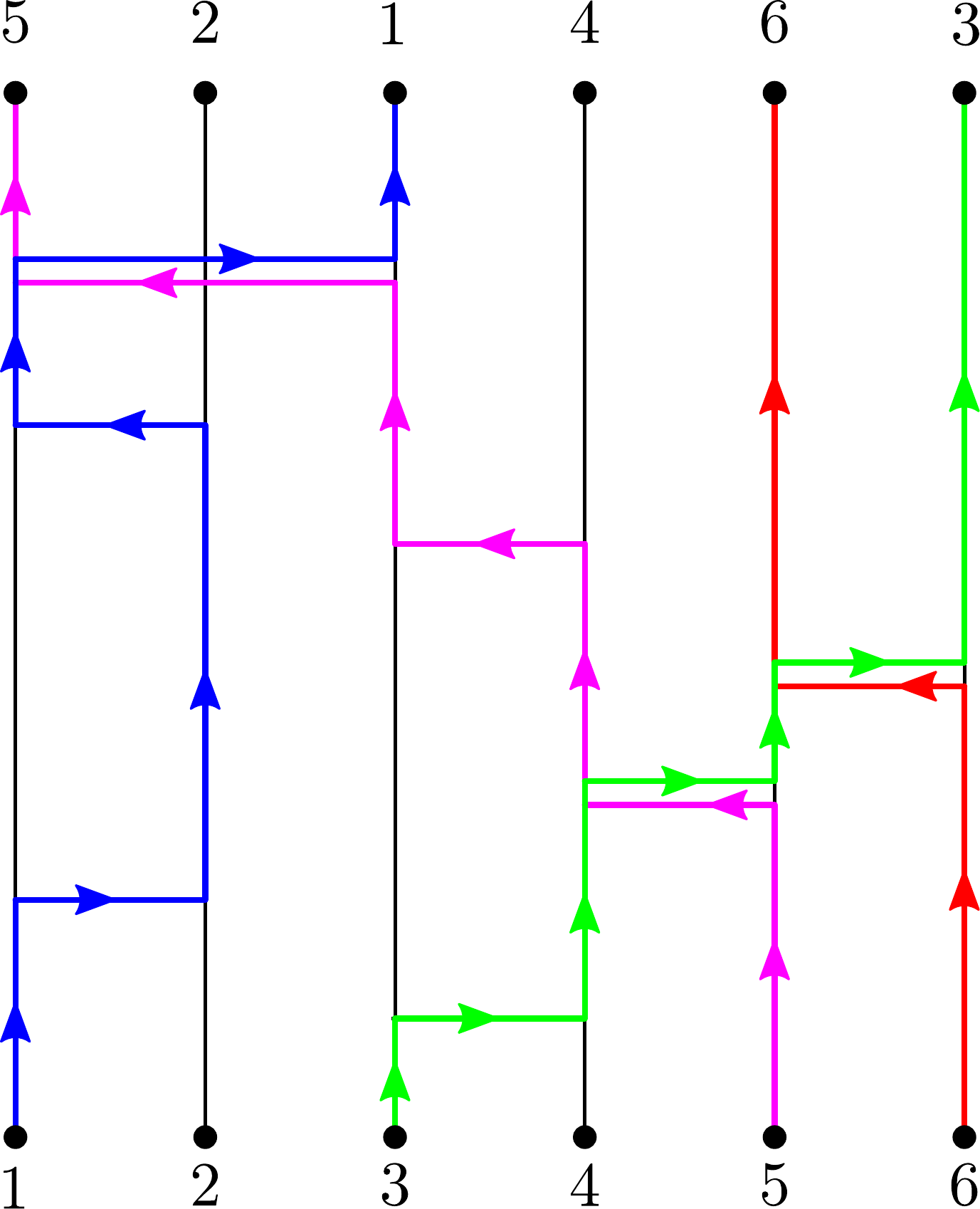}
    \subcaption{The corresponding cyclic random walk}
	\end{subfigure}
	
	\caption{A configuration of bridges and the corresponding CRW. (left) The dotted lines represent the bridges of $X$. The permutation $\sigma(X)$ is determined in the following way -- the labels at the bottom are the labels of the vertices and at the top we have put where they map to under $\sigma(X)$. In this example we have $\sigma(X) = (1 3 6 5) (2) (4)$. (right) The path of the CRW using the bridges of $X$. The direction in which the CRW travels is indicated by the arrows. Note that some bridges are traversed twice, which corresponds to backtracks.
}
\label{fig:crw}
\end{figure}

Consider now a (possibly random) configuration $X \in \mathfrak{X}$. \hfff{def:crw}{The associated \emph{cyclic random walk} $\crw = (\mathcal X_{s} \colon s \geq 0)$} is a continuous time process with values in $V\times [0,1)$, exploring the bridges given by $X$. It starts at a point $\mathcal X_{0} = (v,t) \in V\times [0,1)$, then moves upwards on the bar of the vertex $v$ at unit speed, starting at height $t$, until it encounters a bridge $(\{v,w\}, s) \in X$. Upon encountering a bridge, the CRW jumps to its other end and continues moving on the new bar. Once it gets to height $1$, the CRW moves to the bottom of the bar, at height $0$. Note that the CRW can encounter an already traversed bridge, in which case we say that it makes a backtrack.

Notice that the CRW is periodic. Once $\crw$ reaches its starting point again (which will happen in a finite time), then it will repeat itself. See Figure \ref{fig:crw} for an example of a configuration $X$ and the trajectory of the corresponding CRW.

The CRW as well as other jump processes we will consider in the paper will be always c\`adl\`ag.

\hfff{def:crw_notions_interval}{For a time interval $I$ by $\X_{I}$ we denote the path of the CRW during $I$, likewise $\Zz_I$ is the set of vertices visited by $\crw_I$}, i.e.,
\[
	\Zz_I := \{w\in V: \mathcal X_{s}=(w, z)\text{ for some }s \in I \text{ and }z\in [0,1)\}.
\]
\hfff{def:crw_notions}{We will use the abbreviation $\Zz_s := \Zz_{[0,s]}$ and simply write $\Zz$ for $\Zz_{[0, \infty)}$. For $k\in \N$ we denote by $T_k:=\inf\{s \geq 0: |\mathcal Z_s|\geq k\}$ the time at which the CRW discovers a previously unvisited vertex for the $k$-th time (where we use the convention that $\inf \emptyset = \infty$ and note that $T_1 = 0$).}
For $k\in \N$ we set $Z_k := \mathcal Z_{T_k}$.

The cyclic random walk started at $(v, 0)$ will be denoted by $\crw(v)$ (and likewise for $\Zz(v)$, $T_{k}(v)$, $Z_{k}(v)$ etc.). We will often write simply $\crw = \crw(v)$, $\Zz = \Zz(v)$ etc. if $v$ is fixed. We will also often abuse notation and write $\X_{s} = v$ for $\X_{s} = (v,t)$. Note that since the bars are of height one, the second coordinate $t$ can be read from the time $s$, i.e., $t = s \mod 1$.

The reason for introducing the cyclic random walk is the following relation between the CRW $\crw$ using the bridges of $X$ and the permutation $\sigma(X)$. Consider the sets
\begin{align}\label{eq:very-big-ooo}
	\mathcal{O}(v) & := \{w \in V \colon \exists_{t \ge 0} \crw_t(v) = (w,0)\}, \\
	\mathcal{O}_k(v) & := \{w \in V \colon \exists_{t \in [0, k)} \crw_t(v) = (w,0)\}.
\end{align}

It is readily seen that $\mathcal{O}(v)$ is equal to the orbit of the vertex $v$ under the permutation $\sigma(X)$. Moreover if the orbit has size $\ell$, then $\mathcal{O}_{k}(v)$ consists of the first $k\wedge \ell$ elements of the orbit. In other words, vertices visited by $\crw(v)$ at integer times enter the orbit of $v$ under the permutation $\sigma(X)$. Note that we have $\mathcal{O}(v) \subset \Zz(v)$ but not necessarily  $\mathcal{O}_{k}(v) \subset Z_{k}(v)$. For example, the CRW shown in Figure \ref{fig:crw}, started at $1$, visits vertices $1$, $2$ and $3$ up to time $1$ (blue path), but $\mathcal{O}_{2}(1) = \{1,3\} \not\subset \{1,2\} = Z_{2}(1)$.

Suppose now that the set of bridges $X$ is random. Then for $v \in V$ the cyclic random walk $\crw(v) = (\crw_{s}(v) \colon s\geq 0)$ is itself a stochastic process. \hfff{def:crw_filtrations}{Let $\mathcal F=(\mathcal F_s:s \geq 0)$ denote its natural filtration.}
Set also $\mathcal G_k:=\mathcal F_{T_k}$.

In what follows we will be interested in the situation where $X$ is drawn from the distribution $\mu_{\beta, \theta, \mathcal{C}}$ defined in \eqref{eq:measure}. Fix $\beta, \theta > 0$, an admissible function $\mathcal{C}$, and let $X \in \mathfrak{X}$ be distributed according to $\mu_{\beta, \theta, \mathcal{C}}$. For a fixed vertex $v \in V$, by $\crw^{\beta, \theta, \mathcal{C}}(v)$ we will denote the corresponding cyclic random walk and call it the cyclic random walk associated to $\mu_{\beta, \theta, \mathcal{C}}$, started at $v$.

As a final note, we remark that on several occasions we will work with events defined in terms of uncountable intersections over a set of times. Since the processes we are considering almost surely make only countably many jumps, all such events will be in fact measurable.

\end{section}

\begin{section}{Isoperimetry of the cyclic random walk}\label{sec:isoperimetry}

\begin{subsection}{The setting and main results}\label{sec:isoperimetry-setting}

We will now define a notion of isoperimetry for subsets of $H_{n}$. For a set $A\subset V$ let
\begin{equation}\label{eq:iotaDefinition}
		\iota(A) :=\max\left\{\max_{i\in \{0,\dots,n-1\}} |L_i\cap A|\, ,\, \max_{i\in \{0,\dots,n-1\}} |D_i\cap A| \right\}
\end{equation}
and
\begin{equation}\label{eq:chiDefinition}
		\chi(A) := \min\left\{\min_{i\in \{0,\dots,n-1\}} |L_i\cap A|\, ,\, \min_{i\in \{0,\dots,n-1\}} |D_i\cap A| \right\}.
\end{equation}
Given $A \subset V$ by $E(A)$ we will denote the set of edges $\{v,w\}\in E$ such that $v, w \in A$. As each vertex $v \in A$ has at least $2 (\chi(A) - 1)$ and at most $2 (\iota(A) - 1)$ neighbors in $A$, we have the inequalities
\[
\chi(A) - 1 \leq \frac{|E(A)|}{|A|} \leq \iota(A) - 1,
\]
which justifies the name ``isoperimetry''. Note also the following subadditivity property of $\iota$: for any two subsets $A, B \subset V$ we have $\iota(A \cup B) \leq \iota(A) + \iota(B)$. We also have $\iota(A) \leq \iota(B)$, $\chi(A) \leq \chi(B)$ whenever $A \subset B$, in particular both $\iota(\Zz_{t})$ and $\chi(\Zz_{t})$ are nondecreasing in $t$.

Our main technical result is the following upper bound on $\iota$ of $\Zz_t$
\begin{proposition}\label{prop:isoperimetry-log2n}
Fix $\theta > 0$, an admissible function $\mathcal{C}$ and let $\beta_{0},\beta_{1}$ be such that $\beta_1>\beta_{0} > \Theta / 2$. Consider $\beta \in [\beta_{0}, \beta_{1}]$ and let $\Zz(v) := \Zz^{\beta, \theta, \mathcal{C}}(v)$ be the trace of the cyclic random walk associated to $\mu_{\beta, \theta, \mathcal{C}}$, started at $v\in V$. Then there exist $C, c >0$ (depending only on $\beta_{0}$, $\beta_{1}$, $\theta$, in particular independent of $\mathcal{C}$) such that
\[
	\Pp \left( \forall v \in V \, \iota\left(\Zz_{n
	\log^2 n}(v) \right) \leq C \log^2 n  \right) \geq 1 - C e^{- c \log^2 n}.
\]
\end{proposition}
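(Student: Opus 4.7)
The plan is to analyze the cyclic random walk inductively as it discovers new vertices, showing that every row $L_i$ and every column $D_j$ contains at most $C\log^2 n$ vertices of the trace $\Zz_{n\log^2 n}(v)$, uniformly over the starting vertex $v$. The target probability $1 - Ce^{-c\log^2 n}$ can absorb a union bound over the $n^2$ starting vertices and the $2n$ rows and columns, so the real task is a super-polynomial bound for a single row/column with a single starting vertex. The underlying heuristic is that in time $n\log^2 n$ the CRW visits at most $n\log^2 n$ distinct vertices, and if they are spread uniformly over $V$ of size $n^2$, each row or column receives about $\log^2 n$ of them.

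I would first reduce to the unweighted Poisson measure $\mathcal{B}$. The admissibility of $\mathcal{C}$ gives $|\mathcal{C}(X) - \mathcal{C}(Y)| \leq |X \triangle Y|$, so altering a configuration on $O(1)$ bridges tilts the density $\theta^{\mathcal{C}(X)}/Z_{\beta,\theta,\mathcal{C}}$ by at most $\Theta^{O(1)}$. Consequently, any event localized to a bounded number of bridges has probability under $\mu_{\beta,\theta,\mathcal{C}}$ comparable to its probability under $\mathcal{B}$, and I would work under $\mathcal{B}$, paying constant multiplicative factors in $\Theta$ at every local step.

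Next I would decompose the trajectory at the discovery times $T_k$. On each interval $[T_k, T_{k+1})$ the walk performs a backtrack excursion on already-visited bars before finding a fresh bridge. The engine of the uniform-spread claim is that when $\crw$ crosses a bridge into a bar whose upper segment has not yet been traversed, the Poisson bridges on that segment are independent of the history, so the next discovered vertex is essentially uniform in the currently incident row or column. Using the glossary's classification of bridge crossings into direct, internal, and bad jumps ($I^d$, $I$, $I^b$), direct jumps discover near-uniform fresh neighbours, and the main task is to show that direct jumps dominate internal and bad ones. The principal technical obstacle is the non-Markovianness: every re-entry into a visited bar conditions future bridges on the walk's past traversals of that bar. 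I would control this via the potential $\PP_t$, showing on the good events $\mathcal{D}_t^\delta$ (few high-degree vertices in $G_t$) and $\mathcal{Q}_t$ (straight paths of length $\log^2 n$ in $\core_t$ have small potential) that backtrack excursions have exponentially decaying tails, so $\mathcal{E}_t(k)$ has probability $e^{-ck}$, and all excursions occurring before time $n\log^2 n$ are shorter than $\log^2 n$ outside an event of probability $Ce^{-c\log^2 n}$. Here the hypothesis $\beta > \Theta/2$ is essential, as it keeps the rate of productive direct jumps above the rate of wasted internal ones even after accounting for the tilting cost.

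Finally, conditional on short excursions and the good events, the walk adds a near-uniform fresh vertex to the trace at roughly unit rate. I would run everything up to the stopping time $\tau_{\mathrm{iso}}^\delta = \inf\{t : \iota(\Zz_t) \geq C\log^2 n\}$. On $\{\tau_{\mathrm{iso}}^\delta \leq n\log^2 n\}$ some row or column accumulates $C\log^2 n$ vertices while the isoperimetry is still below threshold; by near-uniformity of direct jumps, each discovery enters a fixed row/column with probability $O(\log^2 n / n)$, bounded using the current isoperimetry (still below $C\log^2 n$ up to $\tau_{\mathrm{iso}}^\delta$). A Chernoff bound on the resulting sum of at most $n\log^2 n$ nearly independent indicators yields failure probability $\exp(-c\log^2 n)$ for $C$ large enough, and the final union bound over $2n$ rows and columns, $n^2$ starting vertices, and the underlying good events preserves this rate. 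The hardest step, in my estimation, is quantifying ``near-uniform'' precisely enough that the errors survive the union bound, which is the role of the careful bookkeeping through $\PP_t$, $\badSet_t$, $\core_t$ and the induced graph $G_t$.
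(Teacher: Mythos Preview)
Your proposal assembles the right cast of characters (direct/internal/bad jumps, the potential $\PP_t$, the core $\core_t$, the bad set $\badSet_t$) but misses the paper's key structural device and inverts the roles of two of them.

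The paper does not attack $T = n\log^2 n$ directly. It proves a bootstrap lemma (Lemma~\ref{le:bootstrap-isoperimetry}): if $\iota(\Zz_T) \le n^\alpha$ holds with probability $1 - Ce^{-c\log^2 n}$ for some small $\alpha$, then in fact $\iota(\Zz_T) \le C\log^2 n$ with the same probability. The proposition is then proved by iteration: start at $T = n^{\alpha/2}$, where the hypothesis is trivial because fewer than $n^\alpha$ vertices have been visited; apply the bootstrap; extend to $T' = T \cdot \lfloor n^{\alpha/2}\rfloor$ by subadditivity of $\iota$ over $\lfloor n^{\alpha/2}\rfloor$ blocks of length $T$ (yielding $\iota(\Zz_{T'}) \le n^{\alpha/2} \cdot C\log^2 n \le n^\alpha$); bootstrap again; repeat $O(1/\alpha)$ times to reach $n\log^2 n$. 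All the hard analysis---internal hits, the bad-set bound, straight paths, backtracks---lives inside the bootstrap lemma and is carried out under the standing Assumption~\ref{lm:isoperimetry} that $\iota(\Zz_T)\le n^\alpha$ already holds with high probability. Your stopping-time scheme tries to bypass this, but the auxiliary events you invoke ($\mathcal{D}_t^\delta$, $\mathcal{Q}_t$, small $\badSet_t$) are themselves established in the paper \emph{under} that assumption via Corollary~\ref{cor:isoperimetry-conditional}; as written you are using the conclusion to feed the hypotheses. A pure stopping-time version might be salvageable, but it would require redoing every auxiliary lemma so that its conclusion holds deterministically on $\{\tau_{\mathrm{iso}}^\delta > t\}$ rather than ``with high probability'', and your outline does not attempt this.

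Separately, you have the backtrack mechanism backwards. The event $\mathcal{E}_t(k)$ is the \emph{good} event that the CRW escapes to distance $k$ from its history; the paper shows $\p(\mathcal{E}_{T_l}(n^\varepsilon)\mid \mathcal{G}_l) \ge q > 0$ (Lemma~\ref{lm:excursions}), not $e^{-ck}$. Likewise $\mathcal{Q}_t$ asserts that straight paths of length $\log^2 n$ in $\core_t$ have \emph{large} potential---many unexplored bar segments from which the CRW can launch escape attempts---and this is precisely what makes long backtracks exponentially unlikely (Proposition~\ref{prop:backtrack}). Your reading (small potential, short excursions) would make backtracks easy rather than hard. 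Finally, the role of $\beta > \Theta/2$ is not to balance direct against internal jumps; internal jumps have rate $O(\iota(\Zz_t)/n)$ and are rare regardless. The condition ensures that the intensity $\mu_t$ of discovering fresh vertices exceeds $1$ (Corollary~\ref{cor:intensity}), so the potential $\PP_t = |\Zz_t| - t$ drifts upward and escape excursions are sustained.
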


The above proposition will be a key tool in the proof of existence of long cycles. Together with a corresponding (easier) lower bound on the quantity $\chi$, given in Proposition \ref{prop:isoperimetry-log2n-lower-bound} below, it gives good control of the isoperimetric properties of \emph{mesoscopic} segments of cycles (i.e., of length roughly $n\log^2 n$), which can be then lifted to isoperimetry of full cycles of length at least $n\log^2 n$. This will enable us to prove that with high probability the conditional split and merge probabilities in the evolution of cycles behave similarly as in the mean-field case, allowing for an application of a modification of an argument due to Schramm.

The parameters $\beta_0,\beta_1$ are introduced for technical reasons. When applying the above proposition in the proof of Theorem \ref{thm:main_theorem_upperbound} we will need uniformity of constants for $\beta$ belonging to the interval $[\beta_0, \beta_1]$, with appropriately chosen $\beta_0, \beta_1$.

The crucial part of the proof of Proposition \ref{prop:isoperimetry-log2n} is a bootstrap argument regarding isoperimetry. Informally speaking, we will show that if at some time $T \leq n \log^2n$ we have with high probability ``good'' isoperimetry (of the order of $n^{\alpha}$ for some small enough $\alpha$), then actually we have with high probability ``very good'' isoperimetry (of the order of $\log^2 n$). This is formalized in the following

\begin{lemma}[Bootstrap]\label{le:bootstrap-isoperimetry}
Let $\theta, \mathcal{C}, \beta_{0}, \beta_{1}, \beta, \Zz(v)$ be as in Proposition \ref{prop:isoperimetry-log2n}. Let $\alpha \in (0, 1/100)$ and $T \leq n\log^2 n$. Suppose that for some $C_1, c_1 >0$ we have
\begin{equation*}
	\Pp\left( \forall v \in V \, \iota(\Zz_{T}(v)) \leq n^{\alpha} \right) \geq 1 - C_1 e^{-c_1 \log^2 n}.
\end{equation*}
Then there exist $C, C_2, c_2 >0$ (depending only on $\theta, \beta_{0}, \beta_{1}, \alpha, c_1, C_1$) such that we have
\[
\Pp \left( \forall v \in V \, \iota(\Zz_{T}(v) ) \leq C \log^2 n  \right) \geq 1 - C_2 e^{- c_2 \log^2 n}.
\]
\end{lemma}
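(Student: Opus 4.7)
My strategy is to use the provisional envelope $\iota(\Zz_T(v)) \le n^\alpha$ as a stopping-time condition under which the cyclic random walk is well approximated by an i.i.d.\ exploration of $H_n$, and then to read off the sharper $\log^2 n$ bound from a Bernstein-type concentration inequality applied to the count of new vertices in a single row or column. First I would fix $v \in V$ and a row $L_i$ (the column case being symmetric) and introduce the stopping time $\tau^\alpha := \inf\{t \ge 0 : \iota(\Zz_t(v)) > n^\alpha\}$, which by hypothesis satisfies $\tau^\alpha \ge T$ with probability at least $1 - C_1 e^{-c_1 \log^2 n}$. On this event the goal becomes controlling $N_i := |\Zz_T(v) \cap L_i|$.

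The CRW spends some (random) time $S_i \le T$ on bars belonging to $L_i$, during which it encounters row bridges at rate $\beta$ and column bridges (each of which exits $L_i$) also at rate $\beta$. Because any single bridge addition changes $\mathcal{C}(X)$ by at most $1$ under \eqref{eq:Lipschitz}, the weight $\theta^{\mathcal{C}(X)}$ distorts the conditional law of each new bridge by a factor of at most $\Theta^2$, which I would absorb into multiplicative constants. Conditionally on the history and on $\tau^\alpha \ge T$, each row bridge lands on an already-visited vertex of $L_i$ with probability at most $\Theta^2 n^\alpha/(n-1) = O(n^{\alpha - 1})$, so the compensated process $N_i - A_i$ is a martingale (with respect to $\fil_s$) whose jumps are bounded by $1$ and whose predictable quadratic variation is of order $\log^2 n$, matching the expected number of entries into $L_i$ before time $T = n\log^2 n$. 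A Bernstein inequality for such martingales then yields $\Pp(N_i \ge C \log^2 n,\ \tau^\alpha \ge T) \le e^{-c \log^2 n}$ for $C$ large, and a union bound over $2n$ rows and columns and $n^2$ starting vertices, combined with the hypothesis, gives the claim with constants depending only on $\theta, \beta_0, \beta_1, \alpha, c_1, C_1$.

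The main obstacle I anticipate is making the ``row bridges land on fresh vertices essentially uniformly'' heuristic fully rigorous, because the CRW is non-Markovian (it can encounter bridges it has already traversed, producing backtracks) and because the weight $\theta^{\mathcal{C}}$ couples distant parts of the configuration. This is presumably where the machinery suggested by the glossary enters: the bad set $\badSet_t$ isolates the rare vertices where the exploration has accumulated too much structure and must be handled separately, while the potential $\PP_t$ and excursion events $\EE_t(k)$ provide quantitative control on how often the CRW interacts with its past inside a mesoscopic window, so that an excursion-theoretic comparison to an i.i.d.\ reference process becomes tractable at the bootstrap scale. The Lipschitz hypothesis on $\mathcal{C}$ is ultimately what allows the argument to be uniform in the admissible function $\mathcal{C}$ and in $\beta \in [\beta_0, \beta_1]$.
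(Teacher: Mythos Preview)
Your overall framing---fix a row $L_0$, work on $\{\tau_{iso}^\alpha \ge T\}$, bound $|\Zz_T(v)\cap L_0|$ by $C\log^2 n$ with superpolynomially small failure probability, then union bound over rows, columns and starting vertices---is exactly the paper's strategy (Corollary~\ref{cor:intersections-with-l0} followed by the short deduction of Lemma~\ref{le:bootstrap-isoperimetry}). The gap is in your second paragraph: the claim that the predictable quadratic variation of $N_i$ is of order $\log^2 n$ is not justified, and in fact is essentially equivalent to the conclusion. For a counting process the predictable quadratic variation of $N_i - A_i$ equals the compensator $A_i$, and here
\[
A_i \;\asymp\; \beta\,S_i \;+\; \beta\,\frac{T}{n} \;\asymp\; \beta\,S_i \;+\; \beta\log^2 n,
\]
where $S_i$ is the total time the CRW spends on bars in $L_0$. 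The second term, coming from direct jumps into $L_0$ from outside, is indeed $O(\log^2 n)$; but the first term is controlled only by $S_i \le N_i$, so Bernstein would give you an inequality of the form $N_i \lesssim N_i + \log^2 n$, which is vacuous. Equating $A_i$ with ``the expected number of entries into $L_i$'' conflates two different quantities: once inside $L_0$ the CRW discovers new vertices there at rate $\asymp\beta$, so a single sojourn can contribute many new vertices, and---more seriously---the number of sojourns can far exceed the number of direct entries because the CRW may return to $L_0$ by \emph{backtracking} through its history, which uses no fresh bridge and therefore is invisible to your intensity bookkeeping.

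This is not a technical wrinkle to be absorbed into constants; it is the entire content of the proof. The paper's argument (Lemma~\ref{lm:main-lemma}) is a renewal-type statement: conditionally on $\mathcal{F}_{\tau_k}$, with probability at least some $p>0$ the next discovery time $\tau_{k+1}$ of a fresh vertex in $L_0$ satisfies $\tau_{k+1}-\tau_k > n\wedge(T-\tau_k)$. Establishing this requires three separate mechanisms: (i) the CRW can, with positive probability, escape the bad set $\badSet_t$ via an excursion of length $n^\varepsilon$ (Lemma~\ref{lm:excursions}); (ii) once outside, it is unlikely to make a fresh jump back into $\badSet_t$, because $|\badSet_t|\le n^{7\varepsilon}$ with high probability (Proposition~\ref{prop:bad-ball-is-small} and Lemma~\ref{lm:bad-hits-process}); and (iii) it is unlikely to backtrack all the way to $L_0$, because any path in $G_t$ from outside $\badSet_t$ to $L_0$ contains a straight subpath in $\core_t$ of length $\ge\log^2 n$ with potential $\ge b\log^2 n$ (Lemmas~\ref{lm:long-path} and~\ref{lm:all-paths-are-good}), and such a path cannot be completely covered without the CRW first making a long excursion (Proposition~\ref{prop:backtrack}). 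Only once this uniform lower bound $p$ is in hand does one compare $\sum_k \ind{\tau_{k+1}-\tau_k>n}$ to i.i.d.\ Bernoulli$(p)$ variables and conclude $|\Zz_T\cap L_0|\le Cp^{-1}\log^2 n$.

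You correctly name every one of these tools in your final paragraph, but you present them as refinements needed to make a basically sound martingale argument rigorous. They are not: the martingale argument as written is circular, and the machinery of $\badSet_t$, $\PP_t$, $\core_t$ and $\EE_t(k)$ is what replaces it.
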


With this lemma the proof of Proposition \ref{prop:isoperimetry-log2n} is rather straightforward and is given in Section \ref{sec:proof-of-isoperimetry}. The whole next section will be devoted to the proof of Lemma \ref{le:bootstrap-isoperimetry}.

Later on we will also need a lower bound on $\chi$, which is analogous to Proposition \ref{prop:isoperimetry-log2n} provided the orbit of the permutation defined by the CRW contains at least $n \log^2 n$ vertices. Recall the definition \eqref{eq:very-big-ooo} of the sets $\mathcal{O}_k(v)$ and $\mathcal{O}(v)$.

\begin{proposition}\label{prop:isoperimetry-log2n-lower-bound}
Fix $\theta > 0$, an admissible function $\mathcal{C}$ and let $\beta_{0}, \beta_{1} > 0$. Consider $\beta \in [\beta_{0}, \beta_{1}]$ and
let $\crw(v) := \crw^{\beta, \theta, \mathcal{C}}(v)$ be the cyclic random walk associated to $\mu_{\beta, \theta, \mathcal{C}}$, started at $v$. There exist $C, c >0$ (depending only on $\beta_{0}$, $\beta_{1}$, $\theta$, in particular independent of $\mathcal{C}$) such that
\[
	\Pp \left( \forall v \in V \,  \chi(\mathcal{O}_{n
	\log^2 n}(v) ) \geq c \log^2 n  \;\mathrm{or}\; |\mathcal{O}(v)| < n \log^2 n  \right) \geq 1 - C e^{- c \log^2 n}.
\]
\end{proposition}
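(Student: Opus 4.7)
The strategy parallels the proof of Proposition~\ref{prop:isoperimetry-log2n} but in a dual direction: instead of bounding the row/column occupancy of the CRW's \emph{trace} from above, the goal is to bound the row/column occupancy of the \emph{orbit} from below. First I would dispense with the case $|\mathcal{O}(v)| < T$, where $T := n\log^2 n$, in which the conclusion is vacuous; on the complementary event $\{\tau_c > T\}$ the vertices $\sigma^0(v),\dots,\sigma^{T-1}(v)$ are distinct, so the identity $\sum_i |L_i \cap \mathcal{O}_T(v)| = T = n\log^2 n$ says that the \emph{average} row occupancy is exactly $\log^2 n$; the task is to upgrade this averaged statement to a pointwise lower bound. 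Using the symmetries of $H(2,n)$ (permutations of rows, permutations of columns, and the swap of the two coordinates) together with a union bound over the $n^2$ starting vertices and the $2n$ rows/columns, it suffices to prove, for a single fixed row $L_0$, a tail bound of the form
\[
\Pp\bigl(|L_0 \cap \mathcal{O}_T(v)| < c\log^2 n,\ \tau_c > T\bigr) \leq C e^{-c'\log^2 n}/n^{3}.
\]

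To establish this tail bound I would write $N_0 := |L_0 \cap \mathcal{O}_T(v)|$ as the number of integer times $j\in[0,T)$ for which $\crw_j(v)$ sits at height~$0$ on a bar of $L_0$. Decompose $[0,T]$ into maximal subintervals spent on a single bar (``bar visits'') and group them by the row of the corresponding vertex. The lower bound on $N_0$ then reduces to two ingredients: (a) the CRW makes at least $c_1 \log^2 n$ distinct bar visits inside $L_0$ before time $T$; (b) each such visit covers height~$0$ (and therefore contributes an orbit element of $L_0$) with conditional probability uniformly bounded below. Item (b) should follow from the Poisson-on-Lebesgue structure of the bridges on a single bar, combined with the observation used repeatedly in Section~\ref{sec:isoperimetry} that under $\mu_{\beta,\theta,\mathcal{C}}$ addition or removal of a single bridge changes the density with respect to $\mathcal{B}$ by at most a factor $\Theta^{2}$; this forces the conditional law of the heights on a fixed bar to be within a bounded multiplicative factor of a homogeneous Poisson process. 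Item (a) is the analogue, for a single fixed row, of the bootstrap isoperimetry of Lemma~\ref{le:bootstrap-isoperimetry}; I would recycle the excursion analysis used there -- tracking $\tau_{iso}^\delta$, $\tau_c$ and the core $\core_t$ -- to lower-bound the number of new bars of $L_0$ visited before the CRW closes.

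Once (a) and (b) are in place, a Bernstein/Chernoff-type inequality for the number of ``successful'' bar visits (with conditional success probabilities uniformly bounded below the relevant filtration $\filG_{k}$) yields the required tail of order $\exp(-c\log^2 n)$, which comfortably absorbs the union bound over $v$ and over rows/columns.

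\textbf{Main obstacle.} The principal difficulty is item (a): ruling out that the first $T$ orbit vertices conspire to avoid some specific row, \emph{without} invoking the upper bound $\iota(\Zz_T)\leq C\log^2 n$ of Proposition~\ref{prop:isoperimetry-log2n}, which is only available for $\beta>\Theta/2$. I expect this to require adapting the excursion/bootstrap machinery of Section~\ref{sec:isoperimetry}, together with the $\Theta^{\pm 2}$ tilting comparison with the $\theta=1$ case. For small $\beta$ one should first observe that $\Pp(\tau_c > T)$ is itself superpolynomially small in $n$, which makes the bound automatic; to keep the constants uniform over $\beta\in[\beta_0,\beta_1]$ some care is needed at the interface of these two regimes. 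The lower bound on ``successful'' visits is genuinely more delicate than the upper bound on trace occupancy, since it requires anticoncentration (rather than a mere tail bound) for the height of a first visit to each newly discovered bar of $L_0$.
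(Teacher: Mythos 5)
Your two–ingredient decomposition of the target (``many distinct bars of $L_0$ are visited'' and ``a bounded fraction of them are visited at height $0$'') matches the structure of the paper's proof, and your item~(b) is handled the same way the paper handles it: at each first visit to a new $L_0$ vertex, the conditional probability of making \emph{no} fresh jump for one time unit is bounded below by $\p(X=0)$ for a Poisson $X$ with parameter $O(\Theta\beta)$, which forces the whole bar to be traversed and the vertex to enter $\mathcal{O}(v)$. A Chernoff bound for the resulting stochastically-dominating Bernoulli sum then finishes the upgrade from new $L_0$-vertex visits to orbit elements.

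The gap is precisely where you locate your main obstacle, namely item~(a). You propose to ``recycle the excursion/bootstrap machinery'' of Section~\ref{sec:isoperimetry} (tracking $\tau_{iso}^\delta$, the core $\core_t$, etc.). That machinery rests on Assumption~\ref{lm:isoperimetry} and on $\beta > \Theta/2$, and the ad-hoc patch you suggest for small $\beta$ (a claimed superpolynomial decay of $\Pp(\tau_c > T)$) is neither established in the paper nor sufficient to cover the intermediate window $\Theta^{-1}/2 \le \beta \le \Theta/2$. The paper avoids all of this with a much lighter argument (Proposition~\ref{prop:lower-bound}): let $Y_t := |\Zz_t\cap L_0|$ with compensator $\Lambda_t$; by Lemma~\ref{le:intensity}, whenever the CRW sits at a vertex whose \emph{column} does not yet contain a visited $L_0$ vertex, the intensity of discovering a new vertex of $L_0$ is at least $\beta_0\Theta^{-1}/n$. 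Since the total time spent at ``bad'' vertices (columns already hitting $L_0\cap\Zz_t$) is at most $n\,Y_T$ (each bar can hold the CRW for at most time $1$ before $\tau_c$), the event $\{Y_T\le \tfrac{\kappa}{2}\log^2 n\}\cap\{\tau_c\ge T\}$ forces $\Lambda_T \gtrsim \log^2 n$, which is incompatible with $Y_T$ being small by Lemma~\ref{lm:concentration-lower-bound-super}. This is a closed, $\beta$-uniform argument requiring only Lemma~\ref{le:intensity} and Lemma~\ref{lm:fresh-vertices-are-not-often}, not the bootstrap. The case that the CRW is ``slow'' (i.e.\ $T_{n\log^2 n} > \kappa n\log^2 n$) is absorbed by running the same compensator estimate up to time $\kappa n\log^2 n$ rather than by any separate small-$\beta$ analysis. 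So the missing ingredient in your proposal is exactly this compensator trick; without it your route either stalls in the intermediate $\beta$-regime or leans on an unproved tail bound for $\tau_c$.
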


As the proof of the above proposition is much less involved than for Proposition \ref{prop:isoperimetry-log2n}, it is given separately in Section \ref{sec:isoperimetry-lower-bound}.

\paragraph{Outline of the proof of Lemma \ref{le:bootstrap-isoperimetry}.}

Here we outline the proof strategy of the main technical result of this section -- Lemma \ref{le:bootstrap-isoperimetry}. Most of our effort is  devoted to the analysis of interactions of the CRW with its history. When the CRW enters a previously visited vertex it may reuse already explored bridges, which can generate a complex behavior depending on the graph $G_t$ of already visited vertices.

One should keep in mind the following intuitive picture. By the assumption $\iota(\Zz_{T}) \leq n^{\alpha}\ll n$ the trace of the CRW is not too concentrated in any row or column. Thus, while moving on the bar of a vertex $v$, if the CRW discovers an unexplored bridge, it will typically jump to a yet unexplored vertex. If it fails to discover a new bridge, it backtracks to the vertex visited before $v$. Such backtracks are common and may cascade creating some interactions of the CRW with its history. Due to the assumption $\beta > \Theta / 2$, the rate of discovery of new vertices is fast enough so that the CRW tends to escape its history, making the above mentioned interactions short-ranged and thus fairly easy to analyze. For a very similar reason, unless the CRW  closes into a cycle quickly, it makes a fairly long cycle. When $T$ is at least of the order of $n$, the CRW occasionally jumps to a vertex visited a long time before. Analysis of such long-range interactions is the main technical difficulty of the proof.

The main task is to show that the time between two subsequent visits in any fixed row or column, say $L_0$, is $c n$ (for some $c > 0$) with uniformly positive probability. Having done that, by a comparison with a sum of independent random variables it is straightforward to conclude that $|\Zz_{n \log^2 n}\cap L_0| \leq C \log^2 n$ with very high probability and thus also $\iota(\Zz_{n \log^2n}) \leq C \log^2 n$.

The CRW can hit $L_{0}$ either by a direct jump using a previously unexplored bridge or by entering through its history. As $T \leq n \log^2 n$ and at each step the CRW has chance roughly $c / n$ of a direct jump, typically it will make $c \log^2 n$ visits of the first type.

To analyze entering $L_{0}$ through the history we distinguish two cases. The first is when the CRW jumps using a new bridge to a vertex which is close in $G_t$ to $L_{0}$. The second is when the vertex is far from $L_{0}$ and the CRW makes a long backtrack employing already used edges.

To rule out the first possibility we show, using a rather delicate argument, that the dangerous zone (``bad set'') consisting of small balls around $L_{0}$ in $G_{t}$ is small enough so that the CRW is quite unlikely to jump to it. Thus it is very unlikely to observe $\log^2 n$ of such jumps.

In the second case it is enough to show that any sufficiently long path in $G_t$ has what we call ``large potential''. Intuitively speaking, the CRW traversing such a path has many chances to escape it by jumping to a new vertex and performing a long excursion avoiding its history. Thus the CRW is unlikely to ever make a long backtrack. This part of the argument is rather technical and again uses crucially the assumption $\beta > \Theta / 2$.

The roadmap to the proof is as follows. In the rest of this section and Section \ref{sec:intensities} we set up the framework for analyzing the excursions made by the CRW, in particular providing bounds for the intensity of discovering new vertices, number of jumps to the history (Lemma \ref{lm:internal-hits}) and the rate at which the ``potential'' corresponding to bars is exhausted (Lemma \ref{lm:good-potential}). This is then used in Section \ref{sec:excursions} to show that the CRW is likely to make short excursions not intersecting its history (Lemma \ref{lm:excursions}). In Section \ref{sec:bad-set} we show that the bad set described above is typically small (Proposition \ref{prop:bad-ball-is-small}) and thus is unlikely to be hit by the CRW (Lemma \ref{lm:bad-hits-process}). In Section \ref{sec:backtracks} we show that long paths typically have large potential (Lemma \ref{lm:all-paths-are-good}) and thus are unlikely to be backtracked (Proposition \ref{prop:backtrack}). All these pieces are then used in Section \ref{sec:proof-of-isoperimetry} to show that visits to $L_{0}$ are infrequent (Lemma \ref{lm:main-lemma}), which easily implies (Corollary \ref{cor:intersections-with-l0}) good isoperimetry claimed in the conclusion of Lemma \ref{le:bootstrap-isoperimetry}.

\paragraph{Basic notation and assumptions.}\label{sec:basic}

Our goal for the rest of this section is to prove Lemma \ref{le:bootstrap-isoperimetry}. Therefore, from now on we fix
\begin{itemize}
\item $\theta > 0$ and an admissible function $\mathcal{C}$,
\item $\beta_{0}$, $\beta_{1}$ such that $\beta_{0} > \Theta / 2$, and $\beta \in [\beta_{0}, \beta_{1}]$,
\item $\alpha \in (0, 1/100)$ and \hfff{def:Ttime}{$T \leq n\log^2 n $},
\item $\varepsilon \in (0, \frac{1}{20})$ such that $\alpha < \varepsilon / 4$
\end{itemize}
(the parameter $\varepsilon$ will play a technical role in intermediate calculations).

We will be considering the cyclic random walk $\crw^{\beta, \theta, \mathcal{C}}(v)$ associated to $\mu_{\beta, \theta, \mathcal{C}}$, started at $(v,0)$ for a fixed vertex $v$. From now on for brevity we write simply $\crw_{t} = \crw^{\beta, \theta, \mathcal{C}}_{t}(v)$, $\Zz_{t} = \Zz^{\beta, \theta, \mathcal{C}}_{t}(v)$ etc.

In what follows whenever we write about global constants like $C,c$ we allow them to depend on $\beta_{0}, \beta_{1},\varepsilon$ (in addition to dependence on $\theta$ and $\alpha$), so that for $\beta \in [\beta_{0}, \beta_{1}]$ all the statements hold with constants depending uniformly on $\beta$. The dependence on $\theta,\alpha,\beta_0,\beta_1,\varepsilon$ will be suppressed in the notation. Occasionally, if a constant depends additionally on some other parameter $\delta$, we will stress it by writing e.g., $c = c(\delta)$. All constants will be independent of the function $\mathcal{C}$ as long as the Lipschitz condition \eqref{eq:Lipschitz} is satisfied.

In proofs we will often take $n$ large enough, depending on the other parameters, but unless stated otherwise the propositions being proved will be such that this can be absorbed into constants $C,c$ appearing in the statements, so the propositions in fact hold for all $n$. 

We will now introduce several notions which will be useful in analyzing the explorations of the cyclic random walk.

\paragraph{The graph $G_t$.}

\hfff{def:gtcrw}{The vertices $\Zz_t$ and bridges explored up to time $t$ by the CRW} induce a graph denoted by $G_t$ (we allow multiple edges if there is more than one bridge between two vertices). Let $d_{G_t}(\cdot, \cdot)$ be its natural graph metric. By $B_{G_{t}}(v,r)$ we denote the ball of radius $r$ in this metric around the vertex $v$ in $G_{t}$.

\paragraph{The bad set.}

Recall that $\varepsilon$ is a parameter fixed at the beginning of this section. Let
\begin{equation}\label{eq:defBadSet}
\badSet_t := \bigcup\limits_{v \in \Zz_{t} \cap L_{0}} B_{G_{t}}(v, n^{\varepsilon})
\end{equation}
\hfff{def:badset}{be the \emph{bad set} at time $t$}. Note that $\badSet_t$ is nondecreasing in $t$.

The reason for calling this set 'bad' is that once the CRW ends up in $\badSet_t$, it might quickly backtrack its way to $L_{0}$ and then make multiple jumps inside the same row, thus ruining good isoperimetric properties of the trajectory.

\paragraph{Dead vertices.}

As the CRW explores the set of vertices, it might happen that at time $t$ the bar $\{v\}\times [0,1)$ corresponding to a vertex $v$ is completely exhausted, i.e.,  $\{v\}\times [0,1)\subset \crw_{[0,t]}$. We will call such a vertex $v$ \emph{dead}.

\paragraph{Stopping times.}
Later on it will be convenient to use the following stopping times related to the CRW.  \hfff{def:tauiso}{For $\delta>0$ we define $\tau_{iso}^\delta$ as the time when the CRW loses good isoperimetric properties, i.e.}
\[
\tau_{iso}^\delta := \inf \{ t \geq 0 \colon \iota(\Zz_{t}) > n^{\delta} \},
\]
and \hfff{def:tauc}{let $\tau_c$ be the time that the CRW closes into a cycle}, i.e.
\[
\tau_c := \inf\{t> 0:  \X_t = (v,0)\},
\]
where $v$ is the starting vertex, meaning $\X_0 = (v,0)$.
\paragraph{Jumps.}

It will be important to distinguish several types of jumps that the CRW can make.  We will call a jump \emph{fresh} when a previously unexplored bridge is used, otherwise we call it a \emph{backtrack}. Suppose that $\X$ makes a fresh jump at time $t$. We will call it an \emph{internal jump} if $\X_{t} \in \Zz_{t-}$, a \emph{bad jump} if $\X_{t} \in \badSet_{t-}$ and a \emph{direct jump to $L_{0}$} if $\X_{t-} \notin L_{0}$ and $\X_{t} \in L_{0}$.

\newpage
\afterpage{
\begin{figure}[h]\label{fig:graph}
	\centering
	\begin{subfigure}[t]{0.45\textwidth}
		\includegraphics[width=\linewidth]{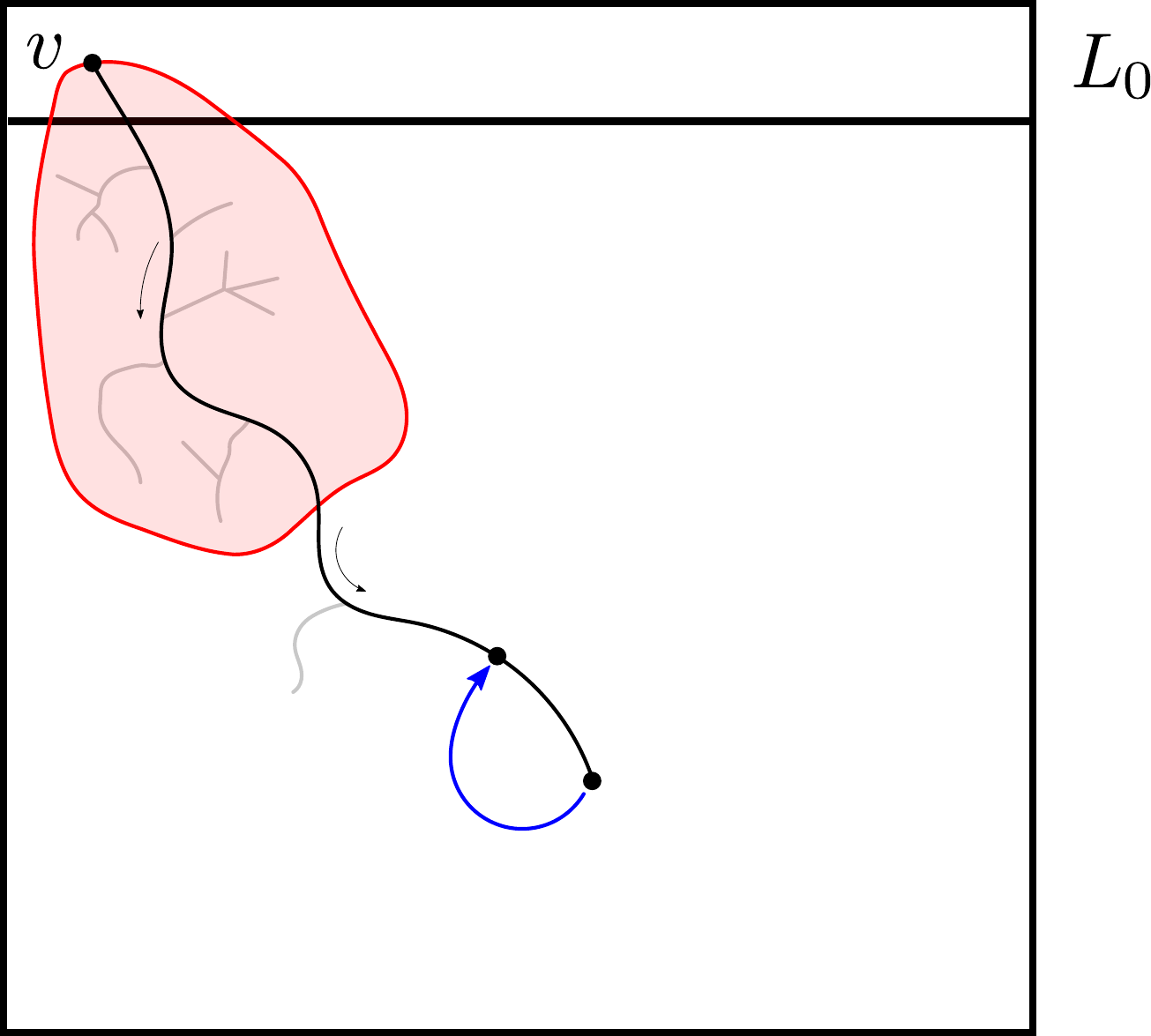}
    \subcaption{The CRW starts at $v$ and makes an exploration leaving the bad set. The exploration ends with an \emph{internal jump} (blue edge).}
	\end{subfigure}
	\hfill
	\vspace{2pc}
	\begin{subfigure}[t]{0.45\textwidth}
		\includegraphics[width=\linewidth]{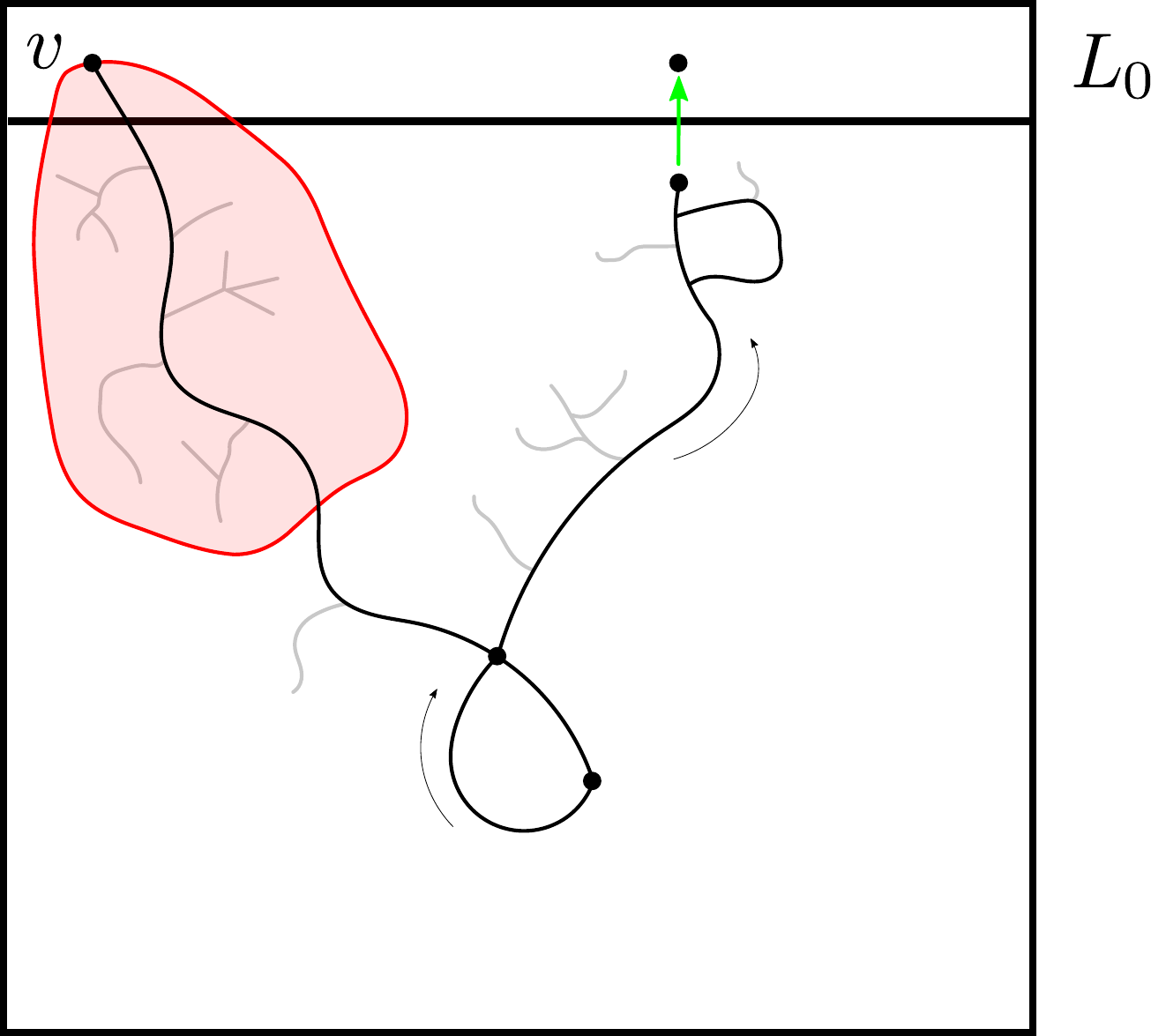}
    \subcaption{The CRW continues the exploration outside the bad set, making a \emph{direct jump} to $L_{0}$ at the end (green edge).}
	\end{subfigure}
		\hfill
	\begin{subfigure}[t]{0.45\textwidth}
		\includegraphics[width=\linewidth]{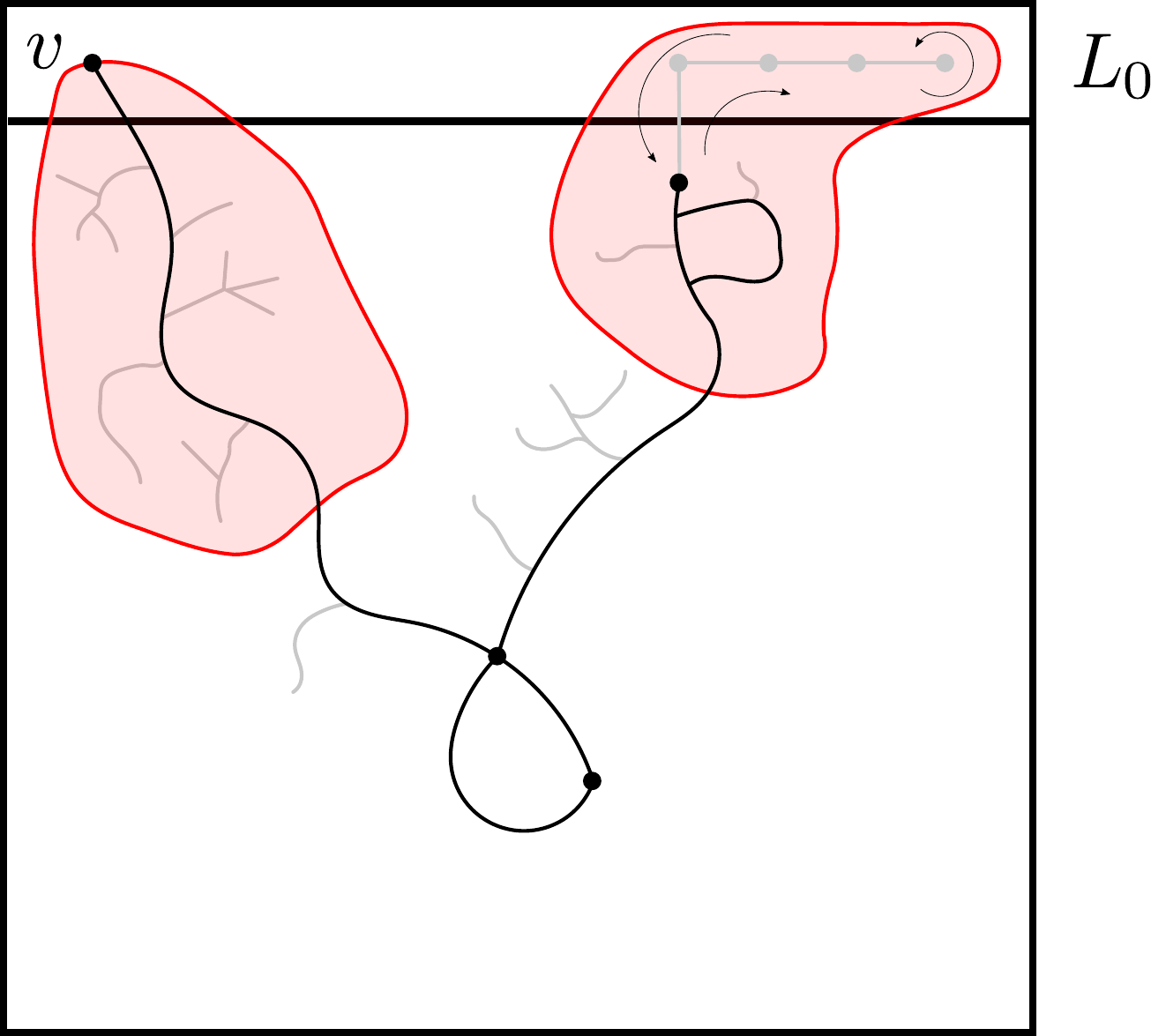}
    \subcaption{After a jump to $L_{0}$ some of the previously visited vertices become a part of the bad set. The CRW then makes an excursion inside $L_{0}$ which is completely backtracked (grey edges), ending up outside $L_{0}$.}
	\end{subfigure}
			\hfill
	\begin{subfigure}[t]{0.45\textwidth}
		\includegraphics[width=\linewidth]{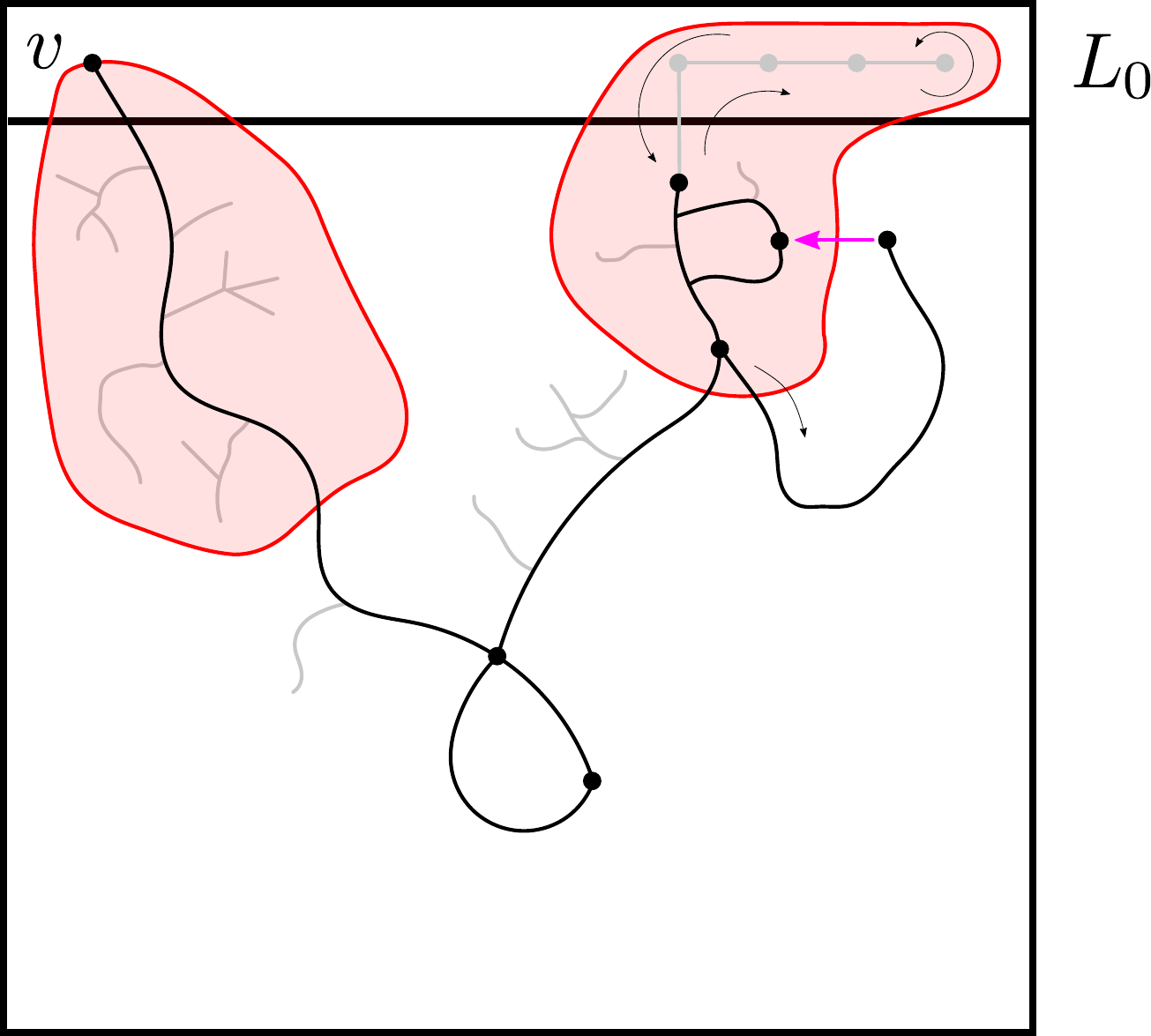}
    \subcaption{The CRW makes an excursion leaving the bad set, ending in a \emph{bad jump} (purple edge).}
	\end{subfigure}
	
	\caption{Sketch of the evolution of the graph $G_{t}$, with the CRW started at $v$. Dead vertices are shown in grey. The bad set $\badSet_{t}$ is marked in red.}
\end{figure}
\clearpage
}
\newpage
\end{subsection}

\begin{subsection}{Intensities of jumps and the potential}\label{sec:intensities}

Recall that a counting process is a nondecreasing, integer valued c\`adl\`ag stochastic process starting at zero and with jumps equal to one.

\paragraph{Intensity.}

Let $Y$ be an $\Ff_t$-adapted counting process. We will say that a nonnegative process $\lambda$ is an \emph{intensity} of $Y$ if $\lambda$ is $\mathcal{F}_t$-progressively measurable, $\int_0^t \lambda_u \, du < \infty$ for all $t$,  and the process $Y_t - \int_0^t \lambda_s \, ds$ is an $\Ff_t$-martingale. \\

In what follows we will often need (conditional) concentration inequalities for counting processes. Two of them, used most frequently, are stated below for convenience and the other two, which will be used only once, are stated in Appendix \ref{sec:appendix_concentation}. Proofs of all of them can be found in Appendix \ref{sec:appendix_concentation} as well.

\begin{lemma}\label{lm:concentration-upper-bound}
Let $Y_t$ be a counting process with bounded intensity $\lambda$ and compensator $\Lambda_t = \int_0^t \lambda_s \, ds$. Assume that $\sigma, \tau$ are bounded stopping times such that $\sigma \le \tau$. Consider  $\ell > 0$ and let $X$ be a Poisson variable with parameter $\ell$.
Then for any $r \geq 0$ we have almost surely
\begin{displaymath}
\p(\{Y_{\tau} - Y_\sigma \ge r\}\cap\{\Lambda_\tau - \Lambda_\sigma \le \ell\}|
\mathcal{F}_\sigma) \le \p(X \ge r).
\end{displaymath}
If $r \geq \ell$, we have in particular
\[
\p(\{Y_{\tau} - Y_\sigma \ge r\}\cap\{\Lambda_\tau - \Lambda_\sigma \le \ell\}|
\mathcal{F}_\sigma) \le \exp\left(-r \log \left( \frac{r}{e\ell}\right) - \ell \right).
\]
\end{lemma}

We will also need a corresponding lower bound.

\begin{lemma}\label{lm:concentration-lower-bound-super}
Let $Y_t$ be as in the previous lemma. Let $\sigma, \tau$ be bounded stopping times such that $\sigma \leq \tau$ and let $\delta,\ell > 0$. Let also $X$ be a Poisson random variable with parameter $\ell$. Then with probability one,
	\[
	\p\rbr{\left\{ Y_{\tau}-Y_{\sigma}\leq\ell(1-\delta)\right\} \cap\left\{ \Lambda_{\tau} - \Lambda_{\sigma}\geq\ell\right\}|\mathcal{F}_\sigma }\leq \p(X \le (1-\delta) \ell) \le \exp\left(-\delta^{2}\ell/2\right),
	\]
	where $\Lambda_{t}=\int_{0}^{t}\lambda_{s} \, ds$.
\end{lemma}

\paragraph{Set of accessible vertices.}
By $A_{t}$ we will denote the set of vertices which at time $t$ are available to the CRW by a fresh jump. Formally, $A_t = \emptyset$ if the CRW has closed into a cycle before time $t$, otherwise let $(w,z) = \crw_t$ and
\[
A_t := \{v \in V\colon \{v,w\} \in E \;\text{and}\; (v,z) \notin \crw_{[0,t)} \}.
\]

\begin{lemma}[Intensity of jumps]\label{le:intensity}
Let $Q_t$ be an $\mathcal{F}_{t}$-adapted c\`adl\`ag process of subsets of $V$ such that $Q_{t}$ can jump only at times when $\crw_{t}$ jumps. Let $J_t = |\{s\le t\colon  \textrm{$\crw$ makes a fresh jump at time $s$ and}\;\mathcal{X}_s \in Q_{s-}\}|$. Then the counting process $J$ has intensity $\lambda$ which satisfies
\begin{displaymath}
  \lambda_t \Theta^{-1} \le \frac{\beta}{n-1}|A_t\cap Q_t| \le \lambda_t \Theta.
\end{displaymath}
In particular for $\theta = 1$ we have $\lambda_t = \frac{\beta}{n-1} |A_t\cap Q_t|$.
\end{lemma}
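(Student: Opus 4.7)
The plan is twofold: (i) establish the exact formula $\lambda_t = \frac{\beta}{n-1}|A_t \cap Q_t|$ in the unweighted case $\theta=1$, where $X$ is genuinely a Poisson point process and the computation is a direct application of the memoryless / thinning properties of PPPs; (ii) pass to arbitrary $\theta > 0$ via the Papangelou (conditional) intensity of the tilted point process $\mu_{\beta,\theta,\mathcal{C}}$, exploiting the admissibility of $\mathcal{C}$ to control the tilting factor by $\Theta$.

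For step (i), suppose $\theta = 1$, so that $X$ is sampled from the PPP $\mathcal{B}$. Condition on $\filF_{t-}$ and assume the CRW has not closed, so that $\crw_{t-}=(w,z_t)$ for some $z_t \in [0,1)$. For each neighbor $v$ of $w$ in $A_t$, i.e., with $(v,z_t) \notin \crw_{[0,t)}$, by the independence properties of the PPP and the strong Markov property, the restriction of $X$ to the small untouched rectangle $\{w,v\} \times (z_t, z_t+\epsilon]$ is, for all sufficiently small $\epsilon>0$, an independent PPP of intensity $\frac{\beta}{n-1}$ given $\filF_{t-}$; hence the first bridge there arrives with rate $\frac{\beta}{n-1}$ and any such bridge is automatically unexplored, producing a fresh jump. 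If $v \notin A_t$, then the status of the PPP on $\{w,v\}$ near height $z_t$ is already revealed by $\filF_{t-}$, so no fresh jump to $v$ at time $t^+$ is possible. Summing over $v \in A_t \cap Q_t$ (noting that $Q_{t-} = Q_t$ since $Q$ only jumps together with $\crw$) yields $\lambda_t = \frac{\beta}{n-1}|A_t \cap Q_t|$, as required.

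For step (ii), observe that $\mu_{\beta, \theta, \mathcal{C}}$ is absolutely continuous with respect to $\mathcal{B}$ with density proportional to $\theta^{\mathcal{C}(X)}$. The corresponding Papangelou conditional intensity (Georgii-Nguyen-Zessin formula) at a new point $(e,s) \notin X$ is
\[
c((e,s) \mid X) = \frac{\beta}{n-1}\, \theta^{\mathcal{C}(X \cup \{(e,s)\}) - \mathcal{C}(X)}.
\]
By the admissibility (Lipschitz) condition \eqref{eq:Lipschitz}, $|\mathcal{C}(X \cup \{(e,s)\}) - \mathcal{C}(X)| \le 1$, and hence $c((e,s) \mid X) \in [\frac{\beta}{n-1}\Theta^{-1}, \frac{\beta}{n-1}\Theta]$ pointwise in $X$. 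The main technical obstacle is to transfer this pointwise Papangelou bound (which is stated conditionally on the full configuration) to a bound on $\lambda_t$, which conditions only on the walk's natural filtration $\filF_{t-}$. But since the bound is uniform in $X$, it survives any further conditioning: repeating the computation of step (i) with the baseline PPP intensity $\frac{\beta}{n-1}$ replaced by its Papangelou analogue and summing over $v \in A_t \cap Q_t$ yields
\[
\frac{\beta}{n-1}\Theta^{-1}|A_t \cap Q_t| \le \lambda_t \le \frac{\beta}{n-1}\Theta |A_t \cap Q_t|,
\]
which is equivalent to the claimed bounds; the equality $\lambda_t = \frac{\beta}{n-1}|A_t \cap Q_t|$ when $\theta=1$ is the special case $\Theta = 1$.
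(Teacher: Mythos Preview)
Your two-step strategy is the right intuition, and the Papangelou viewpoint is a clean way to phrase where the factor $\Theta^{\pm 1}$ comes from. But step~(ii) contains a genuine gap: the Papangelou intensity $c((e,s)\mid X)=\frac{\beta}{n-1}\theta^{\mathcal C(X\cup\{(e,s)\})-\mathcal C(X)}$ conditions on the \emph{entire} configuration $X$, whereas $\lambda_t$ is the predictable intensity of $J$ with respect to the CRW's filtration $\mathcal F_t$, which reveals only what the walk has explored. These two objects are not the same, and your sentence ``since the bound is uniform in $X$, it survives any further conditioning'' silently assumes the identity
\[
\lambda_t \;=\; \sum_{w\in A_t\cap Q_t}\,\E_{\mu}\!\big[c((\{\crw_t,w\},t')\mid X)\,\big|\,\mathcal F_t\big],
\]
which is precisely what has to be proved. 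It \emph{is} true, but it is not a tautology about conditioning on a sub-$\sigma$-field: it requires showing that the rate of a fresh jump to $w$ under $\mu_{\beta,\theta,\mathcal C}$, given only the CRW history, is obtained by integrating the tilting factor $\theta^{\Delta\mathcal C}$ over the unexplored part of the configuration. Your ``repeat step~(i) with $\frac{\beta}{n-1}$ replaced by the Papangelou rate'' is exactly where this justification is missing, since step~(i) rested on memorylessness of the PPP, a property the tilted measure does not have.

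The paper closes this gap by direct computation: for $A\in\mathcal F_s$ it shows $f(t)=\E[J_t\Ind{A}]$ is absolutely continuous and computes $f'(t)$ by expanding $\E[(J_{t+h}-J_t)\Ind{A}]$ over the events ``unique bridge in $(t',t'+h]$, fresh, joins $\crw_t$ to $w$''. After manipulating the integrals against $\theta^{\mathcal C(X)}\mathcal B(dX)$ using independence of disjoint Poisson regions and Lemma~\ref{le:auxiliary-integrability} to kill the $O(h^2)$ terms, one arrives at
\[
\lambda_t \;=\; \frac{\beta}{n-1}\sum_{w\in A_t\cap Q_t}\E\!\big[\theta^{\mathcal C(X\cup\{(\{\crw_t,w\},t')\})-\mathcal C(X)}\,\big|\,\mathcal F_t\big],
\]
which is the conditional Papangelou expression you wanted. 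The Lipschitz bound \eqref{eq:Lipschitz} then gives $\Theta^{-1}\le \theta^{\Delta\mathcal C}\le\Theta$ pointwise, and the claimed inequalities follow. So your outline becomes a proof once this computation (or a citation to a general result relating Papangelou and predictable intensities for Gibbs point processes under exploration filtrations) is supplied; without it the argument is circular.
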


The proof of this lemma is presented in Appendix \ref{ap:intensity}. We stress that the lemma is not specific to the Hamming graph and its statement holds for any weighted transposition process on a finite graph (with $\frac{\beta}{n-1}$ replaced by the appropriate edge intensity of the underlying point process).

\medskip

\hfff{l:jumps}{Let $I_{t}$ (resp. $I_t^b,I_t^d$) denote the total number of internal} (resp. bad, direct to $L_{0}$) jumps up to time $t$. They are counting processes. \hfff{l:intensities}{We will denote their intensities by $\lambda$ (resp. $\lambda^b$, $\lambda^d$)}. The intensity of the process $|\Zz_t|$ will be denoted by $\mu$.

\begin{cor}\label{cor:intensity}
For $t< \tau_c$,
\begin{equation}\label{eq:intensity-mu}
	\mu_t \geq 2 \Theta^{-1} \beta \frac{n - \iota(\Zz_t) }{n-1}
\end{equation}
and
\begin{equation}\label{eq:intensity-lambda}
	\lambda_t \leq \frac{4 \Theta \beta}{n} \iota(\Zz_t).
\end{equation}
\end{cor}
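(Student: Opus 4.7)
The plan is to obtain both estimates by direct application of Lemma \ref{le:intensity} with suitable choices of the set-valued process $Q_t$, combined with an elementary count of visited versus unvisited neighbors in the Hamming graph.

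For the lower bound on $\mu_t$, note that $|\Zz_t|$ is a counting process that increases by $1$ exactly when the CRW makes a fresh jump into a vertex of $V \setminus \Zz_{t-}$. So I would take $Q_t := V \setminus \Zz_t$ in Lemma \ref{le:intensity}, whose conclusion gives $\mu_t \geq \Theta^{-1} \frac{\beta}{n-1} |A_t \cap Q_t|$. It then suffices to show $|A_t \cap Q_t| \geq 2(n - \iota(\Zz_t))$ on $\{t < \tau_c\}$. Write $\crw_t = (w,s)$ with $w = (x,y)$; then $w$ has exactly $2(n-1)$ neighbors in $H$, namely $(L_y \cup D_x) \setminus \{w\}$. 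Any $v \in V \setminus \Zz_t$ has never been visited, so $(v,s) \notin \crw_{[0,t)}$ automatically, which means $v$ lies in $A_t$ provided it is a neighbor of $w$. The visited neighbors of $w$ form a subset of $(L_y \cup D_x) \cap \Zz_t \setminus \{w\}$, whose cardinality is at most $|L_y \cap \Zz_t| + |D_x \cap \Zz_t| - 2 \leq 2\iota(\Zz_t) - 2$ (subtracting $2$ since $w \in L_y \cap D_x \cap \Zz_t$). Thus the unvisited neighbors number at least $2(n-1) - (2\iota(\Zz_t) - 2) = 2(n - \iota(\Zz_t))$, proving \eqref{eq:intensity-mu}.

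For the upper bound on $\lambda_t$, the process $I_t$ counts fresh jumps landing in $\Zz_{t-}$, so I would apply Lemma \ref{le:intensity} with $Q_t := \Zz_t$. The lemma yields $\lambda_t \leq \Theta \frac{\beta}{n-1} |A_t \cap \Zz_t|$. Any vertex in $A_t \cap \Zz_t$ is a visited neighbor of $w = \crw_t$, and by the same count as above there are at most $2\iota(\Zz_t) - 2 \leq 2\iota(\Zz_t)$ such vertices. Hence $\lambda_t \leq \frac{2\Theta\beta}{n-1}\iota(\Zz_t)$, and since $n \geq 2$ gives $\frac{2}{n-1} \leq \frac{4}{n}$, we obtain \eqref{eq:intensity-lambda}. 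When $t \geq \tau_c$ the set $A_t$ is empty and both inequalities hold trivially (so in particular \eqref{eq:intensity-lambda} requires no restriction on $t$).

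There is no serious obstacle here: the content is almost entirely bookkeeping once Lemma \ref{le:intensity} is granted. The only point that deserves care is making sure the two instances of $Q_t$ are indeed c\`adl\`ag and $\Ff_t$-adapted and only change at jump times of $\crw$, which is immediate from the definitions of $\Zz_t$ and the fact that new vertices are discovered precisely at jump times of the CRW.
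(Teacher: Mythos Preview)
Your proposal is correct and follows essentially the same approach as the paper: both apply Lemma \ref{le:intensity} with $Q_t = V\setminus \Zz_t$ for \eqref{eq:intensity-mu} and $Q_t = \Zz_t$ for \eqref{eq:intensity-lambda}, then count (un)visited neighbors of the current vertex via the row and column intersections with $\Zz_t$. The only cosmetic difference is that you subtract the visited neighbors from $2(n-1)$ while the paper directly bounds $|D\setminus\Zz_t| + |L\setminus\Zz_t|$, which yields the same inequality.
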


\begin{proof}
Let us start with \eqref{eq:intensity-mu}. For any $t \geq 0$ let the current vertex $w = \crw_t$ belong to the column $D$ and row $L$. Using Lemma \ref{le:intensity} with $Q_{t} = V \setminus \Zz_t$, together with the definition of the parameter $\iota$ and the obvious containment $( D \cup L) \setminus \Zz_{t} \subset A_{t}$, we have
\begin{align*}
\mu_{t}\geq& \Theta^{-1} \frac{\beta}{n-1} |(D\cup L) \setminus \Zz_t| = \Theta^{-1}  \frac{\beta}{n-1} (|D \setminus \Zz_t| + |L \setminus \Zz_t|) \geq \Theta^{-1} \frac{2\beta }{n-1} (n-\iota(\Zz_t) ),
\end{align*}
as desired. For the proof of \eqref{eq:intensity-lambda}, we write again by the definition of $\iota$ and Lemma \ref{le:intensity} applied with $Q_{t} = \Zz_{t}$
\[
\lambda_t \leq \Theta \frac{\beta}{n-1} |A_t \cap \Zz_{t}| \leq \Theta \frac{\beta}{n-1} (|D \cap \Zz_{t}| + |L \cap \Zz_{t}|) \leq \Theta \frac{2\beta}{n-1} \iota(\Zz_{t}) \leq \Theta \frac{4 \beta}{n} \iota(\Zz_{t}).
\]
\end{proof}

It will be useful to have an estimate of how quickly previously unexplored vertices are discovered by the CRW. We have the following upper bound

\begin{lemma}[New vertices are not very frequent]\label{lm:fresh-vertices-are-not-often}
There exists an increasing function $g : (1, \infty) \to (0, \infty)$ such that for any $\delta > 2 \Theta \beta$,  $k > 0$ and $l\in \N$ we have
\[
\p(|\Zz_{T_l +k}\setminus \Zz_{T_l}|\leq \lceil \delta k \rceil |\mathcal{G}_{l}) = \Pp\left( T_{l+ \lceil \delta k \rceil}\geq T_{l} + k | \mathcal{G}_{l} \right) \geq 1 - e^{-ck},
\]
where $c = 2\Theta \beta g\left( \frac{\delta}{2\Theta \beta} \right) $.
\end{lemma}

\begin{proof}
By Lemma \ref{le:intensity} the intensity $\mu_t$ of $|\Zz_t|$ satisfies
\[
\mu_{t} \leq \Theta \frac{\beta}{n-1} |A_{t}\setminus \Zz_{t}| \leq \Theta \frac{\beta}{n-1} |A_{t}| \leq 2 \Theta \beta.
\]
By Lemma \ref{lm:concentration-upper-bound} (applied with $Y_t = |\Zz_t|$, $\sigma=T_l, \tau = T_l+k$, $\lambda = \mu$, $\ell = 2\Theta \beta k$, $r = \lceil{ \delta k\rceil} \ge \ell$) we obtain
(recall that $\Lambda_t = \int_0^t \lambda_s ds$),
\begin{multline*}
\p(|\Zz_{T_l + k}\setminus \Zz_{T_l}| > \lceil \delta k \rceil|\mathcal{G}_{l}) =
\p(\{|\Zz_{T_l + k}\setminus \Zz_{T_l}| > \lceil \delta k \rceil\}\cap \{\Lambda_{T_l+k} - \Lambda_{T_l} \le 2\Theta \beta k\}|\mathcal{G}_{l})\\
\le \exp\left(- \delta k \log \left( \frac{\delta}{2 e \Theta \beta}\right) - 2 \Theta \beta k \right).
\end{multline*}
Using again the inequality $\delta > 2 \Theta \beta$, it is easy to see that the right hand side is bounded from above by $e^{-ck}$, with $c = 2\Theta \beta g\left( \frac{\delta}{2\Theta \beta} \right)$ for $g(x) = x \log x + 1 - x $ as desired.
\end{proof}


In the remaining part of this section we will work under the following assumption which will not be explicitly stated in the hypotheses of theorems (we recall that we consider the CRW started at $ \crw_0 =(v,0)$ for some fixed vertex $v$).

\begin{assumption}[Main assumption]\label{lm:isoperimetry}
There exist $C,c > 0$ such that
\[
\Pp\left( \iota(\Zz_{T}) > n^{\alpha} \right) \leq C e^{-c \log^2 n}.
\]
\end{assumption}

We will often need the following lemma, which is a consequence of Markov's inequality.
\begin{lemma}\label{le:easy-lemma}
For any event $A$, any $\sigma$-field $\mathcal{G}$, and any $C,r \ge 0$, if $\p(A) \ge 1 - Ce^{-r}$, then
with probability at least $1- Ce^{-r/2}$,
\begin{displaymath}
  \p(A|\mathcal{G}) \ge 1 - e^{-r/2}.
\end{displaymath}
\end{lemma}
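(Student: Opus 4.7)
The plan is a one-line application of Markov's inequality to the conditional probability of the complement. I would introduce the random variable $Y = \p(A^c \mid \mathcal{G}) = 1 - \p(A \mid \mathcal{G})$, which is nonnegative and $\mathcal{G}$-measurable. By the tower property,
\[
\E Y = \E \p(A^c \mid \mathcal{G}) = \p(A^c) \leq C e^{-r}.
\]

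Next I would apply Markov's inequality to $Y$ at the threshold $e^{-r/2}$:
\[
\p\bigl( Y > e^{-r/2} \bigr) \leq \frac{\E Y}{e^{-r/2}} \leq \frac{C e^{-r}}{e^{-r/2}} = C e^{-r/2}.
\]
Rewriting this in terms of $\p(A \mid \mathcal{G}) = 1 - Y$ gives that with probability at least $1 - C e^{-r/2}$ we have $\p(A \mid \mathcal{G}) \geq 1 - e^{-r/2}$, which is the desired conclusion.

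There is no obstacle here: the lemma is a routine "Markov for conditional probabilities" bookkeeping statement, and the only thing to notice is that splitting the exponent $r = r/2 + r/2$ lets one move half of the exponential decay from the probability bound into the bound on the conditional probability, and half into the probability that the conditional probability is good. No assumptions earlier in the paper are needed beyond the definition of conditional probability.
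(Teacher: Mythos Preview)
Your proof is correct and matches the paper's approach exactly: the paper does not spell out a proof but simply states that the lemma ``is an easy consequence of Markov's inequality,'' which is precisely what you have written out.
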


\begin{corollary}\label{cor:isoperimetry-conditional} There exist $C,c>0$ such that for any stopping time $\eta < T$ we have
	\begin{equation*}
		\Pp(\iota(\Zz_T) \leq n^{\alpha}|\filF_\eta) \geq 1-e^{-c \log^2 n},
	\end{equation*}
	with probability at least $1 - \wsp$.
\end{corollary}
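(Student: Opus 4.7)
The plan is to apply Lemma \ref{le:easy-lemma} directly, with the event $A = \{\iota(\Zz_T) \leq n^\alpha\}$ and the $\sigma$-field $\mathcal{G} = \filF_\eta$. Since $\eta$ is a stopping time with respect to the filtration $(\filF_s)_{s\geq 0}$, $\filF_\eta$ is a well-defined $\sigma$-field, so Lemma \ref{le:easy-lemma} applies with no issue on the measurability side.

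By the Main Assumption \ref{lm:isoperimetry}, there exist constants $C_0, c_0 > 0$ (depending only on the parameters fixed above, not on $\eta$) such that
\[
\Pp(A) = \Pp(\iota(\Zz_T) \leq n^\alpha) \geq 1 - C_0 e^{-c_0 \log^2 n}.
\]
Setting $r = c_0 \log^2 n$ and applying Lemma \ref{le:easy-lemma} to $A$ with the $\sigma$-field $\mathcal{G} = \filF_\eta$, we obtain that with probability at least $1 - C_0 e^{-c_0 \log^2 n / 2}$,
\[
\Pp(A \mid \filF_\eta) \geq 1 - e^{-c_0 \log^2 n / 2}.
\]
Relabeling the constants (e.g.\ taking $c = c_0 / 2$ and $C = C_0$) yields the statement of the corollary. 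Note that the resulting $C, c$ do not depend on the choice of stopping time $\eta$, only on the constants supplied by the Main Assumption; this uniformity is what makes the corollary useful when later applied to various stopping times derived from the CRW.

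There is essentially no obstacle in this proof: it is a mechanical application of the Markov-style bound packaged as Lemma \ref{le:easy-lemma} to the unconditional tail estimate of Assumption \ref{lm:isoperimetry}. The only point worth checking is that we may freely choose $\mathcal{G}$ to be $\filF_\eta$ regardless of $\eta$, which is immediate from the fact that Lemma \ref{le:easy-lemma} is stated for an arbitrary $\sigma$-field.
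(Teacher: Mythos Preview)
Your proof is correct and is exactly the intended argument: the corollary is stated in the paper immediately after Lemma \ref{le:easy-lemma} and Assumption \ref{lm:isoperimetry} precisely because it follows by applying the former to the latter with $A = \{\iota(\Zz_T)\le n^\alpha\}$, $\mathcal{G} = \filF_\eta$, and $r = c_0\log^2 n$.
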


In the next lemma we show that $I_{T}$, the total number of internal jumps up to time $T$, is small with high probability.

\begin{lemma}[Internal jumps are rare]\label{lm:internal-hits}
There exist $C, c > 0$ such that
\[
\Pp \left(I_{T} \geq \frac{1}{2} n^{2\alpha} \right) \leq C e^{-c \log^2 n}.
\]
\end{lemma}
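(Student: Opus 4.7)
The strategy is to combine the intensity bound for internal jumps from Corollary \ref{cor:intensity} with the good isoperimetry guaranteed by Assumption \ref{lm:isoperimetry}, and then apply a counting-process concentration inequality.

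First, recall that by \eqref{eq:intensity-lambda} the intensity of the internal-jump process $I_t$ satisfies $\lambda_t \leq \frac{4\Theta\beta}{n} \iota(\Zz_t)$, and that $\iota(\Zz_t)$ is nondecreasing in $t$. Thus on the event $\{\iota(\Zz_T) \leq n^\alpha\}$ we have $\iota(\Zz_t) \leq n^\alpha$ for every $t \leq T$, and consequently the compensator satisfies
\begin{equation*}
\Lambda_T := \int_0^T \lambda_s\, ds \;\leq\; \frac{4\Theta\beta}{n} \cdot n^\alpha \cdot T \;\leq\; 4\Theta\beta \, n^{\alpha} \log^2 n,
\end{equation*}
where we used $T \leq n\log^2 n$.

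Next, I would apply the standard exponential tail bound for counting processes with bounded compensator (i.e., Lemma \ref{lm:concentration-upper-bound} from the appendix, used earlier in Lemma \ref{lm:fresh-vertices-are-not-often}): on the event $\{\Lambda_T \leq \ell\}$ with $\ell = 4\Theta\beta\, n^\alpha \log^2 n$, we have, for $k = \lceil n^\delta\rceil$,
\begin{equation*}
\p\bigl(\{I_T \geq k\} \cap \{\Lambda_T \leq \ell\}\bigr) \;\leq\; \exp\!\left(-k \log\!\frac{k}{e\ell}\right).
\end{equation*}
Since $\delta \geq 2\alpha$, for all sufficiently large $n$ the ratio $k/\ell$ is at least $n^\alpha/(4e\Theta\beta \log^2 n)$, so $\log(k/(e\ell)) \geq \tfrac{\alpha}{2}\log n$, and hence the exponent is at most $-\tfrac{\alpha}{2} n^\delta \log n$, which is far smaller than $-c\log^2 n$ for any fixed $c$.

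Finally, Assumption \ref{lm:isoperimetry} provides $\p(\iota(\Zz_T) > n^\alpha) \leq Ce^{-c\log^2 n}$, so a union bound gives
\begin{equation*}
\p(I_T \geq n^\delta) \;\leq\; \p(\iota(\Zz_T) > n^\alpha) + \exp\!\left(-\tfrac{\alpha}{2} n^\delta \log n\right) \;\leq\; C' e^{-c'\log^2 n},
\end{equation*}
as desired. There is no serious obstacle: the key quantitative point is simply that $\delta \geq 2\alpha$ creates a polynomial-in-$n$ gap between $n^\delta$ and the compensator size $n^\alpha \log^2 n$, which makes even the crudest Poisson-type tail estimate more than sufficient to beat the target rate $e^{-c\log^2 n}$.
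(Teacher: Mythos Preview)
Your proof is correct and follows essentially the same route as the paper: bound the intensity of $I_t$ via Corollary~\ref{cor:intensity} on the good-isoperimetry event, note that this makes the compensator $\Lambda_T$ only of order $n^\alpha\log^2 n \ll n^\delta$, and then apply the Poisson-type tail bound of Lemma~\ref{lm:concentration-upper-bound} together with Assumption~\ref{lm:isoperimetry}. The only cosmetic difference is that the paper phrases the argument via the stopped process $I_{t\wedge \tau_{iso}^{\delta/2}}$ rather than intersecting with the event $\{\Lambda_T\le \ell\}$, but the content is the same.
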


\begin{proof}We have
\[
\Pp \left(I_{T} \geq \frac{1}{2} n^{2\alpha} \right) \leq \Pp \left(I_{T \wedge \tau_{iso}^{\alpha}} \geq \frac{1}{2} n^{2\alpha} \right) + \Pp\left( \tau_{iso}^{\alpha} \leq T \right).
\]
By Assumption \ref{lm:isoperimetry} the second term is small enough. Now we bound, for $t < \tau_{iso}^{\alpha}$, the intensity $\lambda_t$ of $I_t$ by Lemma \ref{le:intensity}
\[
\lambda_{t}\leq  \frac{4 \Theta \beta }{n} \iota(\Zz_t) \leq \frac{4 \Theta \beta}{n} n^{\alpha} = 4 \Theta \beta n^{\alpha - 1}.
\]
Since $T \leq n \log^2 n$ and $\frac{1}{2} n^{2\alpha} > 4 \Theta \beta n^{\alpha-1}T$ for $n$ large enough, we can apply Lemma \ref{lm:concentration-upper-bound} to the stopped process $Y_{t} = I_{t \wedge \tau_{iso}^{\alpha}}$, obtaining for $n$ large enough the bound
\begin{align*}
	\Pp \left(I_{T \wedge \tau_{iso}^{\alpha}} \geq \frac{1}{2} n^{2\alpha} \right) &\leq \exp\left( - \frac{1}{2} n^{2\alpha} \log \left( \frac{\frac{1}{2} n^{2\alpha}}{4 \Theta\beta e n^{\alpha - 1} T} \right)  \right)  \leq \exp\left( - \frac{1}{2} n^{2\alpha} \log \left( \frac{n^{\alpha}}{8 \Theta \beta e \log^2 n} \right)  \right),
\end{align*}
and the right hand side is at most $e^{-c \log^2 n}$ for some $c > 0$.
\end{proof}

\paragraph{The potential.}\label{sec:potential}
While visiting a vertex the CRW does not necessarily exhaust its whole bar. \hfff{def:potential}{For $t>0$ by $\PP_t$ we denote the Lebesgue measure of unused parts of visited bars and call it the \emph{potential}}. Formally,
\begin{equation*}
	\PP_t := |\Zz_t| - U_t,
\end{equation*}
where the number of visited vertices $|\Zz_t|$ is equal to the total measure of visited bars and $U_t$ is the measure of their used parts, which is equal to the Lebesgue measure of the path $\crw_{[0,t)}$.

Notice that  until time $\tau_{c}$ the potential $\PP_{t}$ increases by $1$ each time the CRW visits a previously unexplored vertex and otherwise decreases linearly with $t$. This means that for $t \leq \tau_{c}$ the potential $\PP_{t}$ follows the equation (with $\PP_{0} = 1$)
\begin{equation}\label{eq:potential}
\PP_{t} = |\Zz_{t}| - t.
\end{equation}

In the following technical lemma we show that the potential of a path $\X_{[t,t+s]}$ is typically proportional to $s$ and cannot drop significantly before time $T$. Furthermore, with probability bounded away from $0$ it stays strictly positive. These properties will be useful in the forthcoming analysis of excursions and backtracks of the CRW.

\begin{lemma}[Controlling the change in potential]\label{lm:good-potential}
Fix $a < \Theta^{-1} \beta_{0} - 1/2$. There exist positive constants $C=C(a), c=c(a), q$ such that for $n$ large enough, any stopping time $\eta$ and any $s\geq 0$ the following hold with probability at least $ 1 - Ce^{-c\log^2 n}$:

\begin{equation}\label{eq:good_potential_eq1}
\id_{\{T \geq \eta + s\}}\cdot\Pp\left( \cbr{\PP_{\eta+s} - \PP_{\eta} \le as} \cap\cbr{ \tau_c \geq \eta + s} | \filF_{\eta} \right) \le  C (e^{-c s} +e^{-c\log^2 n}),
\end{equation}
\begin{equation}\label{eq:good_potential_eq2}
\Pp\left(\forall_{u\leq T-\eta} {\PP_{\eta+u} - \PP_{\eta} + 1 >0} | \filF_{\eta} \right) \ge q,
\end{equation}
\begin{equation}\label{eq:good_potential_eq3}
\Pp\left(\cbr{\exists_{s\leq u\leq T-\eta} {\PP_{\eta+u} - \PP_{\eta} + 1 \leq a s}} \cap\cbr{ \tau_c \geq \eta + s} | \filF_{\eta} \right) \le  C (e^{-c s} +e^{-c\log^2 n}).
\end{equation}
\end{lemma}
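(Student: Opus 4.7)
The starting observation is that, by \eqref{eq:potential}, on $\{\tau_c \geq \eta + u\}$ one has $\PP_{\eta+u} - \PP_\eta = N_u - u$, where $N_u := |\Zz_{\eta+u}| - |\Zz_\eta|$ is an $(\filF_{\eta+u})_u$-adapted counting process with intensity $\mu_{\eta+u}$, so that all three bounds reduce to controlling $N_u$ versus the linear benchmark $u$. The key input is Lemma \ref{lem:fresh-jumps-under-good-isoperimetry}, which provides an $\filF_\eta$-measurable event $\Omega_0$ with $\Pp(\Omega_0) \geq 1 - C e^{-c\log^2 n}$ on which $\Pp(\mathcal{E}\mid\filF_\eta) \geq 1 - e^{-c\log^2 n}$, where $\mathcal{E} := \{\mu_{\eta+t} \geq \ell_n \text{ for all } \eta \leq t \leq T\wedge\tau_c\}$ and $\ell_n := 2\Theta^{-1}\beta(n-n^\varepsilon)/(n-1)$. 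Crucially, the choice $a < \Theta^{-1}\beta_0 - 1/2$ forces $\ell_n > 1 + 2a$ for all $n$ large, so the process $N_u - u$ has strictly positive drift with a fixed deterministic margin over $a$. All three estimates will be proved conditionally on $\Omega_0$; the $e^{-c\log^2 n}$ term in \eqref{eq:good_potential_eq1} and \eqref{eq:good_potential_eq3} arises as $\Pp(\mathcal{E}^c \mid \filF_\eta)$.

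For \eqref{eq:good_potential_eq1} I would apply a Chernoff-type lower-tail inequality for counting processes whose intensity is bounded below (the lower-tail counterpart of Lemma \ref{lm:concentration-upper-bound} from Appendix \ref{sec:appendix_concentation}). On $\mathcal{E}$ the compensator of $N$ on $[0,s]$ exceeds $\ell_n s$, while $\{\PP_{\eta+s} - \PP_\eta \leq as\} \cap \{\tau_c \geq \eta+s\}$ is contained in $\{N_s \leq (1+a)s\}$; since $(1+a)s$ undershoots $\ell_n s$ by a fixed multiplicative factor, the Chernoff bound yields $\Pp(\{N_s \leq (1+a)s\} \cap \mathcal{E} \mid \filF_\eta) \leq e^{-cs}$. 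The same machinery delivers \eqref{eq:good_potential_eq3} after a monotonicity trick: since $N$ is nondecreasing, if $N_k \geq k + as$ at every integer $k \in [\lceil s \rceil, T - \eta]$, then for $u \in [k, k+1]$ we have $N_u \geq N_k \geq u + as - 1$. A union bound, combined with $\Pp(\{N_k \leq k + as - 1\} \cap \mathcal{E} \mid \filF_\eta) \leq e^{-ck}$ (whose slack is $\geq ak+1$), sums to $Ce^{-cs}$.

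For \eqref{eq:good_potential_eq2} I would couple $N$ with a Poisson process. On $\mathcal{E}$ we have $\mu_{\eta+u} \geq \ell_n$, so by a standard thinning construction (using an independent auxiliary Poisson measure) there exists a Poisson process $M$ of rate $\ell_n$ satisfying $M_u \leq N_u$ for all $u \leq (T-\eta) \wedge (\tau_c - \eta)$. Since $\PP$ remains constant after $\tau_c$, the event $\{M_u > u - 1 \text{ for all } u \geq 0\}$ implies the event in \eqref{eq:good_potential_eq2}, so it suffices to bound from below $\Pp(\inf_{u \geq 0}(M_u - u) > -1)$ uniformly in $n$. This is a classical fluctuation fact: $(M_u - u)_{u \geq 0}$ is a L\'evy process with positive drift $\ell_n - 1$, and by a standard ruin/first-entrance argument (writing the inter-arrival times as iid exponentials with mean $1/\ell_n < 1$ and analyzing the associated random walk with negative mean step) the ruin probability from level $1$ is strictly smaller than $1$, giving a bound $q' > 0$ that depends only on $\ell_n$, and hence only on $\beta_0$ and $\Theta$. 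Taking $q := q'/2$ absorbs the $e^{-c\log^2 n}$ loss from $\Omega_0^c$ for $n$ large.

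The main obstacle, and the reason the assumption $\beta > \Theta/2$ pervades the paper, is the positivity of the drift $\ell_n - 1$. The entire argument is brittle with respect to this gap: Part 2 would fail outright without $\ell_n > 1$, and the Chernoff slacks in Parts 1 and 3 are exactly the margin $\ell_n - 1 - 2a > 0$, which shrinks to zero as $a \to \Theta^{-1}\beta_0 - 1/2$. The remaining bookkeeping, including the two-level structure that separates the $\filF_\eta$-measurable event $\Omega_0$ from the dynamic event $\mathcal{E}$, is routine.
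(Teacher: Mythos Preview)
Your approach is essentially the same as the paper's: reduce via \eqref{eq:potential} to the counting process $N_u = |\Zz_{\eta+u}| - |\Zz_\eta|$, use the good-isoperimetry event (the paper goes through Corollary~\ref{cor:isoperimetry-conditional} and \eqref{eq:intensity-mu} directly rather than Lemma~\ref{lem:fresh-jumps-under-good-isoperimetry}, but this is the same content) to secure $\mu \geq \beta' := 1+2a$, and then apply Poisson-type concentration. The only differences are in the choice of off-the-shelf tools: for \eqref{eq:good_potential_eq2} the paper invokes a time-change lemma (Lemma~\ref{le:counting-process-half-line}) in place of your thinning-plus-ruin argument, and for \eqref{eq:good_potential_eq3} it uses a reversed-submartingale maximal inequality (Lemma~\ref{le:lower-bound-on-interval}) rather than your union bound over integer times---both pairs of arguments being equivalent in substance.
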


\begin{proof}
Let $\sigma = \inf\cbr{t > 0 \colon \iota(\Zz_t) > n^{\alpha}} = \tau_{iso}^{\alpha}$. Set $\tau = \tau_c \wedge \sigma$. Let
\begin{displaymath}
\mathcal{E} = \cbr{\PP_{\eta+s} - \PP_{\eta} \le as} \cap\cbr{ \tau_c \geq \eta + s}\cap\{T \ge \eta+s\}.
\end{displaymath}
By Corollary \ref{cor:isoperimetry-conditional} and \eqref{eq:potential}, with probability at least $1 - Ce^{-c\log^2 n}$,
\begin{align*}
\p(\mathcal E|\mathcal F_\eta) &\le \p(\mathcal{E} \cap  \cbr{\sigma >\eta + s} |\filF_\eta) + C e^{-c\log^2 n} \\
&=  \p(\{|\Zz_{\eta+s}| - |\Zz_\eta| \le (a+1)s\} \cap \cbr{\tau \geq \eta + s} |\mathcal{F}_\eta)
+ C e^{-c\log^2 n}.
\end{align*}

Recall that $\mu_t$ denotes the intensity of $|\Zz_t|$. By \eqref{eq:intensity-mu} of Corollary \ref{cor:intensity} and the definition of $\sigma$, for $t < \tau$
we have
\begin{align}\label{eq:estimate-on-intensity-mu}
\mu_{t} \geq \frac{2 \Theta^{-1} \beta }{n-1} (n-\iota(\Zz_t) ) \geq \frac{2 \Theta^{-1} \beta}{n-1} (n-n^{\alpha})\geq 1+2a =: \beta'>1,
\end{align}
for $n$ large enough. Thus, setting $\Lambda_t = \int_0^t \mu_u \, du$, we obtain
\begin{multline*}
\p(\{|\Zz_{\eta+s}| - |\Zz_\eta| \le (a+1)s\} \cap \cbr{\tau \geq \eta + s} |\mathcal{F}_\eta)\\
\le \p(\{|\Zz_{\eta+s}| - |\Zz_\eta| \le (a+1)s\} \cap \cbr{\Lambda_{\eta+s} - \Lambda_\eta \ge \beta's} |\mathcal{F}_\eta).
\end{multline*}

Recalling that $a = \frac{1}{2}(\beta' - 1) > 0$ and using Lemma \ref{lm:concentration-lower-bound-super} with $\ell = \beta's$, $\delta = \frac{\beta' - 1 - a}{\beta'}$ we obtain that the right-hand side above is almost surely bounded by
\begin{displaymath}
e^{- s \beta' \frac{1}{2} \left( \frac{\beta' - 1}{2 \beta'} \right)^2 } = e^{- c s}.
\end{displaymath}
for some $c > 0$. This proves \eqref{eq:good_potential_eq1}.

Let us now prove \eqref{eq:good_potential_eq2}. Denoting the event there by $\mathcal{E}$ and noticing that once the CRW closes into a cycle the potential stays constant, we estimate
\begin{align*}
	 \p(\mathcal E| \filF_\eta) &= \p(\{\forall_{u \in [0,T\wedge \tau_c - \eta]}  \PP_{\eta+u} - \PP_\eta+1\ge  0 \}|\mathcal{F}_\eta)\\
 & \ge \p(\{\forall_{u \in [0,T\wedge \tau_c - \eta]}  |\Zz_{\eta+u}| - |\Zz_\eta| \ge  u-1 \}  \cap  \{\sigma >T\} |\filF_\eta) \\
& \ge \p(\{\forall_{u \in [0,\tau - \eta]}  |\Zz_{\eta+u}| - |\Zz_\eta| \ge  u-1 \}  \cap  \{\sigma >T\} |\filF_\eta)\\
&\ge \p(\{\forall_{u \in [0,\tau - \eta]}  |\Zz_{\eta+u}| - |\Zz_\eta| \ge  u-1 \}|\filF_\eta) - \p(\sigma \le T|\mathcal{F}_\eta).
\end{align*}
By Corollary \ref{cor:isoperimetry-conditional} the second term is with probability at least $1 - \wsp$ bounded by $ C e^{-c \log^2 n}$ and thus negligible. As for the first term, note that thanks to \eqref{eq:estimate-on-intensity-mu} the intensity $\mu_t$ is bounded away from $1$ for $n$ large enough and $t \in [\eta,\tau]$, so Lemma \ref{le:counting-process-half-line} guarantees that the increase of the associated counting process, i.e., $|\Zz_{\eta+u}|$, is always at least $u-1$, with probability bounded from below by some $q > 0$ (observe also that Lemma \ref{le:counting-process-half-line} does not require any of the involved stopping times to dominate the other one). This concludes the proof of \eqref{eq:good_potential_eq2}.

Now we pass to the proof of \eqref{eq:good_potential_eq3}. Again denoting the event there by $\mathcal{E}$ and noticing that once the CRW closes into a cycle the potential stays constant, we estimate
\begin{align*}
\p(\mathcal E|\mathcal F_\eta) &\le \Pp\left(\cbr{\exists_{s\leq u\leq T\wedge \tau_c-\eta} {\PP_{\eta+u} - \PP_{\eta}  \leq a s-1}}  \cap  \cbr{\sigma >T} |\filF_\eta\right) + \p(\sigma \leq T|\filF_\eta) \\
&\le \Pp\left(\exists_{u \in [s,\tau-\eta] } |\Zz_{\eta+u}| - |\Zz_{\eta}| \le u + as |\filF_\eta\right) +  C e^{-c\log^2 n},
\end{align*}
where the second inequality holds with probability at least $1 - \wsp$ (for some $C,c > 0$), by Corollary \ref{cor:isoperimetry-conditional} applied to the second term. Similarly as in the proof of \eqref{eq:good_potential_eq2}, the intensity of $|\Zz_{\eta+u}|$ is bounded away from $1$, so
by Lemma \ref{le:lower-bound-on-interval} (applied with $\beta'$ instead of $\beta$), the first term is almost surely bounded by $e^{-cs}$ for some $c > 0$ depending only on $\beta'$.
\end{proof}

\end{subsection}

\begin{subsection}{Excursions}\label{sec:excursions}

In this section we introduce lemmas which in the final proof will help us show that the CRW with probability bounded away from zero may leave $L_0$ (or more generally the bad set) and move far away from it (in the metric of the graph $G_t$), thus making a quick return difficult.

The first lemma  is of technical nature and asserts that there is a non-negligible probability that the CRW will make a move within its current column. The second lemma will be crucial in proving that the CRW with high probability will not backtrack to $L_0$.

\begin{lemma}[Jumps within columns are quite likely]\label{lm:l-jumps}
Let $k \geq 0$ and let $D(v)$ denote the column containing vertex $v$. Let $X_{k} := \crw_{T_{k}}$. There exist $C, p > 0$ such that
\begin{align}\label{eq:l-jumps}
\Pp( \{ X_{k+1} \in D(X_{k}) \} \cap \{ T_{k+1} < T_k+1 \}| \mathcal{G}_{k}) \geq p \id_{\{ T_k \leq T \}}
\end{align}
with probability at least $1 - C e^{- c \log^2 n}$ for some $C, c > 0$.
\end{lemma}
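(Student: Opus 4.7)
The plan is: on $\{T_k > T\}$ the inequality is trivial, so I assume $T_k \le T$. Let $E := \{X_{k+1} \in D(X_k)\} \cap \{T_{k+1} < T_k + 1\}$. The event $\mathcal{I} := \{\iota(\Zz_{T_k}) \le n^\alpha\}$ is $\mathcal{G}_k$-measurable, and since $\iota(\Zz_\cdot)$ is nondecreasing, Assumption \ref{lm:isoperimetry} yields $\Pp(\mathcal{I}^c \cap \{T_k \le T\}) \le Ce^{-c\log^2 n}$. It therefore suffices to exhibit a universal $p > 0$ with $\Pp(E \mid \mathcal{G}_k) \ge p$ on $\mathcal{I} \cap \{T_k \le T\}$.

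Let $\sigma$ denote the first jump time of the CRW after $T_k$; between $T_k$ and $\sigma$ the walk sits on the (previously unexplored) bar of $X_k$, and $\Zz_t = \Zz_{T_k}$, $\X_t = X_k$ for $t \in [T_k,\sigma)$. I will apply Lemma \ref{le:intensity} twice, working conditionally on $\mathcal{G}_k$ (so that $X_k$ is known and the process $Q_t := D(X_k) \setminus \Zz_t$ is adapted for $t \ge T_k$). Every $v \in Q_{T_k}$ is a neighbour of $X_k$ and unvisited at every height, so on $\mathcal{I}$ and for $t \in [T_k,\sigma)$ one has
\[
\lambda^D_t \ge \Theta^{-1}\frac{\beta_0}{n-1}|A_t\cap Q_t| \ge \Theta^{-1}\frac{\beta_0}{n-1}(n - n^\alpha) \ge c_1,
\]
for some $c_1 = c_1(\beta_0,\theta) > 0$ and $n$ large. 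Taking instead $Q_t = V$, the bound $|A_t| \le 2(n-1)$ (the degree of $X_k$ in $H$) furnishes a deterministic upper bound $\lambda^{\mathrm{tot}}_t \le 2\Theta\beta_1 =: C$ on the total jump intensity for $t < \sigma$.

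To conclude I will run a competing-hazards computation. Let $N^D$ be the counting process with intensity $\lambda^D$, and set $\tau := T_{k+1}\wedge(T_k+1)$. A case analysis shows $N^D_\tau \in \{0,1\}$ and $\{N^D_\tau=1\}=E$ up to a null set, because the only possible fresh jump in $[T_k,\tau]$ occurs at $T_{k+1}$ and contributes to $N^D$ precisely when $X_{k+1} \in D(X_k)$. Optional stopping on $N^D_t - \int_{T_k}^{t}\lambda^D_s\,ds$, combined with $\tau\wedge\sigma - T_k = \min(\sigma - T_k, 1)$ and $\lambda^D_t \ge c_1$ on $[T_k,\sigma)$, then yields
\[
\Pp(E\mid\mathcal{G}_k) = \E[N^D_\tau\mid\mathcal{G}_k] \ge c_1\,\E[\min(\sigma-T_k, 1)\mid\mathcal{G}_k] \ge c_1\cdot\frac{1-e^{-C}}{C} =: p,
\]
the last step using the survival bound $\Pp(\sigma-T_k>s\mid\mathcal{G}_k)\ge e^{-Cs}$ derived from $\lambda^{\mathrm{tot}}\le C$. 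The slightly delicate point will be choosing $\tau = T_{k+1}\wedge(T_k+1)$ rather than $\sigma\wedge(T_k+1)$, so that the matching $\{N^D_\tau=1\}=E$ survives internal jumps in $[T_k,T_{k+1})$; once this is set up, the rest is a routine intensity computation, with the isoperimetric hypothesis $\iota(\Zz_{T_k})\le n^\alpha$ entering only through the lower bound on $\lambda^D$.
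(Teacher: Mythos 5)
Your proof is correct, and it is structurally a bit different from the paper's. The paper introduces both processes $D_t$ and $L_t$ (fresh jumps to new vertices in the current column, resp. row), defines $\rho$ and $\gamma$ as the first such jump times after $T_k$, and shows $\p(\rho < \gamma \wedge (T_k+1)\mid\mathcal{G}_k) \ge p$ via a somewhat intricate sequence of applications of Doob's theorem, exploiting a symmetry between the row and column intensities to extract the factor $\tfrac{1}{2}$. You dispense with the row process entirely: you lower-bound the compensator of the column process $N^D$ by $c_1$ times the sojourn time at $X_k$ and then lower-bound that sojourn time via the bound $\lambda^{\mathrm{tot}}\le 2\Theta\beta_1$ and the time-change Lemma \ref{lm:concentration-upper-bound}. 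This is a genuinely simpler route to the same constant-order bound, and — importantly — it targets the event $E=\{X_{k+1}\in D(X_k)\}\cap\{T_{k+1}<T_k+1\}$ directly rather than via the proxy event $\{\rho<\gamma\wedge(T_k+1)\}$.

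Two small points you should tighten when writing this up. First, your justification ``the only possible fresh jump in $[T_k,\tau]$ occurs at $T_{k+1}$'' is not literally true: internal jumps (fresh jumps landing in $\Zz_{t-}$) may occur in $(T_k,T_{k+1})$. The conclusion $\{N^D_\tau=1\}=E$ is nevertheless correct, but the right reason is that $Q_t=D(X_k)\setminus\Zz_t$ is disjoint from $\Zz_t$, so internal jumps never land in $Q_{t-}$ and thus do not contribute to $N^D$; only the jump to the new vertex at $T_{k+1}$ can increment $N^D$. Second, to invoke optional sampling you should first localize: $\tau$ is not almost surely bounded, but on $\{T_k\le T\}\in\mathcal{G}_k$ one has $\tau \le T+1 \le n\log^2 n+1$, so you can apply optional stopping to the bounded stopping times $T_k\wedge(T+1)$ and $\tau\wedge(T+1)$ and then restrict to $\{T_k\le T\}$. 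With these two clarifications your argument is complete.
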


\begin{proof}
Let $D_t$ (resp. $L_t$) denote the number of jumps of the CRW to a previously unexplored vertex in the same column (resp. row). Then up to time $\tau_c$ by Lemma \ref{le:intensity} their intensities (denote them by $\delta_t,\nu_t$ resp.) satisfy $\delta_t \Theta^{-1}  \le \frac{\beta}{n-1}|A_t\cap D(\crw_t)| \le \delta_t\Theta $ and $\nu_t\Theta^{-1} \leq \frac{\beta}{n-1} |A_t\cap L(\crw_t)|\le \nu_t\Theta$. In particular, on the event $\{t < \tau_{iso}^\alpha\wedge \tau_c\}$ we have
\begin{align}\label{eq:same-column-intensity-1}
  m := \Theta^{-1} \frac{\beta}{n-1} (n - n^\alpha) \le \delta_t,\nu_t.
\end{align}
We also have trivially
\begin{align}\label{eq:same-column-intensity-2}
  \delta_t,\nu_t \le \Theta\beta.
\end{align}

Consider the event $A = \{\tau_{iso}^\alpha > T_k\}$ and two stopping times $\rho = \inf\{t>T_k\colon D_t > D_{T_k}\}$ and $\gamma = \inf\{t > T_k\colon L_t > L_{T_k}\}$. Define also $\mathcal{E} = \{\rho < \gamma \wedge (T_k+1)\}$. The lemma will follow once we prove that almost surely
\begin{align}\label{eq:same-column}
\p(\mathcal{E}|\mathcal{G}_k) \ge p\Ind{A}.
\end{align}

Indeed, the event $\{\Ind{A} < \Ind{\{T_k \le T\}}\}$ is contained in $\{\tau_{iso}^\alpha \le  T\}$ which by Assumption \ref{lm:isoperimetry} has probability at most $C e^{- c\log^2 n}$.

Let $B$ be any element of $\mathcal{G}_k$ and denote $P := \p(\mathcal{E}\cap A\cap B)$. Note that on $A$ we have $T_k < \infty$. Observe also that with probability one $\rho \neq \gamma\wedge (T_k+1)$ and so
\begin{displaymath}
P = \E (D_{\rho\wedge \gamma\wedge (T_k+1)} - D_{T_k})\Ind{A\cap B} = \E \int_{T_k}^{\rho\wedge \gamma\wedge(T_k+1)} \delta_s \Ind{A\cap B} \, ds,
\end{displaymath}
where we used Doob's theorem and the fact that $A, B \in \mathcal{G}_k$.

Since between $T_k$ and $\rho \wedge \gamma\wedge (T_k+1)$ the quantity $\iota(\Zz_s)$ does not change and the CRW does not close into a cycle, we can use \eqref{eq:same-column-intensity-1} to estimate
\begin{align}\label{eq:bound-on-P}
  P \ge &\E (\rho\wedge \gamma\wedge(T_k+1) - T_k) m \Ind{A\cap B} \nonumber\\
  =& m \E (\rho - T_k)\Ind{A\cap B\cap \mathcal{E}} + m\E (\gamma\wedge (T_k+1) - T_k)\Ind{A\cap B} \nonumber\\
  &- m \E(\gamma\wedge (T_k+1) - T_k)\Ind{A\cap B\cap \mathcal{E}}\nonumber\\
  & = m\E (\gamma\wedge (T_k+1) - T_k)\Ind{A\cap B} - m\E (\gamma\wedge (T_k+1) - \rho)\Ind{A\cap B\cap \mathcal{E}}.
\end{align}
Note that
\begin{align*}
	\E (\gamma\wedge (T_k+1) - \rho)\Ind{A\cap B\cap \mathcal{E}} & \le \frac{1}{m} \E \int_\rho^{\gamma\wedge(T_k+1)} \nu_s \, ds \Ind{A\cap B\cap \mathcal{E}} \\
   & = \frac{1}{m} \E (L_{\gamma\wedge(T_k+1)} - L_\rho) \Ind{A\cap B\cap \mathcal{E}} = \frac{P}{m},
\end{align*}
where in the first equality we used Doob's theorem (note that $A\cap B\cap \mathcal{E} \in \mathcal{F}_{\rho}$) and in the second one the observation that on $\mathcal{E}$ we have $L_{\gamma\wedge(T_k+1)} - L_\rho \le 1$.

Combining the above inequality with \eqref{eq:bound-on-P}, we get
\begin{align}\label{eq:2nd-bound-on-P}
  P \ge \frac{m}{2}\E (\gamma\wedge (T_k+1) - T_k)\Ind{A\cap B}.
\end{align}
Integrating by parts we get
\begin{align*}
  P &\ge \frac{m}{2} \int_0^1 \E \Ind{A\cap B} \p( \gamma - T_k > s|\mathcal{G}_k) \, ds \\
  &= \frac{m}{2} \int_0^1 \E \Ind{A\cap B} \p(\{L_{T_k+s} - L_{T_k} = 0\}|\mathcal{G}_k) \, ds.
\end{align*}
Using \eqref{eq:same-column-intensity-2} and the concentration estimate from Lemma \ref{lm:concentration-upper-bound} with $r=0$, we get that
\[
\p(\{L_{T_k+s} - L_{T_k} = 0\}|\mathcal{G}_k) \ge \p(X = 0),
\]
where $X$ is a Poisson variable with parameter $\Theta \beta s$. Thus
\begin{displaymath}
  P \ge \p(A\cap B) \frac{m}{2}\int_0^1 e^{-\Theta \beta s} \, ds.
\end{displaymath}
Setting $p = \frac{m}{2}\int_0^1 e^{-\Theta \beta s} \, ds $ and using the definition of $P$ together with the fact that $A\in \mathcal{G}_k$, we get
\begin{displaymath}
  \E \left( \p(\mathcal{E}|\mathcal{G}_k)\Ind{A}\Ind{B} \right) = \p(\mathcal{E}\cap A\cap B) \ge \E p \Ind{A} \Ind{B}
\end{displaymath}
for all $B\in \mathcal{G}_k$, which implies \eqref{eq:same-column} and concludes the proof.
\end{proof}
\paragraph{Excursions.}
Let $t$ be a stopping time. \hfff{def:excursion}{We will say that the CRW makes an \emph{excursion} of length $k$ starting at time $t$}, the event which we denote by $\EE_{t}(k)$, if there exists $s > 0$ such that the following conditions hold:
\begin{itemize}
\item $\Zz_{[t, t+s]} \cap (\Zz_{t-} \cup L_{0}) = \emptyset$,
\item $\X_{t+s}$ is at distance $k$ from $\X_{t}$ in $G_{t+s}$ or $t+s = T$.
\end{itemize}
Note that in particular if $t+s < T$ then the CRW has to discover at least $k$ previously unexplored vertices and it is possible for $\X_{[t, t+s]}$ to intersect itself. The condition $t+s = T$ is included as we are interested in the CRW only up to time $T$.

\begin{lemma}[Excursions are quite likely]\label{lm:excursions}
There exists $C,c,q >0$ such that for any $l \geq 1$ we have
\begin{align}\label{eq:excursion-T-k}
\Pp\left( \EE_{T_{l}}(n^\varepsilon) | \mathcal{G}_{l} \right) \geq q\cdot \id_{\cbr{\crw_{T_l}\notin L_0}\cap \cbr{T_l \leq T}},
\end{align}
with probability at least $1-\wsp$.
\end{lemma}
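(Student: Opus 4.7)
My plan is to combine a \emph{good-potential} event with high-probability absence of \emph{bad jumps} so that the CRW explores a tree rooted at $\crw_{T_l}$ and reaches large depth; since, under these conditions, the new part of $G_{T_l+u}$ is a tree, this depth coincides with the graph distance in $G_{T_l+u}$.

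First, I apply Lemma \ref{lm:good-potential}, equation \eqref{eq:good_potential_eq2}, with $\eta=T_l$ to obtain a potential event $\mathcal{A}$ of conditional probability at least some absolute $q_{0}>0$, on which $|\Zz_{T_l+u}|-|\Zz_{T_l}|>u-1$ for every $u\leq T-T_l$. Next, I fix a large constant $K$ (to be chosen below) and set $u^{*}:=Kn^{\varepsilon}\wedge(T-T_l)$. Applying Lemma \ref{le:intensity} with $Q_t=\Zz_{T_l-}$, and then with $Q_t=L_{0}\setminus\Zz_{T_l-}$, together with Assumption \ref{lm:isoperimetry}, the intensity of any fresh jump landing in $\Zz_{T_l-}\cup L_{0}$ is at most $Cn^{\alpha-1}$, since at most $2\iota(\Zz_{T_l-})\leq 2n^{\alpha}$ old vertices and at most one vertex of $L_{0}$ lie in the current row or column (using also that $\crw_{T_l}\notin L_{0}$). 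Integrating over $[T_l,T_l+u^{*}]$ and invoking Lemma \ref{lm:concentration-upper-bound} yields an event $\mathcal{B}$ of probability at least $1-Cn^{\alpha+\varepsilon-1}=1-o(1)$ on which no such fresh jump occurs; an analogous argument with $Q_t=\Zz_t\setminus\Zz_{T_l-}$ gives $\mathcal{B}'$ of probability $1-o(1)$ on which no internal jump of any kind takes place in $[T_l,T_l+u^{*}]$.

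On $\mathcal{A}\cap\mathcal{B}\cap\mathcal{B}'$ together with the high-probability isoperimetry event, during $[T_l,T_l+u^{*}]$ the CRW performs only fresh jumps to fresh vertices outside $L_0$ or backtracks along used bridges, and in particular avoids $\Zz_{T_l-}\cup L_{0}$ (the only potential entry to $\Zz_{T_l-}$ is the unique bridge the CRW used at $T_l$, and the absence of internal jumps prevents new bridges between the new region and $\Zz_{T_l-}$ from appearing). Thus the portion of $G_{T_l+u}$ discovered after $T_l$ is a tree rooted at $\crw_{T_l}$, and the distance in $G_{T_l+u}$ from $\crw_{T_l}$ to $\crw_{T_l+u}$ equals the depth of $\crw_{T_l+u}$ in this tree. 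If $T-T_l\leq Kn^{\varepsilon}$, choosing $s=T-T_l$ immediately verifies $\EE_{T_l}(n^{\varepsilon})$ via the alternative $t+s=T$. Otherwise, I must show that the tree depth reaches $n^{\varepsilon}$ by time $Kn^{\varepsilon}$ with conditional probability bounded away from zero.

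For this remaining step, I analyze the depth process $D(u)$ of the CRW. On each first visit to a freshly discovered bar, the CRW either finds a new bridge on that bar before wrapping around (a \emph{birth}, $D\to D+1$) or completes a full wrap-around and backtracks to the parent (a \emph{death}, $D\to D-1$). Using the intensity estimates of Lemma \ref{le:intensity}, the assumption $\beta>\Theta/2$, and the fact that the $\theta$-weight distorts intensities only by the factors $\Theta^{\pm1}$ already absorbed in Lemma \ref{le:intensity}, the birth probability at each such first-visit event exceeds some absolute $p_{0}>1/2$, uniformly in the past. Therefore $D$ stochastically dominates a biased random walk with positive drift on $\mathbb{Z}_{\geq 0}$, and a standard large-deviation argument---in the same spirit as Lemmas \ref{lm:fresh-vertices-are-not-often}, \ref{lm:l-jumps} and the concentration inequalities underlying Lemma \ref{lm:good-potential}---yields a constant $K$ and a constant $q_{1}>0$ such that $D$ exceeds $n^{\varepsilon}$ by time $Kn^{\varepsilon}$ with conditional probability at least $q_{1}$. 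Setting $q=q_{0}q_{1}/2$ and absorbing the $o(1)$ losses completes the argument; the required $1-\wsp$ outer probability comes from Assumption \ref{lm:isoperimetry} via Corollary \ref{cor:isoperimetry-conditional}.

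The hard part is this last step. Because the $\theta$-weighted CRW is non-Markovian, intensities depend on the entire past configuration, so the coupling of $D$ with a biased random walk has to be built purely from the $\Theta^{\pm1}$ intensity bounds of Lemma \ref{le:intensity}. The assumption $\beta>\Theta/2$ is exactly what guarantees a strictly positive drift after absorbing the worst-case $\Theta$-distortion, matching its role throughout Section \ref{sec:isoperimetry}; without it the coupled random walk could have nonpositive drift and the argument would collapse.
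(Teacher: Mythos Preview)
Your overall scaffold matches the paper's: restrict to an interval $[T_l,T_l+s]$ with $s\asymp n^\varepsilon$, rule out internal jumps and direct jumps to $L_0$ with probability $1-o(1)$, use \eqref{eq:good_potential_eq2} for positive conditional probability that the CRW does not backtrack out of the fresh tree, and conclude that the exploration after $T_l$ is a tree disjoint from $\Zz_{T_l-}\cup L_0$. The divergence is in the last step, where you introduce a biased random walk for the depth $D$, and this step has a real gap.

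On the tree, each fresh jump increases $D$ by one and each backtrack decreases $D$ by one, so $D(u)=f(u)-b(u)$. Since a backtrack only occurs when a full bar (length $1$) has been exhausted and the CRW moves at unit speed, $b(u)\le u$; hence
\[
D(u)\ \ge\ \big(|\Zz_{T_l+u}|-|\Zz_{T_l}|\big)-u\ =\ \PP_{T_l+u}-\PP_{T_l}.
\]
This deterministic inequality is the key observation you are missing. Once you have it, \eqref{eq:good_potential_eq1} of Lemma~\ref{lm:good-potential} (which you already invoke for \eqref{eq:good_potential_eq2}) gives $\PP_{T_l+s}-\PP_{T_l}\ge as=n^\varepsilon$ with conditional probability $1-o(1)$ for $s=n^\varepsilon/a$. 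Intersecting this $1-o(1)$ event with the positive-probability event from \eqref{eq:good_potential_eq2} and with the $1-o(1)$ no-internal-jump and no-direct-jump events yields the excursion with probability at least $q-o(1)$. No separate random-walk analysis is needed; the potential \emph{is} the random walk you want.

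Your biased-random-walk model, by contrast, is not correct as stated. You analyze births and deaths only ``on each first visit to a freshly discovered bar'', but after a death the CRW sits on the parent's \emph{partially used} bar, whose remaining length may be arbitrarily small; the birth probability there need not exceed $1/2$, so $D$ does not stochastically dominate a simple biased walk step by step. A correct coupling would be to a supercritical Galton--Watson exploration, but carrying that out under the $\Theta^{\pm 1}$ intensity bounds essentially reproduces the potential estimate \eqref{eq:good_potential_eq1}. There is also a bookkeeping issue: you combine $\mathcal{A}$ (probability $\ge q_0$) with a depth event of probability $\ge q_1$ via the product $q_0q_1/2$, but these events are not independent, and conditioning on $\mathcal{A}$ destroys the intensity bounds of Lemma~\ref{le:intensity}. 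The paper avoids this because the depth event comes for free (probability $1-o(1)$), so one merely subtracts $o(1)$ from $q_0$.
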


\begin{proof}

Throughout the proof we assume that $\cbr{\crw_{T_l}\notin L_0}\cap \cbr{T_l \leq T}$ holds.

Let $\tau = \inf \{ u \in (T_{l}, \infty) \, \colon \, \mbox{$\crw_{u}$ makes an internal jump}\}$ and $\sigma = \inf \cbr{u \in (T_{l}, \infty) \colon \crw_u \text{ makes a direct jump to } L_0}$. For $k,s \in \N$ let $s' = s \wedge (T - T_{l})$ and
\begin{equation}\label{eq:tree_exploration}
	\tilde{\mathcal{E}}(k,s)  := \cbr{\tau > T_l+s} \cap \cbr{\forall_{u\leq s'} \PP_{T_l+u} -\PP_{T_l-}> 0}\cap (\cbr{\PP_{T_l+s} -\PP_{T_l}\geq k}\cup\{T_l+s>T\})\cap \cbr{\sigma > T_l+s}.
\end{equation}

The lemma will follow once we prove $\tilde{\mathcal{E}}(k,s) \subset \mathcal{E}_{T_l}(k)$ and that with high probability $\Pp(\tilde{\mathcal{E}}(n^\varepsilon,s)|\mathcal{G}_{l} )\geq q$ for some $s > 0$ and some constant $q > 0$.

The first two conditions of \eqref{eq:tree_exploration} imply that the subgraph $G^l_{s'}$ of $G_{T_l+s'}$ induced by the exploration $\crw_{[T_l, T_{l} + s']}$ is a tree and the CRW does not revisit $\Zz_{T_l-}$, i.e., $\Zz_{[T_l, T_l+s']}\cap \Zz_{T_l-} = \emptyset$. Indeed, as there are no internal jumps the CRW can revisit $\Zz_{T_l-}$ only by backtracking the bridge used at time $T_l$. This happens only when all vertices $\Zz_{[T_l, T_l+u]}$ are dead at some time $u\leq s'$ (we again use the fact that there are no internal jumps). This is equivalent to $\PP_{T_l+u} -\PP_{T_l-}=0$ which is impossible. Once we know that $\Zz_{[T_l, T_l+s']}\cap \Zz_{T_l-} = \emptyset$ and there are no internal jumps during $[T_l, T_l+s']$, the only possibility left is that the exploration is a tree. As a corollary we observe that the first two conditions of \eqref{eq:tree_exploration} imply that the CRW does not close into a cycle, i.e., $\tau_c\geq T_l+s'$.

If $T_l + s > T$, then the second condition from the definition of an excursion is trivially satisfied. Assume therefore that $T_l + s \leq T$, so that $s' = s$ in the argument above. As $G^l_{s}$ is a tree, one sees that the distance $d_{G^l_{s}}$ of the vertex $\crw_{T_l+s}$ from $\crw_{T_l}$ is $f-b$, where $f$ (resp. $b$) is the number of fresh jumps (resp. backtracks) during time $(T_l, T_l+s]$.

Notice that $f=|\Zz_{(T_l, T_l+s]}| - 1 = |\Zz_{T_l+s}| - |\Zz_{T_l}|$, as there are no internal jumps.

Moreover, we have $b\leq s$, since a backtrack occurs only once a whole bar has been exhausted (i.e., the corresponding vertex has become dead) and the CRW moves at unit speed. Altogether this implies
\begin{equation*}
	d_{G^l_{s}}(\crw_{T_l+s}, \crw_{T_l})  \geq  |\Zz_{T_l+s}| - |\Zz_{T_l}| - s  = \PP_{T_l+s} -\PP_{T_l}.
\end{equation*}
To conclude we notice that $d_{G_{T_l+s}}(\crw_{T_l+s}, \crw_{T_l}) = d_{G^l_{s}}(\crw_{T_l+s}, \crw_{T_l})$ and by the third condition of \eqref{eq:tree_exploration} the assumption $T_l+s \le T$ implies that the right hand side above is at least $k$.

The final condition of \eqref{eq:tree_exploration} together with $\cbr{\crw_{T_l}\notin L_0}$ ensures that $\Zz_{[T_l, T_l+s]}\cap L_0 = \emptyset$.

These arguments proved that $\tilde{\mathcal{E}}(k,s) \subset \mathcal{E}_{T_l}(k)$, with the excursion taking total time $s'$. Now we are left with showing that with probability at least $1 - \wsp$ we have $\Pp(\tilde{\mathcal{E}}(n^\varepsilon,s)|\filG_l)\geq q$ for some $s > 0$ and $q>0$. For this part we fix $k=n^\varepsilon$ and $s = k/a$, for some $a\in(0,\Theta^{-1} \beta - 1/2)$.  We recall that the first two conditions of \eqref{eq:tree_exploration} imply $\tau_c\geq T_l+s'$. Thus we have
\begin{align*}
	\p(\tilde{\mathcal{E}}(k, s)|\filG_l) &\geq \p(\cbr{\tau > T_l+s} \cap \cbr{\forall_{u\leq s'} \PP_{T_l+u} -\PP_{T_l-}> 0}\cap \cbr{\sigma >T_l+s}|\filG_l) \\
	&\qquad \qquad \qquad- \p(\cbr{\PP_{T_l+s} -\PP_{T_l} < k}\cap\{T_l+s\le T\}\cap \cbr{\tau_c\geq T_l+s'}|\filG_l)\\
	&\geq \p(\forall_{u\leq s'} \PP_{T_l+u} -\PP_{T_l-}> 0|\filG_l) - \p({\tau \leq T_l+s}|\filG_l) - \p({\sigma \leq T_l+s}|\filG_l) \\
	&\qquad \qquad \qquad- \p(\cbr{\PP_{T_l+s} -\PP_{T_l} < k}\cap\{T_l+s\le T\}\cap \cbr{\tau_c\geq T_l+s}|\filG_l),
\end{align*}
where on the event $\{T_l+s\le T\}$ we replaced $s'$ with $s$.

By Lemma \ref{lm:good-potential} with probability at least $1 - \wsp$ (for some $C,c > 0$) we have $ \p(\forall_{u\leq s'} \PP_{T_l+u} -\PP_{T_l-}> 0|\filG_l) \geq q$ for some $q>0$ (notice that $\PP_{T_l-} + 1 = \PP_{T_l}$). Thus to conclude the proof it is enough to show that the other terms are $o(1)$. Fix $\delta > 2 \Theta \beta$. We have
\begin{equation*}
	\p(\tau \leq T_{l}+s|\filG_l) \leq \p(\cbr{\tau \leq T_{l}+s}\cap \cbr{T_{l+\lceil \delta s\rceil} \geq T_{l}+s}|\filG_l) + \p({T_{l+\lceil \delta s\rceil} < T_{l} + s}|\filG_l).
\end{equation*}
The second term is $o(1)$ by Lemma \ref{lm:fresh-vertices-are-not-often}. For the first term we write
\begin{equation*}
	\p(\cbr{\tau \leq T_{l}+s}\cap \cbr{T_{l+\lceil \delta s\rceil}>T_{l}+s}|\filG_l)  = \p(\cbr{N_s \geq 1}\cap \cbr{T_{l+\lceil \delta s\rceil}>T_{l}+s}|\filG_l),
\end{equation*}
where $N_u = I_{T_l+u} - I_{T_l}$ is the number of internal jumps during time $[T_l, T_l+u]$. Recall that by Corollary \ref{cor:isoperimetry-conditional} the event $\cbr{\iota(\Zz_{T_l})\leq n^{\varepsilon}}$ has high conditional probability. On this event for $u\leq s\wedge(T_{l+\lceil \delta s\rceil}-T_l)$  we bound the intensity $\lambda_u$ of $N_u$ by \eqref{eq:intensity-lambda} from Lemma~\ref{le:intensity}
\begin{equation*}
	\lambda_u \leq \frac{4 \Theta \beta}{n}\iota(\Zz_{T_l+u}) \leq \frac{4 \Theta \beta}{n}(\iota(\Zz_{T_l}) + \lceil \delta s\rceil) \leq \frac{4 \Theta \beta}{n}(n^\varepsilon + \delta n^\varepsilon/a+1) =: \bar \lambda.
\end{equation*}
Now we have $\bar\lambda\cdot s \leq \bar\lambda\cdot n^\varepsilon/a = C n^{2\varepsilon-1}$, for some $C>0$. Thus by Markov's inequality, Doob's theorem and monotonicity of $N$,
\begin{align*}
\p(\cbr{\tau \leq T_{l}+s}\cap \cbr{T_{l+\lceil \delta s\rceil}>T_{l}+s}|\filG_l)\ind{\iota(\Zz_{T_l})\leq n^{\varepsilon}} &\le \E(N_{s\wedge(T_{l+\lceil\delta s\rceil} - T_l)}\ind{\iota(\Zz_{T_l})\leq n^{\varepsilon}}  |\mathcal{G}_l) \\
&
\le \E (\bar{\lambda}s \ind{\iota(\Zz_{T_l})\leq n^{\varepsilon}}|\mathcal{G}_l) \leq C n ^{2 \varepsilon - 1} = o(1),
\end{align*}
 and consequently with probability at least $1 - \wsp$ (for some $C,c > 0$)
\begin{equation*}
	\p(\tau \leq T_{l}+s|\filG_l) = o(1).
\end{equation*}

Analogously one can show that
\begin{equation*}
	\p(\sigma \leq T_{l} + s|\filG_l) = o(1).
\end{equation*}
Indeed, by Lemma~\ref{le:intensity} we can easily estimate the intensity of direct jumps by $\lambda_u^d\leq \bar \lambda$, as $|A_u \cap L_0|\leq 1$.

Finally, by the choice of $s$ above and \eqref{eq:good_potential_eq1} in Lemma \ref{lm:good-potential} we have
\begin{equation}
	\p(\cbr{\PP_{T_l+s} -\PP_{T_l}< k}\cap\{T_l+s\le T\}\cap \cbr{\tau_c\geq T_l+s'}|\filG_l) = o(1),
\end{equation}
thus the proof is finished.

\end{proof}

As in the forthcoming proofs we will need to apply Lemma \ref{lm:l-jumps} and Lemma \ref{lm:excursions} for random $k$, we state the following easy corollary

\begin{corollary}\label{cor:random-stopping-time}
Let $\tau$ be a stopping time such that with probability one $\tau \in \{T_i\colon i \le n^2\}$ and let $\tau'$ be such that $\tau' = T_{i+1}$ on the event $\{\tau = T_{i}$\}. Then with probability at least $1 - Ce^{-c\log^2 n}$,
\begin{align}\label{eq:l-jumps-random}
\Pp( \{ \crw_{\tau'} \in D(\crw_{\tau}) \} \cap \{ \tau' < \tau + 1 \}| \mathcal{F}_{\tau}) \geq p \id_{\{ \tau \leq T \}}
\end{align}
and
\begin{align}\label{eq:excursion-tau}
\Pp\left( \EE_{\tau}(n^\varepsilon) | \mathcal{F}_{\tau} \right) \geq q\cdot \id_{\cbr{\crw_{\tau}\notin L_0}\cap \cbr{\tau \leq T}}.
\end{align}
\end{corollary}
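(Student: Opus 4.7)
The corollary is essentially a union-bound argument that lifts the deterministic-index conclusions of Lemmas \ref{lm:l-jumps} and \ref{lm:excursions} to a random stopping time taking at most $n^2$ values. For each fixed $k \le n^2$, let $E_k$ denote the (high-probability) event on which the conditional bound from Lemma \ref{lm:l-jumps} at index $k$ holds, and let $F_k$ denote the analogous event coming from Lemma \ref{lm:excursions}; by assumption each satisfies $\Pp(E_k), \Pp(F_k) \ge 1 - Ce^{-c\log^2 n}$. Setting $E^* := \bigcap_{k \le n^2}(E_k \cap F_k)$ and taking the union bound,
\begin{equation*}
\Pp(E^*) \ge 1 - 2 n^2 C e^{-c\log^2 n} \ge 1 - C' e^{-c'\log^2 n}
\end{equation*}
for some $C',c' > 0$ and all sufficiently large $n$, since the polynomial factor $n^2 = e^{2\log n}$ is harmlessly absorbed by $e^{-c\log^2 n /2}$.

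Next I would transfer the conditional statement from $\mathcal{G}_k$ to $\mathcal{F}_\tau$ using a standard stopping-time identity. Because $\tau$ almost surely lies in the countable set $\{T_i : i \le n^2\}$, on the event $\{\tau = T_k\}$ the trace of $\mathcal{F}_\tau$ coincides with the trace of $\mathcal{G}_k = \mathcal{F}_{T_k}$: for every $B \in \mathcal{F}_\tau$, $B \cap \{\tau = T_k\} \in \mathcal{G}_k$, and conversely. Consequently, for any event $A$ one has the almost-sure identity $\id_{\{\tau = T_k\}}\Pp(A \mid \mathcal{F}_\tau) = \id_{\{\tau = T_k\}}\Pp(A \mid \mathcal{G}_k)$. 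On $\{\tau = T_k\}$ we have $\crw_\tau = X_k$ and, by the definition of $\tau'$, $\crw_{\tau'} = X_{k+1}$, so the event in \eqref{eq:l-jumps-random} coincides with the event from Lemma \ref{lm:l-jumps} at index $k$, and analogously for \eqref{eq:excursion-tau} and Lemma \ref{lm:excursions}.

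Combining these two observations, on the event $E^*$ one obtains
\begin{align*}
\Pp\bigl(\{\crw_{\tau'} \in D(\crw_\tau)\} \cap \{\tau' < \tau+1\} \mid \mathcal{F}_\tau\bigr)
 &= \sum_{k \le n^2} \id_{\{\tau = T_k\}} \Pp\bigl(\{X_{k+1} \in D(X_k)\} \cap \{T_{k+1} < T_k+1\} \mid \mathcal{G}_k\bigr) \\
 &\ge p \sum_{k \le n^2} \id_{\{\tau = T_k\}} \id_{\{T_k \le T\}} = p\, \id_{\{\tau \le T\}},
\end{align*}
which is \eqref{eq:l-jumps-random}; the same calculation with $F_k$ and $q$ in place of $E_k$ and $p$ yields \eqref{eq:excursion-tau}.

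Since this is pure bookkeeping, there is no real conceptual obstacle: the only item to verify quantitatively is the absorption $n^2 e^{-c \log^2 n} \le e^{-c' \log^2 n}$, which is immediate because $\log^2 n$ dominates any constant multiple of $\log n$. I would state and use the $\sigma$-algebra identification $\mathcal{F}_\tau \cap \{\tau = T_k\} = \mathcal{G}_k \cap \{\tau = T_k\}$ explicitly, as it is the one nontrivial measurability point; everything else is direct substitution.
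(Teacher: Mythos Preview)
Your proposal is correct and follows essentially the same approach as the paper: decompose $\Pp(\cdot\mid\mathcal{F}_\tau)$ as $\sum_{k\le n^2}\Ind{\{\tau=T_k\}}\Pp(\cdot\mid\mathcal{G}_k)$, observe that failure of the random-$\tau$ bound forces failure of the fixed-$k$ bound for some $k\le n^2$, and take a union bound. Your write-up is in fact more explicit than the paper's about the trace identity $\mathcal{F}_\tau\cap\{\tau=T_k\}=\mathcal{G}_k\cap\{\tau=T_k\}$ and about absorbing the $n^2$ factor, both of which the paper leaves implicit.
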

\begin{proof}
It is enough to note that
\begin{align*}
\Pp( \{ \crw_{\tau'} \in D(\crw_{\tau}) \} \cap \{ \tau' < \tau + 1 \}| \mathcal{F}_{\tau}) = \sum_{k=1}^{n^2} \Pp( \{ X_{k+1} \in D(X_{k}) \} \cap \{ T_{k+1} < T_k + 1 \}| \mathcal{G}_{k})\Ind{\{\tau = T_k\}},
\end{align*}
so if \eqref{eq:l-jumps-random} does not hold then \eqref{eq:l-jumps} fails for some $k \leq n^2$. Now the statement follows from Lemma \ref{lm:l-jumps} and a union bound over $k$.

Analogously, we can write
\begin{align*}
  \Pp\left( \EE_{\tau}(n^\varepsilon) | \mathcal{F}_{\tau} \right) & = \sum_{l=1}^{n^2}  \Pp\left( \EE_{\tau}(n^\varepsilon) | \mathcal{F}_{\tau} \right)\Ind{\{\tau = T_l\}}  = \sum_{l=1}^{n^2}  \Pp\left( \EE_{T_l}(n^\varepsilon) | \mathcal{G}_l \right)\Ind{\{\tau = T_l\}},
\end{align*}
and \eqref{eq:excursion-tau} fails only if \eqref{eq:excursion-T-k} fails for some $l \le n^2$. Again we finish by Lemma \ref{lm:excursions} and a union bound.
\end{proof}

\end{subsection}

\begin{subsection}{The bad set}\label{sec:bad-set}

Recall that the bad set $\badSet_{t}$, defined in \eqref{eq:defBadSet}, consists of vertices which are close in $G_{t}$ to $L_{0}$. Our goal is to show that the CRW is unlikely to ever hit the bad set. Since it is difficult to know exactly which part of $\badSet_{t}$ is accessible to the CRW at time $t$, we will in fact prove a stronger statement, namely that with high probability $\badSet_{t}$ itself is small for all $t \leq T$.

\paragraph{The core of $G_t$.} We now introduce a special subgraph of $G_t$, which will play an important role in the analysis of bad jumps and backtracks.

For a graph $G$ and a vertex $v\in V_G$ denote by $\deg_G(v)$ the degree of $v$ in $G$, counted with multiplicities. Below, to simplify the notation we will often write $v \in G$ instead of $v \in V_G$.

\hfff{def:core}{Let $\core_{t}$ be the subgraph of $G_{t}$ obtained by successively removing dead vertices of degree one} (i.e., we remove dead vertices of degree one in $G_t$, obtaining the graph ${G}_t^{(1)}$, next we remove dead vertices of degree one in ${G}_t^{(1)}$, etc. until no more vertices can be removed). We will call the graph $\core_t$ the \emph{core} of $G_t$ (this is similar to what is called the $2$-core of $G_t$ in graph theory, except that we allow possibly two vertices of degree one which are not dead). See Figure \ref{fig:core} for an example of a graph $G_{t}$ and its core.

The procedure described above corresponds to removing trees consisting of dead vertices, connected to the core. Note that $\core_{t}$ can still contain dead vertices and it is not necessarily nondecreasing in $t$. The role the graph $\core_{t}$ will play in subsequent arguments is twofold. In the analysis of the size of the bad set, it will be convenient to handle the intersection of the bad set with the core and the trimmed trees separately. In the subsequent part we will also use the special structure of the core to show that if the CRW is outside the bad set then it is very unlikely to backtrack all the way to $L_0$.

\begin{figure}[h]
	\centering
		\includegraphics[scale=0.4]{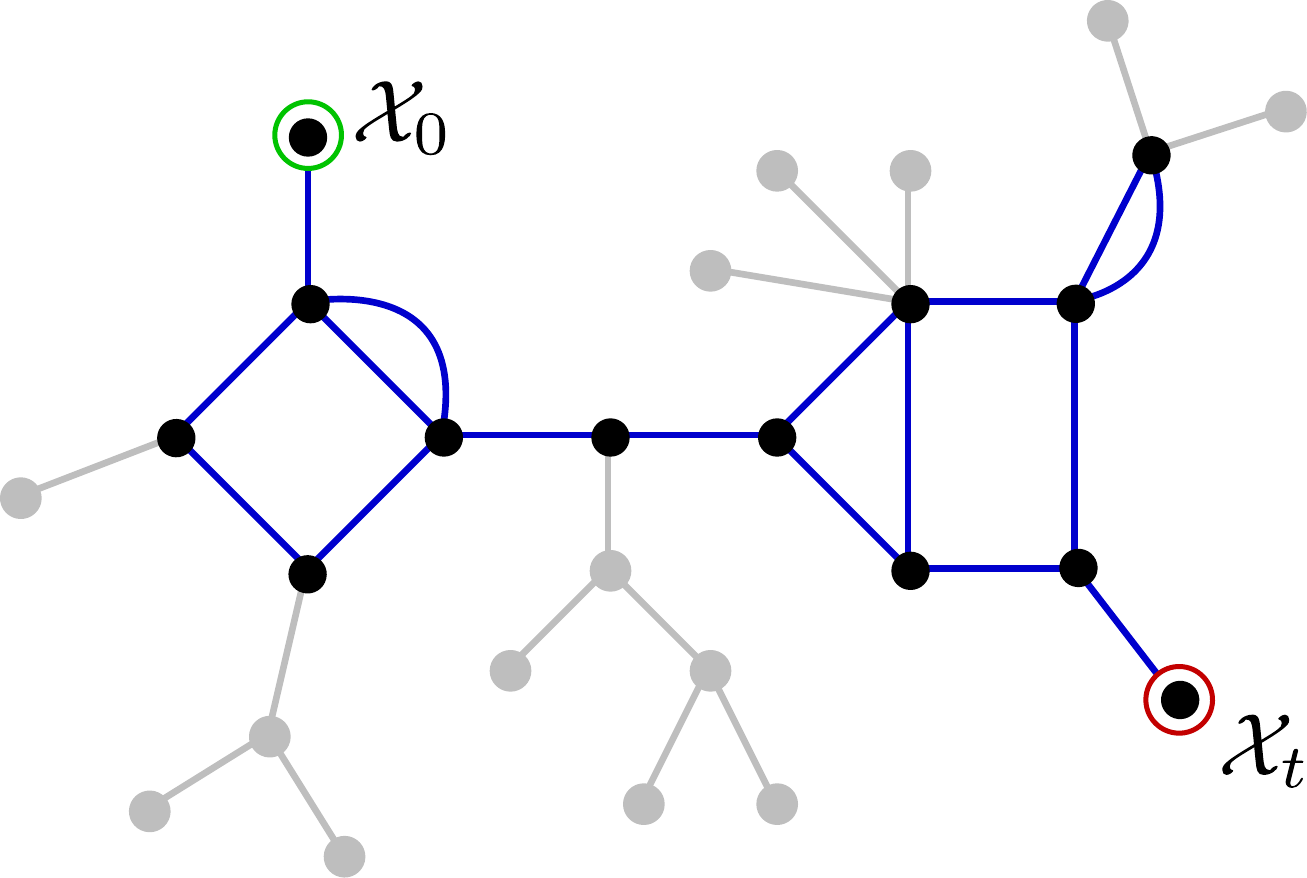}
	\caption{Graph $G_t$ and its core $\core_t$ (edges shown as thicker blue lines). Parts of the graph removed in the construction of the core are shown in grey. The starting and end vertices of the CRW, $\crw_{0}$ and $\crw_{t}$, have been marked in green and red, respectively.}
\label{fig:core}
\end{figure}

The degrees of vertices in $\core_t$ are in a simple relation with the number of internal jumps $I_t$.
\begin{lemma}
For any $t \ge 0$,
\begin{displaymath}
	\sum_{v \in \core_t} (\deg_{\core_t}(v) - 2) - 2I_t = - 2.
\end{displaymath}
\end{lemma}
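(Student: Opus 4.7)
The plan is to reduce the identity to an Euler-type invariant. By the handshake lemma (valid in multigraphs when degrees are counted with multiplicity, as is the case here since $\core_t$ may inherit parallel bridges), $\sum_{v \in \core_t} \deg_{\core_t}(v) = 2|E(\core_t)|$, so the claim is equivalent to
\[
|V(\core_t)| - |E(\core_t)| = 1 - I_t.
\]
I will prove this equality first for $G_t$ and then note that the trimming procedure defining $\core_t$ preserves it.

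The first step is a straightforward induction on the jumps of the CRW. At $t=0$ we have $|V(G_0)|=1$, $|E(G_0)|=0$, $I_0 = 0$, and the identity holds. The graph $G_t$ only changes at jump times. A backtrack reuses an already-explored bridge, so $G_t$ is unaltered and neither side changes. A fresh jump to a previously unvisited vertex (an "external" fresh jump) adds one new vertex and one new edge, so $|V|-|E|$ is preserved while $I_t$ does not change. An internal jump (fresh jump to a vertex in $\Zz_{t-}$) adds one new edge but no new vertex, decreasing $|V|-|E|$ by one, while $I_t$ increases by one, decreasing $1 - I_t$ by one. Thus the identity is preserved through every jump, and hence holds for all $t$.

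The second step is to observe that each iteration in the construction of $\core_t$ deletes exactly one vertex and exactly one incident edge (the unique edge at the degree-one vertex being removed), so it preserves $|V|-|E|$. Consequently $|V(\core_t)| - |E(\core_t)| = |V(G_t)| - |E(G_t)| = 1 - I_t$. Multiplying by $-2$ and applying the handshake lemma gives $\sum_{v \in \core_t}(\deg_{\core_t}(v)-2) = 2|E(\core_t)| - 2|V(\core_t)| = 2I_t - 2$, which rearranges to the claimed identity.

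I do not foresee a genuine obstacle; this is a bookkeeping argument. The only points that warrant a brief check are that (i) multiplicities must be handled consistently — the paper's convention that $G_t$ may contain multiple edges and that $\deg$ is counted with multiplicity is precisely what makes the handshake step work, and (ii) closure of the CRW into a cycle at $\tau_c$ does not spoil the invariant: if the closure happens through continuous motion along the starting bar, no quantity changes; if it happens via an internal jump, both $|E(G_t)|$ and $I_t$ increase by one in step with the argument; after $\tau_c$ everything is frozen. Thus the identity holds for every $t \geq 0$.
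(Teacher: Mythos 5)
Your proof is correct and follows essentially the same approach as the paper: an induction over jump times showing the invariant is preserved, followed by the observation that the trimming used to build $\core_t$ does not change it. The only cosmetic difference is that you reformulate the invariant as $|V|-|E| = 1 - I_t$ via the handshake lemma, whereas the paper tracks $\sum_v(\deg(v)-2) - 2I_t$ directly; the two are interchangeable.
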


\begin{proof}
We will first prove that for any $t \ge 0$,
\begin{displaymath}
	\sum_{v \in G_t} (\deg_{G_t}(v) - 2) - 2I_t = - 2.
\end{displaymath}

The expression on the left hand side equals $-2$ at time $t = 0$ and changes only when the CRW makes a fresh jump. The increase of the degrees caused by such a jump is compensated either by the summand $-2$ (if the CRW explores a new vertex) or by the increase of $I_t$ (in the case of internal jumps). This proves the above formula.

To pass from $G_t$ to $\core_t$ note that whenever one removes a vertex of degree one then the sum of degrees decreases by $2$ and the number of vertices decreases by one, so the sum in question does not change.

\end{proof}

\hfff{lab:event-d-t}{For $\delta > 0$ let us define}
\begin{equation*}
\mathcal{D}_{t}^\delta := \left\{ \sum\limits_{v \in \core_{t}} \left( \deg_{\core_t}(v) - 2 \right) \leq n^{\delta} \right\}.
\end{equation*}
Combining the above lemma with Lemma \ref{lm:internal-hits} we obtain immediately

\begin{lemma}[The core has few excess edges]\label{lm:internal-hits-degree3}
There exist $C, c > 0$ such that
\[
\Pp\left( \forall_{t\leq T} \mathcal{D}_{t}^{2\alpha}\right) \ge 1 - C e^{- c \log^2 n}.
\]
\end{lemma}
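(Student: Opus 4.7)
The plan is to reduce the claim about the core to the bound on internal jumps that was just proven. The identity established immediately before the lemma gives $\sum_{v \in \core_t}(\deg_{\core_t}(v) - 2) = 2 I_t - 2$ pointwise for every $t \geq 0$. Therefore the event $\mathcal{D}_t^\delta$ is literally $\{2 I_t - 2 \leq n^\delta\}$, and the proof of the lemma is a transcription of an upper tail bound on $I_T$.

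Next I would use the fact that $I_t$ is a counting process, hence nondecreasing in $t$. This turns the intersection $\bigcap_{t \leq T}\mathcal{D}_t^\delta$ into the single event $\{I_T \leq (n^\delta + 2)/2\}$, so no additional union bound over $t$ is needed.

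Finally I would invoke Lemma \ref{lm:internal-hits}. Since $\delta > 2\alpha$, I can pick some $\delta' \in (2\alpha, \delta)$ (say $\delta' = (\delta + 2\alpha)/2$). For all sufficiently large $n$ one has $n^{\delta'} \leq (n^\delta + 2)/2$, so
\[
\Pp\bigl( \exists\, t \leq T \colon \neg \mathcal{D}_t^\delta \bigr) = \Pp\bigl(I_T > (n^\delta + 2)/2\bigr) \leq \Pp\bigl(I_T \geq n^{\delta'}\bigr) \leq C e^{-c \log^2 n},
\]
with constants $C, c > 0$ furnished by Lemma \ref{lm:internal-hits} applied to $\delta'$. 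For small $n$ the claim is trivial after enlarging $C$.

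There is essentially no obstacle here — the work was done in Lemma \ref{lm:internal-hits} and in the combinatorial identity preceding the statement. The only bookkeeping point is to make sure the constants $C, c$ can be chosen uniformly in $\delta \in (2\alpha, 1)$; this follows because one can take $\delta'$ as a fixed function of $\delta$ and $\alpha$ and then use the corresponding constants from Lemma \ref{lm:internal-hits}, whose dependence on parameters has already been tracked.
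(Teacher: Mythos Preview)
Your proposal is correct and is exactly the argument the paper has in mind; the paper's proof is the single sentence ``Combining the above lemma with Lemma \ref{lm:internal-hits} we obtain immediately'', and you have simply unpacked that sentence. One minor simplification for the uniformity remark: inspecting the proof of Lemma \ref{lm:internal-hits}, the final bound $\exp\bigl(-n^{2\alpha}\log(n^{\alpha}/(4\Theta\beta e\log^2 n))\bigr)$ does not depend on $\delta$ at all, so the constants are automatically uniform and no extra tracking is needed.
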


Moreover, the intersection of the core and the bad set is small, as asserted in the following lemma
\begin{lemma}\label{lm:core-bad-ball-is-small}
There exist $C, c > 0$ such that
\[
\pr{\sup\limits_{t \leq T} |V_{\core_{t}}\cap \badSet_t| \geq n^{4\varepsilon} } \leq C e^{- c \log^2 n}.
\]
\end{lemma}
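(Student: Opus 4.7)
The plan is to work on the high-probability event that combines the isoperimetry assumption with a small cyclomatic number of the core, and then to control a ball in $\core_t$ around a projection of $L_0 \cap \Zz_t$. Specifically, I pick $\delta \in (2\alpha, \varepsilon/2)$ and set $\mathcal{A} = \{\iota(\Zz_T) \le n^\alpha\} \cap \{\forall_{t \le T}\ \mathcal{D}_t^\delta\}$. By Assumption \ref{lm:isoperimetry} and Lemma \ref{lm:internal-hits-degree3}, $\p(\mathcal{A}) \ge 1 - Ce^{-c \log^2 n}$. On $\mathcal{A}$, monotonicity of $\iota$ gives $|L_0 \cap \Zz_t| \le n^\alpha$ for all $t \le T$, while the identity $\sum_{v \in \core_t}(\deg_{\core_t}(v) - 2) = 2I_t - 2$ together with $\mathcal{D}_t^\delta$ bounds the cyclomatic number of $\core_t$ (which equals $I_t$) by $n^\delta$.

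Next I project $L_0 \cap \Zz_t$ onto $V(\core_t)$. Since $G_t \setminus \core_t$ is a forest of dead trees attached to the core, each $v \in L_0 \cap \Zz_t$ has a unique anchor $v^{\ast} \in V(\core_t)$ (with $v^{\ast} = v$ if $v$ is already in the core); writing $L_0'' := \{v^{\ast} : v \in L_0 \cap \Zz_t\}$ I have $|L_0''| \le n^\alpha$. Because any $G_t$-geodesic from a core vertex into a dead tree must enter through the anchor (a detour through a dead tree never shortens a path) and because $G_t$- and $\core_t$-distances between core vertices coincide, this yields the containment
\begin{equation*}
V(\core_t) \cap \badSet_t \ \subseteq\ B_{\core_t}(L_0'',\, n^\varepsilon).
\end{equation*}

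The heart of the argument is to show that $|B_{\core_t}(L_0'', n^\varepsilon)| \le n^{4\varepsilon}$ on $\mathcal{A}$, and this is where I expect the main difficulty. The intuition is that $\core_t$ is a tree plus at most $n^\delta$ chords with most vertices of degree~$2$, so a BFS from $L_0''$ should stay narrow: the width of the successive frontiers $\partial_k$ can grow only when passing through a vertex of positive excess degree $\deg_{\core_t}(v) - 2 > 0$, and the total \emph{signed} excess $\sum_v (\deg_{\core_t}(v) - 2)$ is at most $n^\delta$. The subtle point is that $\core_t$ may carry many non-dead degree-$1$ vertices whose excess $-1$ cancels positive excess at the branch points to which they are attached; a careful BFS accounting (this is the idea the authors credit to W.~Samotij) must exploit that such leaves \emph{terminate} the BFS rather than propagating it, so the effective growth is controlled by $|L_0''|$ plus the positive-excess budget spent inside the ball. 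I would aim for a bound of the form $|B_{\core_t}(L_0'', n^\varepsilon)| \le C(n^\alpha + n^\delta)\cdot n^\varepsilon$, which is at most $n^{4\varepsilon}$ since $\alpha < \varepsilon/4$ and $\delta < \varepsilon/2$.

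Finally, to pass from a pointwise estimate to the uniform supremum over $t \le T$, I note that both $\core_t$ and $\badSet_t$ change only at the at most $O(n^2)$ bridge arrival times up to $T$, so a union bound over these discrete times costs only a polynomial factor which is easily absorbed by $Ce^{-c\log^2 n}$.
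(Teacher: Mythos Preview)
Your approach is essentially the same as the paper's: work on the high-probability event where the excess-degree sum is small and $|\Zz_t\cap L_0|$ is small, then bound BFS balls in $\core_t$. The paper carries this out by doing a single-root BFS in a spanning tree of $\core_t$ for each $v\in \Zz_t\cap L_0$ separately (getting $|B_{G_t}(v,n^\varepsilon)\cap V_{\core_t}|\le 3n^{2\varepsilon}$), then taking a union over the $\le n^\varepsilon$ such $v$; your multi-root BFS from the projection $L_0''$ is an equivalent packaging.

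The one point you flag as ``subtle'' is in fact a non-issue, and recognizing this is the key observation. By construction of the core (iteratively removing \emph{dead} leaves), any remaining leaf of $\core_t$ must be non-dead; but a vertex $w\neq \X_0,\X_t$ of degree~$1$ in $G_t$ has necessarily been entered and left along the same bridge, hence its bar is exhausted and $w$ is dead. Thus $\core_t$ has at most two vertices of degree~$1$, namely $\X_0$ and $\X_t$. Consequently, for any subset $B\subset V(\core_t)$,
\[
\sum_{v\in B}(\deg_{\core_t}(v)-2)\ \le\ \sum_{v:\deg_{\core_t}(v)>2}(\deg_{\core_t}(v)-2)\ \le\ n^\delta+2,
\]
since the total signed sum is $\le n^\delta$ and the negative contribution is at least $-2$. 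Plugging this into the BFS recursion $|B_j|\le |B_{j-1}|+2+\sum_{w\in B_{j-1}}(\deg_{\core_t}(w)-2)$ gives linear growth $|B_k|\le C k(n^\delta+|L_0''|)$ with no delicate accounting needed. Your worry about ``many non-dead degree-$1$ vertices'' cannot occur.

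Finally, the union bound over bridge-arrival times in your last paragraph is unnecessary: on the single event $\mathcal{A}$ the combinatorial bound holds deterministically for every $t\le T$, which is also how the paper argues.
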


\begin{proof}
We will first estimate the size of the $k$-neighborhood of any vertex in $\core_{t}$ for arbitrary $k \geq 1$.

To this end, fix any vertex $r \in \core_t$ and consider a spanning tree $\mathcal{T}$ of $\core_{t}$ obtained by a breadth first search starting from $r$, so that the distances between $r$ and any other vertex of $\core_t$ are the same in $\core_t$ and in $\mathcal{T}$.

Let $B_{i}$ denote the ball $B_{\mathcal{T}}(r, i)$ (we assume $B_{-1} = \emptyset$) and let $\deg_{\mathcal{T}}(w)$ be the degree of $w$ in $\mathcal{T}$. Obviously we have $\deg_{\mathcal{T}}(w) \leq \deg_{\core_t}(w)$. We have $|B_{0}| = 1$, $|B_{1}| - |B_{0}|= \deg_{\mathcal{T}}(r)$ and for any $i \geq 1$,
\[
|B_{i+1}| - |B_{i}| = \sum\limits_{w \in B_{i} \backslash B_{i-1}} \left( \deg_{\mathcal{T}}(w) - 1 \right),
\]
since $\mathcal{T}$ is a tree. For $j \geq 1$ we can sum these equalities from $i=0$ to $j-1$, getting
\[
|B_{j}| - 1 = \sum\limits_{w \in B_{j-1}} \left( \deg_{\mathcal{T}}(w) - 1 \right) + 1,
\]
so
\[
|B_{j}| = \sum\limits_{w \in B_{j-1}} \left( \deg_{\mathcal{T}}(w) - 2 \right) + |B_{j-1}| + 2.
\]
In particular
\begin{equation}\label{eq:chain-of-balls}
|B_{j}| \leq \sum\limits_{w \in B_{j-1}} \left( \deg_{\core_t}(w) - 2 \right) + |B_{j-1}| + 2.
\end{equation}

Note that $\core_t$ contains at  most two vertices of degree one, $\X_0$ and $\X_t$. Therefore
\begin{displaymath}
\sum\limits_{w \in B_{j-1}} \left( \deg_{\core_t}(w) - 2 \right)  \le 2+ \sum\limits_{w \in \core_t} \left( \deg_{\core_t}(w) - 2  \right).
\end{displaymath}
By Lemma \ref{lm:internal-hits-degree3} with probability at least $1 - Ce^{-c \log^2 n}$ (for some $C,c > 0$) for all $t \leq T$ the right hand side above is bounded by $n^{2\alpha} + 2 \leq n^\varepsilon + 2$. From this and \eqref{eq:chain-of-balls} we obtain
\[
|B_{j}| \leq  n^{\varepsilon} + 2 + |B_{j-1}| + 2
\]
with high enough probability. This in turn implies that for any $k \geq 1$ we have $|B_{k}| \leq k (n^{\varepsilon} + 5)$.

We now apply this estimate to bound the size of the intersection of the core with the bad set. For any $v \in \Zz_t\cap L_0$ let $G(v)$ be the induced subgraph of $\core_t$ with the set of vertices equal to $B_{G_t}(v,n^\varepsilon)\cap V_{\core_t}$. Note that it may happen that $v \notin \core_t$, but if $G(v)$ is nonempty, it is connected and of diameter at most $2n^\varepsilon$. Indeed, $\core_t$ is connected and distances in $\core_t$ between any two vertices $w,u
\in \core_t$ are the same as  in $G_t$ (since the shortest paths between any elements of the core are disjoint from the trees which are removed during its construction).

Now choose any $r \in G(v)$ and let $B_k$ be defined as in the first part of the proof. Taking $k = 2n^{\varepsilon} \ge \diam(G(v))$ we get $G(v) \subset B_k$, which by our bound on $|B_k|$ proves that for $n$ large enough with probability at least $1 - Ce^{-c \log^2 n}$, for any $t \le T$ and  $v \in \Zz_t\cap L_0$ we have
\begin{equation}\label{eq:size-of-core-ball}
|B_{G_t}(v,n^\varepsilon)\cap V_{\core_t}| \leq 3 n^{2 \varepsilon}.
\end{equation}
Now using the definition of the bad set we get
\begin{displaymath}
V_{\core_t}\cap \badSet_t   =
\bigcup_{v \in \Zz_t \cap L_0} B_{G_t}(v,n^\varepsilon)\cap V_{\core_t}.
\end{displaymath}
By Assumption \ref{lm:isoperimetry} with probability at least $1 - C_1 e^{- c_1 \log^2 n}$, for some $C_1, c_1 > 0$, we have $|\Zz_t \cap L_0| \le n^\varepsilon$. This estimate together with \eqref{eq:size-of-core-ball} concludes the proof.
\end{proof}
To prove that the whole bad set is small up to time $T$, we will show that dead trees removed in the construction of $\core_{t}$ cannot be too large and then estimate the total number of such trees.

\begin{proposition}[The bad set is small]\label{prop:bad-ball-is-small}
There exist $C, c > 0$ such that
\[
\pr{ |\badSet_{T}| \geq n^{7\varepsilon} } \leq C e^{- c \log^2 n}.
\]
\end{proposition}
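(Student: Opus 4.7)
The plan is to split $\badSet_T = (\badSet_T \cap V_{\core_T}) \cup (\badSet_T \setminus V_{\core_T})$ and control each piece. Lemma \ref{lm:core-bad-ball-is-small} already bounds the first piece by $n^{4\varepsilon}$ with probability at least $1 - Ce^{-c\log^2 n}$, so the main work is on the dead--tree part.

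The key structural observation is that each connected component of $G_T \setminus V_{\core_T}$ is a combinatorial tree attached to $V_{\core_T}$ via exactly one edge of $G_T$: a removable component cannot contain a cycle or a multi--edge (otherwise its last--removed vertex would have degree at least two at the moment of its removal), and cannot be attached via two or more edges (otherwise some vertex on an attaching path would retain degree at least two throughout the pruning). Consequently, for any dead tree of size $k$ the CRW enters exactly once through the unique attaching bridge, exhausts every bar inside (since all $k$ vertices are dead at time $T$) and leaves through that bridge in the opposite direction; so the exploration of the tree is a single excursion of duration exactly $k$ starting at the discovery time $T_\ell$ of the first vertex of the tree, and satisfies $\PP_{T_\ell + k} - \PP_{T_\ell} = (k-1) - k = -1$.

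This lets me bound every dead tree size by $\log^2 n$. For any tree of size $k \geq \log^2 n$ we have at $u = k \in [\log^2 n, T - T_\ell]$ that $\PP_{T_\ell + u} - \PP_{T_\ell} + 1 = 0 \leq a\log^2 n$, so the event $\{k \geq \log^2 n\}$ is contained in the event appearing in \eqref{eq:good_potential_eq3} applied with $\eta = T_\ell$ and parameter $\log^2 n$ in place of the $s$ of the lemma. The latter has conditional probability at most $Ce^{-c\log^2 n}$, and a union bound over the at most $|\Zz_T| \leq Cn\log^2 n$ candidate entry times $T_\ell$ (controlled via Lemma \ref{lm:fresh-vertices-are-not-often}) yields that no dead tree exceeds $\log^2 n$ in size, with probability at least $1 - Ce^{-c\log^2 n}$.

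Finally, a dead tree with root $r$ contributes to $\badSet_T$ only if either (a) the tree contains a vertex of $\Zz_T \cap L_0$, or (b) $r \in V_{\core_T} \cap \badSet_T$; this is seen by decomposing a shortest path from any bad vertex of the tree to its $L_0$ witness, which either stays in the tree or leaves through $r$. Case (a) accounts for at most $|\Zz_T \cap L_0| \leq \iota(\Zz_T) \leq n^\alpha$ trees by Assumption \ref{lm:isoperimetry}. Case (b) accounts for at most $|V_{\core_T} \cap \badSet_T| \cdot \max_v \deg_{G_T}(v) \leq n^{4\varepsilon}\log^2 n$ trees, where the maximum--degree bound holds with probability $1 - e^{-c\log^2 n}$ by a standard Poisson--tail estimate: conditionally on the bridges of $X$ away from a vertex $v$, the tilt $\theta^{\mathcal{C}(X)}$ alters the distribution of the number of bridges at $v$ by a factor of $\Theta$ per bridge only, so this number is stochastically dominated by $\Poiss(2\beta\Theta)$, and a union bound over the $n^2$ vertices finishes. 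Multiplying the tree counts by the per--tree size bound $\log^2 n$ gives $|\badSet_T \setminus V_{\core_T}| \leq (n^\alpha + n^{4\varepsilon}\log^2 n)\log^2 n \leq n^{5\varepsilon}$ for large $n$, so $|\badSet_T| \leq n^{7\varepsilon}$. The most delicate ingredient I anticipate is the max--degree tail bound, as it is the only step where one must revisit the tilted Poisson structure of $X$ directly rather than rely on the cyclic random walk analysis developed earlier.
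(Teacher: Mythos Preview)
Your proof is correct and follows essentially the same route as the paper: split $\badSet_T$ into its core part (handled by Lemma \ref{lm:core-bad-ball-is-small}) and its dead--tree part, bound the size of each dead tree via the potential drop identity $\PP_{T_\ell+k}-\PP_{T_\ell}=-1$ together with \eqref{eq:good_potential_eq3}, and count contributing trees by whether they contain a vertex of $L_0$ or are rooted at a core vertex already in $\badSet_T$, the latter count being controlled via the Poisson--type max--degree estimate of Lemma \ref{lm:poisson-approximation}. Your quantitative choices are a bit sharper than the paper's (tree sizes and degrees bounded by $\log^2 n$ rather than $n^{\varepsilon}$), but this makes no structural difference.
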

\begin{proof}
Let $K$ be the graph removed from $G_T$ in the construction of $\core_T$. We will first prove that

\begin{equation}\label{eq:bound-on-Delta}
\pr{ |V_K\cap \badSet_T| \geq n^{6\varepsilon}} \leq C e^{- c \log^2 n}.
\end{equation}

To this end we first estimate the size of the largest connected component of $K$ (i.e., the largest tree removed in the construction of the core).
Let $\mathcal A$ be the event that there is a component $K'$ of size $l  \ge n^\varepsilon$. Note that after entering $K'$ for the first time, say at time $T_k$, the CRW traverses the whole tree $K'$ in a depth first search manner, exhausting all the bars corresponding to visited vertices. Due to the constant speed of the CRW, the time needed for this equals exactly $l$. Since the number of vertices and edges in a tree differ by one, during that time the CRW makes $l-1$ jumps to previously unexplored vertices and the same number of backtracks. It follows that
\begin{equation}\label{eq:potential_tmp}
	\mathcal{P}_{T_k+l} - \mathcal{P}_{T_k} = l-1 - l = -1.
\end{equation}
Using \eqref{eq:good_potential_eq3} from Lemma \ref{lm:good-potential} with $\eta = T_k$ and $s = n^{\varepsilon}/2$ we get that for fixed $k,l$  the probability that the condition \eqref{eq:potential_tmp} is fulfilled is smaller that $C e^{-c\log^2n}$ for some $C,c > 0$. Observing that $k,l\leq n^2$ and applying a union bound we get
\begin{equation}\label{eq:tmp_estimate_77}
\pr{\mathcal{A}} \leq C_1 e^{-c_1 \log^2 n},
\end{equation}
for some $C_1, c_1>0$.
To prove \eqref{eq:bound-on-Delta} it remains to bound the number of  trees with nonempty intersection with $\badSet_T$. Note that such a tree either
\begin{itemize}
\item[a)] is attached to a vertex from $\badSet_T\cap V_{\core_T}$

\item[b)] or  contains an element from $\Zz_T \cap L_0$.
\end{itemize}

The number of the former trees is at most the sum of degrees of vertices in $\badSet_T\cap V_{\core_T}$. The number of the latter trees equals at most $\iota(\Zz_T)$.

Now we show that with probability at least $1 - C e^{-c n^{\varepsilon}}$, for some $C, c > 0$, no vertex has degree greater than $n^{\varepsilon}$. Indeed, for a fixed vertex $w$ we can estimate the number of bridges incident to $w$ by using Lemma \ref{lm:poisson-approximation} (which gives a general bound on the number of bridges in a given subset of $E \times [0,1)$ in terms of a Poisson process). More specifically, we apply the second part of the lemma with $A = E_{w} \times [0,1)$, where $E_{w}$ is the set of all edges incident to $w$. Note that the Lebesgue measure of $A$ satisfies $|A| = 2(n-1)$. Thus we obtain that with probability at least $1 - C' e^{-c' n^{\varepsilon}}$, for some $C', c' > 0$, the total number of bridges incident to $w$ is at most $n^{\varepsilon}$. A union bound over $w$ finishes the argument.

Using Lemma \ref{lm:core-bad-ball-is-small}  with probability at least $1 - C e^{-c \log^2 n}$ we have $|V_{\core_{T}}\cap \badSet_T|\leq n^{4\varepsilon}$. Recall that the size of a single dead tree is smaller than $n^{\varepsilon}$ with high probability, see \eqref{eq:tmp_estimate_77}. Thus the total number of vertices belonging to trees from case a) above is at most $n^{4 \varepsilon} \cdot n^{\varepsilon} \cdot n^{\varepsilon} = n^{6 \varepsilon}$ with high probability.

On the other hand, by Assumption \ref{lm:isoperimetry} the total number of vertices from trees satisfying case b) is at most
$n^\varepsilon \cdot n^\varepsilon = n^{2\varepsilon}$ with high probability. Combining the two cases yields \eqref{eq:bound-on-Delta}.

Thus with the required probability, $|\badSet_{T}| = |\badSet_{T} \cap V_K| + |\badSet_{T} \cap V_{\core_{T}}| \leq n^{4\varepsilon} + n^{6\varepsilon} \leq n^{7\varepsilon}$ for $n$ large enough.

\end{proof}
\paragraph{Bad hits process.}\label{sec:bad-hits}

Here we prove that $I^b_T$, i.e., the number of bad jumps up to time $T$ is small with high probability. The argument is similar as in Lemma \ref{lm:internal-hits}, but more subtle.

\begin{lemma}\label{lm:bad-hits-process}
There exist $C,c,\delta >0 $ such that for any stopping time $\eta$  we have
\[
\Pp \left(I^{b}_{T} - I^{b}_{\eta} \geq 1 \big| \Ff_{\eta}\right) \leq \frac{1}{n^{\delta}},
\]
with probability at least $1 - \wsp$.
\end{lemma}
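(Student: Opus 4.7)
The plan is to apply Lemma \ref{le:intensity} with $Q_t = \badSet_t$ (which is $\filF_t$-adapted and c\`adl\`ag, jumping only when $\crw$ jumps) to bound the intensity of bad jumps by $\lambda^b_t \leq \Theta \frac{\beta}{n-1}|A_t \cap \badSet_t|$, and then control the bad jump count in expectation via Doob's theorem and the conditional Markov inequality.

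The crucial deterministic step is the bound
\[
\int_\eta^T |A_s \cap \badSet_s|\, ds \leq 2\, |\badSet_T|\, \iota(\Zz_T).
\]
To prove it I would note that $A_s \subseteq L(\crw_s) \cup D(\crw_s)$, use monotonicity $\badSet_s \subseteq \badSet_T$, apply Fubini to get
\[
\int_\eta^T |A_s \cap \badSet_s|\,ds \leq \sum_{v \in \badSet_T} \mathrm{Leb}\bigl\{s \in [\eta, T] \colon \crw_s \in L(v) \cup D(v)\bigr\},
\]
and then use the fact that the CRW spends at most one unit of time at each vertex (bars have length one and the CRW moves upward at unit speed) to bound each summand by $|\Zz_T \cap (L(v) \cup D(v))| \leq 2\iota(\Zz_T)$.

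Combining this estimate with Proposition \ref{prop:bad-ball-is-small} and Assumption \ref{lm:isoperimetry}, the event $\mathcal{A} = \{|\badSet_T| \leq n^{7\varepsilon}\} \cap \{\iota(\Zz_T) \leq n^\alpha\}$ has probability at least $1 - Ce^{-c\log^2 n}$, so on $\mathcal{A}$ the right-hand side of the key estimate is at most $2 n^{\alpha + 7\varepsilon}$. Splitting the conditional expectation over $\mathcal{A}$ and $\mathcal{A}^c$, using the trivial universal bound $|\badSet_T|\, \iota(\Zz_T) \leq n^3$ on the complement, and applying Lemma \ref{le:easy-lemma} to pass to a conditional probability bound on $\mathcal{A}^c$, I would conclude that with probability at least $1 - Ce^{-c\log^2 n / 2}$,
\[
\E[I^b_T - I^b_\eta \mid \filF_\eta] \leq \frac{\Theta\beta}{n-1}\bigl(2n^{\alpha+7\varepsilon} + n^3 e^{-c\log^2 n /2}\bigr) \leq n^{-\delta}
\]
for some $\delta > 0$, using that $\alpha + 7\varepsilon < 1$ by the parameter choices $\alpha < \varepsilon/4$ and $\varepsilon < 1/20$. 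The conditional Markov inequality then yields the claim. I do not anticipate a serious obstacle; the most substantial input, the smallness of the bad set, has already been established in Proposition \ref{prop:bad-ball-is-small}, and the remaining content is just the short Fubini-type bound above together with the isoperimetric control provided by Assumption \ref{lm:isoperimetry}.
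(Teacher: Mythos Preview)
Your proof is correct and follows essentially the same route as the paper: bound $\lambda^b$ via Lemma~\ref{le:intensity}, control $\int \lambda^b_s\,ds$ using Proposition~\ref{prop:bad-ball-is-small} and Assumption~\ref{lm:isoperimetry}, and finish with Doob's theorem and conditional Markov. The paper organizes the Fubini step by rows and columns meeting $\badSet_t$ and uses a stopping time $\tau = \inf\{t : \iota(\Zz_t) > n^\varepsilon \text{ or } |\badSet_t| > n^{7\varepsilon}\}$ rather than your event-split over $\mathcal{A}$, but these are cosmetic differences; your exponent $\alpha+7\varepsilon-1$ is in fact slightly tighter than the paper's $15\varepsilon-1$.

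One point to tighten: the claim that the CRW spends at most one unit of time at each vertex holds only for $s \le \tau_c$, since after closing into a cycle the walk repeats periodically. This is harmless because $A_s = \emptyset$ for $s \ge \tau_c$ by definition, so your integral $\int_\eta^T |A_s \cap \badSet_s|\,ds$ is actually supported on $[\eta, T\wedge\tau_c]$; just restrict the Lebesgue sets in your Fubini step to $s \le \tau_c$ before invoking the unit-time-per-vertex bound. The paper makes this explicit by carrying $\tau_c$ in its integral.
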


\begin{proof}
We will work on the event $\{\eta \leq T\}$ (on its complement the probability in question vanishes due to monotonicity of $I^b$). Let us set
\[
\tau = \inf \{ t \geq 0 \colon \iota(\Zz_{t}) > n^{\varepsilon} \mbox{ or }  |\badSet_{t}| > n^{7 \varepsilon} \}.
\]
Using a union bound we write
\begin{equation}\label{eq:bad_jumps_increase}
\Pp \left(I^{b}_{T} - I^{b}_{\eta} \geq 1 \big| \Ff_{\eta} \right) \leq \Pp \left(I^{b}_{T \wedge \tau} - I^{b}_{\eta} \geq 1 \big| \Ff_{\eta} \right) + \Pp\left( \tau \leq T \big| \Ff_{\eta} \right).
\end{equation}
Observe that $\Pp\left( \tau \leq T \big| \Ff_{\eta} \right) \leq e^{-c \log^2 n}$ with probability at least $1 - \wsp$ (for some $C,c > 0$), since by Proposition \ref{prop:bad-ball-is-small} the bad set is small with high probability (and thus, by Lemma \ref{le:easy-lemma}, also conditionally on $\Ff_{\eta}$), and by Corollary \ref{cor:isoperimetry-conditional} $\iota(\Zz_t)$ is small with high probability conditionally on $\Ff_{\eta}$.

To deal with the first term we estimate the intensity $\lambda^b$ of $I^b$. For any $t$ such that $\eta \leq t < T \wedge \tau$ let $D(\badSet_{t})$ (resp. $L(\badSet_{t})$) denote the set of columns (resp. rows) which have non-empty intersection with $\badSet_{t}$. Formally, $D(\badSet_{t}) = \cbr{D_i: D_i\cap \badSet_{t}\neq \emptyset}$ (and likewise for $L(\badSet_{t})$). Let $K_{t} = D(\badSet_{t}) \cup L(\badSet_{t})$. Recall that by $A_{t}$ we denote the set of vertices accessible at time $t$ by a fresh jump. Observe that if $w=\X_{t}$ is a vertex in a row or column belonging to $K_t$, then $|A_t \cap \badSet_t|\leq n^{7\varepsilon}$, otherwise  $|A_t \cap \badSet_t| = 0$. Note also that $A_t=\emptyset$ if $t\geq \tau_c$. By Lemma~\ref{le:intensity} we have
\[
\Lambda := \int\limits_{\eta}^{T \wedge \tau} \lambda^{b}_{t} \, dt \leq \frac{ \Theta \beta}{n-1} \int\limits_{\eta}^{T \wedge \tau}  | A_{t}\cap \badSet_{t}| \,  dt \leq \frac{ \Theta \beta}{n-1} n^{7\varepsilon} \int\limits_{\eta}^{T \wedge \tau \wedge \tau_c} \id_{\{ \X_{t} \in \bigcup K_{t} \}} \, dt,
\]
where $\X_{t} \in \bigcup K_{t}$ means that $\X_t$ is at a vertex belonging to a row or column having nonempty intersection with $\badSet_{t}$. As $\badSet_{t}$ is nondecreasing in $t$, so is $K_t$, thus we can further estimate
\[
\Lambda \leq \frac{ \Theta \beta}{n-1} n^{7 \varepsilon} \int\limits_{\eta}^{T \wedge \tau \wedge \tau_c} \id_{\{ \X_{t} \in \bigcup K_{T \wedge \tau} \}} \, dt = \frac{\Theta \beta}{n-1} n^{7 \varepsilon} \sum_{F\in  K_{T \wedge \tau}}\int\limits_{\eta}^{T \wedge \tau \wedge \tau_c} \id_{\{ \X_{t} \in F \}} \, dt.
\]
For any row or column $F\in K_{T \wedge \tau}$ we have
\begin{equation*}
	\int\limits_{\eta}^{T \wedge \tau \wedge \tau_c} \id_{\{ \X_{t} \in F \}} \, dt = \sum\limits_{v \in F} \int\limits_{\eta}^{T \wedge \tau \wedge \tau_c}\id_{\{\X_{t} = v\}}\, dt \leq |F\cap \Zz_{T\wedge \tau}| \leq \iota(\Zz_{T\wedge \tau}).
\end{equation*}
We used the fact that for a fixed vertex $v$ the integral is bounded by $1$, since $t\leq \tau_c$ and the bar corresponding to $v$ has height $1$. Combining the above facts we obtain
\begin{equation*}
	\Lambda \leq \frac{ \Theta \beta}{n-1} n^{7 \varepsilon}  |K_{T \wedge \tau}|\iota(\Zz_{T\wedge \tau}).
\end{equation*}
We have $|\badSet_{ T\wedge \tau}|\leq n^{7\varepsilon}$, thus $|K_{T \wedge \tau}| \leq 2 n^{7\varepsilon}$. Moreover, $\iota(\Zz_{T\wedge \tau})\leq n^{\varepsilon}$ and thus $\Lambda \leq 4 \Theta \beta n^{15\varepsilon -1} $.

As $I^{b}_{t} - \int\limits_{0}^{t} \lambda^{b}_{u} \, du$ is a martingale, we have by Doob's theorem
\[
\E \left( (I^{b}_{T \wedge \tau} - I^{b}_{\eta})\ind{T\wedge \tau \ge \eta}  | \Ff_{\eta} \right) = \E \left( \ind{T\wedge \tau \ge \eta} \int\limits_{\eta}^{T \wedge \tau} \lambda^{b}_{t} \, dt | \Ff_{\eta} \right) \leq 4 \Theta \beta n^{15 \varepsilon - 1}.
\]
Now by an application of the conditional Markov inequality we bound the first term of \eqref{eq:bad_jumps_increase} by $n^{-\delta}$ for some $\delta>0$. This concludes the proof.
\end{proof}

\end{subsection}
\begin{subsection}{Backtracks}\label{sec:backtracks}

We will now show that after an internal jump the CRW is unlikely to backtrack its steps back to $L_{0}$.

We start with a deterministic lemma about the structure of the core $\core_{t}$. Recall that a path in a graph is a sequence of pairwise distinct vertices $v_1,\ldots,v_k$ such that for all $i < k$ vertices $v_i$ and $v_{i+1}$ are adjacent.

\begin{lemma}\label{lm:long-path}
Suppose that $\core_{t}$ contains at most $n^{\delta}$ vertices of degree greater than $2$, for some $ \delta \in (0, \varepsilon)$. Then there exists $n_0 \geq 1$ such that for any $n \geq n_0$ and any $v \in L_{0} \cap \core_{t}$ the following holds: every simple path in $G_{t}$ which connects $v$ with $\badSet^c_{t}\cap \core_{t}$ must contain a subpath of length at least $\log^2 n$ consisting only of vertices of degree two in $\core_{t}$.
\end{lemma}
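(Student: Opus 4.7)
The plan is a purely deterministic two-step argument: first I would show that the simple path $P$ lies entirely inside $\core_t$, and then apply pigeonhole using the hypothesis on high-degree vertices.

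For the first step, I would argue that each connected component $T$ of the subgraph of $G_t$ induced on $V_K := V_{G_t}\setminus V_{\core_t}$ is a tree attached to $\core_t$ by exactly one cross-edge. That $V_K$ induces a forest is easy: a cycle in $V_K$ would contain a first-removed vertex whose two cycle-neighbours are still present at the moment of removal, contradicting the degree-one condition for removal. To count cross-edges of a tree component $T$ with vertices $w_1,\dots,w_m$ labelled in removal order, let $c_i$ denote the number of cross-edges from $w_i$ to $\core_t$ and $r_i$ the number of $T$-neighbours of $w_i$ removed strictly after $w_i$. The degree-one-at-removal condition reads $c_i + r_i = 1$, and summing over $i$ together with the identity $\sum_i r_i = |E(T)| = m-1$ (each edge of $T$ contributes to exactly one $r_i$) yields $\sum_i c_i = 1$. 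Once this is established, any simple path entering $T$ through its unique cross-edge cannot exit, so no simple path between two vertices of $\core_t$ visits $V_K$; in particular $P \subset \core_t$.

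Given $P \subset \core_t$, the pigeonhole is immediate. Since $v \in \Zz_t \cap L_0$, the definition of $\badSet_t$ gives $B_{G_t}(v, n^\varepsilon) \subset \badSet_t$, so the endpoint of $P$ in $\badSet_t^c \cap \core_t$ is at $G_t$-distance strictly greater than $n^\varepsilon$ from $v$, and $P$ therefore has more than $n^\varepsilon$ vertices. At most $n^\delta$ of these have degree $>2$ in $\core_t$ (by hypothesis) and at most two have degree $1$ (by the remark in the paper that $\core_t$ carries at most two non-dead vertices of degree one), so the set $I$ of ``bad'' vertices along $P$ satisfies $|I|\le n^\delta + 2$ and partitions $P$ into at most $|I|+1$ maximal runs of degree-$2$ vertices. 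The longest such run has length at least
\[
\frac{n^\varepsilon - n^\delta - 2}{n^\delta + 3},
\]
which exceeds $\log^2 n$ for $n$ large, since $\delta < \varepsilon$. The only genuine difficulty is the first step, encapsulated by the identity $\sum_i c_i = 1$; everything else is elementary counting, and the argument uses no probabilistic input and no feature of the Hamming graph beyond the definitions of $\core_t$ and $\badSet_t$.
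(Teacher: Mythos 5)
Your proof is correct and follows essentially the same two-step outline as the paper: first confine the simple path to $\core_t$, then apply pigeonhole using the bound on high-degree vertices. The one place you add genuine value is in making rigorous the paper's terse claim that the path cannot leave and re-enter $\core_t$ (the paper just remarks that this would force a repeated vertex); your identity $\sum_i c_i = 1$, obtained from $c_i + r_i = 1$ and $\sum_i r_i = |E(T)| = m-1$, is a clean way to see that each trimmed tree hangs off $\core_t$ by exactly one cross-edge, and you also explicitly account for the at-most-two degree-one vertices of $\core_t$, which the paper's pigeonhole count silently ignores.
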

\begin{proof}
By the definition of the bad set any path connecting $v$ with $\badSet^c_{t}$ has length $n^{\varepsilon}$. Note that if the end vertex of the path is in $\core_{t}$ then the whole path is fully contained in $\core_{t}$ (otherwise to return to $\core_{t}$ the path would have to repeat one of its vertices). Dividing the path into consecutive subpaths of length $\log^2 n$ we obtain $n^{\varepsilon} / \log^2 n$ subpaths. Since $n^{\delta} < n^{\varepsilon} / \log^2 n$ for $n$ large enough and we assumed there are at most $n^{\delta}$ vertices of degree more than $2$ in $\core_{t}$, there must be at least one subpath with only vertices of degree $2$.
\end{proof}

Let $P$ be a path in $\core_{t-}$ consisting only of vertices of degree two in $\core_{t-}$. We will call such a path \emph{straight}. If $t < \tau_c$, we will call the \emph{potential of $P$} the total Lebesgue measure of these parts of bars corresponding to vertices of $P$ which have not been explored up to time $t$. For $t \ge \tau_c$ we set the potential to be zero. Formally, if we denote the potential of $P$ by $\PP(P)$, we have
\begin{displaymath}
  \PP(P) := \Ind{\{t< \tau_c\}}{\rm Leb}\Big( (V(P) \times [0,1))\setminus \crw_{[0,t]}\Big),
\end{displaymath}
where $V(P)$ is the set of vertices of $P$. We stress that this notion as well as the property of being a straight path depends strongly on $t$, which is suppressed in the notation but should not lead to misunderstanding in the sequel.

Suppose that $t < \tau_{c}$. Let $t_1(P)$ be the time the CRW entered a vertex of $P$ for the first time, and $t_2(P)\le t$ the time when it left a vertex of $P$ for the last time before $t$. Because of the property \eqref{eq:potential} at time $t$ the potential of $P$ equals to $\PP_{t_{2}(P)-} - \PP_{t_{1}(P)}$.

Given $b > 0$, we will say that $P$ has \emph{large potential with constant} $b$ if
\begin{equation}\label{eq:large-potential}
\PP(P)\geq b |P|,
\end{equation}
where $|P|$ is the length of $P$, i.e., the number of its vertices minus one. Later on the parameter $b$ will be fixed and we will simply use the term ``large potential'', with $b$ being implicit.

We now introduce the event
\begin{equation}\label{eq:straight-paths}
\mathcal{Q}_{t}(b) := \left\{ \mbox{all straight paths in $\core_{t}$ of length $\lceil \log^2 n\rceil$ have large potential with constant $b$} \right\}.
\end{equation}

\begin{lemma}[Straight paths have large potential]\label{lm:all-paths-are-good}
There exist $C, c > 0$ and $b > 0$ such that
\[
\Pp\left(\forall_{t < T\wedge \tau_c } \mathcal{Q}_{t}(b) \right) \geq 1 - C e^{- c \log^2 n}.
\]\end{lemma}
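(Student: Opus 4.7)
The plan is to fix a candidate length-$L$ straight path $P$ (with $L = \lceil \log^2 n \rceil$), bound the probability that its potential is small via Lemma \ref{lm:good-potential}, and then union-bound over the polynomially many such candidates, using the structural control on $\core_t$.

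For a fixed simple path $P = (v_0, \ldots, v_L)$ in the Hamming graph, let $\eta := t_1(P)$, a stopping time, and set $s := (1-b)L$ where $b \in (0, a(1+a)^{-1})$ is to be chosen ($a$ is the constant from Lemma \ref{lm:good-potential} for the given $\beta_0$). Applying \eqref{eq:good_potential_eq3} with these $\eta,s$ and taking expectations to remove the conditioning yields
\[
\p\bigl(\exists u \in [s, T - \eta]\colon \PP_{\eta+u} - \PP_{\eta} + 1 \le as,\; \tau_c \ge \eta + s\bigr) = O(e^{-c\log^2 n}).
\]
I claim that the event ``$P$ is straight in $\core_{t^-}$ for some $t < T \wedge \tau_c$ and $\PP(P) < bL$'' is contained in the above. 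Indeed, using the identity $\PP(P) = \PP_{t_2(P)^-} - \PP_{t_1(P)}$ stated just before the lemma, the formula $\PP_u = |\Zz_u| - u$ for $u < \tau_c$, and the containment $V(P) \subset \Zz_{t_2(P)^-} \setminus \Zz_{t_1(P)^-}$ (so $|\Zz_{t_2(P)^-}| - |\Zz_{t_1(P)}| \ge L$), one finds $t_2(P) - t_1(P) \ge L - \PP(P) > (1-b)L = s$, hence $u := t_2(P) - t_1(P)$ lies in $[s, T-\eta]$. Moreover $\PP_{\eta+u} - \PP_\eta + 1 \le bL + 1 \le as$ for $n$ large (this is where $b < a(1+a)^{-1}$ enters), and $\tau_c > t_2(P) \ge \eta + s$. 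So the bad event in the displayed inequality is realized, giving an $O(e^{-c\log^2 n})$ bound for each fixed $P$.

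The remaining step is the union bound over candidate paths. A crude enumeration of length-$L$ paths in $G_T$ gives $e^{\Theta(\log^3 n)}$ candidates, far too many. The way out is to exploit the degree-two constraint: at any fixed time $t$, a straight path in $\core_t$ is uniquely determined by its start vertex and first bridge, because every intermediate vertex has exactly two core-bridges. By a Poisson tail bound on the number of bridges incident to a given vertex, the maximum such count is at most $n^\varepsilon$ with probability $1 - Ce^{-cn^\varepsilon}$, yielding at most $n^{2+\varepsilon}$ (start, first bridge) pairs. The core $\core_t$ changes only at the CRW jump times, of which there are $O(|X|) = O(n^2)$ whp, and each maximal straight path contributes at most $n^2$ sub-paths of length $L$. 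Multiplying these counts gives $n^{O(1)}$ total candidates, so the per-path bound $O(e^{-c\log^2 n})$ survives the union bound to yield the claimed $1 - Ce^{-c'\log^2 n}$.

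The main obstacle is precisely this enumeration: without the degree-two constraint inside $\core_t$, the candidate set would be super-polynomial and would overwhelm the per-path probability estimate. The polynomial count relies crucially on Lemma \ref{lm:internal-hits-degree3} (which bounds the total excess degree, and hence the number of branching vertices, in the core) together with the Poisson bound on bridges per vertex; carrying out this enumeration in a way that is uniform over the time-varying core $\core_t$, so that a single union bound handles all $t < T \wedge \tau_c$ simultaneously, is the delicate part of the argument.
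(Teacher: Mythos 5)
Your per-path estimate is essentially correct and uses the same key ingredient as the paper, namely \eqref{eq:good_potential_eq3} from Lemma \ref{lm:good-potential} applied at the stopping time $t_1(P)$. The arithmetic reduction (showing that a low-potential straight path forces $\PP_{\eta+u}-\PP_\eta+1\le as$ for some $u\ge s$) is also sound. The problem is the union bound.

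Your reduction to ``polynomially many candidates'' does not yield a valid union bound, because the candidate set is random. You enumerate by (jump time, start vertex, first bridge, offset), and argue this set has size $n^{O(1)}$ whp. But a union bound $\p(\exists P\ \text{bad})\le\sum_P\p(P\ \text{bad})$ requires $P$ to range over a \emph{deterministic} index set; you cannot multiply a per-fixed-$P$ probability by the cardinality of a random collection. To make the bound deterministic you would have to sum over all length-$L$ paths in $H_n$, of which there are $e^{\Theta(\log^3 n)}$, and this overwhelms the $e^{-c\log^2 n}$ per-path estimate --- exactly the obstruction you flagged as ``delicate'' but did not resolve. Conditioning on the event that only polynomially many paths are straight in some $\core_t$ does not help either, since that conditioning is itself a random event correlated with the potential process. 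The paper sidesteps enumeration of paths entirely: it observes that a bad straight path (entered at $t_1(P)=T_k$, exited at $T_{k+l}$) forces a small potential increment between the discovery times $T_k$ and $T_{k+l}$, and $T_k$ is a stopping time indexed by a deterministic $k\le n^2$. The quantifier over $l$ is absorbed into the ``$\exists u$'' form of \eqref{eq:good_potential_eq3} for the main contribution, and the residual case ($T_{k+l}-T_k$ too small) is handled by Lemma \ref{lm:fresh-vertices-are-not-often} with geometric decay in $l$. This gives a union bound over only $n^2$ deterministic indices, which is what makes the argument close. Note also that in this organization the degree-two structure of $\core_t$ and Lemma \ref{lm:internal-hits-degree3} are not needed for this particular lemma (they enter elsewhere, in Lemma \ref{lm:long-path} and Lemma \ref{lm:core-bad-ball-is-small}); your proposal invokes them in a role they do not actually play in the paper's proof.
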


\begin{proof}
Let $\gamma = 1 / 4 \Theta \beta_{1}$, fix arbitrary $a < \Theta^{-1} \beta_{0} - 1/2$ and let $b = a\gamma$. Suppose that there exists $t < T\wedge \tau_c$ and a straight path $P$ in $\core_{t}$ of length $m = \lceil \log^2 n\rceil$ which does not have large potential. In particular this implies that there exist $l \geq 1$ and $k \geq m$ such that
\begin{equation}\label{eq:potential-b}
\mathcal{P}_{T_{l+k}-} - \mathcal{P}_{T_{l}} < b m.
\end{equation}
Indeed, we can take $t_1(P) = T_l$ and let $T_{l+k} \leq t$ be the moment the CRW enters the other end vertex of $P$.
Note that $k \geq m$ since the CRW may have traversed some dead trees between  $T_l$ and $T_{l+k}$.

The lemma will be shown once we prove that
\begin{equation} \label{eq:goal_17}
	p := \Pp\left( \exists_{l\geq 1} \exists_{k\geq m } \mathcal{P}_{T_{l+k}-} - \mathcal{P}_{T_{l}} < b m  \;\textrm{and}\; T_{l+k}\leq T\right) \leq C e^{- c \log^2 n},
\end{equation}
for some $C,c > 0$. Note that writing $T_{l+k}\leq T$ we implicitly assume that the CRW visits at least $l+k$ vertices. Now, if the event in the definition of $p$ holds, then either after some time $T_l$ we have discovered at least $k \geq m$ additional vertices very quickly (so that $T_{l+k} - T_{l} \leq \gamma m$ and $T_{l+k}\leq T$), or after $T_l$ the potential failed to increase by $bm$ in a time interval of length at least $\gamma m$. Thus we may estimate
\begin{multline*}
	p \leq  \Pp\left( \exists_{l\geq 1} \exists_{ u \in[ \gamma m, \, T - T_{l}] } \mathcal{P}_{T_{l}+u} - \mathcal{P}_{T_{l}} < b m \;\textrm{and} \; \tau_c \geq T_l +\gamma m\right) \\+ \Pp(\exists_{l\geq 1} \exists_{k\geq m } \, T_{l+k} - T_{l} \leq \gamma m \;\textrm{and} \;T_{l+k}\leq T ).
\end{multline*}

Note that we have chosen $\gamma$ small enough so that by Lemma \ref{lm:fresh-vertices-are-not-often} we get
\begin{displaymath}
\Pp(\exists_{l\geq 1} \exists_{k\geq m } \, T_{l+k} - T_{l} <  \gamma m) \le \sum_{l=1}^{n^2} \sum_{k \ge m} e^{-ck} \le C_1 e^{-c_1 m}.
\end{displaymath}

Thus
\begin{equation*}
	p \leq  \Pp\left(\exists_{l\geq 1} \exists_{u \in[ \gamma m, \, T - T_{l}] } \, \mathcal{P}_{T_{l}+u} - \mathcal{P}_{T_{l}} < b m \;\textrm{and}\; \tau_c \geq T_l +\gamma m \right) + C_1 e^{-c_1 m},
\end{equation*}
for some $C_1, c_1 > 0$. Now using a union bound over $l$ (observe that $T_l = \infty$ for $l > n^2$) and \eqref{eq:good_potential_eq3} from Lemma \ref{lm:good-potential} (recall that $b = a \gamma$) we obtain \eqref{eq:goal_17}, thus concluding the proof.
\end{proof}

From now on we fix $b$ to be the constant guaranteed by Lemma \ref{lm:all-paths-are-good}. Let $S_t$ be the set of endpoints of all straight paths in $\core_{t-}$ with potential at time $t$ at least $b\log^2 n$ and let $\rho_k$ denote the $k$-th moment $t$ when the CRW enters a vertex from $S_{t}$ by a backtrack. More precisely, set $\rho_0 = 0$, and for $k \ge 1$,
\begin{displaymath}
   \rho_k = \inf\{t > \rho_{k-1}\colon \textrm{$\X_t \in S_t, \X_{t-} \notin S_t$ and the bridge $(\{\X_t,X_{t-}\},t)$ has been traversed before time $t$}\}.
\end{displaymath}

Note that $\rho_k$ are stopping times and $\rho_k < \tau_c$ (since if $t> \tau_c$, then $S_t =\emptyset$). Moreover, the minimal path in $\core_{\rho_k-}$ with potential at time $\rho_k$ at least $b\log^2 n$, and starting at $\crw_{\rho_k}$, is uniquely determined. Let us denote this path by $P_k$.

Let us say that the CRW completely covers $P_k$ if after entering its end vertex by a backtrack it eventually exhausts all the bars corresponding to vertices of $P_k$, possibly departing from them at some intermediate time intervals. Denote by $\mathcal{A}_k$ the event that $\rho_k < \infty$, the CRW completely covers $P_k$ before time $T$ and while traversing $P_k$ it does not make an excursion of length $n^\varepsilon$ (recall Definition  \ref{def:excursion}). Formally,
\begin{displaymath}
\mathcal{A}_k = \bigcup_{t\in (0,T]} \Big( \{t > \rho_k\}\cap \{P_k\times [0,1) \subset \crw_{[0,t]}\}\cap \bigcap_{s \in (0,t]} (\mathcal{E}_{s}(n^\varepsilon)\cap \{s> \rho_{k} \})^c\Big).
\end{displaymath}
\begin{proposition}[No straight paths of large potential are covered without excursions]\label{prop:backtrack}
There exist $ C, c  > 0$ such that
\begin{displaymath}
  \p(\bigcup_{k=1}^\infty \mathcal{A}_k) \le Ce^{-c\log^2 n}.
\end{displaymath}
In other words, the probability that before time $T$ the CRW completely covers some straight path of potential (at the time of entry) at least $b\log^2 n$ without making an excursion of length $n^\varepsilon$ in the process, is bounded by $Ce^{-c\log^2 n}$.
\end{proposition}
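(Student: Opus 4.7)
The overall plan is to bound $\p(\mathcal{A}_k)$ uniformly in $k$ by $Ce^{-c\log^2 n}$ and then take a union bound. Since each $\rho_k \le T$ corresponds to a distinct backtracking instant, at most $2|X|$ values of $k$ are relevant, and Poisson concentration for $|X|$ under $\mathcal{B}$ together with the Lipschitz tilting comparison to $\mu_{\beta,\theta,\mathcal{C}}$ gives $|X| = O(n^2)$ with overwhelming probability, so the union bound loses only a polynomial factor.

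For fixed $k$, I work on $\{\rho_k < \infty\}$ and recall that by the definitions of $S_t$ and $P_k$ the unexplored part of the $P_k$-bars at time $\rho_k$ has total Lebesgue measure at least $b\log^2 n$. Full coverage of $P_k$ by some time $t^* \le T$ forces the CRW to traverse every point of every $P_k$-bar, so on this coverage event the CRW spends at least $b\log^2 n$ units of wall-clock time during $(\rho_k, t^*]$ moving on points that were initially unexplored. I define stopping times $\sigma_0 = \rho_k$ and, inductively, $\sigma_{j+1}$ as the first time after $\sigma_j$ at which the CRW, while on an initially unexplored point of some $P_k$-bar, performs a fresh jump to a new vertex outside $L_0$. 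Applying Lemma \ref{le:intensity} with $Q_t = V \setminus (\Zz_t \cup L_0)$ together with Assumption \ref{lm:isoperimetry} via Corollary \ref{cor:isoperimetry-conditional}, the intensity of the counting process $N_t := \#\{j \ge 1 : \sigma_j \le t\}$ during such intervals is bounded below by $\Theta^{-1}\beta - o(1)$, hence by a positive constant since $\beta > \Theta/2$. A concentration estimate for counting processes (Lemma \ref{lm:concentration-lower-bound-super}) then yields that with probability at least $1 - e^{-c\log^2 n}$, on the coverage event at least $M := c_0 \log^2 n$ of the $\sigma_j$'s fall in $(\rho_k, t^*]$.

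At each $\sigma_j$ the CRW has just discovered a new vertex outside $L_0$ with $\sigma_j \le T$, so Corollary \ref{cor:random-stopping-time} produces an $\filF_{\sigma_j}$-measurable event $\mathcal{H}_j$ with $\p(\mathcal{H}_j^c) \le Ce^{-c\log^2 n}$ on which $\p(\mathcal{E}_{\sigma_j}(n^\varepsilon) \mid \filF_{\sigma_j}) \ge q$. Writing $B_j := \mathcal{E}_{\sigma_j}(n^\varepsilon)^c$ and iterating the tower property (using $\mathcal{H}_j \in \filF_{\sigma_j} \subset \filF_{\sigma_{j+1}}$), one obtains
\begin{displaymath}
\p\Bigl(\bigcap_{j \le M} (B_j \cap \mathcal{H}_j)\Bigr) \le (1-q)^M.
\end{displaymath}
On $\mathcal{A}_k$, all the events $B_j$ with $\sigma_j \le t^*$ hold, so combining this bound with the concentration estimate on $N_{t^*}$ and the union bound $\p(\bigcup_{j \le M} \mathcal{H}_j^c) \le M C e^{-c\log^2 n}$ yields $\p(\mathcal{A}_k) \le C' e^{-c' \log^2 n}$. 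The outer union bound over $k$ then completes the proof.

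The main obstacle is rigorously producing the lower bound on $N_{t^*}$: one has to isolate the wall-clock time the CRW spends on initially unexplored $P_k$-bar points from time spent on short backtracks within $P_k$, short excursions of length below $n^\varepsilon$ (which are not excluded by $\mathcal{A}_k$), and rare internal, bad, or direct jumps to $L_0$. These latter events are handled respectively by Lemma \ref{lm:internal-hits}, Lemma \ref{lm:bad-hits-process}, and the $O(1/n)$ intensity bound on direct jumps to $L_0$; each contributes only a $Ce^{-c\log^2 n}$ error that can be absorbed into the final bound.
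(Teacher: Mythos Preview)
Your approach is essentially the paper's: union bound over polynomially many $k$, then for fixed $k$ show that covering $P_k$ forces $\Theta(\log^2 n)$ fresh jumps to new vertices outside $L_0$, and at each such jump Corollary~\ref{cor:random-stopping-time} gives a uniform chance $q$ of an excursion, so iterated conditioning yields $(1-q)^{c\log^2 n}$.

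The one implementation difference worth noting is that the paper dissolves the ``main obstacle'' you describe by an explicit time change: it sets $H_t = \int_0^t \Ind{P_k}(\crw_{\rho_k+s})\,ds$, takes the right-continuous inverse $\sigma_s$, and works with $N_s = M_{\sigma_s}$ in the filtration $(\mathcal{F}_{\rho_k+\sigma_s})_{s\ge 0}$. In that frame the intensity of $N$ is uniformly $\ge c'$ on $\{\tau_{iso}^\alpha > T\}$, so Lemma~\ref{lm:concentration-lower-bound-super} applies directly and there is nothing to ``isolate''. In particular Lemmas~\ref{lm:internal-hits} and~\ref{lm:bad-hits-process} and the direct-jump intensity bound you invoke in your last paragraph are not needed here at all; the only inputs are Assumption~\ref{lm:isoperimetry} (for the intensity lower bound) and Corollary~\ref{cor:random-stopping-time} (for the excursion probability).
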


\begin{proof}
Note first that for some $C > 0$ we have $\p(\rho_{Cn^2} < \infty) \le e^{-n^2}$. Indeed, each $\rho_k$ involves a backtrack and before the CRW closes into a cycle each bridge can be used in at most one backtrack, so the number of moments $\rho_k$ is bounded by the total numer of bridges in the process. It follows from Lemma \ref{le:auxilliary-poisson} (applied with $t = 0$, $s = 1$ and $k = Cn^2$ for suitably chosen $C > 0$) that this number is at most $Cn^2$, for some $C > 0$, with the required probability.

Thus, as we can perform a union bound over $k\leq Cn^2$, it is enough to show that for every $k \geq 1$ we have $\p(\mathcal{A}_k) \leq Ce^{-c\log^2 n}$ for some $C,c > 0$.

To simplify the notation, in what follows we will drop the subscript $k$ and write simply $\rho, P$ for $\rho_k, P_k$. We will also identify $P$ with $(V(P)\times [0,1))\setminus \crw_{[0,\rho]}$, i.e., with the part of all the bars corresponding to vertices from $P$ which at time $\rho$ was unused.

Let us first introduce a change of time to merge into one interval all the random intervals of time during which the CRW stays on $P$. Define
\begin{displaymath}
  H_t = \int_0^t \Ind{P}(\mathcal{X}_{\rho+s}) \, ds,
\end{displaymath}
where in the case $\rho = \infty$ we interpret $\Ind{P}(\mathcal{X}_{\rho+s})$ as zero (we will use this convention throughout the proof). The process $H_t$ measures how much of the potential that the path $P$ had at time $\rho$ has been used up to time $t$. In particular $P$ gets completely backtracked up to time $t$ if and only if $H_{t-\rho}$ equals the potential of the path at time $\rho$.

Define now $\sigma_0 = 0$ and for $s > 0$
\begin{displaymath}
  \sigma_s = \left(\inf\{t>0 \colon H_t \ge s\}\right)\wedge (T-\rho)_{+}.
\end{displaymath}

Consider also the processes
\begin{itemize}
\item $J_s = |\Zz_s\setminus L_0|$,
\item
$M_t = \int_{0}^{t} \Ind{P}(\mathcal{X}_{(\rho + s)-}) \, dJ_{\rho + s}$ -- the number of jumps to previously unexplored vertices outside $L_0$ that the CRW makes from $P$ between times $\rho$ and $\rho+t$.

\item $N_s = M_{\sigma_s}$ -- the number of jumps to previously unexplored vertices outside $L_0$ the CRW makes during the first $s$ time units spent on $P$ during the backtrack.
\end{itemize}

Note that $\sigma_s$ is a stopping time with respect to the filtration $(\mathcal{F}_{\rho+t})_{t\ge 0}$. Moreover, if $\sigma_s < T - \rho$, then $\mathcal{X}_{(\rho+\sigma_s)-} \in P$.

Our strategy for proving that $\p(\mathcal{A}_k)$ is small is as follows. First we will show that on the set $\mathcal{A}_k\cap\{ \tau^\alpha_{iso}> T\}$ we have $N_{b  \log^2 n} > c\log^2 n$, i.e., the CRW makes many jumps to previously unexplored vertices while backtracking $P$. Then, to finish the argument, we will use Lemma \ref{lm:excursions} to show that the probability that none of those jumps is a beginning of a forbidden excursion is small.

Let us thus first estimate the intensity of $N_t$ for $t \le b \log^2 n$ with respect to the filtration $(\mathcal{F}_{\rho+\sigma_s})_{s\ge 0}$ on the event $\{ \sigma_{b \log^2 n} <  T-\rho\} \cap \{\tau^\alpha_{iso}>T\}$ .

Denote by $\mu^J_t$ is the intensity of $J_t$ and note that by Doob's theorem and the properties of integrals with respect to counting processes, the intensity of $M_t$ with respect to $(\mathcal{F}_{\rho+t})_{t\ge 0}$ equals $\Ind{P}(\mathcal{X}_{(\rho+t)-})\mu^J_{\rho+t}$.

Now, again we employ Doob's theorem together with a change of variables (note that function $s\mapsto \sigma_s$ is constant on intervals where $\Ind{P}(\mathcal{X}_{(\rho+\sigma_s)-})$ vanishes and otherwise increases linearly with speed one) to conclude that the intensity of $N_s$ with respect to $(\mathcal{F}_{\rho+\sigma_s})_{s\ge 0}$ equals
\begin{displaymath}
  \widetilde{\mu}_{s} = \Ind{P}(\mathcal{X}_{(\rho+\sigma_s)-})\mu^J_{\rho+\sigma_s}.
\end{displaymath}
Therefore we can use the same argument as in the proof of \eqref{eq:intensity-mu} in Lemma \ref{cor:intensity} to conclude that if $T < \tau_{iso}^\alpha$, then for Lebesgue almost all $s$ such that $\rho + \sigma_s < T \wedge \tau_c$ we have 
\begin{displaymath}
\widetilde{\mu}_s \ge \frac{\Theta^{-1} \beta}{n-1}(n - 2 - n^\alpha) \ge c' > 0
\end{displaymath}
for some $c' > 0$.
Since $\mathcal{A}_k \subset \{ \rho + \sigma_{b \log^2 n } \leq T \wedge \tau_{c}\}$ we have
\begin{displaymath}
  \Lambda_{b \log^2 n } := \int_0^{b \log^2 n} \widetilde{\mu}_s \, ds \ge c' b\log^2 n
\end{displaymath}
on the event $\mathcal{A}_k\cap \{\tau^\alpha_{iso}>T\}$ and thus by Lemma \ref{lm:concentration-lower-bound-super} and Assumption \ref{lm:isoperimetry}, we get for some constants $C, c>0$
\begin{align}\label{eq:can't-have-few-jumps}
  \p(\mathcal{A}_k \cap \{N_{b \log^2 n } \le c\log^2n\}) \le Ce^{-c\log^2 n}.
\end{align}

It remains to bound the probability $\p(\mathcal{A}_k \cap \{N_{b \log^2 n} > c\log^2n\})$.

Denote thus by $\gamma_1,\gamma_2,\ldots$ the times of subsequent jumps from $P$ to previously unexplored vertices outside $L_0$ made after time $\rho$. The condition $N_{b \log^2 n} > c\log^2n$ translates into $\gamma_{\lceil c\log^2 n \rceil} \le \rho + \sigma_{b\log^2 n}$. Thus $\mathcal{A}_k \cap \{N_{b \log^2 n} > c\log^2n\} \subset \mathcal{A}_k \cap \{\gamma_{\lceil c\log^2 n \rceil} < T\}$.

Let
\[
	J := \inf \left\{r\ge 1\colon \Pp\left( \EE_{\gamma_{r}}(n^\varepsilon) | \mathcal{F}_{\gamma_r} \right) < q\cdot \Ind{\{\mathcal{X}_{\gamma_r} \notin L_0\}}\Ind{\cbr{\gamma_r \leq T}}\right\}.
\]
with $q$ as in Corollary \ref{cor:random-stopping-time}. $J$ is a stopping time with respect to the discrete time filtration $(\mathcal{F}_{\gamma_r})_{r\ge 1}$. By Corollary \ref{cor:random-stopping-time}, for each $r$ the probability that the inequality in the definition of $J$ holds is bounded by $Ce^{-c\log^2n}$. Taking the union bound over $r \le \lceil c\log^2 n\rceil$ we get $\p(J \le \lceil c\log^2 n\rceil) \le C'e^{-c'\log^2 n}$ for some $C', c' > 0$.

We can now estimate
\begin{align*}
	\p(\mathcal{A}_k \cap &\{N_{b \log^2 n} > c\log^2n\})  \le \p(\mathcal{A}_k\cap\{\gamma_{\lceil c\log^2 n\rceil} < T\}) \\
	&\le \p\left(\bigcap_{r=1}^{\lceil c\log^2n \rceil} \mathcal{E}_{\gamma_r}(n^\varepsilon)^c \cap \left\{\gamma_{\lceil c\log^2 n\rceil} \leq T\right\}\cap \left\{J > \lceil c
  \log^2 n\rceil\right\}\right) + C'e^{-c' \log^2 n}\\
&\le (1-q)^{c\log^2 n} + C'e^{-c' \log^2 n} \le 2 e^{-c''\log^2 n},
\end{align*}
where the third inequality is obtained by a sequence of conditionings with respect to $\mathcal{F}_{\gamma_r}$, $r = \lceil c\log^2 n\rceil,\ldots,1$. Together with \eqref{eq:can't-have-few-jumps} this shows that for all $k$,
\begin{displaymath}
  \p(\mathcal{A}_k) \le Ce^{-c\log^2 n},
\end{displaymath}
which ends the proof of the proposition.
\end{proof}

\end{subsection}

\begin{subsection}{Isoperimetry upper bound}\label{sec:proof-of-isoperimetry}
Let $\tau_{0} = 0$ and for $k \geq 1$ let
\begin{equation*}\label{eq:tau_k}
\tau_{k} := \inf\{t > \tau_{k-1} \colon \X_{t} \in L_{0}, \, \X_{t} \notin \Zz_{t-} \}
\end{equation*}
be the $k$-th time a new vertex from $L_{0}$ is visited by $\X$.

First we present the main technical lemma stating that the CRW does not visit $L_0$ too often. This result will be used to prove the forthcoming Corollary~\ref{cor:intersections-with-l0} claiming good isoperimetry.

\begin{lemma}\label{lm:main-lemma}
There exists $C, c, p>0 $ such that for any $k \in \N$
\[
\Pp\left(  \tau_{k + 1} - \tau_{k} >  n \wedge (T - \tau_{k})  | \Ff_{\tau_{k}} \right) \geq p \cdot \id_{\{\tau_k \leq T - 1\}}
\]
holds with probability at least $1 - \wsp$.
\end{lemma}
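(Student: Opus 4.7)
The plan is to decompose the evolution of the CRW after $\tau_k$ into three successive phases: the first two yield a positive conditional probability of reaching a favourable configuration in bounded time, and the third guarantees that from this configuration no new $L_0$-vertex is discovered during the next $n$ time units. Throughout we work on $\{\tau_k \le T - 1\}$ and condition on $\mathcal{F}_{\tau_k}$.

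\textbf{Phases I and II: leaving $L_0$ and making an excursion.} Since $\mathcal{X}_{\tau_k}$ is a freshly discovered vertex of $L_0$, we have $\tau_k = T_j$ for some $j$. By Corollary \ref{cor:random-stopping-time} applied at $\tau_k$, with $\mathcal{F}_{\tau_k}$-conditional probability at least $p_1 > 0$ the next new vertex is entered before time $\tau_k + 1$ and lies in the column $D(\mathcal{X}_{\tau_k})$, hence outside $L_0$; call this time $\tau_k'$. Applying Corollary \ref{cor:random-stopping-time} once more at $\tau_k'$ (noting $\mathcal{X}_{\tau_k'}\notin L_0$), with conditional probability at least $p_2 > 0$ the CRW performs an excursion $\mathcal{E}_{\tau_k'}(n^\varepsilon)$, ending at some time $\eta$ at graph distance $n^\varepsilon$ from $\mathcal{X}_{\tau_k'}$, without having revisited any prior vertex during the excursion. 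Both constant lower bounds hold on an event of $\mathcal{F}_{\tau_k}$-conditional probability at least $1-Ce^{-c\log^2 n}$ by Lemma \ref{le:easy-lemma}.

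\textbf{Phase III: no new $L_0$-vertex in the next $n$ time units.} It suffices to show that, with $\mathcal{F}_\eta$-conditional probability bounded below by a constant, the CRW does not enter $L_0$ at all during $[\eta,(\eta+n)\wedge T]$. Any visit to $L_0$ happens either by a fresh jump from outside $L_0$ (a direct jump, possibly landing at an old vertex) or by a backtrack across a previously used bridge from a neighbour of $L_0$ in $G_t$. For the first mechanism, whenever $\mathcal{X}_t\notin L_0$ we have $|A_t\cap L_0|\le 1$, so by Lemma \ref{le:intensity} the intensity of direct jumps to $L_0$ is at most $\Theta\beta/(n-1)$; integrated over $[\eta,\eta+n]$ this is $O(1)$, and a comparison with a Poisson process (via the appendix concentration lemmas) gives a uniform constant $p_3>0$ lower bound for the probability of no direct jump to $L_0$ in this interval. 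For the second mechanism, the starting vertex of the backtrack lies in $\badSet_t$; to be there without having made a fresh bad jump, the CRW must have travelled across $G_t$ from $\mathcal{X}_\eta$ into the bad set. On the high-probability intersection of Assumption \ref{lm:isoperimetry}, Proposition \ref{prop:bad-ball-is-small}, Lemma \ref{lm:internal-hits-degree3} and Lemma \ref{lm:all-paths-are-good}, Lemma \ref{lm:long-path} shows that any path in $G_t$ from outside $\badSet_t$ to $L_0\cap\core_t$ contains a straight subpath of length $\lceil\log^2 n\rceil$ with potential at least $b\log^2 n$; by Proposition \ref{prop:backtrack} such a subpath cannot be completely covered without the CRW making an excursion of length $n^\varepsilon$, which by definition stays outside $L_0$. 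Together with Lemma \ref{lm:bad-hits-process} applied at $\eta$ (giving probability at most $n^{-\delta}$ of any bad jump in $[\eta, T]$, on an event of probability $\ge 1-Ce^{-c\log^2 n}$), this rules out the second mechanism with probability $1-Ce^{-c\log^2 n}$.

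Multiplying the constant lower bounds $p_1 p_2 p_3$ from Phases I, II and the direct-jump estimate, and absorbing the $Ce^{-c\log^2 n}$ failure probabilities from bad jumps, backtracks and the structural events, yields the conclusion with $p = p_1 p_2 p_3/2$, holding with the required high probability. \textbf{Main obstacle.} The most delicate step is the backtrack estimate: one must turn the purely geometric fact of Lemma \ref{lm:long-path} (any approach to $L_0$ from outside $\badSet_t$ must cross a long straight subpath) into a dynamical bound on the trajectory of the CRW, which is achieved by combining Lemma \ref{lm:all-paths-are-good} with Proposition \ref{prop:backtrack}. A secondary, purely technical obstacle is the conditioning: extracting true conditional-probability lower bounds from the Poisson comparison in the direct-jump estimate and from Phases I, II requires applying the intermediate lemmas at the appropriate stopping times and using Lemma \ref{le:easy-lemma} to convert $\mathbb{P}$-level estimates into $\mathcal{F}_{\tau_k}$-conditional ones.
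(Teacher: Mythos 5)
Your proposal matches the paper's proof closely: the paper also proceeds by (a) using Lemma \ref{lm:l-jumps} (equivalently, the first half of Corollary \ref{cor:random-stopping-time}) to hop to a fresh same-column vertex off $L_0$, (b) using the excursion estimate of Corollary \ref{cor:random-stopping-time} to exit the bad set, and (c) from the exit time (the paper's $\sigma$, your $\eta$) showing that neither a direct jump nor a backtrack returns the CRW to $L_0$ within $n$ time units, with the direct-jump bound coming from the $O(1/n)$ intensity and the backtrack bound coming from Lemma \ref{lm:long-path}, Lemma \ref{lm:all-paths-are-good}, Lemma \ref{lm:bad-hits-process} and Proposition \ref{prop:backtrack}. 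One point you gloss over is the link between Proposition \ref{prop:backtrack} and the event ``backtrack reaches $L_0$'': Proposition \ref{prop:backtrack} only forbids covering a straight path \emph{without} an excursion, so one must still explain why reaching $L_0$ forces a cover without excursion. The paper does this by taking $t$ to be the \emph{last} time before the hit $\tau$ at which $\crw_t \notin \badSet_t$; between $t$ and $\tau$ no excursion can occur (an excursion would expel the CRW from the bad set, contradicting maximality of $t$), so the straight subpath guaranteed by Lemma \ref{lm:long-path} is indeed covered excursion-free and Proposition \ref{prop:backtrack} applies. Your write-up flags this as ``the most delicate step'' but does not supply this device, and also reports the resulting failure probability as $Ce^{-c\log^2 n}$ rather than the $n^{-\delta}$ coming from Lemma \ref{lm:bad-hits-process}; neither affects the final constant $p$, but both are worth stating precisely.
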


\begin{proof}
Throughout the proof we will use ``with high probability'' as a shorthand for ``with probability at least $1 - \wsp$'' for some constants $C,c > 0$ (whose values may change from line to line).

At time $\tau_{k}$ a new vertex $w \in L_{0}$ is visited (unless $k=0$ and $\crw_{0} \notin L_{0}$, which is easily dealt with below). Our first aim is to show that the CRW escapes from $L_0$ and then from the bad set (with conditional probability uniformly bounded away from $0$).
Define the stopping time $\sigma = \inf\{t > \tau_k\colon \mathcal{X}_t \notin \badSet_{t}\}$.
By Lemma \ref{lm:l-jumps} \whp~the CRW has a chance bounded away from $0$ of performing (in time less than $1$) a jump from $w$ to a previously unvisited vertex from the same column, thus leaving $L_{0}$. From the new vertex the CRW can, with probability bounded away from $0$, make an excursion described in Corollary \ref{cor:random-stopping-time}. Note that if this happens, then $\sigma < \tau_{k+1}$. Thus \whp~we have $\p(\sigma < \tau_{k+1}|\mathcal{F}_{\tau_k}) > q$ for some $q > 0$, independent of $n$.

Note that if $\tau_{k+1} > \sigma > T$ then, $\tau_{k+1} - \tau_k > \sigma - \tau_k > T - \tau_k$, so it is enough to prove that for some $q' > 0$  we have
\begin{equation}\label{eq:long-excursion-without-l0}
\Pp\left(\Zz_{[\sigma,(\sigma +  n)\wedge T]} \cap L_{0} = \emptyset | \Ff_{\sigma} \right) \geq q'\ind{\sigma \le T, \sigma < \tau_{k+1}},
\end{equation}
\whp. This also takes care of the case $k=0$, $\crw_{0} \notin L_{0}$, since then $\sigma = \tau_{0}$.

From now on we will implicitly work on the event $\{\sigma \le T, \sigma < \tau_{k+1}\}$. Denote by $\tau$ the time of the first hit of $L_0$ after time $\sigma$. This hit can happen either by a direct jump or by a backtrack. Denote the event that the former (resp. the latter) situation happens and $\tau_{k+1} \le (\sigma+n)\wedge T$ by $\RR$ (resp. by $\Kk$). Clearly on $\RR$ and $\Kk$ we also have $\tau \le (\sigma+n)\wedge T$. Moreover on $\Kk$ we have $\tau < \tau_{k+1} < \infty$, so in particular $\tau < \tau_c$.

Recall that $I_t^d$ is the number of  direct jumps to $L_0$ in the time interval $[0,t]$.
By Lemma \ref{le:intensity} the intensity $\lambda_t^d$ of $I_t^d$ is at most $\frac{\Theta \beta}{n-1}$ as long as the CRW is outside $L_{0}$, in particular on the interval $[\sigma,\tau)$ (the intensity can be $0$ if the only possible vertex in $L_{0}$ is dead).
Denoting $\Lambda_t = \int_0^t \lambda_t^d$, we thus get
\begin{displaymath}
\p(\RR|\mathcal{F}_\sigma) = \p\left(\left\{I_{\tau \wedge (\sigma+n)\wedge T}^d - I_\sigma^d \ge 1 \right\}\cap \left\{ \Lambda_{ \tau\wedge (\sigma+n)} - \Lambda_\sigma \le \frac{n}{n-1}\Theta \beta \right\} \Big|\mathcal{F}_\sigma\right)
\end{displaymath}
and by Lemma \ref{lm:concentration-upper-bound} the right hand side is bounded from above by $\p(X \ge 1)$, where $X$ is a Poisson random variable with parameter $\frac{n }{n-1} \Theta\beta \leq 2 \Theta \beta$. We conclude that almost surely $\Pp(\RR|\Ff_\sigma)$ is uniformly bounded away from $1$ or, equivalently, for some $q > 0$ we have
\begin{align}\label{eq:costamcostam}
\p(\RR^c|\mathcal{F}_\sigma) \ge q \Ind{\{\sigma \le T\wedge \tau_{k+1}\}} \geq q.
\end{align}

As $\Kk \subset \RR^c$, to obtain \eqref{eq:long-excursion-without-l0} it thus suffices to show that $\p(\Kk | \mathcal{F}_\sigma) = o(1)$ with high probability. We will first show that we can restrict to an event on which for all $t \le T$ the sets $\badSet_{t}$ have at most $n^{\varepsilon/2}$ vertices of degree greater than 2 in $\core_{t}$, there are no bad jumps for $t \leq \tau$, and all straight paths in $\core_{t}$ of length at least $\log^2 n$ have potential greater than $b\log^2 n$. To this end recall the notation of Lemmas \ref{lm:internal-hits-degree3}, \ref{lm:bad-hits-process} and \ref{lm:all-paths-are-good},
set $\delta = \varepsilon/2$ and define
\begin{displaymath}
 \mathcal{C} := \bigcap_{t \le T} \DD_t^\delta \cap \{I^{b}_{T} = 0\} \cap \bigcap_{t\le T}\mathcal{Q}_t (b),
\end{displaymath}
with $b$ as guaranteed by Lemma \ref{lm:all-paths-are-good}.

We see that $\mathcal{C}$ satisfies all the properties mentioned above and moreover by the aforesaid lemmas (and Lemma \ref{le:easy-lemma}), with probability at least $1-Ce^{-c\log^2 n}$ we have $\p(\mathcal{C}|\mathcal{F}_\sigma) \ge 1 - e^{-c\log^2 n}$. Thus it is enough to show that $\p(\Kk \cap \mathcal{C}|\mathcal{F}_\sigma) = o(1)$ with high probability.

By definition we have $\crw_\sigma \notin \badSet_{\sigma}$. Let
\begin{displaymath}
t = \max\{s \in  [\sigma,\tau) \colon \X_s \neq \X_{s-}, \textrm{ and } \X_s \notin \badSet_{s}\}
\end{displaymath}
i.e., $t$ is the time of the last jump of the CRW before $\tau$ such that $\crw_{t} \notin \badSet_{t}$ (note that $t < T$). On $\Kk \cap \mathcal{C}$ the assumptions of Lemma \ref{lm:long-path} are satisfied, so every simple path in $G_t$ from $\crw_{t}$ to $L_0$ must contain a straight subpath in $\core_t$ of length at least $\log^2 n$. By the definition of the event $\mathcal{C}$ at time $t$ every such path has potential at least $b\log^2 n$. Since on $\Kk \cap \mathcal{C} \subset \RR^c \cap \mathcal{C}$ there are no bad jumps or direct hits to $L_0$, to get from $\crw_t$ to $L_0$ the CRW must completely cover at least one such path (we give a formal proof of this intuitively clear fact below).

Moreover, between $t$ and $\tau$ the CRW does not make an excursion of length $n^{\varepsilon}$, since at the end of such an excursion it would be outside the current bad set, which would contradict the definition of $t$. Therefore, by Proposition \ref{prop:backtrack},  $\p(\Kk \cap \mathcal{C}) \le Ce^{-c\log ^2 n}$, which by Lemma \ref{le:easy-lemma} shows that $\p(\Kk \cap \mathcal{C}|\mathcal{F}_\sigma) \le e^{-\frac{c}{2}\log ^2 n}$ with probability at least $1 - Ce^{-\frac{c}{2}\log^2 n}$. We have thus proved that $\p(\Kk|\mathcal{F}_\sigma) = o(1)$ with probability at least $1 - Ce^{-c\log^2 n}$ for some $C,c > 0$, which together with \eqref{eq:costamcostam} proves \eqref{eq:long-excursion-without-l0}.

We finish with a formal proof of the existence of a straight path with large potential which is covered by the CRW between time $t$ and $\tau$ (assuming that the event $\mathcal{C}$ holds). First, it is easy to see that the next vertex visited by the CRW after time $t$ must belong to $\badSet_{t}$. Indeed, assume that the next jump happened at time $s$. Then by the definition of $t$, $\crw_{s} \in \badSet_{s}$. Moreover the shortest path in $G_s$ from $\crw_s$ to $L_0$ avoids $\crw_t$ (otherwise we would have $\crw_t \in \badSet_t$). Thus this path uses only edges from $G_t$, and so $\crw_s \in \badSet_t$.  

Now, between $t$ and $\tau$ there are finitely many jumps. Denote by $v_1,\ldots,v_M$ consecutive vertices from $\badSet_t$, visited by the CRW between times $t$ and $\tau$. Let $t_1,\ldots,t_M$ be the times of visits to $v_1,\ldots,v_M$. Note that in particular $t_M = \tau$, since by the definition of $\tau$ and the event $\Kk$, $\X_\tau \in G_\sigma \subset G_t$.

We will show by induction that each $v_k$ is connected to $\badSet_t^c \cap \core_t$ by a path in $G_t$ consisting only of vertices which have been visited by the CRW between times $t$ and $t_k$.

This is true for $v_1 = \crw_s$ because it belongs to $\badSet_t = \badSet_{s-}$ and so (due to absence of bad jumps) the jump from $\crw_t$ to $\crw_s$ is necessarily a backtrack.  Assuming that the statement in question holds for $v_1,\ldots,v_k$, we have three possibilities:
\begin{itemize}
\item The vertex $v_{k+1}$ is visited directly after $v_k$, in which case the statement extends to $v_{k+1}$ since due to absence of bad jumps between times $t$ and $t_{k+1}$ the bridge used for this jump corresponds to an edge in $G_t$.

\item The CRW leaves $\badSet_t$ after visiting $v_k$ and the re-entry into $\badSet_t$ happens by a bridge corresponding to an edge in $G_t$.  In this case the statement in question also holds for $v_{k+1}$, since then $\X_{t_{k+1}-} \in \core_t$).

\item The CRW leaves $\badSet_t$ after visiting $v_k$ and the re-entry into $\badSet_t$ happens via a bridge which up to time $t$ was unexplored. Since $v_{k+1} \in \badSet_{t}$ and after time $t$ there were no bad jumps, in this situation, the vertex $v_{k+1}$ must equal $v_l$ for some $l\le k$, since the bridge used for re-entry must have been used for the first time only after time $t$ and it could have been used only to leave $\badSet_t$.
\end{itemize}

Thus in particular $\crw_{\tau}$ is connected with $\badSet_{t}^c\cap \core_t$ by a path $P$ in $G_t$ consisting only of vertices visited by the CRW between times $t$ and $\tau$. Note that since $\tau<\tau_c$, we must have $\crw_\tau \in \core_{t}$. By Lemma \ref{lm:long-path} the path $P$ contains a straight subpath $P'$ of length at least $\log^2 n$. It is now easy to see that this subpath is completely covered between $t$ and $\tau$. Indeed, using again the absence of bad jumps and the fact that on $\Kk$ we have $\tau< \tau_c$, we see that the order in which the vertices of $P'$ are visited by the CRW between times $t$ and $\tau$ is uniquely determined -- the first entry into each consecutive vertex of $P'$ must be made by a backtrack from the previous one and after each departure from $P'$ the CRW returns by backtrack using the same bridge through which it has left. This shows that the whole path $P'$ must be exhausted before time $\tau$. Denote by $t'$ the first time after $t$ when the CRW enters a vertex of $P'$. Then clearly $P'$ is a straight path in $\core_{t'-}$ and therefore by the definition of the event $\mathcal{C}$ it has large potential at time $t'$. In particular $t' = \rho_k$ for some $k$ (recall the definition of $\rho_k$ given before Proposition \ref{prop:backtrack}).
\end{proof}

\begin{corollary}\label{cor:intersections-with-l0}
There exist $C, c >0$ such that
\[
\Pp \left( |\Zz_{T} \cap L_{0}| \geq C \log^2 n  \right) \leq C e^{- c \log^2 n}.
\]
\end{corollary}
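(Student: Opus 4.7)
The plan is to iterate Lemma \ref{lm:main-lemma} along the sequence of visits $(\tau_k)$ and combine a deterministic \emph{pigeonhole}-style upper bound on the number of ``long'' gaps $\tau_{k+1}-\tau_k > n$ with a probabilistic Chernoff-type \emph{lower} bound coming from the lemma. Concretely, set $M := \lceil C \log^2 n \rceil$, where $C$ will be chosen below, and observe that by the definition of $\tau_k$ we have
\[
\{|\Zz_T \cap L_0| \geq M\} \subseteq \{\tau_M \leq T\},
\]
so it suffices to bound $\p(\tau_M \leq T)$. For each $k < M$, let $A_k$ denote the event on which the conclusion of Lemma \ref{lm:main-lemma} holds at index $k$, and set $A := \bigcap_{k < M} A_k$. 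By a union bound over $k \leq M = O(\log^2 n)$ we get $\p(A^c) \leq C_1 e^{-c_1 \log^2 n}$.

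Next, define the indicators $B_k := \ind{\tau_{k+1}-\tau_k > n \wedge (T-\tau_k)}$. The key deterministic observation is the following: on the event $\{\tau_M \leq T\}$, whenever $B_k = 1$ for some $k < M$, then either $\tau_{k+1}-\tau_k > n$ or $\tau_{k+1} > T$; the second alternative would contradict $\tau_{k+1}\leq \tau_M\leq T$, so the first must hold. Since $T \leq n \log^2 n$ and the gaps are nonnegative, at most $\log^2 n$ of the indicators $B_k$ (for $k<M$) can equal $1$ on this event. On the other hand, on $A$ we have $\p(B_k \mid \Ff_{\tau_k}) \geq p \cdot \ind{\tau_k \leq T-1}$ for every $k < M$; applying a standard auxiliary-randomization coupling (or equivalently a martingale Chernoff inequality applied to $\sum_{k<M}(B_k - \E[B_k\mid \Ff_{\tau_k}])$) we deduce that on $A$, up to an event of probability $\leq e^{-c_3 M} = e^{-c_3 C \log^2 n}$, the sum $\sum_{k<M} B_k$ stochastically dominates a Binomial$(M,p)$ and so exceeds $pM/2$. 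Choosing $C > 4/p$ ensures $pM/2 > 2\log^2 n$, producing a contradiction with the deterministic bound of $\log^2 n$. Collecting the estimates,
\[
\p(\tau_M \leq T) \leq \p(A^c) + e^{-c_3 C \log^2 n} \leq C e^{-c \log^2 n},
\]
which gives the desired conclusion after enlarging $C$.

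The main technical subtlety is the coupling step, since the ``good'' event $A$ is not $\Ff_{\tau_k}$-measurable for each individual $k$: to extract an honest Chernoff bound one must either (i) construct the coupling using independent external uniforms $U_k \sim U(0,1)$ and compare $\ind{U_k < p}$ with $B_k$ on $A$, or (ii) apply a Freedman-type martingale deviation inequality to the centered increments. A second minor issue is that the lemma carries the indicator $\ind{\{\tau_k \leq T-1\}}$ rather than $\ind{\{\tau_k \leq T\}}$; this is handled by replacing $\{\tau_M \leq T\}$ with $\{\tau_M \leq T-1\}$ in the argument above and absorbing the negligible event $\{T-1<\tau_M \leq T\}$ into the error term (at most one visit can occur in the last unit interval, which does not affect the conclusion up to constants).
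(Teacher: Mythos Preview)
Your proposal is correct and follows essentially the same route as the paper: iterate Lemma \ref{lm:main-lemma}, use a Chernoff-type bound to show that many indices $k$ satisfy $\tau_{k+1}-\tau_k > n$, and contrast this with the deterministic pigeonhole bound coming from $T \leq n\log^2 n$. The paper's implementation sidesteps the measurability subtlety you raise by folding $A_k^c \cup \{\tau_k > T-1\}$ directly into the success event $\EE_k$, so that $\p(\EE_k\mid \Ff_{\tau_k}) \geq p$ holds almost surely and the stochastic domination by i.i.d.\ Bernoullis is immediate, with no need for auxiliary randomization or a Freedman inequality. One small correction: your parenthetical claim that ``at most one visit can occur in the last unit interval'' is not right---the CRW can make many jumps in unit time---and the paper instead observes that with high probability at most $O(\log^2 n)$ new $L_0$-vertices are discovered in $[T-1,T]$, which is absorbed into the constant $C$.
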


\begin{proof}
Let $\mathcal{H}_{k} = \Ff_{\tau_{k}}$ and let $A_{k}$ denote the event that the estimate from Lemma \ref{lm:main-lemma} holds. For $k \geq 0$ consider the events
\[
\EE_{k} = \big( \{\tau_{k + 1} - \tau_{k} >  n \wedge (T - \tau_{k})\} \cap A_{k} \cap\{\tau_k \le T-1\}\big) \cup A_{k}^{c} \cup \{\tau_k > T-1 \}.
\]
We have
\[
\Pp( \EE_{k} | \mathcal{H}_{k}) = \id_{A_{k}\cap\{\tau_k \le T-1\}} \Pp( \{\tau_{k + 1} - \tau_{k} >  n \wedge (T - \tau_{k})\} | \mathcal{H}_{k} ) + \id_{A_{k}^{c} \cup \{\tau_k > T-1 \}},
\]
and by applying Lemma \ref{lm:main-lemma} we can estimate
\begin{equation}\label{eq:lower-bound-bernoulli}
\Pp( \EE_{k} | \mathcal{H}_{k}) \geq p \id_{A_{k}} \id_{\{\tau_k \leq T-1 \}} + \id_{A_{k}^{c} \cup \{\tau_k > T-1 \}} \geq p.
\end{equation}
Now let $K = \frac{2}{p} \lceil \log^2 n \rceil$ and consider the sum $\id_{\EE_{1}} + \ldots + \id_{\EE_{K}}$. Introducing $\xi_{k} := \id_{\EE_{k}} - p$, we observe that by \eqref{eq:lower-bound-bernoulli} the sum $M_N = \xi_1 + \ldots + \xi_{N}$ (with $M_0 = 0$) forms a submartingale with increments bounded by $1$. As any submartingale can be written as a martingale plus a nonnegative predictable term and here the martingale part has bounded increments, by Azuma's inequality for martingales with bounded increments (\cite{mcdiarmid_1989}) we get
\[
\pr{ M_N \leq - t } \leq e^{ - \frac{t^2}{2N}}
\]
for any $t \geq 0$ and $N \geq 1$. Taking $N = K$, $t = \frac{pK}{2}$ and rewriting the inequality in terms of $\id_{\EE_{k}}$ we obtain the estimate
\[
\pr{\id_{\EE_{1}} + \ldots + \id_{\EE_{K}} \leq \frac{pK}{2}} \leq e^{-\frac{p^2 K}{8}}.
\]
Therefore with high probability at least $pK / 2$ of the events $\EE_{k}$ hold. Since $\pr{A_{k}} \geq 1 - C e^{- c \log^2 n}$ for some $C,c > 0$, by doing a union bound over $k$ we can assume that none of the events $A_{k}^{c}$ hold. This implies that either $\tau_{k} > T-1$ for some $k = 1, \ldots, K$, or the event $\{\tau_{k+1} - \tau_{k} > n \}$ holds at least $pK / 2$ times, which implies $\tau_{K} \geq n p K / 2 \wedge (T - 1)$. As $T \leq n \log^2 n$, in both cases we have $\tau_{K} \geq T-1$, so with probability at least $1 - e^{-c \log^2 n}$ (for some $c > 0$) we have at most $K = \lceil \frac{2}{p} \log^2 n \rceil$ vertices from $L_{0}$ visited up to time $T-1$. An easy estimate shows that with high enough probability there are at most $c \log^2 n$ visits to $L_{0}$ between times $T-1$ and $T$, which ends the proof.
\end{proof}

\paragraph{Proofs of Lemma \ref{le:bootstrap-isoperimetry} and Proposition \ref{prop:isoperimetry-log2n}}\label{sec:conclusions}

Using the results of Section \ref{sec:proof-of-isoperimetry} the proof of Lemma \ref{le:bootstrap-isoperimetry} is now immediate.

\begin{proof}[Proof of Lemma \ref{le:bootstrap-isoperimetry}]

Since in Corollary \ref{cor:intersections-with-l0} the starting vertex of the CRW was arbitrary, by symmetry of the graph $H_{n}$ the claim of the corollary holds with $L_{0}$ replaced by any other row or column. Thus by performing a union bound over all rows and columns, and all starting vertices $v \in V$, we obtain that there exist constants $C, C', c' > 0$ such that
\[
\Pp \left( \forall v\in V \, \iota(\Zz_{T} (v) ) \leq C \log^2 n  \right) \geq 1 - C' e^{- c' \log^2 n}.
\]
Since we worked under Assumption \ref{lm:isoperimetry}, this finishes the proof of Lemma \ref{le:bootstrap-isoperimetry}.

\end{proof}

With this lemma we can finally prove Proposition \ref{prop:isoperimetry-log2n}.

\begin{proof}[Proof of Proposition \ref{prop:isoperimetry-log2n}]

Fix $\alpha \in (0, 1/100)$ and $T = n^{\alpha / 2}$. Consider the cyclic random walk $\crw = \crw(v)$ started at a vertex $v$.
By Lemma \ref{lm:fresh-vertices-are-not-often} we have
\[
\Pp \left( T_{\lceil 4 \Theta \beta n^{\alpha / 2} \rceil} < \lfloor n^{\alpha / 2} \rfloor \right) \leq C e^{-c n^{\alpha / 2}}
\]
for some $C , c > 0$. Since $\lceil 4 \Theta \beta n^{\alpha / 2} \rceil < n^{\alpha}$ and $e^{-c n^{\alpha / 2}} \leq e^{- c \log^2 n} $ for $n$ large enough, we obtain that with probability at least $1 - C_{1} e^{ - c_{1} \log^2 n}$, for some $C_{1}, c_{1} > 0$, until time $T$ fewer than $n^{\alpha}$ vertices have been explored by the CRW. In particular, this implies $\iota(\Zz_{T}) < n^{\alpha}$.

Now by a union bound over starting vertices $v$ and Lemma \ref{le:bootstrap-isoperimetry} we obtain that for some $C, C_{2}, c_{2} > 0$ we have
\[
\Pp \left( \forall v \in V \, \iota(\Zz_{T}(v) ) \leq C \log^2 n  \right) \geq 1 - C_2 e^{- c_2 \log^2 n}.
\]
The rest of the argument is inductive. Suppose that for some $T \in [n^{\alpha / 2}, n^{1 - \alpha / 2} \log^2 n ]$ we have
\begin{equation}\label{eq:bootstrap-induction}
\Pp \left( \forall v \in V \, \iota(\Zz_{T}(v) ) \leq C \log^2 n \right) \geq 1 - C' e^{- c' \log^2 n}
\end{equation}
for some $C', c' > 0$. Consider the CRW started at a fixed vertex $w$ and run up to time $T' = \lfloor T \rfloor \lfloor n^{\alpha / 2} \rfloor$. Divide the time interval $[0, T']$ into $k = \lfloor n^{\alpha / 2} \rfloor$ intervals $I_{i} = [S_{i}, S_{i+1})$ of length $S = \lfloor T \rfloor$. Observe now that for any $v \in V$ by construction of the cyclic random walk we have $\Zz_{[S_{i}, S_{i+1}]}(v) = \Zz_{[0, S]}(\crw_{S_{i}}(v))$. Since $S\leq T$ and the bound in \eqref{eq:bootstrap-induction} is uniform over all vertices,  we obtain
\[
\Pp \left( \forall_{i = 1, \ldots, k} \, \forall v\in V \, \iota(\Zz_{[S_{i}, S_{i+1}] }(v) ) \leq C \log^2 n  \right) \geq 1 - C' e^{- c' \log^2 n}.
\]
for some $C', c' > 0$.
Finally, by subadditivity of $\iota$ we can bound $\iota(\Zz_{T'}(v))$ by the sum of $\iota(\Zz_{[S_{i}, S_{i+1}]}(v))$ for $i = 1, \ldots, k$, obtaining
\[
\Pp \left( \forall v \in V \, \iota(\Zz_{T'}(v) ) \leq C  \lfloor n^{\alpha / 2} \rfloor \log^2 n  \right) \geq 1 -  C' e^{- c' \log^2 n}.
\]
As $C  \lfloor n^{\alpha / 2} \rfloor \log^2 n < n^{\alpha}$ for $n$ large enough, we obtain that for some constants $C_{1}, c_{1} > 0$
\[
\Pp \left( \forall v \in V \, \iota(\Zz_{T'}(v) ) \leq  n^{\alpha} \right) \geq 1 - C_{1} e^{- c_{1} \log^2 n}.
\]
Now an application of Lemma \ref{le:bootstrap-isoperimetry} gives for some constants $C, C_{2}, c_{2} > 0$
\[
\Pp \left( \forall v \in V \, \iota(\Zz_{T' }(v) ) \leq C \log^2 n \right) \geq 1 - C_{2} e^{- c_{2} \log^2 n},
\]
which finishes the inductive step.

Now, since we started from $T = n^{\alpha / 2}$ and at each step we increase the time by a factor of $\lfloor n^{\alpha / 2} \rfloor$, after at most $\lceil \frac{2}{\alpha} \rceil + 1$ steps we obtain
\[
\Pp \left( \forall v \in V \, \iota(\Zz_{n \log^2 n }(v) ) \leq  C \log^2 n \right) \geq 1 - C e^{- c \log^2 n}
\]
for some constants $C, c > 0$ depending on $\alpha$, but not on $n$. This finishes the proof.
\end{proof}

\end{subsection}

\begin{subsection}{Isoperimetry lower bound}\label{sec:isoperimetry-lower-bound}

In this section we prove the isoperimetry lower bound given by Proposition \ref{prop:isoperimetry-log2n-lower-bound}. The proof is independent of the previous section (we will only make use of Lemma \ref{le:intensity} and Lemma \ref{lm:fresh-vertices-are-not-often}, which do not require Assumption \ref{lm:isoperimetry}).

\begin{proposition}\label{prop:lower-bound}

Fix $\theta > 0$, an admissible function $\mathcal{C}$ and $\beta_{0}, \beta_{1} > 0$. Consider $\beta \in [\beta_{0}, \beta_{1}]$ and let $\crw(v) := \crw^{\beta, \theta, \mathcal{C}}(v)$ be the cyclic random walk associated to $\mu_{\beta, \theta, \mathcal{C}}$, started at $v$. There exist $C, c >0$ (depending only on $\beta_{0}$, $\beta_{1}$, $\theta$) such that
\[
\Pp \left(\{ |Z_{n \log^2 n}(v) \cap L_{0}| \leq c \log^2 n \} \cap \{ T_{n \log^2 n}(v) < \infty \} \right) \leq C e^{- c \log^2 n}.
\]
\end{proposition}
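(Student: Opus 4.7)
The plan is to analyze the counting process $M_t := |\Zz_t \cap L_0|$ via its predictable compensator $\Lambda^M_t$. The key idea is to show that on $\{\tau < \infty\}$, where $\tau := T_{n \log^2 n}(v)$, the compensator $\Lambda^M_\tau$ has already grown to order $\log^2 n$ with overwhelming probability, and then to conclude by a standard lower-tail concentration inequality for counting processes.

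Two elementary inputs are immediate. First, Lemma \ref{lm:fresh-vertices-are-not-often} applied with $l = 0$ and any fixed $\delta > 2 \Theta \beta_1$, taking $k$ with $\lceil \delta k \rceil = n \log^2 n$, yields $c_1 = c_1(\theta, \beta_1) > 0$ such that $\p(\tau \geq c_1 n \log^2 n) \geq 1 - e^{-cn\log^2 n}$. Second, Lemma \ref{le:intensity} with $Q_t = L_0 \setminus \Zz_t$ gives $\lambda^M_t \geq \Theta^{-1}\frac{\beta}{n-1}|(A_t \cap L_0) \setminus \Zz_t|$. Writing $D(w)$ for the column containing $w$ and setting
\[
U_t := \ind{\crw_t \notin L_0 \text{ and } D(\crw_t)\cap L_0 \not\subseteq \Zz_t},
\]
one checks that when $U_t = 1$, the unique vertex $v^*(t) := D(\crw_t)\cap L_0$ is a neighbor of $\crw_t$ not lying in $\Zz_t$, so it belongs to $(A_t \cap L_0)\setminus \Zz_t$; consequently $\lambda^M_t \geq \Theta^{-1}\frac{\beta}{n-1}\, U_t$.

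The deterministic heart of the argument is the bad-time bound
\[
\int_0^t (1 - U_s)\,ds \leq n \cdot |\Zz_t \cap L_0|, \qquad t \geq 0.
\]
Indeed, $\{U_s = 0\}$ splits into two cases: $\crw_s \in L_0$, or $\crw_s$ sits on a non-$L_0$ bar of a column whose $L_0$-vertex has already been visited by time $s$. Since every bar has length $1$ and is traversed at unit speed, the total time of the first kind up to $t$ is at most $|\Zz_t \cap L_0|$; moreover, there are exactly $|\Zz_t \cap L_0|$ columns whose $L_0$-vertex belongs to $\Zz_t$, and each such column contributes at most $n-1$ to the time spent on its non-$L_0$ bars. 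This step is the main obstacle, as it is the only place where the geometry of $L_0$ (a row intersecting each column in exactly one vertex) is used in an essential way.

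To conclude, fix a small $c > 0$ and set
\[
\sigma := \tau \wedge \inf\{t \geq 0 : M_t > c \log^2 n\}, \qquad \sigma' := \sigma \wedge \inf\{t\geq 0 : \Lambda^M_t \geq c_2 \log^2 n\},
\]
where $c_2 := \frac{\beta c_1}{2\Theta} > 0$. On the event $E := \{\tau < \infty\} \cap \{M_\tau \leq c\log^2 n\} \cap \{\tau \geq c_1 n \log^2 n\}$ we have $\sigma = \tau$, and combining the compensator bound with the bad-time bound yields $\Lambda^M_\sigma \geq \frac{\beta}{\Theta(n-1)}(c_1 - c)\,n\log^2 n \geq c_2 \log^2 n$ provided $c < c_1/2$. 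Shrinking $c$ further so that $c < c_2/2$, we see that on $E$ one has $\sigma' < \infty$, $\Lambda^M_{\sigma'} = c_2 \log^2 n$, and $M_{\sigma'} \leq M_\sigma \leq c \log^2 n \leq \Lambda^M_{\sigma'}/2$. A standard lower-tail concentration inequality for counting processes (cf.\ Lemma \ref{lm:concentration-lower-bound-super}) bounds $\p(\{M_{\sigma'} \leq \Lambda^M_{\sigma'}/2\}\cap \{\Lambda^M_{\sigma'} \geq c_2 \log^2 n\})$ by $Ce^{-c' \log^2 n}$, and combining with the time lower bound yields the proposition.
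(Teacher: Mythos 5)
Your proof is correct and follows essentially the same route as the paper's: bound the compensator of the counting process $|\Zz_t \cap L_0|$ from below by observing that the CRW spends little time in columns whose $L_0$-vertex is already discovered, use Lemma \ref{lm:fresh-vertices-are-not-often} to get a lower bound on $T_{n\log^2 n}$, and close with Lemma \ref{lm:concentration-lower-bound-super}. The only cosmetic difference is that you stop the process at the random time $\tau=T_{n\log^2 n}$ rather than at a deterministic $T=\kappa n\log^2 n$ as the paper does; note also that you implicitly use $T_{n\log^2 n}<\infty \Rightarrow \sigma<\tau_c$ in the bad-time bound (as does the paper via the event $\{\tau_c\ge T\}$), which is worth making explicit, and the index in Lemma \ref{lm:fresh-vertices-are-not-often} should be $l=1$ rather than $l=0$ since $T_1=0$.
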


\begin{proof}

Let $\kappa < 1 / 2 \Theta \beta_{1}$ and $T \ge \kappa n \log^2 n$. Let $Y_{t} = | \Zz_{t} \cap L_{0}|$ and let $\lambda_{t}$ be the corresponding intensity of the process $Y_{t}$. Fix $\delta < \beta_{0} \Theta^{-1} \kappa/2$. We will first show that
\begin{equation}\label{eq:lower-bound-for-t}
\pr{ \{ Y_{T} \leq \delta\log^2 n \} \cap \{ \tau_{c} \geq T \} } \leq C e^{- c \log^2 n}
\end{equation}
for some $C,c > 0$.

Consider $\Lambda_{t} = \int\limits_{0}^{t} \lambda_{s} \, ds$.
Observe that
\begin{align}\label{eq:inclusion}
  \{Y_T \le \frac{\kappa}{2} \log^2 n\} \cap \{\tau_c \ge T\} \subset \{\Lambda_T \ge \frac{\beta_{0} \Theta^{-1} \kappa}{2}\log^2 n\}.
\end{align}
Indeed, up to time $\tau_c$, unless the CRW is at a vertex from a column containing an already visited vertex from $L_{0}$, the intensity of making a direct jump to $L_{0}$ (and thus necessarily discovering a previously unvisited vertex from $L_{0}$) is bounded from below by $\frac{\beta \Theta^{-1}}{n-1} >  \frac{\beta_{0} \Theta^{-1}}{n}$ by Lemma \ref{le:intensity}. Note that there are at most $nY_T$ such bad vertices and until time $\tau_{c}$ the CRW can spend time at most $1$ at any given vertex. Thus on the event $\{Y_T \le \frac{\kappa}{2} \log^2 n\} \cap\{\tau_c \geq T\}$ the total time spent at bad vertices before $T$ is at most $n Y_T \le \frac{\kappa}{2}n\log^2n$. We thus have
\[
\Lambda_{T} = \int\limits_{0}^{T} \lambda_{t} \, dt \geq \frac{\beta_{0} \Theta^{-1}}{n} \left(T - \frac{\kappa}{2}n \log^2 n \right) \ge \frac{\beta_{0} \Theta^{-1} \kappa}{2} \log^2 n,
\]
proving \eqref{eq:inclusion}.
As $\delta < \beta_{0} \Theta^{-1} \kappa/2$ we thus get
\begin{align*}
\pr{\{Y_{T} \leq \delta\log^2 n\}\cap\{\tau_c \ge T\}} \leq & \pr{ \{ Y_{T} \leq \delta \log^2 n \} \cap \{ \Lambda_{T} \geq \frac{\beta_{0} \Theta^{-1}\kappa}{2} \log^2 n \} }
\end{align*}
and by Lemma \ref{lm:concentration-lower-bound-super} the right-hand side above is bounded by $C e^{- c \log^2 n}$ for some $C,c > 0$. This proves \eqref{eq:lower-bound-for-t}.

Now we prove the statement of the proposition. Let $M = n \log^2 n$ and $A = \{ |Z_{M} \cap L_{0}| \leq \delta \log^2 n \} \cap \{ T_{M} < \infty \}$, with the same $\delta$ as above. We have
\begin{align*}
& \Pp \left( A \right) \leq \Pp \left( A \cap \{ T_{M} \leq \kappa n \log^2 n \} \right) + \Pp \left( A \cap \{ T_{M} > \kappa n \log^2 n \} \right) \\
& \leq  \Pp \left( T_{M} \leq \kappa n \log^2 n \right) + \Pp \left( A \cap \{ T_{M} > \kappa n \log^2 n \} \right).
\end{align*}
As $1 / \kappa > 2 \Theta \beta_{1} \geq 2 \Theta \beta $, by Lemma \ref{lm:fresh-vertices-are-not-often} the first term on the right hand side does not exceed $e^{-c'\log^2 n}$ for some $c'>0$. For the second term we write
\begin{align*}
& \Pp \left( A \cap \{ T_{M} > \kappa n \log^2 n \} \right) = \Pp \left( \{ |Z_{M} \cap L_{0}| \leq \delta \log^2 n \} \cap \{ T_{M} < \infty \} \cap \{ T_{M} > \kappa n \log^2 n \} \right) \\
& \le \Pp \left( \{ |\Zz_{\kappa n \log^2 n} \cap L_{0}| \leq \delta \log^2 n \} \cap \{ \tau_{c} \geq \kappa n \log^2 n \} \right)
\end{align*}
and by \eqref{eq:lower-bound-for-t} the right hand side is small enough, which finishes the proof.

\end{proof}

The proof of Proposition \ref{prop:isoperimetry-log2n-lower-bound} is now rather straightforward.
\begin{proof}[Proof of Proposition \ref{prop:isoperimetry-log2n-lower-bound}]

Fix a starting vertex $v$. Let $\tau_{0} = 0$ and let
\[
\tau_{k} = \inf\{t > \tau_{k-1} \colon \X_{t} \in L_{0}, \, \X_{t} \notin \Zz_{t-} \}
\]
be the $k$-th time a new vertex from $L_{0}$ is visited. Let $J_{t}$ be the total number of fresh jumps made up to time $t$. By Lemma \ref{le:intensity} the intensity $\lambda$ of $J$ is bounded from above by $2 \beta \Theta$. By Lemma \ref{lm:concentration-upper-bound} we obtain
\[
\p(J_{\tau_{k} + 1} - J_{\tau_{k}} \ge 1 | \mathcal{F}_{\tau_{k}}) \id_{\{\tau_k < \infty\}} \le \p(X \ge 1),
\]
where $X$ is a Poisson variable with parameter $2 \beta \Theta$. In particular this implies that for some $p>0$ bounded away from $0$ the following holds: each time the CRW visits a new vertex $\crw_{\tau_{k}} = w$ from $L_{0}$, with probability at least $p$ it makes no fresh jumps for time $1$, thus exhausting the whole bar of $w$. In particular with probability $p>0$ the vertex $w$ enters the orbit $\mathcal{O}(v)$.

The rest of the proof is a rather standard concentration estimate. Let $K = \lceil c \log^2 n \rceil$, with $c$ as in Proposition \ref{prop:lower-bound}. Let $\mathcal{H}_{k} = \mathcal{F}_{\tau_{k}}$ and
\[
A_{k} = \left( \{ \tau_{k} < \infty\} \cap \{ \crw_{\tau_{k}} \in \mathcal{O}(v) \} \right) \cup \{\tau_{k} = \infty\}.
\]
By the argument above we have $\p(A_{k} | \mathcal{H}_{k}) \geq p$, which implies that for $\xi_k := \id_{A_k} - p$ the sum $M_N = \xi_1 + \ldots + \xi_N$ (with $M_0 = 0$) forms a submartingale with increments bounded by $1$. As any submartingale can be written as a martingale plus a nonnegative predictable term and here the martingale part has bounded increments, by Azuma's inequality for martingales with bounded increments (\cite{mcdiarmid_1989}) we get
\[
\pr{ M_N \leq - t } \leq e^{ - \frac{t^2}{2N}}
\]
for any $t \geq 0$ and $N \geq 1$. Taking $N = K$ and $t = \frac{pK}{2}$ we obtain the estimate
\[
\pr{\id_{A_{1}} + \ldots + \id_{A_{K}} \leq \frac{pK}{2}} \leq e^{-\frac{p^2 K}{8}}.
\]
Together with Proposition \ref{prop:lower-bound} this implies that with probability at least $1 - C' e^{-c' \log^2 n}$ (for some $C', c' > 0$) we have either
\begin{itemize}
\item $T_{n \log^2 n}(v) = \infty$, which implies $|\mathcal{O}(v)| < n \log^2 n$,
\item or $\tau_{K} \leq T_{n\log^2 n} < \infty$ and $\sum\limits_{k=1}^{K} \id_{A_{k}} \geq \frac{pK}{2}$.
\end{itemize}
In the latter case we get that the event $\{ \tau_{k} < \infty\} \cap \{ \crw_{\tau_{k}} \in \mathcal{O}(v) \}$ holds at least $\frac{cp}{2} \log^2 n$ times for $k \leq K$, implying in particular $|\mathcal{O}_{n \log^2 n}(v) \cap L_{0}| \geq c'' \log^2 n$ for some $c'' > 0$ (note that for $ k\ge 1$ if $T_k < \infty$, then $T_k \le k-1$).

To finish the proof we note that by symmetry of $H_{n}$ the above argument is valid with $L_{0}$ replaced by any other row or column (as Proposition \ref{prop:lower-bound} has the same symmetry) and the starting vertex $v$ was arbitrary. Thus by performing a union bound over starting vertices $v \in V$ and all rows and columns we obtain the desired bound on $\chi(\mathcal{O}_{n \log^2 n}(v))$.

\end{proof}

\end{subsection}

\end{section}

\begin{section}{General transposition processes}\label{sec:transposition_process}

In this section we introduce the notion of a general transposition process, which is another perspective on the permutation model $\mu_{\beta, \theta, \mathcal{C}}$ defined in \eqref{eq:permutation_associated} and \eqref{eq:measure}  In this formalism we will state Lemma \ref{lem:almost_uniform} and Proposition \ref{cor:splitting}. These are all prerequisites needed in the next, final section.

For $X\in \mathfrak{X}$ let $(e_1, t_1), (e_2, t_2), \ldots, (e_{|X|}, t_{|X|})$ be the points of $X$ sorted by the second coordinate. We define $\{\sigma_t\}_{t\in \{0, 1, \ldots, |X|\}}$ by $\sigma_0 := \text{id}$ and for $t \in \{1, \ldots, |X|\}$
\[
\sigma_t := {e_t} \circ \ldots \circ {e_1},
\]
where any edge is identified with the transposition of its endpoints. This becomes a stochastic process when $X$ is sampled according to $\mu_{\beta, \theta, \mathcal{C}}$. Note that $\sigma_{|X|} = \sigma(X)$, with the latter permutation defined in \eqref{eq:permutation_associated}.

In what follows we will consider $(\sigma_i)_{i=0}^k$ conditionally on $\Omega_k := \{|X| = k\}$.
We use $(\fil_i)_{i=0}^k$ to denote the filtration on $\Omega_k$ associated with the process.

Obviously this transposition model depends on the parameters of $\mu_{\beta, \theta, \mathcal{C}}$. What might be surprising is that its evolution is not far from the i.i.d. transposition process on the edges of $H_n$. This observation will play a crucial role in the proof of forthcoming Proposition~\ref{cor:splitting}.

\begin{lemma}\label{lem:almost_uniform}
  Let $\beta, \theta>0$ and $\mathcal{C}$ be an admissible function. Let $X$ be sampled from $\mu_{\beta, \theta, \mathcal{C}}$ and let $\{\sigma_{t}\}_{t \geq 0}$ be the associated transposition process. For any $i,k\in \N$, $i < k$, $e\in E$ we have
  \begin{equation}\label{eq:semiuniform}
    \p(\sigma_{i+1}\circ \sigma_{i}^{-1} = e | \fil_i , |X|=k) \in \left[\frac{\Theta^{-2}}{|E|}, \frac{\Theta^2}{|E|} \right].
  \end{equation}
\end{lemma}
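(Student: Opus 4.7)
The plan is a direct Radon--Nikodym computation on the conditional law of the edge sequence, followed by an application of the Lipschitz condition \eqref{eq:Lipschitz}. As a first observation, admissibility means that the weight $\theta^{\mathcal{C}(X)}$ depends only on the sorted sequence $\edges{X}$, and that under the base Poisson law $\mathcal{B}$, conditional on $\{|X|=k\}$, the $k$ points are i.i.d.\ uniform on $E\times[0,1)$. Consequently, after sorting by time, the edge sequence $(e_1,\dots,e_k)$ is uniform on $E^k$ and independent of the times (which form the order statistics of $k$ i.i.d.\ uniform variables on $[0,1)$). Reweighting by $\theta^{\mathcal{C}}$ affects only the edge sequence, so under $\mu_{\beta,\theta,\mathcal{C}}(\cdot\mid |X|=k)$ the edges $(e_1,\dots,e_k)$ have joint mass function proportional to $\theta^{\mathcal{C}(e_1,\dots,e_k)}$ on $E^k$, with times independent of edges.

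Next, I would fix a realisation $\mathbf{f}=(f_1,\dots,f_i)$ of the edges revealed by $\fil_i$. Since the future times are independent of the future edges given $\fil_i$ and $|X|=k$, and since $\sigma_{i+1}\circ\sigma_i^{-1}=e_{i+1}$, the probability in \eqref{eq:semiuniform} equals
\begin{equation*}
p(e)\;=\;\frac{S(e)}{\sum_{e'\in E} S(e')},\qquad\text{where }\ S(e)=\sum_{\mathbf{g}\in E^{k-i-1}}\theta^{\mathcal{C}(\mathbf{f},e,\mathbf{g})}.
\end{equation*}
The key input is then \eqref{eq:Lipschitz}. Configurations encoding the edge sequences $(\mathbf{f},e,\mathbf{g})$ and $(\mathbf{f},e',\mathbf{g})$ can be realised with the same set of times, so their symmetric difference consists of at most two bridges (the bridge carrying $e$ and the one carrying $e'$). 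Hence $|\mathcal{C}(\mathbf{f},e,\mathbf{g})-\mathcal{C}(\mathbf{f},e',\mathbf{g})|\le 2$, and therefore
\begin{equation*}
\Theta^{-2}\,\theta^{\mathcal{C}(\mathbf{f},e',\mathbf{g})}\;\le\;\theta^{\mathcal{C}(\mathbf{f},e,\mathbf{g})}\;\le\;\Theta^{2}\,\theta^{\mathcal{C}(\mathbf{f},e',\mathbf{g})},
\end{equation*}
since $\theta^{\pm 2}\in[\Theta^{-2},\Theta^{2}]$ by definition of $\Theta=\max(\theta,\theta^{-1})$.

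Summing this pointwise inequality over $\mathbf{g}\in E^{k-i-1}$ gives $\Theta^{-2}S(e')\le S(e)\le \Theta^{2}S(e')$, and then summing over $e'\in E$ yields $|E|\Theta^{-2}S(e)\le\sum_{e'}S(e')\le|E|\Theta^{2}S(e)$; dividing produces $p(e)\in[\Theta^{-2}/|E|,\,\Theta^{2}/|E|]$, which is \eqref{eq:semiuniform}. There is no real obstacle in this argument: the statement is essentially a reformulation of the Lipschitz assumption on $\mathcal{C}$, and the only step requiring a little care is verifying that the times play no role, so that the conditional distribution reduces cleanly to a weighted uniform measure on $E^k$ and the swap of a single edge incurs a symmetric-difference cost of exactly two bridges rather than one.
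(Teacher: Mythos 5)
Your proof is correct and takes essentially the same approach as the paper's: both rest on swapping the $(i+1)$-th edge from $e$ to $e'$ (a symmetric difference of two bridges) and invoking the Lipschitz condition \eqref{eq:Lipschitz} to bound the resulting change in $\mathcal{C}$ by $2$. The only cosmetic difference is that you first factor out the time coordinates to reduce to a weighted discrete measure on $E^k$, whereas the paper realizes the swap directly as a measure-preserving bijection $T$ on the configuration space $\mathfrak{X}$.
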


\begin{proof}
Let us fix $e \in E$ and a sequence of permutations $\sigma = (\sigma_1,\ldots,\sigma_i)$ such that for any $ j \in \{1, \ldots,  i - 1\}$ the composition $\sigma_{j+1}\circ \sigma_j^{-1}$ is a transposition. Let $U_{\sigma, e}^{i+1}$ be the set of all $X\in \mathfrak{X}$ such that $|X|=k$, $e_{i+1} = e$ and the transposition process associated to $X$ agrees with $\sigma$ up to time $i$. For any $\tilde{e} \in E$ let $T:U_{\sigma, e}^{i+1} \mapsto U_{\sigma, \tilde{e}}^{i+1}$ be the mapping which swaps the $(i+1)$-th point of $X$ from $e$ to $\tilde{e}$, i.e., $(e_{i+1}, t_{i+1}) = (e, t_{i+1})$ in $X$ is replaced by $(\tilde{e}, t_{i+1})$. Note that $T$ is a bijection which preserves the Poisson point process $\mathcal{B}$. Thus we have
	\begin{equation*}
		\frac{\mu_{\beta, \theta, \mathcal{C}}(U_{\sigma, \tilde{e}}^{i+1})}{\mu_{\beta, \theta, \mathcal{C}}(U_{\sigma, {e}}^{i+1} )} =
		 \frac{\int_{\mathfrak{X}}\Ind{U_{\sigma, \tilde{e}}^{i+1}}(X) \theta^{\mathcal{C}(X)} \mathcal{B}(dX)}{\int_{\mathfrak{X}}\Ind{U_{\sigma, e}}^{i+1}(X) \theta^{\mathcal{C}(X)} \mathcal{B}(dX)} =  \frac{\int_{\mathfrak{X}}\Ind{U_{\sigma, {e}}^{i+1}}(X) \theta^{\mathcal{C}(T(X))} \mathcal{B}(dX)}{\int_{\mathfrak{X}}\Ind{U_{\sigma, {e}}^{i+1}}(X) \theta^{\mathcal{C}(X)} \mathcal{B}(dX)}.
	\end{equation*}
By the Lipschitz property \eqref{eq:Lipschitz} of $\mathcal{C}$ clearly we have $|\mathcal{C}(T(X)) - \mathcal{C}(X)|\leq 2$. Thus the integrands in the numerator and the denominator above can differ by a factor of at most $\theta^2$, which leads to the estimate
\[
	{\mu_{\beta, \theta, \mathcal{C}}(U_{\sigma, \tilde{e}}^{i+1})}/{\mu_{\beta, \theta, \mathcal{C}}(U_{\sigma, {e}}^{i+1} )}\leq \Theta^2.
\] From this and an analogous argument for the lower bound it is straightforward to obtain \eqref{eq:semiuniform}.
\end{proof}

In the final arguments we will need statements holding uniformly in a large enough time window before time $|X|$. Thus, conditionally on $|X| = k$, we let $k' = \max\{0, k - 2\lceil n^{11/6}\rceil\}$ and define the time interval $\interval := \{k',\ldots, k - 1 \}$. We will now prove that on $\interval$ the transposition process corresponding to the measure $\mu_{\beta,\theta,\mathcal{C}}$ behaves in a certain sense similarly to the mean-field case (corresponding to $\theta=1$ on the complete graph), i.e., for most values of $k$ the process, when conditioned on $|X| = k$, with high probability has splitting and merging probabilities comparable to the mean-field case.

To formalize this intuition let us introduce a stopping time $\tau$, corresponding to the moment when the cycles lose good isoperimetric properties. Denote by \hfff{def:orbits}{$\orb_{\sigma}(v)$ the cycle of the permutation $\sigma$ containing $v\in V$ and by $\orb_{\sigma}^{\ell}(v)$} its first $\ell \wedge |\sigma|$ elements. We will write $\orb_{t}^\ell(v)$ as a shorthand for $\orb_{\sigma_t}^\ell(v)$.

For constants $c_1, c_2 > 0$ which will be fixed later we define
\begin{align*}
  \tau^\iota & := \inf\{ s \in I\colon \exists{v\in V} \, \iota(\orb_s^{n\log^2 n}(v)) > c_1\log^2 n \}, \\
   \tau^\chi  & := \inf\{s \in I \colon \exists{v\in V} \, |\orb_s(v)| \ge n\log^2 n \;{\rm and} \; \chi(\orb_s^{n\log^2 n}(v)) < c_2\log^2 n)\}, \\
   \tau & := \tau^\iota \wedge \tau^\chi.
\end{align*}
Note that for each $k$, $\tau$ is a stopping time with respect to the natural filtration of the process $(\sigma_t)_{t=0}^k$ on $\{|X| = k\}$.

\begin{proposition} \label{cor:splitting}
Let $\beta, \theta > 0$ be such that $\beta > \Theta/2$ and let $\mathcal{C}$ be an admissible function. Let $X$ be sampled from $\mu_{\beta, \theta, \mathcal{C}}$ and let $\{\sigma_{t}\}_{t \in \{0,\ldots,|X|\}}$ be the associated transposition process.

Then there exists $K_n \subset \N$, $C,c>0$ and constants $c_1,c_2$ in the definition of $\tau$, depending only on $\beta, \theta$ and $\mathcal{C}$, such that the following properties hold.

\begin{enumerate}[(i)]
\item $\p(  |X|\in K_n ) \geq 1 - Ce^{-c \log^2 n}$
\item For $k \in K_n$,  $\p(\tau = \infty| |X| = k) \ge  1 - Ce^{-c \log^2 n}$.
\item Consider  $\ell\geq n\log^2 n$. Denote by $\mathcal{D}_{i}$ the event that in the transition from step $i$ to $i+1$ a cycle of $\sigma_i$  is split into two cycles, one of which has size smaller than $\ell$.  Then for every $i\in I$,
\begin{equation*}
    \p(\mathcal{D}_{i}|\fil_i, |X|=k) \ind{k\in K_n} \ind{\tau > i} \leq C\frac{ \ell}{n^2}.
  \end{equation*}

\item Let $\mathcal{C}_1, \mathcal{C}_2$ be two cycles of $\sigma_i$ such that $|\mathcal{C}_j|\geq n \log^2n$, for $j\in\{1,2\}$. Denote by $\mathcal{M}_{i}$ the event that they are merged in the transition from step $i$ to $i+1$. Then for every $i \in I$,
\begin{equation}
  \p(\mathcal{M}_{i} | \mathcal{F}_i, |X|=k) \geq c\frac{|\mathcal{C}_1| |\mathcal{C}_2| }{n^4} \ind{k\in K_n} \ind{\tau > i}.
\end{equation}
\end{enumerate}

\end{proposition}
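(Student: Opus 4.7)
For item (i), I would use the Lipschitz bound \eqref{eq:Lipschitz} to estimate $\theta^{\mathcal{C}(X)} \in \theta^{\mathcal{C}(\emptyset)}[\Theta^{-|X|}, \Theta^{|X|}]$, so that the distribution of $|X|$ under $\mu_{\beta,\theta,\mathcal{C}}$ differs from $\mathrm{Poisson}(\beta n^2)$ by a density factor of at most $\Theta^{|X|}$ on each side. Standard Chernoff tail bounds for Poisson variables then show that $|X|$ concentrates in an interval of width $O(n\log n)$ around its mean (which is within $O(\log n)$ of $\beta n^2$); taking $K_n$ to be such an interval gives the required bound $1 - Ce^{-c\log^2 n}$.

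For item (ii), the orbit $\orb_s(v)$ is the cycle of $\sigma_s$ containing $v$, traced at integer times by the cyclic random walk built from only the first $s$ bridges of $X$. The plan is to apply Propositions \ref{prop:isoperimetry-log2n} and \ref{prop:isoperimetry-log2n-lower-bound} to each such truncated configuration separately. Let $t_s$ denote the time coordinate of the $s$-th bridge and condition on $X^* := X \cap (E \times [t_s, 1))$. After rescaling time by $1/t_s$, the conditional distribution of the first $s$ bridges is $\mu_{\beta t_s, \theta, \mathcal{C}'}$, where $\mathcal{C}'(Y) := \mathcal{C}(Y \cup X^*)$ is still admissible (the Lipschitz constant is preserved). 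For $k = |X| \in K_n$ and $s \in I$ one has $t_s = 1 - o(1)$, hence $\beta t_s \in [\beta - o(1), \beta]$ and the two isoperimetric propositions apply with constants uniform in $s$, yielding the upper and lower bounds on $\iota(\orb_s^{n\log^2 n}(v))$ and $\chi(\orb_s^{n\log^2 n}(v))$. A union bound over $s \in I$ (of size $O(n^{11/6})$) and $v \in V$ finishes (ii), since the polynomial overhead is swallowed by the $e^{-c\log^2 n}$ bound.

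For item (iii), fix $i \in I$ and $\ell \geq n\log^2 n$. The transposition $\sigma_{i+1}\sigma_i^{-1} = \{u,v\}$ splits a cycle of $\sigma_i$ into a piece of size less than $\ell$ if and only if $u,v$ lie in a common cycle at cyclic distance less than $\ell$. For any vertex $w$ in any cycle $\mathcal{C}$, the set $S_w$ of vertices of $\mathcal{C}$ at cyclic distance less than $\ell$ from $w$ can be covered by $O(\ell/(n\log^2 n))$ consecutive segments of the form $\orb_i^{n\log^2 n}(\cdot)$; on the event $\{\tau > i\}$, subadditivity of $\iota$ yields $\iota(S_w) = O(\ell/n)$, so the number of neighbors of $w$ in $S_w$ (counted in $H_n$) is at most $2\iota(S_w) = O(\ell/n)$. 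Summing over $w \in V$ and using $\sum_\mathcal{C}|\mathcal{C}| = n^2$, the total number of ``dangerous'' edges is $O(n\ell)$, and the upper bound in Lemma \ref{lem:almost_uniform} gives $\p(\mathcal{D}_i | \cdot) \leq O(n\ell) \cdot \Theta^2 / |E| = O(\ell / n^2)$. For item (iv), take disjoint cycles $\mathcal{C}_1, \mathcal{C}_2$ of $\sigma_i$ of size at least $n\log^2 n$ and partition each $\mathcal{C}_j$ into consecutive chunks of length $n\log^2 n$. Applying $\chi(\orb_i^{n\log^2 n}(\cdot)) \geq c_2\log^2 n$ to each chunk, every row and every column of $H_n$ contains at least $\Omega(|\mathcal{C}_j|/n)$ vertices of $\mathcal{C}_j$; hence for each $w \in \mathcal{C}_1$ the number of neighbors of $w$ in $\mathcal{C}_2$ (both in its row and in its column) is $\Omega(|\mathcal{C}_2|/n)$, giving $|E(\mathcal{C}_1,\mathcal{C}_2)| \geq \Omega(|\mathcal{C}_1||\mathcal{C}_2|/n)$. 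The lower bound in Lemma \ref{lem:almost_uniform} then yields $\p(\mathcal{M}_i | \cdot) \geq \Theta^{-2} |E(\mathcal{C}_1,\mathcal{C}_2)|/|E| \geq c|\mathcal{C}_1||\mathcal{C}_2|/n^4$.

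The main obstacle is item (ii): although the CRW machinery of Section \ref{sec:isoperimetry} is phrased for the full configuration $\sigma(X)$, the conditioning-and-rescaling trick above reduces each partial configuration $\sigma_s$ to an instance of the same setup with an admissible cost $\mathcal{C}'$ and an effective parameter $\beta t_s$. The window length $2\lceil n^{11/6}\rceil$ is chosen so that (a) $\beta t_s$ stays uniformly above $\Theta/2$ for every $s \in I$, and (b) a union bound over the polynomially many $s \in I$ and $v \in V$ still fits comfortably inside the $e^{-c\log^2 n}$ tail supplied by Section \ref{sec:isoperimetry}.
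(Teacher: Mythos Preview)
Your treatment of (iii) and (iv) is exactly the paper's argument (packaged there as Lemma~\ref{lem:intensityOfSplit} combined with Lemma~\ref{lem:almost_uniform}), so nothing to add there.

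For (i)--(ii), however, there are two real issues. First, the paper does not choose $K_n$ via a Chernoff bound on $|X|$. It proves the \emph{unconditional} estimate $\p(\mathcal{I}) \ge 1 - Ce^{-c\log^2 n}$ for the isoperimetry event $\mathcal{I} = \{\tau=\infty\}$ and only then defines $K_n := \{k:\p(\mathcal{I}\mid |X|=k) \ge 1 - Ce^{-\frac{c}{2}\log^2 n}\}$, obtaining (i) from Markov's inequality applied to $\p(\mathcal{I}^c)$. Your Chernoff interval has no a priori relation to the conditional isoperimetry, and your union bound in (ii) yields an unconditional statement, not the conditional one required; the Markov step that ties (i) and (ii) to a single $K_n$ is missing. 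Second, your conditioning step is not quite right: once you fix $s$ and condition on the tail $X^* = X\cap(E\times[t_s,1))$, you have also fixed $|X|$, hence the number of bridges in $[0,t_s)$. The resulting law on the first $s$ bridges is \emph{not} $\mu_{\beta t_s,\theta,\mathcal{C}'}$ but that measure conditioned on having exactly $s$ points, which destroys the Poisson structure underlying the intensity Lemma~\ref{le:intensity} and hence Propositions~\ref{prop:isoperimetry-log2n} and~\ref{prop:isoperimetry-log2n-lower-bound}. The paper avoids this by never conditioning on the future: for \emph{deterministic} $t$, the unconditional marginal of $X^t = X\cap(E\times[0,t))$ under $\mu_{\beta,\theta,\mathcal{C}}$ is (after rescaling) a genuine sample from $\mu_{\beta t,\theta,\mathcal{C}^t}$, where $\theta^{\mathcal{C}^t(Y)} = \int \theta^{\mathcal{C}(Y\cup Z_{|E\times[t,1)})}\,\mathcal{B}(dZ)$ is obtained by \emph{integrating out} (not fixing) the future and is still admissible. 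One then discretizes $[t_0,1)$ into $\sim \beta n^2 e^{\kappa\log^2 n}$ subintervals, each containing at most one bridge with high probability, applies the two isoperimetry propositions at each grid point, and union-bounds; since $\orb_s(v) = \mathcal{O}^{X^{t_s}}(v)$ and $t_s>t_0$ for all $s\in I$ with high probability, this covers every $s\in I$ at once.
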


The proof is deferred to the end of the next subsection.

\subsection{Isoperimetry and its consequences}

Here we use the notation from the previous section and assume that $\{ \sigma_{t} \}_{t \in \{0, \ldots, |X|\}}$ is a transposition process associated to the distribution $\mu_{\beta, \theta, \mathcal{C}}$. The proof of Proposition \ref{cor:splitting} is rather easy once we know that any (long enough) fragment of $\sigma_t$ is ``spread evenly on the graph''. This is formalised in events $\mathcal{I}^{\iota}$ and $\mathcal{I}^{\chi}$ defined below.

Recall that $\interval = \{k',\ldots, k - 1 \}$, where $k' = \max\{0, k - 2\lceil n^{11/6}\rceil\}$, conditionally on $|X| = k$. Let $c_1, c_2 > 0$ and consider the events
\begin{align}
& \mathcal{I}^{\iota} := \{ \tau^{\iota} \geq |X| \} = \left\{ \forall s \in \interval \, \forall v \in V \, \iota \left( \orb_{s}^{n \log^2 n}(v) \right) \le c_{1} \log^2 n \right\}, \label{eq:assumption1} \\
& \mathcal{I}^{\chi} := \{ \tau^{\chi} \geq |X| \} = \left\{ \forall s \in \interval \, \forall v \in V \, \chi \left( \orb_{s}^{n \log^2 n}(v) \right) \geq c_{2} \log^2 n  \ind{|\orb_{s}(v)| \geq n \log^2 n}\right\}. \label{eq:assumption2}
\end{align}
Finally, let \hfff{def:assumptionI}{$\mathcal{I} := \mathcal{I}^{\iota} \cap \mathcal{I}^{\chi}$}.

We will show that the event $\mathcal{I}$ holds (for appropriate choice of $c_1$, $c_2$) with high probability. As a first step we prove that the conclusions of Proposition \ref{prop:isoperimetry-log2n} and Proposition \ref{prop:isoperimetry-log2n-lower-bound} hold uniformly for cyclic random walks using their bars only up to a certain level.

More precisely, for $s,t \in (0,1)$ let $\mathfrak{X}^{s,t}$ be the restriction of $\mathfrak{X}$ to $[s,t)$, i.e., the space of finite subsets of $E \times [s,t)$. We can define measures $\mu_{\beta, \theta, \mathcal{C}}^{s,t}$ on $\mathfrak{X}^{s,t}$ by a formula analogous to \eqref{eq:measure}, i.e.,
\begin{equation}\label{eq:measure-1}
	  \mu^{s,t}_{\beta, \theta, \mathcal{C}}(U) := \frac{1}{Z^{s,t}_{\beta, \theta, \mathcal{C}}} \int_{\mathfrak{X}^{s,t}}\Ind{U}(Y) \theta^{\mathcal{C}(Y)} \mathcal{B}(dY).
	\end{equation}

If $X \in \mathfrak{X}$ and $X = \{(e_1, t_1), \ldots, (e_k, t_k)\}$, we define the restriction of $X$ to $[s,t)$, denoted by $X^{s,t}$, by including only these pairs $(e_i, t_i)$ for which $t_i \in [s, t)$. We have $X^{s,t} \in \mathfrak{X}^{s,t}$. For simplicity we will write $\mathfrak{X}^{t}$ instead of $\mathfrak{X}^{0,t}$, etc.

For $Y \in \mathfrak{X}^{t}$ let $\Zz^{Y}(v)$ denote the trace of the cyclic random walk (started at $(v,0)$) using the bridges of $Y$ and running on bars of height $t$ instead of height $1$. Note that if $Y$ is distributed according to $\mu_{\beta, \theta, \mathcal{C}}^{t}$ (for some $\beta, \theta, \mathcal{C}$), then the process $(\Zz^{Y}_{T}(v), T \geq 0)$ has the same distribution as $(\Zz^{X}_{T / t}(v)), T \geq 0)$, where $X \in \mathfrak{X}$ is distributed according to $\mu_{t \beta, \theta, \mathcal{C}}$ (this follows directly by properties of the Poisson point process $\mathcal{B}$).

Furthermore, if $X$ is distributed according to $\mu_{\beta, \theta, \mathcal{C}}$, then the law of $X^{t}$ under $\mu_{\beta, \theta, \mathcal{C}}$ is given by $\mu^{t}_{\beta, \theta, \mathcal{C}^{t}}$, where $\mathcal{C}^{t}: \mathfrak{X}^{t}\mapsto \R$ is a function defined by
\begin{equation*}
  \theta^{\mathcal{C}^{t}(Y)} := \int_{\mathfrak{X} } \theta^{\mathcal{C}(Y \cup Z_{|E\times [t,1)})} \mathcal{B}(dZ),
\end{equation*}
where $Z_{|E\times [t,1)} = Z \cap (E \times [t,1))$. It is easy to check that if $\mathcal{C}$ is admissible, then so is $\mathcal{C}^t$.

Let us define the analogues of \eqref{eq:assumption1} and \eqref{eq:assumption2} for the CRW. Recall the sets $\mathcal{O}_k,
\mathcal{O}$, defined in \eqref{eq:very-big-ooo}, and denote by $\mathcal{O}_{k}^Y,\mathcal{O}^Y$ the analogous sets for the process $Y$. For fixed $t_0\in (0,1)$ and $c_1, c_2>0$ we set
\begin{align*}
  \mathcal{A}^{\iota} &:=  \left\{ \forall t \in [t_{0}, 1) \, \forall v \in V \, \iota \left(\mathcal{Z}^{X^{t}}_{t n\log^2 n}(v) \right) \leq c_1 \log^2 n \right\},  \\
  \mathcal{A}^{\chi} &:=  \left\{ \forall t \in [t_{0}, 1) \, \forall v \in V \, \chi \left(\mathcal{O}^{X^{t}}_{n\log^2 n}(v) \right) \geq c_2 \log^2 n \ind{ |\mathcal{O}^{X^{t}}(v)| \geq n \log^2 n} \right\}.
\end{align*}
We also set $\mathcal{A} := \mathcal{A}^{\iota}\cap \mathcal{A}^{\chi}$. We will now prove

\begin{proposition}\label{prop:isoperimetry-log2n-uniform}

Fix $\beta$, $\theta$, $\mathcal{C}$ such that $\beta > \Theta / 2$ and let $t_{0} \in (0,1)$ be such that $t_{0} \beta > \Theta / 2$. Let $X$ be distributed according to $\mu_{\beta, \theta, \mathcal{C}}$. Then there exist $C, c > 0$ and $c_1,c_2>0$ in the definition of $\mathcal{A}^{\iota}$ and  $\mathcal{A}^{\chi}$ above such that
\begin{equation*}
  \Pp \left( \mathcal{A}  \right) \geq 1 - C e^{- c \log^2 n}.
\end{equation*}
\end{proposition}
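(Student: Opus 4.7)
The plan is to derive Proposition \ref{prop:isoperimetry-log2n-uniform} from the pointwise Propositions \ref{prop:isoperimetry-log2n} and \ref{prop:isoperimetry-log2n-lower-bound} by a time-rescaling argument followed by a union bound on the random grid of bridge times. For fixed $t \in [t_{0}, 1)$, consider the map $\phi_{t}\colon \mathfrak{X}^{t}\to \mathfrak{X}$, $(e,s)\mapsto (e,s/t)$. By scaling invariance of the underlying Poisson point process, together with the observation already made in the paper that $X^t \sim \mu^{t}_{\beta,\theta,\mathcal{C}^{t}}$ with $\mathcal{C}^t$ admissible, the pushforward $\phi_{t}(X^{t})$ is distributed as $\mu_{t\beta,\theta,\widetilde{\mathcal{C}}^{t}}$ for some admissible $\widetilde{\mathcal{C}}^{t}$. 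Moreover, as vertex sets, $\mathcal{Z}^{X^{t}}_{ts}(v)=\mathcal{Z}^{\phi_{t}(X^{t})}_{s}(v)$ and $\mathcal{O}^{X^{t}}(v)=\mathcal{O}^{\phi_{t}(X^{t})}(v)$, since the CRW on bars of height $t$ at unit speed is a linear time reparametrization (by factor $t$) of the CRW on bars of height $1$ with bridges rescaled by $1/t$. Because $t\beta$ ranges over $[t_{0}\beta,\beta]$ with $t_{0}\beta>\Theta/2$, Propositions \ref{prop:isoperimetry-log2n} and \ref{prop:isoperimetry-log2n-lower-bound} apply to $\phi_{t}(X^{t})$ with constants depending only on $t_{0}\beta,\beta,\theta$, yielding
\begin{equation}\label{eq:ptwuniplan}
\Pp\bigl(\forall v\in V,\ \iota\bigl(\mathcal{Z}^{X^{t}}_{tn\log^{2}n}(v)\bigr)\le c_{1}\log^{2}n\bigr)\ge 1-Ce^{-c\log^{2}n}
\end{equation}
and an analogous pointwise lower bound for $\chi\bigl(\mathcal{O}^{X^{t}}_{n\log^{2}n}(v)\bigr)\ge c_{2}\log^{2}n$ on $\{|\mathcal{O}^{X^{t}}(v)|\ge n\log^{2}n\}$, with constants $c_{1}, c_{2}, C, c>0$ independent of $t$.

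To promote \eqref{eq:ptwuniplan} to the uniform-in-$t$ statements $\mathcal{A}^{\iota}$ and $\mathcal{A}^{\chi}$, I would apply the pointwise bounds on the random grid $\mathcal{G}=\{t_{0}\}\cup\{\tau\in[t_{0},1)\colon (e,\tau)\in X\text{ for some }e\}$, consisting of $t_{0}$ and the bridge times of $X$ in $[t_{0},1)$. The number of such bridge times is Poisson with mean $O(n^{2})$, so $|\mathcal{G}|\le n^{3}$ except on an event of probability $e^{-cn^{2}}$, and the resulting union bound loses only a polynomial factor, negligible against $e^{-c\log^{2}n}$. For $t$ in a gap $[\tau_{i},\tau_{i+1})$, $X^{t}=X^{\tau_{i}}$ is fixed, so only the bar height varies. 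A simple monotonicity observation then says that, with the same bridges, taller bars make wraps strictly longer, so in the same clock time the CRW performs fewer bar-visits; hence $\mathcal{Z}^{X^{t}}_{T}(v)\subseteq \mathcal{Z}^{X^{\tau_{i}}}_{T}(v)$ (the right-hand side uses bars of the natural height $\tau_{i}$). By monotonicity of $\iota$ in sets and in time it suffices to bound $\iota\bigl(\mathcal{Z}^{X^{\tau_{i}}}_{\tau_{i+1}n\log^{2}n}(v)\bigr)$, which after rescaling at $\tau_{i}$ reduces to a bound on $\iota\bigl(\mathcal{Z}^{Y}_{(\tau_{i+1}/\tau_{i})n\log^{2}n}(v)\bigr)$ with $Y\sim\mu_{\tau_{i}\beta,\theta,\widetilde{\mathcal{C}}}$. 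Since $\tau_{i+1}/\tau_{i}\le 1/t_{0}$ is a constant, this time can be partitioned into $\lceil 1/t_{0}\rceil$ consecutive pieces of length $n\log^{2}n$, each controlled by Proposition \ref{prop:isoperimetry-log2n} (with uniformity in the starting point), and subadditivity of $\iota$ yields the required $O(\log^{2}n)$ bound with sufficient probability.

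The analogous argument for the lower bound on $\chi$ and the orbit $\mathcal{O}^{X^{t}}$ proceeds similarly, using additionally that crossing a single bridge time modifies $\sigma(X^{t})$ by one transposition, so the orbit $\mathcal{O}^{X^{t}}(v)$ changes only by a split or a merge and $\chi\bigl(\mathcal{O}^{X^{t}}_{n\log^{2}n}(v)\bigr)$ can drop by at most $O(1)$ per bridge crossing, which is absorbed in the constants. The main obstacle will be the uniformity step, namely matching the trace at intermediate $t$ with the pointwise estimates at grid points; this is made tractable by the bar-height monotonicity (which handles the continuous variation within each bridge-free interval in the favourable direction) together with the bounded time-overhead $\tau_{i+1}/\tau_{i}\le 1/t_{0}$, which keeps the concatenation-over-time-windows argument at constant overhead.
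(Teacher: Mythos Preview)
Your high-level strategy matches the paper's: rescale at each $t\in[t_0,1)$ to reduce to Propositions \ref{prop:isoperimetry-log2n} and \ref{prop:isoperimetry-log2n-lower-bound}, then cover $[t_0,1)$ by a grid and union-bound. The crucial divergence is your choice of grid, and this is where the argument breaks.

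Propositions \ref{prop:isoperimetry-log2n} and \ref{prop:isoperimetry-log2n-lower-bound} are statements about a \emph{deterministic} parameter $\beta'\in[t_0\beta,\beta]$ (equivalently a deterministic bar height $t$): for each fixed $t$ one has $\Pp(\text{good at }t)\ge 1-Ce^{-c\log^2 n}$. You try to union-bound these over the \emph{random} bridge times $\tau_i$, and that is not valid. The event $\{\text{bad at }\tau_i\}$ is not of the form $\{\text{bad at }t\}$ for a fixed $t$, because $\tau_i$ is a function of $X$; after rescaling, $\phi_{\tau_i}(X^{\tau_i})$ does \emph{not} have law $\mu_{\tau_i\beta,\theta,\widetilde{\mathcal C}}$ once one conditions on $\tau_i$ (knowing a bridge time carries information about the rest of the configuration). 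Your bar-height monotonicity is correct, but it only reduces the question to controlling $\iota\bigl(\mathcal Z^{X^{\tau_i}}_{\cdot}(v)\bigr)$ at the random $\tau_i$, which is exactly the unjustified step.

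The paper avoids this by taking a \emph{deterministic} grid $t_0=s_0<\dots<s_k=1$ with spacing of order $n^{-2}e^{-\kappa\log^2 n}$, hence $k\asymp n^2 e^{\kappa\log^2 n}$ points. This is fine enough that the event $\mathcal E=\{\text{every cell }[s_i,s_{i+1})\text{ contains at most one bridge}\}$ holds with probability $\ge 1-Ce^{-c\log^2 n}$ (via Lemma \ref{le:auxilliary-poisson}), while the union bound over the $k+1$ deterministic grid points still goes through after choosing $\kappa<c/2$. On $\mathcal E$ the interpolation within a cell is then trivial: for $t\in[s_i,s_{i+1})$ the bridge set $X^t$ coincides with either $X^{s_i}$ or $X^{s_{i+1}}$, and since the CRW's height is simply $s\bmod(\text{bar height})$, one checks that $\mathcal Z^{X^t}_{tn\log^2 n}(v)$ and $\mathcal O^{X^t}_{n\log^2 n}(v)$ agree with one of the two endpoint values. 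Incidentally, your claim that crossing a single bridge changes $\chi\bigl(\mathcal O^{X^t}_{n\log^2 n}(v)\bigr)$ by at most $O(1)$ is false --- a single split can replace the first $n\log^2 n$ elements of the orbit entirely --- which is another reason the paper reduces to endpoint values rather than tracking increments.
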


\begin{proof}
It will be convenient to divide the time interval $[0,1)$ into subintervals small enough so that each of them contains at most one bridge, as then it will be enough to control isoperimetry at the endpoints and use a union bound.

Let $\kappa > 0 $ (to be specified later in the proof). Let $t_0 = s_{0} < s_1 < \ldots < s_k = 1$ be such that $k = \lceil\beta n^2 e^{\kappa \log^2 n}\rceil$ and $|s_{i+1} - s_{i}|\leq \beta^{-1} n^{-2}e^{-\kappa \log^2 n}$ for $ i \in \{0, 1, \ldots, k-1 \}$. For simplicity we will write $X^{i} :=  X^{s_{i}}$ and $X^{i, i+1} := X^{s_{i}, s_{i+1}}$ (and likewise for $\mathfrak{X}$).

Consider the event that there is at most one bridge in each interval $[s_{i}, s_{i+1})$
\begin{equation*}
   \mathcal{E} := \{ \forall_{i  \in \{0, \ldots, k-1 \} } |X^{i,i+1}| \leq 1\}.
\end{equation*}
First note that
\begin{equation}\label{eq:not-too-many-bridges}
 \p(\mathcal{E}^c) \leq C e^{- c \log^2 n}
\end{equation}
for some $C,c > 0$. Indeed, for any fixed $i = 0, \ldots, k - 1$ by Lemma \ref{le:auxilliary-poisson} applied with $t = s_{i}$, $s = |s_{i+1} -  s_{i}|$ we have
\[
\pr{|X^{i,i+1}| > 1} \leq e^{C \lambda_{i}} \pr{Y_{\lambda_{i}} \geq 2},
\]
for some $C > 0$, where $\lambda_{i} \leq \Theta e^{-\kappa \log^2 n}$ and $Y_{\lambda_{i}}$ is a Poisson variable with parameter $\lambda_{i}$. Using the simple estimate $\pr{Y_{\lambda_{i}} \geq 2} \leq \lambda_{i}^2$ (valid for $\lambda_{i}$ small enough) we obtain
\begin{align*}
& \Pp\left( |X^{i,i+1}| > 1 \right) \leq e^{C \Theta e^{-\kappa \log^2 n}} \Theta^2 e^{-2\kappa \log^2 n} \leq C' e^{- 2\kappa \log^2 n}
\end{align*}
for some $C' > 0$. Applying union bound over all $i = 0, \ldots, k-1$, gives $\pr{\mathcal{E}} \leq k \cdot C' e^{- 2\kappa \log^2 n} \leq 2 C' \beta n^2 e^{- \kappa \log^2 n} \leq C e^{- c \log^2 n}$ for some $C, c > 0$.

Now we prove that each of the events defining $\mathcal{A}$ holds with high enough probability. We start with the event $\mathcal{A}^{\iota}$. Recall that $\mathcal{Z}^{X^{t}}(v)$ is the trace of the CRW using bridges of $X^{t}$. Consider the event
\begin{equation*}
   \mathcal{J} := \left\{\forall_{i  \in \{0, 1, \ldots, k \} } \, \forall_{v\in V} \, \iota \left(\mathcal{Z}^{X^{s_{i}}}_{s_{i} n\log^2 n}(v) \right) \leq C \log^2 n \right\}
 \end{equation*}
with $C$ as in Proposition \ref{prop:isoperimetry-log2n}.

As remarked before, for any fixed $i \in \{0, 1, \ldots, k \}$ the law of $X^{s_{i}}$ under $\mu_{\beta, \theta, \mathcal{C}}$ is given by $\mu_{\beta, \theta, \mathcal{C}^{s_{i}}}^{s_{i}}$ and the trace $\mathcal{Z}^{X^{s_{i}}}_{T}(v)$ has the same distribution as the trace $\mathcal{Z}^{\tilde{X}^{i}}_{T / s_{i}}(v)$, where $\tilde{X}^{i}$ has distribution $\mu_{\beta_{i}, \theta, \mathcal{C}^{s_{i}}}$ with $\beta_{i} = s_{i}\beta$ (note that the latter CRW uses bars of height $1$). Note that $\beta_{i} > \Theta / 2$, as by assumption $t_{0}\beta > \Theta / 2$, and we have $\beta_{i} \in [t_{0}\beta, \beta]$.

Therefore, by Proposition \ref{prop:isoperimetry-log2n} we obtain for any fixed $i$

\[
\pr{\forall_{v\in V} \, \iota \left(\mathcal{Z}^{X^{s_{i}}}_{s_{i} n\log^2 n}(v) \right) \leq C \log^2 n} \geq 1 - C e^{- c \log^2 n},
\]
with constants $C,c$ depending only on $\Theta$, $\beta$ and $t_{0}$ (but not on $\mathcal{C}^{s_{i}}$).

Applying a union bound over $i=0,\ldots,k$ we get
\[
\pr{\mathcal{J}^{c}} \leq (k+1) C e^{- c \log^2 n} \leq 4 C \beta n^2 e^{\kappa \log^2 n} e^{- c \log^2 n}.
\]
Now we can fix $\kappa < c/2$ to obtain that $\pr{\mathcal{J}} \geq 1 - C' e^{-c' \log^2 n}$ for some $C', c' > 0$.

On the event $\mathcal{E}$ there is at most one bridge in each interval $[s_{i}, s_{i+1})$, which implies for any $v \in V$
\[
\sup\limits_{t \in [s_{i}, s_{i+1})} \iota \left(\Zz^{X^{t}}_{t n\log^2 n}(v)\right) \leq \max \left\{ \iota \left(\Zz^{X^{s_{i}}}_{s_{i} n\log^2 n}(v)\right), \iota \left(\Zz^{X^{s_{i+1}}}_{s_{i+1} n\log^2 n}(v)\right) \right\}.
\]
Thus on the event $\mathcal{E} \cap \mathcal{J}$ we obtain
\[
\sup\limits_{t \in [t_{0}, 1)} \max\limits_{v \in V} \iota \left(\Zz^{X^{t}}_{t n\log^2 n}(v) \right) \leq C \log^2 n.
\]
Since each event $\mathcal{E}$ and $\mathcal{J}$ occurs with high enough probability, we can take $c_1 = C$ to obtain that $\mathcal{A}^{\iota}$ holds with the required probability.

The proof for the event $\mathcal{A}^{\chi}$ is analogous. Using the same notation as above, let
\begin{equation*}
   \mathcal{K} := \left\{\forall_{i  \in \{0, 1, \ldots, k \} } \, \forall_{v\in V} \, \chi \left(\mathcal{O}^{X^{s_{i}}}_{n\log^2 n}(v)\right) \geq C \log^2 n \text{ or } |\mathcal{O}^{X^{s_{i}}}(v)| < n \log^2 n  \right\}
 \end{equation*}
with $C$ as in Proposition \ref{prop:isoperimetry-log2n-lower-bound}.

As before,  $(\Zz^{X^{s_{i}}}_{T}(v), T \geq 0)$ has the same distribution as $(\Zz^{\tilde{X}^{i}}_{T / s_{i}}(v), T \geq 0)$, where $\tilde{X}^{i}$ is distributed according to $\mu_{\beta_{i}, \theta, \mathcal{C}^{s_{i}}}$ with $\beta_{i} = s_{i}\beta$. Applying Proposition \ref{prop:isoperimetry-log2n-lower-bound} and performing a union bound over $i$ gives us that $\pr{\mathcal{K}} \geq 1 - C' e^{-c' \log^2 n}$ for some $C', c' >0$ depending on $\theta$, $\beta$, $t_{0}$.

On the event $\mathcal{E}$ for each $t  \in [s_{i}, s_{i+1})$ we have
either $\mathcal{O}^{X^{t}}_{t n\log^2 n}(v)= \mathcal{O}^{X^{s_{i}}}_{s_i n\log^2 n}(v)$ and  $\mathcal{O}^{X^{t}}(v)= \mathcal{O}^{X^{s_{i}}}(v)$, or
$\mathcal{O}^{X^{t}}_{t n\log^2 n}(v)= \mathcal{O}^{X^{s_{i+1}}}_{s_{i+1} n\log^2 n}(v)$ and  $\mathcal{O}^{X^{t}}(v)= \mathcal{O}^{X^{s_{i+1}}}(v)$.
Thus $\mathcal{E}\cap \mathcal{K} \subseteq \mathcal{A}^{\chi}$.
We finish by observing that both $\mathcal{E}$ and $\mathcal{K}$ hold with high enough probability, so in the definition of $\mathcal{A}^{\chi}$ we can take $c_2 = C$ with $C$ as in the definition of $\mathcal{K}$.
\end{proof}

Now we can prove that the good isoperimetric properties $\mathcal{I}^\iota$ and $\mathcal{I}^\chi$ defined in \eqref{eq:assumption1} and \eqref{eq:assumption2} hold in the discrete time setting (for some $c_1$, $c_2$) with high probability. Recall that $\mathcal{I} = \mathcal{I}^\iota \cap \mathcal{I}^\chi$.

\begin{lemma}\label{eq:good_isoperimetry_is_common} Let $\beta, \theta>0$ be such that $\beta > \Theta/2$ and $\mathcal{C}$ be an admissible function. There exist $c_1, c_2>0$ in the definitions \eqref{eq:assumption1} and \eqref{eq:assumption2},  $C, c, c'>0$ and $K_n\subset \N$ such that
\[
\p\left(\mathcal{I}\big| |X|= k\right)\geq (1 - Ce^{-c \log^2 n})\ind{k\in K_n}\quad\text{and}\quad\p(|X|\in K_n)\geq 1 - e^{-c' \log^2 n}.
\]
\end{lemma}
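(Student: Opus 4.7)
The plan is to transfer the unconditional isoperimetric bounds for truncated CRWs from Proposition~\ref{prop:isoperimetry-log2n-uniform} to the discrete-time process $(\sigma_s)$. Writing $t_1<\cdots<t_{|X|}$ for the ordered bridge times of $X$, one has $\sigma(X^t)=\sigma_s$ for every $t\in(t_s,t_{s+1}]$, so $\orb_s^{n\log^2 n}(v)=\mathcal{O}^{X^t}_{n\log^2 n}(v)$ and, since the first $n\log^2 n$ orbit points are visited by the height-$t$ CRW by real time $tn\log^2 n$, also $\orb_s^{n\log^2 n}(v)\subseteq \mathcal{Z}^{X^t}_{tn\log^2 n}(v)$. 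Hence on the event $\mathcal{A}=\mathcal{A}^\iota\cap \mathcal{A}^\chi$ from Proposition~\ref{prop:isoperimetry-log2n-uniform}, the bounds $\iota(\mathcal{Z}^{X^t}_{tn\log^2 n}(v))\le c_1\log^2 n$ and $\chi(\mathcal{O}^{X^t}_{n\log^2 n}(v))\ge c_2\log^2 n$ (unless $|\mathcal{O}^{X^t}(v)|<n\log^2 n$) immediately force $\mathcal{I}$, provided that for every $s\in I$ one can pick $t\in(t_s,t_{s+1}]\cap[t_0,1)$. Here $t_0\in(\Theta/(2\beta),1)$ is a threshold fixed in advance (legal because $\beta>\Theta/2$), and the constants $c_1,c_2$ defining $\mathcal{I}^\iota,\mathcal{I}^\chi$ are those supplied by Proposition~\ref{prop:isoperimetry-log2n-uniform} for this $t_0$.

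Two preparatory ingredients are needed. First, I would identify the conditional law of the times: under $\mathcal{B}$ given $|X|=k$ the bridges are i.i.d.\ uniform on $E\times[0,1)$, and since by admissibility the tilt $\theta^{\mathcal{C}(X)}$ depends only on the ordered edge sequence, under $\mu_{\beta,\theta,\mathcal{C}}$ conditioned on $|X|=k$ the sorted times $(t_1,\dots,t_k)$ are still distributed as the order statistics of $k$ i.i.d.\ Uniform$[0,1)$ and are independent of the edges. Second, I would prove concentration of $|X|$: setting $p_k:=\Pp_\mu(|X|=k)$, one has $p_k/p_{k-1}=(\beta n^2/k)\cdot a_k/a_{k-1}$ with $a_k:=\mathbb{E}_{\mathcal{B}}[\theta^{\mathcal{C}(X)}\mid|X|=k]$. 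Coupling a uniform edge sequence of length $k$ with its truncation to length $k-1$ and invoking \eqref{eq:Lipschitz} gives $a_k/a_{k-1}\in[\Theta^{-1},\Theta]$, hence $p_k/p_{k-1}\le 1/2$ for $k>2\beta n^2\Theta$ and $p_k/p_{k-1}\ge 2$ for $k<\beta n^2/(2\Theta)$. This yields geometric decay of $p_k$ outside a window of width $O(n^2)$ and therefore $\Pp_\mu(|X|\in K_n^{(1)})\ge 1-e^{-c\log^2 n}$ for $K_n^{(1)}:=\{k:\beta n^2/(2\Theta)-\log^2 n\le k\le 2\beta n^2\Theta+\log^2 n\}$.

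I would then set $K_n:=K_n^{(1)}\cap K_n^{(2)}$ with $K_n^{(2)}:=\{k:\Pp_\mu(\mathcal{A}^c\mid|X|=k)\le e^{-c'\log^2 n}\}$. By Proposition~\ref{prop:isoperimetry-log2n-uniform} combined with the conditional Markov inequality (Lemma~\ref{le:easy-lemma}), $K_n^{(2)}$ has $\mu$-probability at least $1-Ce^{-c''\log^2 n}$, so the same holds for $K_n$. For $k\in K_n$ the conditional probability of $\mathcal{A}^c$ is already under control by the definition of $K_n^{(2)}$; it remains to bound $\Pp(\exists s\in I:t_s<t_0\mid|X|=k)$. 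Noting $s_{\min}:=\min I\ge k-3n^{11/6}$ and $k\ge\beta n^2/(3\Theta)$ for $k\in K_n^{(1)}$ and $n$ large, we have $s_{\min}/k=1-O(n^{-1/6})$, which exceeds $t_0$ with a positive margin. A Chernoff bound on the Binomial $\#\{i\le k:U_i\le t_0\}$ (the rank of $t_0$ among the order statistics) yields $\Pp(t_{s_{\min}}<t_0\mid|X|=k)\le e^{-cn^2}$, and monotonicity propagates this to all $s\in I$. Summing gives $\Pp_\mu(\mathcal{I}^c\mid|X|=k)\le Ce^{-c\log^2 n}$, as required.

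The step requiring genuine care is the concentration of $|X|$: since $\mu_{\beta,\theta,\mathcal{C}}$ is only a Lipschitz tilt of a Poisson law rather than Poisson itself, standard Poisson concentration is unavailable and must be replaced by the ratio argument above. Everything else is either a soft measure-theoretic manipulation (independence of times and edges conditionally on $|X|$), a classical Chernoff bound for uniform order statistics, or a packaged conditional Markov inequality.
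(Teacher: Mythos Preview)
Your proof is correct and follows the same high-level strategy as the paper: reduce $\mathcal{I}$ to the event $\mathcal{A}$ of Proposition~\ref{prop:isoperimetry-log2n-uniform} by showing that the bridge times $t_s$ for $s\in I$ lie in $[t_0,1)$, then extract $K_n$ by a Markov-type argument. The implementation, however, differs in two respects worth noting.

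First, the paper works unconditionally throughout: it shows $\p(\mathcal{I})\ge 1-Ce^{-c\log^2 n}$ directly and only at the very end defines $K_n=\{k:\p(\mathcal{I}\mid|X|=k)\ge 1-Ce^{-\frac{c}{2}\log^2 n}\}$ by Markov. You instead build $K_n=K_n^{(1)}\cap K_n^{(2)}$ explicitly and work conditionally from the start. Second, to get both the time bound $\{t_s>t_0\ \forall s\}$ and (implicitly) the concentration of $|X|$, the paper invokes Lemma~\ref{le:auxilliary-poisson} once, bounding $\p(|X\cap(E\times[t_0,1))|\le 2\lceil n^{11/6}\rceil)$ directly. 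You separate these: the ratio argument $p_k/p_{k-1}\in[\beta n^2\Theta^{-1}/k,\,\beta n^2\Theta/k]$ for concentration of $|X|$, and the conditional independence of the sorted times from the edge sequence (plus a binomial Chernoff bound) for the order-statistic estimate. Your observation that under $\mu_{\beta,\theta,\mathcal{C}}(\,\cdot\mid|X|=k)$ the times remain uniform order statistics is correct and clean; the paper never states it because its unconditional route via Lemma~\ref{le:auxilliary-poisson} does not need it. Either packaging works; the paper's is shorter since one lemma does double duty, while yours is more self-contained and makes the conditional structure explicit.
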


\begin{proof}
Let $X$ be sampled from $\mu_{\beta, \theta, \mathcal{C}}$ and fix $t_{0} \in (0,1)$ such that $t_{0} \beta > \Theta / 2$.

We first prove that the event $\mathcal{I} = \mathcal{I^{\iota}} \cap \mathcal{I^{\chi}}$ holds with probability at least $1 - C e^{-c \log^2 n}$, for some $C,c >0$, when $c_1$, $c_2$ are chosen appropriately.

Consider the bridges $\sigma_{s} = (e_{s}, t_{s})$ for $s = |X| - 2\lceil n^{11/6}\rceil,\ldots, |X|$. The event
\[
\mathcal{S} = \{ |X| \geq 2 \lceil n^{11/6}\rceil \} \cap \{ \forall_{ s \in \{|X| - 2 \lceil n^{11/6}\rceil,\ldots, |X| \}} \, t_{s} > t_{0} \}
\]
holds with high probability. Indeed, if $|X| < 2 \lceil n^{11/6}\rceil$ or $t_{s} \leq t_{0}$ for some $s$, then necessarily $|X \cap (E \times [t_{0}, 1))| \leq 2 \lceil n^{11/6} \rceil$. An application of the second part of Lemma \ref{le:auxilliary-poisson} with $k_n = 2 \lceil n^{11/6}\rceil$ shows that with probability at least $1 - C e^{-c n^2}$, for some $C,c >0$, this does not happen.

From now on we work on the event $\mathcal{S}$. As in the proof of Proposition \ref{prop:isoperimetry-log2n-uniform} let $X^{t_{s}}$ denote the restriction of $X$ to the interval $[0, t_{s})$ and let $\Zz^{X^{t_{s}}}(v)$ be the trace of the corresponding cyclic random walk started at $v$.

By the construction of the cyclic random walk we have $\orb_{s}(v) \subseteq \Zz^{X^{t_{s}}}(v)$ and $\orb_{s}^{n \log^2 n}(v) \subseteq \Zz^{X^{t_{s}}}_{t_{s} n \log^2 n}(v)$. Recalling the definition of the event $\mathcal{A}^{\iota}$, by Proposition \ref{prop:isoperimetry-log2n-uniform} there exist $C, c > 0$ such that with probability at least $1 - C e^{- c \log^2 n}$ we have
\[
\sup\limits_{t \in [t_{0}, 1)} \max\limits_{v \in V} \iota \left(\mathcal{Z}^{X^{t}}_{t n\log^2 n}(v) \right) \leq C \log^2 n.
\]
As on $\mathcal{S}$ we have $t_{s} > t_{0}$ for all $s = |X| - 2 \lceil n^{11/6} \rceil, \ldots, |X|$, together with the observation about the orbits this shows that $\mathcal{I}^{\iota}$, with $c_1 = C$ in the definition \eqref{eq:assumption1}, holds with probability at least $1 - C' e^{-c' \log^2 n}$ for some $C',c' > 0$.

For the proof that the event $\mathcal{I}^{\chi}$ holds with high probability, note that $\mathcal{O}^{X^{t_{s}}}_{t_s n \log^2n}(v) = \orb_{s}^{n \log^2 n}(v)$, in particular $\orb_{s}(v) = \mathcal{O}^{X^{t_{s}}}(v)$. Therefore, recalling the definition of $\mathcal{A}^{\chi}$, we can use Proposition \ref{prop:isoperimetry-log2n-uniform} to conclude that with probability at least $1 - C' e^{- c' \log^2 n}$, for some $C',c' >0$, we have
\[
\forall s\in I \, \forall v \in V \, \chi \left(\orb^{n\log^2 n}_{s}(v) \right) \geq C \log^2 n \ind{ |\orb_{s}(v)| \geq n \log^2 n}.
\]
As before on the event $\mathcal{S}$ we have $t_{s} > t_{0}$, so if we take $c_{2} = C$ in \eqref{eq:assumption2} we obtain that the event $\mathcal{I}^{\chi}$ holds with probability at least $1 - C' e^{- c' \log^2 n}$ for some $C', c' > 0$.

Now suppose that $C,c > 0$ are such that $\pr{\mathcal{I}} \geq 1 - C e^{- c \log^2 n}$. Let
\[
K_{n} := \left\{ k \in \N \colon \Pp(\mathcal{I} \big| |X|= k) \geq 1 - C e^{- \frac{c}{2} \log^2 n}\right\}.
\]
We write by definition of $K_{n}$
\begin{align*}
\pr{\mathcal{I}^{c}} = & \sum\limits_{k \in K_{n}} \Pp(\mathcal{I}^{c} \big| |X|=k) \pr{|X| = k} + \sum\limits_{k \notin K_{n}} \Pp(\mathcal{I}^{c} \big| |X|=k) \pr{|X| = k} \\
& \geq \pr{|X| \notin K_{n}} \cdot C e^{- \frac{c}{2} \log^2 n}
\end{align*}
and now the lower bound on $\pr{\mathcal{I}}$ together with a simple calculation gives us
\[
\pr{|X| \in K_{n}} \geq 1 - \frac{C e^{-c \log^2 n}}{C e^{- \frac{c}{2} \log^2 n}} = 1 - e^{-\frac{c}{2} \log^2 n}
\]
as desired, which proves the second assertion of the lemma with $c' = c/2$.
\end{proof}

We will now relate good isoperimetric properties of cycles to the probabilities of splits and merges in the corresponding transposition process.

\begin{lemma}\label{lem:intensityOfSplit}
Let $\sigma$ be a permutation and let $e = \{u,w\}$ be an edge chosen at random according to a distribution $\{p_e\}_{e\in E}$ satisfying $ c / |E|\leq  p_e \leq C/|E|$, for some $C,c > 0$. Let $(u,w)$ denote the transposition of endpoints of $e$.
\begin{enumerate}[(i)]
\item Suppose that for some $k,m \in \N$ and each $v$ we have
	\begin{equation}\label{eq:isoperimetricAssumption}
		\iota(\orb_\sigma^k(v)) \leq m.
	\end{equation}
Then for any $\ell \geq k$ the probability that a cycle of $\sigma$ is split in $(u,w) \circ \sigma$ into two cycles, one of which has size smaller than $\ell$, is at most
	\[
		 \frac{4 C\ell }{k n} m.
	\]
\item Suppose that for some $k,m \in \N$ and each $v$ satisfying $|\orb_{\sigma}(v)| \geq k$ we have
	\begin{equation}\label{eq:isoperimetricAssumption}
		\chi(\orb_\sigma^k(v)) \geq m.
	\end{equation}
Then given two cycles $\mathcal{C}_{1}$, $\mathcal{C}_{2}$ of $\sigma$ of length at least $k$, the probability that they are merged in $(u,w) \circ \sigma$ into one cycle is at least
	\[
		 \frac{c}{2} \frac{|\mathcal{C}_{1}||\mathcal{C}_{2}| m^2}{n^2 k^2}.
	\]
\end{enumerate}
\end{lemma}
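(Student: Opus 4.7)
The plan is to reduce both statements to counting edges of $H_n$ that realise the desired split or merge, and then to combine the counts with the bounds $p_e\in[c/|E|, C/|E|]$ together with $|E|=n^2(n-1)$.

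For part (i), observe first that the transposition $(u,w)$ can split a cycle of $\sigma$ only when $u$ and $w$ lie in the same cycle. If $u,w$ belong to a common cycle of length $N$ and $w=\sigma^j(u)$ with $1\le j\le N-1$, the resulting cycles have sizes $j$ and $N-j$. Thus one of the two new cycles has size $<\ell$ iff $w\in\orb_\sigma^\ell(u)\setminus\{u\}$ or, equivalently, $u\in\orb_\sigma^\ell(w)\setminus\{w\}$. I would therefore bound the number of such ``bad'' edges by
\[
|\{e=\{u,w\}\in E : w\in\orb_\sigma^\ell(u)\setminus\{u\}\}|\le \sum_{u\in V} 2\,\iota(\orb_\sigma^\ell(u)).
\]
Next, I would cover $\orb_\sigma^\ell(u)$ by $\lceil\ell/k\rceil\le 2\ell/k$ orbits of length $k$ of the form $\orb_\sigma^k(\sigma^{ik}(u))$ and use subadditivity of $\iota$ together with the hypothesis $\iota(\orb_\sigma^k(\cdot))\le m$ to obtain $\iota(\orb_\sigma^\ell(u))\le 2\ell m/k$. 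Summing and multiplying by $p_e\le C/|E|=C/(n^2(n-1))$, the probability of a bad split is at most $\frac{C}{n^2(n-1)}\cdot n^2\cdot\frac{4\ell m}{k}\le \frac{4C\ell m}{kn}$, as claimed.

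For part (ii), a transposition $(u,w)$ merges the two cycles $\mathcal{C}_1,\mathcal{C}_2$ iff $e=\{u,w\}\in E(\mathcal{C}_1,\mathcal{C}_2)$, so the probability of a merge is at least $\frac{c}{|E|}|E(\mathcal{C}_1,\mathcal{C}_2)|$. To lower-bound $|E(\mathcal{C}_1,\mathcal{C}_2)|$, for each $j\in\{1,2\}$ I would split the cycle $\mathcal{C}_j$ into $M_j:=\lfloor|\mathcal{C}_j|/k\rfloor\ge |\mathcal{C}_j|/(2k)$ pairwise disjoint orbits of length $k$, each of the form $\orb_\sigma^k(\sigma^{ik}(v))$ for some $v\in\mathcal{C}_j$. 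Since $|\mathcal{C}_j|\ge k$, the hypothesis applies to each such orbit, yielding $|L_i\cap\orb_\sigma^k(\cdot)|\ge m$ and $|D_i\cap\orb_\sigma^k(\cdot)|\ge m$ for every $i$. Adding these contributions over the $M_j$ disjoint orbits gives, for every row $L_i$ and every column $D_i$,
\[
|L_i\cap\mathcal{C}_j|\ge \frac{|\mathcal{C}_j|m}{2k},\qquad |D_i\cap\mathcal{C}_j|\ge \frac{|\mathcal{C}_j|m}{2k}.
\]
Since $|E(\mathcal{C}_1,\mathcal{C}_2)|=\sum_{i=0}^{n-1}|L_i\cap\mathcal{C}_1|\,|L_i\cap\mathcal{C}_2|+\sum_{i=0}^{n-1}|D_i\cap\mathcal{C}_1|\,|D_i\cap\mathcal{C}_2|$, this yields $|E(\mathcal{C}_1,\mathcal{C}_2)|\ge 2n\cdot\frac{|\mathcal{C}_1||\mathcal{C}_2|m^2}{4k^2}=\frac{n|\mathcal{C}_1||\mathcal{C}_2|m^2}{2k^2}$. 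Multiplying by $c/|E|=c/(n^2(n-1))$ and using $n-1\le n$ gives the desired lower bound $\frac{c}{2}\cdot\frac{|\mathcal{C}_1||\mathcal{C}_2|m^2}{n^2 k^2}$.

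Neither step involves a serious obstacle: part (i) is a direct counting argument leveraging the $\iota$ bound on orbits of length $k$ via subadditivity, and part (ii) is a straightforward aggregation of the $\chi$ bounds, the only slight subtlety being the need to split $\mathcal{C}_j$ into disjoint orbits of length exactly $k$ in order to add row/column intersection counts without overcounting. The factor $1/2$ in the final bound absorbs the approximation $\lfloor |\mathcal{C}_j|/k\rfloor\ge |\mathcal{C}_j|/(2k)$ together with the use of $n$ in place of $n-1$.
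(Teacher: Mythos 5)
Your part (ii) coincides almost verbatim with the paper's argument: split each $\mathcal{C}_j$ into $\lfloor|\mathcal{C}_j|/k\rfloor$ disjoint length-$k$ segments, apply the $\chi$-hypothesis to each, and lower-bound $|E(\mathcal{C}_1,\mathcal{C}_2)|$ row-by-row and column-by-column. That part is fine.

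Part (i) follows the same overall route as the paper (characterise the ``bad'' transpositions via $\orb^\ell_\sigma$, bound the count of bad edges by subadditivity of $\iota$), but two points deserve attention. First, the clause ``$w\in\orb_\sigma^\ell(u)\setminus\{u\}$ or, equivalently, $u\in\orb_\sigma^\ell(w)\setminus\{w\}$'' is misleading: these two conditions are \emph{not} equivalent. In a cycle of length $N$ with $w=\sigma^j(u)$, the first says $j<\ell$ and the second says $N-j<\ell$; the correct characterisation of a bad split is their \emph{disjunction} (equivalently $w\in(\orb^\ell_\sigma(u)\cup\orb^\ell_{\sigma^{-1}}(u))\setminus\{u\}$, which is what the paper uses). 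Your subsequent counting $\sum_{u\in V}2\iota(\orb^\ell_\sigma(u))$ does cover every bad edge at least once, so the bound survives this slip, but the phrasing is incorrect as written.

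Second, and more substantively, the last displayed inequality does not hold: you compute
\[
\frac{C}{n^2(n-1)}\cdot n^2\cdot\frac{4\ell m}{k}=\frac{4C\ell m}{k(n-1)},
\]
and $\frac{4C\ell m}{k(n-1)}>\frac{4C\ell m}{kn}$, so the claimed bound is not reached. The missing factor $\tfrac{n-1}{n}$ is exactly what the paper recovers by using both $\orb^\ell_\sigma$ and $\orb^\ell_{\sigma^{-1}}$, observing that the resulting sum over $u\in V$ counts every bad edge \emph{exactly twice}, and dividing by $2$; this gives a count of bad edges bounded by $n^2\lceil 2\ell/k\rceil m\le 3n^2\ell m/k$, and then $\frac{C}{n-1}\cdot\frac{3\ell}{k}m\le\frac{4C\ell m}{kn}$ for $n\ge 4$. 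Your one-sided overcounting cannot be divided by $2$ (some bad edges are counted once, others twice), so the $\tfrac{1}{n-1}$-versus-$\tfrac{1}{n}$ gap remains. The discrepancy is asymptotically negligible and immaterial for the downstream applications, where only the order $\frac{\ell m}{kn}$ matters, but as a proof of the literal statement it is incomplete. It is fixed by switching to the symmetric set $\orb^\ell_\sigma(u)\cup\orb^\ell_{\sigma^{-1}}(u)$ and halving, as in the paper.
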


\begin{proof}
We start with the proof of (i). Fix a vertex $v \in V$, let $D,L$ denote respectively the column and the row containing $v$, and let $\orb^{-\ell}_{\sigma}(v) = \orb^{\ell}_{\sigma^{-1}}(v)$. The number of $w \in V$ for which $(v,w)$ is an edge such that a cycle of $\sigma$ is split in $(v,w) \circ \sigma$ into two cycles, one of which has size smaller than $\ell$, is equal to
\[
     |(\orb^{\ell}_\sigma(v)\cup \orb^{-\ell}_\sigma(v))\cap(D\cup L \setminus \{v\})| \leq  2\iota(\orb^{\ell}_\sigma(v)\cup \orb^{-\ell}_\sigma(v)).
\]
By dividing $\orb^{\ell}_\sigma(v)\cup \orb^{-\ell}_\sigma(v)$ into pieces of length $k$ and exploiting subadditivity of $\iota$ we obtain
\[
	\iota(\orb^{\ell}_\sigma(v)\cup \orb^{-\ell}_\sigma(v)) \leq \left\lceil \frac{2\ell}{k} \right\rceil \max_{v'\in V} \iota(\orb_\sigma^k(v')) \leq \left\lceil \frac{2\ell}{k} \right\rceil m.
\]
Thus for fixed $v$ there are at most $\lceil \frac{2 \ell}{k}\rceil m$ edges with one endpoint equal to $v$ which would cause a cycle of $\sigma$ to split with one of the resulting pieces smaller than $\ell$. By our assumptions each such edge $e$ is chosen with probability $p_{e} \leq \frac{C}{|E|}$. As each vertex has degree $2(n-1)$ in $H_{n}$, we obtain that the total probability of such a split is at most
  \[
    \frac{C}{n-1}\left\lceil \frac{2\ell}{k} \right\rceil m \leq \frac{4 C\ell}{kn}m,
  \]
as desired.

For the proof of (ii), consider two cycles $\mathcal{C}_{1} = \orb_{\sigma}(v_1)$, $\mathcal{C}_{2} = \orb_{\sigma}(v_2)$ of length at least $k$. By the assumption of the lemma we have $\chi\left(\orb_{\sigma}^{k}(v_j)\right) \geq m$ for $j=1,2$. By dividing $\mathcal{C}_{j}$ into segments of length $k$ and recalling the definition of $\chi$, we obtain that each $\mathcal{C}_{j}$ has at least $\lfloor \frac{|\mathcal{C}_{j}|}{k}\rfloor m \geq \frac{1}{2}\frac{ |\mathcal{C}_{j}|m}{k}$ vertices in each row and each column of $H_{n}$. This implies that there are at least $2n\left( \frac{1}{2}\frac{ |\mathcal{C}_{1}|m}{k}\right)\left( \frac{1}{2}\frac{ |\mathcal{C}_{2}|m}{k}\right) = \frac{n}{2} \frac{|\mathcal{C}_{1}||\mathcal{C}_{2}|m^2}{k^2}$ edges joining a vertex from $\mathcal{C}_{1}$ with a vertex from $\mathcal{C}_{2}$.

For each such an edge $e$ we have $p_{e} \geq \frac{c}{|E|}$. As choosing such an edge results in a merge between $\mathcal{C}_{1}$ and $\mathcal{C}_{2}$, we obtain
\[
\p(\mbox{$\mathcal{C}_{1}$ and $\mathcal{C}_{2}$ are merged in $(u,w) \circ \sigma$}) \geq \frac{c}{n^2 (n-1)} \cdot \frac{n}{2} \frac{|\mathcal{C}_{1}||\mathcal{C}_{2}|m^2}{k^2} \geq \frac{c}{2} \frac{|\mathcal{C}_{1}||\mathcal{C}_{2}| m^2}{n^2 k^2},
\]
as desired.

\end{proof}

Now we can finally employ Lemmas \ref{lem:almost_uniform}, \ref{eq:good_isoperimetry_is_common} and \ref{lem:intensityOfSplit} to prove Proposition \ref{cor:splitting}.

\begin{proof}[Proof of Proposition \ref{cor:splitting}]
Let $K_n \subset \N$ and $C,c,c' >0$ be as in Lemma \ref{eq:good_isoperimetry_is_common} (in particular $(i)$ of Proposition \ref{cor:splitting} is satisfied). We have
\[
\Pp \left(\mathcal{I} \big| |X| = k \right) \geq \left(1 - C e^{-c \log^2n}\right) \ind{k \in K_n},
\]
which in particular implies $(ii)$, i.e.,
\begin{equation}\label{eq:tau-large-conditionally}
\Pp \left( \tau = \infty\big| |X| = k \right) \geq \left(1 - C e^{-c \log^2n}\right) \ind{k \in K_n}.
\end{equation}
Fix $i \in I$ and let $\mathcal{C}_{1} = \orb_{i}(v_{1})$, $\mathcal{C}_{2} = \orb_{i}(v_{2})$ for some $v_{1},v_{2} \in V$.  Let $\ell \geq n \log^2 n$. As on the event $\{\tau > i\}$ we have $\iota \left( \orb_{i}^{n \log^2 n}(v) \right) \le c_{1} \log^2 n$ for all $v$, by Lemma \ref{lem:almost_uniform} and part (i) of Lemma \ref{lem:intensityOfSplit} we obtain
\[
\p(\mathcal{D}_{i}|\mathcal{F}_i, |X|=k) \ind{k\in K_n} \ind{\tau > i} \leq \frac{4 \Theta^{-1} \ell}{n \cdot n \log^2 n} c_{1} \log^2 n = C' \frac{\ell}{n^2}
\]
for some $C' > 0$, giving $(iii)$.

Let us now pass to the proof of (iv). Fix $i \in I$ and let $\mathcal{C}_{1} = \orb_{i}(v_{1})$, $\mathcal{C}_{2} = \orb_{i}(v_{2})$ be two cycles of length at least $n\log^2 n$. On the event $\{\tau > i\}$ we have $\chi\left(\orb_{i}^{n \log^2 n}(v)\right) \geq c_2 \log^2 n$ for $j=1,2$. Thus by Lemma \ref{lem:almost_uniform} and part (ii) of Lemma \ref{lem:intensityOfSplit} we obtain
\[
\p(\mathcal{M}_{i}|\mathcal{F}_i, |X|=k) \geq  \frac{\Theta^{-1}}{2} \frac{|\mathcal{C}_{1}||\mathcal{C}_{2}| (c_2 \log^2 n)^2}{n^2 (n \log^2 n)^2} \ind{k\in K_n} \ind{\tau > i} \geq C'' \frac{|\mathcal{C}_{1}||\mathcal{C}_{2}|}{n^4} \ind{k\in K_n} \ind{\tau > i}
\]
for some $C''>0$ as desired.

\end{proof}

\end{section}

\begin{section}{Macroscopic cycles}

In this section we will prove our main results, namely Theorem \ref{thm:main_theorem_upperbound} and Theorem \ref{thm:main_theorem_lowerbound}. We will need one more ingredient, which is a variant of an argument due to Schramm, described in the next subsection.

In most of this section we will work with the general transposition process introduced in Section \ref{sec:transposition_process}.

\begin{subsection}{Schramm's argument}\label{sec:schramm-argument}
In this standalone part we develop ideas of \cite{Schramm:2011fk}. Our contribution is to rephrase them in terms of an abstract split-merge process. We will then apply the results of this subsection to transposition processes coming from measures $\mu_{\beta, \theta, \mathcal{C}}$ (see Section \ref{sec:proof main_theorem_upperbound}) and we believe that they might be useful in more general cases.

Let $V$ be a finite set and $h\in \N$. We say that $\{\sigma_k \}_{k\in \{0,1,\ldots, h\}}$ is a \emph{random split-merge process} over $V$ if any $\sigma_k$ is a (random) partition of $V$. We refer to the sets of this partition as components. We call a random split-merge process \emph{simple} if in transition from $\sigma_k$ to $\sigma_{k+1}$ (which we will call step $k$) we allow only
 \begin{itemize}
   \item  a component to be split into two,
   \item  two components to be merged.
 \end{itemize}
We denote by $\{\fil_k\}_{k\in \{0,1,\ldots, h\}}$ the natural filtration of the process and write \hfff{def:cycleSize}{$\vx{k}{\ell}$ for the set of $v\in V$ which belong to components of $\sigma_k}$ of size at least $\ell \in \N$. We set $\interval_{h} := \{0, 1, \ldots, h-1\}$.

Now we state the main result of this section. Its proof is essentially the same as proofs in Section 2 of \cite{Schramm:2011fk}, with only very slight modifications.

\begin{lemma}\label{le:Schramm_key_lemma}
Let $h\in \N$ and $\{\sigma_k \}_{k\in \{0,1,\ldots, h\}}$ be a simple split-merge process over a finite set $V$. Let $t_0\in \interval_h$, $\delta \in (0,1]$, $\varepsilon \in (0,1/8)$ and $j\in \N$ be such that $ 2^j \le \varepsilon \delta |V|$. Assume moreover that for a stopping time $\tau$ the following conditions hold
\begin{enumerate}[(i)]
\item there exists $c_1 > 0$ such that for any $k\in \interval_h$, $\ell \geq 2^{j}$  we have
\begin{align*}
    \p(\text{some component }&C\text{ of $\sigma_k$ is split in transition from step $k$ to $k+1$ into }C_1, C_2 \\
    &\text{ and } \min(|C_1|, |C_2|)\leq \ell |\fil_k) \ind{\tau > k} \leq  c_1  \frac{\ell}{|V|}.
\end{align*}
\item there exists $c_2>0$ such that for any $k\in \interval_h$ and any two components $C_1, C_2 \in \sigma_k $ such that $|C_1|,|C_2|\geq 2^j$ we have
\begin{equation*}
  \p(\text{components }C_1, C_2 \text{ are merged in transition from step $k$ to $k+1$} |\fil_k) \geq c_2 \frac{|C_1||C_2|}{|V|^2} \ind{\tau > k}.
\end{equation*}
\end{enumerate}
Then there exist $c_3, c_4>0$, depending only on $c_1, c_2$, such that if
\begin{equation}\label{eq:growing_interval}
t_1 := t_0 + \lceil\Delta t\rceil, \quad \Delta t =  c_3 \delta^{-1}\frac{|V|}{2^j}\log_2\left(\frac{|V|}{2^j}\right),
\end{equation}
satisfies $t_1 \leq h$, then
\begin{align}\label{eq:Schramm-ineq}
\E \Big(| \vx{t_0}{2^j}\setminus \vx{t_1}{\varepsilon \delta |V|}| \big|\mathcal{F}_{t_0} \Big) \ind{ | \vx{t_0}{2^j} | \geq \delta |V|} \le c_4 \delta^{-1}\varepsilon |\log_2(\varepsilon\delta)| |V| + c_{4} \delta^{-1} |V| \p(\tau \leq t_1|\mathcal{F}_{t_0}).
\end{align}
\end{lemma}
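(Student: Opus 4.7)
The strategy is a dyadic-doubling argument in the spirit of Section 2 of \cite{Schramm:2011fk}. First I would split the expectation on the left of \eqref{eq:Schramm-ineq} according to whether $\tau > t_1$ or $\tau \leq t_1$. On $\{\tau \leq t_1\}$ the trivial bound $|\vx{t_0}{2^j} \setminus \vx{t_1}{\varepsilon \delta |V|}| \leq |V|$ combined with the available $\delta^{-1}$ factor of slack matches the second error term in \eqref{eq:Schramm-ineq}. From here on I would assume $\tau > t_1$, which makes hypotheses (i) and (ii) available uniformly on $[t_0, t_1]$. By linearity of expectation it then suffices to prove, for each fixed $v \in \vx{t_0}{2^j}$, that the probability $p_v$ that $v$ lies in a component of size $< \varepsilon \delta |V|$ at time $t_1$ is at most $c\,\delta^{-1} \varepsilon |\log_2(\varepsilon \delta)|$; summing over the at most $|V|$ such vertices yields the main term.

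To bound $p_v$ I would track $|C_t(v)|$, the size of the component of $\sigma_t$ containing $v$, through dyadic thresholds $N_k = 2^{j+k}$ for $k = 0,1,\ldots,m$ with $m := \lceil \log_2(\varepsilon \delta |V|/2^j) \rceil$, via the stopping times $\rho_k(v) := \inf\{t \geq t_0 : |C_t(v)| \geq N_k\}$. Note that $\rho_0(v) = t_0$ since $v \in \vx{t_0}{2^j}$, and $\{\rho_m(v) \leq t_1\} \subseteq \{v \in \vx{t_1}{\varepsilon \delta |V|}\}$. Within epoch $k$ (i.e.\ while $|C_t(v)| \in [N_k, N_{k+1})$), hypothesis (ii) gives---provided the side invariant $|\vx{t}{2^j}| \geq \delta|V|/2$ holds, addressed below---a merge rate of $C_t(v)$ with another component of size $\geq 2^j$ that is bounded below by $c_2 \delta N_k /(2|V|)$, and a single such merge immediately sends $|C_t(v)|$ past $N_{k+1}$. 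Allocating a sub-window of length $T_k \sim \delta^{-1} |V|/N_k$ to each epoch (the geometric sum $\sum_k T_k$ sits comfortably inside $\Delta t$, with the logarithmic slack providing room for a high-probability argument) yields at least one merge per epoch with probability bounded away from $0$. The competing bad event is a split of $C_t(v)$ producing a piece of size at most $\varepsilon \delta |V|$; by (i) applied with $\ell = \varepsilon \delta |V|$ its per-step probability is at most $c_1 \varepsilon \delta$, and summing over only the $\sim |\log_2(\varepsilon \delta)|$ epochs for which $N_k < \varepsilon \delta |V|$ (larger epochs cannot cause failure) gives total bad-event probability $O(\delta^{-1} \varepsilon |\log_2(\varepsilon \delta)|)$, which is the desired bound on $p_v$.

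The main obstacle is maintaining the side invariant $|\vx{t}{2^j}| \geq \delta|V|/2$ throughout $[t_0, t_1]$, which drives the merge-rate estimate above. Mass can leave $\vx{\cdot}{2^j}$ only through splits of a component of size $\geq 2^j$ both of whose pieces fall below $2^j$; by (i) the expected per-step outflow is $O((2^j)^2/|V|)$, so the total expected outflow over $\Delta t$ steps is $O(\delta^{-1} \cdot 2^j \log_2(|V|/2^j))$, which under $2^j \leq \varepsilon \delta |V|$ with $\varepsilon < 1/8$ stays well below $\delta|V|/4$ once $c_3$ is chosen large enough. A Doob-type maximal inequality upgrades the expectation estimate to a high-probability statement. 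Combining the per-vertex bound, the union bound over vertices, and the side-invariant control yields \eqref{eq:Schramm-ineq} with constants $c_3, c_4$ depending only on $c_1$ and $c_2$.
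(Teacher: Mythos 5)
Your high-level reduction---pay the second error term trivially on $\{\tau \le t_1\}$, then bound the per-vertex failure probability $p_v$ and sum by linearity---is appealing but runs into trouble at the point where you try to actually prove the per-vertex bound $p_v \lesssim \delta^{-1}\varepsilon|\log_2(\varepsilon\delta)|$ from hypotheses~(i) and~(ii). Condition~(i) controls the probability that \emph{some} component splits with a small piece; it gives no per-vertex handle on the event ``$C_t(v)$ splits and $v$ lands in the small piece,'' and without an extra exchangeability input the probability of the latter cannot be reduced to $\ell/|C_t(v)|$ or anything like it. Worse, even interpreting the per-step split probability as $c_1\varepsilon\delta$ and summing, the arithmetic does not close: the number of epochs $k$ with $N_k < \varepsilon\delta|V|$ is $\log_2(\varepsilon\delta|V|/2^j)$, not $|\log_2(\varepsilon\delta)|$, and if one instead keeps the correct epoch-dependent split rate $\sim c_1 N_k/|V|$ and an epoch length $T_k\sim\delta^{-1}|V|/N_k\cdot\log(|V|/2^j)$ (which you need to make a merge likely), the per-epoch bad-split probability comes out $\sim c_1\delta^{-1}\log(|V|/2^j)$, which is not $o(1)$, so there is no high-probability statement to union-bound.

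The side-invariant control also does not go through as stated. The total expected outflow of mass from $\vx{\cdot}{2^j}$ over $\Delta t$ steps is $\sim c_3\delta^{-1}2^j\log(|V|/2^j)$; under $2^j \le \varepsilon\delta|V|$ this is $\lesssim c_3\varepsilon|V||\log(\varepsilon\delta)|$, which is not guaranteed to stay below $\delta|V|/4$ for general $\delta,\varepsilon$ in the stated range (and enlarging $c_3$ makes this \emph{worse}, not better, since it lengthens the window). The paper avoids both pitfalls by working with aggregate expectations rather than per-vertex high-probability statements: it bounds $\E|S_{K-1}|$ directly (each bad split in epoch $i$ loses at most $2\cdot 2^{i+1}$ vertices, and the geometric sum $\sum_i m_i 2^{2i}/|V|$ produces the $\varepsilon|\log(\varepsilon\delta)||V|$ term), bounds $\E|M_{(\sigma-1)\wedge(K-1)}|$ with a per-epoch failure rate $2^i/|V|$ that is small only in expectation, and, crucially, does not try to maintain the side invariant pathwise: it introduces the index $\sigma$ at which $|\vx{t}{2^i}|$ drops below $\delta|V|/2$, observes that $\{\sigma<\infty\}\subset\{|M_{(\sigma-1)\wedge(K-1)}\cup S_{K-1}|\ge\delta|V|/2\}$, and closes via Markov's inequality. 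Your dyadic stopping times $\rho_k(v)$ are a natural object, but to make a per-vertex argument rigorous you would need a stronger hypothesis than~(i) (something pinning down where $v$ lands after a split); under the lemma's hypotheses as written, the expectation route is essentially forced.
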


In other words, if sufficiently many vertices are in components of size at least $2^j$ at time $t_0$, most of them will be in components of size $\varepsilon \delta |V|$ at time $t_1$ (unless the split-merge properties (i) and (ii) fail, which is reflected in the second term).

The proof is an implementation of the following simple idea. Fix $K := \lceil \log_2(\varepsilon \delta |V|)\rceil$ and set milestones $t_0 = T_j < T_{j+1} < \ldots < T_{K} = t_1$. In each epoch $\{T_j, T_j+1, \ldots, T_{j+1}\}$ we expect the size of moderately large components to grow by a factor of two. The subtlety lies in formalising this statement and finding the correct lengths of the epochs. They need to be long enough so that most components have a chance to merge into bigger ones and at the same time to be short enough so that not too many splits occur. It turns out that the proper choice is
\begin{equation}\label{eq:epoch_lenghts}
  T_{i+1} - T_{i} =  m_i := \lceil a_i\rceil, \quad a_i  := \frac{4}{c_2} \delta^{-1} \frac{|V|}{2^i}  \log_2  \Big(\frac{|V|}{2^i} \Big).
\end{equation}
Now the form of $\Delta t$ in \eqref{eq:growing_interval} is rather natural.

In proofs below we will use $N$ as shorthand for $|V|$ and $\log$ for $\log_2$. All unspecified constants are assumed to be independent of $\delta$ and $\varepsilon$. We implicitly assume that the condition $t_1 \leq h$ is met. We can also assume that $\p(\{|\vx{t_0}{2^j}| \ge \delta |V|) > 0$. For notational simplicity fix an arbitrary $A \in \fil_{t_0}$ such that $\p(A\cap \{|\vx{t_0}{2^j}| \ge \delta |V|\}) > 0$ and set $\cp(\cdot) = \p(\cdot|A\cap \{|\vx{t_0}{2^j}| \ge \delta |V|\})$. We denote by $\cE(\cdot)$ the corresponding expectation. The inequality \eqref{eq:Schramm-ineq} is then equivalent to
\begin{displaymath}
  \cE|\vx{t_0}{2^j}\setminus \vx{t_1}{\varepsilon \delta N}| \le  c_4 \delta^{-1}\varepsilon|\log(\varepsilon \delta)|N + c_4 \delta^{-1} N \cp(\tau \leq t_1)
\end{displaymath}
for all admissible sets $A$.

The idea behind the proof of Lemma \ref{le:Schramm_key_lemma} consists of showing that for any epoch $i\in \{j, \ldots, K-1\}$ the number of vertices in $\vx{T_{i}}{2^{i}} \setminus \vx{T_{i+1}}{2^{i+1}}$ must be small.

The first reason for a vertex to fall in this set is splitting. Namely, by $\bar{S}_{i}$ we denote the set of vertices $v \in V$ which at some time $t \in \{T_{i}, \ldots, T_{i+1}-1\}$ belong to a component which in transition to time $t+1$ is split and $v$ ends up in a component of size smaller than $2^{i+1}$. In Lemma \ref{lem:number-of-splits} we show that $\bar{S}_{i}$ is small.

The second reason is failure of the components to merge. We define $\bar{M}_{i} := \vx{T_{i}}{2^{i}} \setminus (\vx{T_{i+1}}{2^{i+1}} \cup \bar{S}_{i})$, the set containing vertices whose components did not split, but failed to merge into a bigger one. In Lemma \ref{lem:number-of-non-merges} we analyze $\bar{M}_{i}$ in more detail and prove that it is small as well.

 We also denote
\begin{equation*}
S_{k} := \bigcup_{i = j}^{k} \bar{S}_i, \quad M_{k} := \bigcup_{i = j}^k \bar{M}_i.
\end{equation*}
The following inclusion reveals the rationale behind the above definitions:
\begin{equation}\label{eq:loss_of_vertices_in_Schramm_argument}
  \vx{T_j}{2^j} \setminus \vx{T_{k+1}}{2^{k+1}} \subset S_k \cup M_k.
\end{equation}

We first deal with splits

\begin{lemma}\label{lem:number-of-splits}
Under assumptions of Lemma \ref{le:Schramm_key_lemma} there exists $c>0$ such that
\begin{equation}\label{eq:first-part}
  \cE \left( |{S}_{K-1}| \ind{\tau > T_K} \right) \leq c \varepsilon|\log_2(\varepsilon \delta)| N.
\end{equation}
\end{lemma}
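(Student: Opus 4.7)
The plan is to bound the contribution of splits epoch by epoch and then perform a geometric summation. First I would fix $i \in \{j, \ldots, K-1\}$ and for each time step $t \in \{T_i, \ldots, T_{i+1}-1\}$ introduce the random variable $X_t^{(i)}$ counting the number of vertices of $V$ that at step $t$ end up in a component of size strictly less than $2^{i+1}$ as a consequence of a split (setting $X_t^{(i)} = 0$ if no such split occurs). Since the process is simple, at each step at most one component splits; whenever $X_t^{(i)} > 0$, that component splits as $C_1 \cup C_2$ with $\min(|C_1|, |C_2|) < 2^{i+1}$, and the total contribution is at most $2 \cdot 2^{i+1} = 2^{i+2}$. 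By definition, $|\bar{S}_i| \le \sum_{t=T_i}^{T_{i+1}-1} X_t^{(i)}$.

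Next I would apply assumption (i) with $\ell = 2^{i+1}$: on the event $\{\tau > t\}$ one has
\begin{equation*}
\cp\bigl(X_t^{(i)} > 0 \,\big|\, \fil_t\bigr) \ind{\tau > t} \le c_1 \frac{2^{i+1}}{N}.
\end{equation*}
Combining this with the deterministic bound $X_t^{(i)} \le 2^{i+2}$ gives
\begin{equation*}
\cE\bigl(X_t^{(i)} \ind{\tau > t} \,\big|\, \fil_t\bigr) \le 4 c_1 \frac{4^i}{N} \quad \text{a.s.}
\end{equation*}
Since $\ind{\tau > T_K} \le \ind{\tau > t}$ for $t \le T_K$, summing the last bound over $t \in \{T_i, \ldots, T_{i+1}-1\}$ and using $m_i \le a_i + 1 \le C \delta^{-1} (N/2^i)\log_2(N/2^i)$ yields
\begin{equation*}
\cE\bigl(|\bar{S}_i| \ind{\tau > T_K}\bigr) \le m_i \cdot 4c_1 \frac{4^i}{N} \le C' \delta^{-1}\, 2^i \log_2(N/2^i).
\end{equation*}

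Finally, summing over $i \in \{j, \ldots, K-1\}$ I would use the standard identity $\sum_{i=0}^{K-1} i \cdot 2^i = (K-2) 2^K + 2$ to compute
\begin{equation*}
\sum_{i=j}^{K-1} 2^i \log_2(N/2^i) \le 2^K(\log_2 N - K + 2).
\end{equation*}
Recalling $K = \lceil \log_2(\varepsilon \delta N) \rceil$, one has $2^K \le 2\varepsilon\delta N$ and $\log_2 N - K \le |\log_2(\varepsilon\delta)| + 1$; together with $|\log_2(\varepsilon\delta)| \ge 3$ (which follows from $\varepsilon < 1/8$, $\delta \le 1$) the right-hand side is bounded by $C'' \varepsilon\delta N |\log_2(\varepsilon\delta)|$. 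The factor $\delta^{-1}$ then cancels with the $\delta$ in $2^K$, producing the desired bound $c\varepsilon |\log_2(\varepsilon\delta)| N$.

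The probabilistic content is essentially a direct invocation of assumption (i); the main point that makes the estimate nontrivial is the geometric summation in the final step, where one has to check that the dominant contribution comes from $i$ close to $K$ and that the $\delta^{-1}$ appearing in the epoch lengths $m_i$ is exactly cancelled by the factor $\delta$ present in $2^K \approx \varepsilon\delta N$. This cancellation is what makes the choice of $a_i$ in \eqref{eq:epoch_lenghts} essentially optimal for the argument.
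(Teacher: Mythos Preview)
Your proof is correct and follows essentially the same approach as the paper's: bound $|\bar S_i|$ by $2\cdot 2^{i+1}$ times the number of ``small'' splits in epoch $i$, use assumption~(i) to control the conditional probability of such a split, then perform the geometric summation $\sum_i m_i 2^{2i}/N$ and check that the dominant term comes from $i$ near $K$ with the $\delta^{-1}$ in $m_i$ cancelling against the $\delta$ in $2^K\le 2\varepsilon\delta N$. The only cosmetic differences are that the paper phrases things via the indicator $\Ind{\mathcal D_t}$ rather than your random variable $X_t^{(i)}$, and carries out the final sum slightly differently; your constant $4c_1$ should in fact be $8c_1$ (since $2^{i+2}\cdot 2^{i+1}=8\cdot 4^i$), but this is immaterial.
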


\begin{proof}
Fix $i\in \{j, \ldots, K-1\}$ and let
\[
\mathcal D_t := \{\textrm{a split occured in transition from time $t$ to $t+1$ creating a component of size smaller than}\; 2^{i+1}\}.
\]
Whenever a split creates one or possibly two components of size smaller than $2^{i+1}$, at most $2 \cdot 2^{i+1}$ vertices are added to the set $\bar{S}_i$. Thus
  \begin{displaymath}
  |\bar{S}_i| \ind{\tau > T_i} \le 2\cdot 2^{i+1} \sum_{t=T_i}^{T_{i+1}-1}\Ind{\mathcal D_t} \ind{\tau > T_i}.
  \end{displaymath}
Applying assumption (i) of Lemma \ref{le:Schramm_key_lemma} we obtain $\ind{\tau > T_i}\cp(\mathcal D_t) \le  C 2^{i+1}/N,$ for some $C > 0$. As $\{\tau > T_K\} \subset\{\tau > T_i\}$, we have
  $$\cE \left( |{S}_{K-1}| \ind{\tau > T_K} \right) \leq 8 C \sum_{i=j}^{K-1} m_i 2^{2i}/N.$$
The rest of the proof follows by calculations employing the form of $m_i$.
\begin{align*}
\frac{1}{N}\sum_{i=j}^{K-1} m_i 2^{2i} &\le \frac{2}{N}\sum_{i=j}^{K-1} a_i 2^{2i} = \frac{8}{c_{2}} \delta^{-1} \sum_{i=j}^{K-1} 2^{i} \log  \Big(\frac{N}{2^i} \Big) \leq \frac{8}{c_{2}} \delta^{-1} \sum_{i=0}^{K-1} 2^{i} \log  \Big(\frac{N}{2^i} \Big)
\\
& = \frac{8}{c_{2}} \delta^{-1} \Big((2^K - 1)\log N - 2 + 2^{K+1} - 2^K K\Big)\\
& \le \frac{8}{c_{2}} \delta^{-1} \Big( ( \log N  - K)2^K +2^{K+1}\Big).
\end{align*}
Recalling $K = \lceil \log_2(\varepsilon \delta N)\rceil$, we check easily that $2^{K+1} \le 4\varepsilon \delta N$ and $(\log N  - K)2^K \le 2 |\log (\varepsilon \delta)|\varepsilon\delta N$, therefore \eqref{eq:first-part} follows.
\end{proof}

Analysing $M_i$ is somewhat tricky. We introduce an additional index
\begin{equation} \label{eq:schramm_stopping_time}
\sigma := \min\left\{ i\in \{j, \ldots, K-1\} \colon |\vx{t}{2^i}| < \delta N/2\text{ for some } t\in \{T_i +1, \ldots, T_{i+1}\}\right\},
\end{equation}
with the convention $\sigma = +\infty$ when the set is empty. Now we can state

\begin{lemma}\label{lem:number-of-non-merges}
Under assumptions of Lemma \ref{le:Schramm_key_lemma} we have
\begin{equation}\label{eq:second-part}
  \cE \left( |{M}_{(\sigma - 1) \wedge (K-1)}| \ind{\tau > T_K} \right)\leq 2\varepsilon \delta N.
\end{equation}
\end{lemma}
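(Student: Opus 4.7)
My strategy is based on a clean structural characterization of $\bar{M}_i$, followed by an iteration of the merge lower bound from assumption (ii) of Lemma \ref{le:Schramm_key_lemma}. The key structural claim is the following: if $v \in \bar{M}_i$, then throughout the epoch $t \in [T_i, T_{i+1}]$, the component $C_t(v)$ of $v$ in $\sigma_t$ satisfies $|C_t(v)| \in [2^i, 2^{i+1})$, and no split of $v$'s component occurs during the epoch. Indeed, any split of $v$'s component at some step $t$ would put $v$ into a piece either of size $<2^{i+1}$ — forcing $v \in \bar{S}_i$, a contradiction — or of size $\ge 2^{i+1}$; but since $v \notin \vx{T_{i+1}}{2^{i+1}}$ requires $|C_{T_{i+1}}(v)| < 2^{i+1}$, a later reduction would have to occur, and reductions happen only through further splits, each of which would again force $v \in \bar{S}_i$. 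A symmetric argument shows that $|C_t(v)|$ cannot temporarily exceed $2^{i+1}$ via a merge (it could not be brought back down without a forbidden split), nor drop below $2^i$ (a split producing a piece of size $<2^i<2^{i+1}$ would once more put $v \in \bar{S}_i$).

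A direct consequence is that for $v \in \bar{M}_i$, at each step $t \in [T_i, T_{i+1}-1]$ the component $C_t(v)$ does not merge with any other component of size $\ge 2^i$, since such a merge would yield $|C_{t+1}(v)| \ge 2^{i+1}$ with no way back. By assumption (ii), on $\{\tau > t\}$ the conditional probability of this merge given $\mathcal{F}_t$ is at least
\[
  \frac{c_2\,|C_t(v)|\,(|\vx{t}{2^i}| - |C_t(v)|)}{N^2} \;\ge\; \frac{c_2\,\delta\,2^i}{4N},
\]
where the last inequality uses $|\vx{t}{2^i}| \ge \delta N/2$ (valid for $t \in \{T_i+1,\ldots,T_{i+1}\}$ on the event $\{i < \sigma\}$) together with $|C_t(v)| < 2^{i+1} \le 2^K \le 2\varepsilon\delta N \le \delta N/4$, the last step using $\varepsilon < 1/8$. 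Iterating the complementary bound over the $m_i - 1$ steps $t \in \{T_i+1,\ldots,T_{i+1}-1\}$ by successive conditioning on $\mathcal{F}_t$, and using $(1-x)^m \le e^{-xm}$ together with the lower bound $m_i \ge a_i = \frac{4}{c_2}\delta^{-1}(N/2^i)\log_2(N/2^i)$ from \eqref{eq:epoch_lenghts}, I obtain
\[
  \cp(v \in \bar{M}_i,\; i < \sigma,\; \tau > T_K \mid \mathcal{F}_{T_i}) \;\le\; C \cdot \frac{2^i}{N}\cdot \ind{|C_{T_i}(v)| \in [2^i, 2^{i+1})}
\]
for some absolute constant $C$.

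Summing this bound over vertices produces a factor of at most $|\{v : |C_{T_i}(v)| \in [2^i,2^{i+1})\}| \le N$, and then summing over $i \in \{j,\ldots,K-1\}$ yields the geometric series $\sum_{i=j}^{K-1} C \cdot 2^i \le C\cdot 2^K \le 2C\varepsilon\delta N$, matching the desired bound $2\varepsilon\delta N$ once the constant $C$ is absorbed (for instance by slightly enlarging the constant $4/c_2$ in the definition of $a_i$, which in turn only enlarges $c_3$ in \eqref{eq:growing_interval}). The main obstacle is the structural claim of the first paragraph — the careful bookkeeping of how splits, merges and the definitions of $\bar{S}_i$ and $\vx{T_{i+1}}{2^{i+1}}$ interact — because once the trajectory of $|C_t(v)|$ is pinned down to the interval $[2^i, 2^{i+1})$, the rest is a routine iteration of assumption (ii) as a sequence of conditional Bernoulli trials.
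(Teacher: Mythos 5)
Your proof is correct and takes essentially the same route as the paper: establish that for $v \in \bar{M}_i$ the component of $v$ stays in the size window $[2^i, 2^{i+1})$ throughout the epoch, use assumption (ii) together with $|\vx{t}{2^i}|\ge \delta N/2$ (from $\sigma > i$) to bound the failure-to-merge probability at each step by $1-\frac{c_2\delta 2^i}{4N}$, iterate by successive conditioning over the epoch, sum over $v$ and then over $i$ to get the geometric series $\sum_i 2^i < 2^K \le 2\varepsilon\delta N$. The only deviation is that you iterate over $t\in\{T_i+1,\ldots,T_{i+1}-1\}$ (i.e.\ $m_i-1$ steps) rather than the paper's $\{T_i,\ldots,T_{i+1}-1\}$ ($m_i$ steps), which produces an absolute constant $C=e^x$ in front and forces the cosmetic fix you describe; this is actually a minor point in your favor, since the definition \eqref{eq:schramm_stopping_time} of $\sigma$ controls $|\vx{t}{2^i}|$ only for $t\in\{T_i+1,\ldots,T_{i+1}\}$, whereas the paper's iteration implicitly uses it at $t=T_i$ as well (which for $i>j$ would require an extra observation), so your version sidesteps a small bookkeeping gap in the published argument at the cost of a harmless constant.
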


\begin{proof}
Consider an epoch $i\in \{j, \ldots, K-1\}$ and let $v$ be any vertex. Recalling the definition of $\bar{M}_{i}$, if $v \in \bar{M}_i$, then at time $T_i$ the vertex $v$ is in a component of size at least $2^i$, at time $T_{i+1}$ the component of $v$ is smaller than $2^{i+1}$, and $v \notin \bar{S}_{i}$, so there is no splitting between these two times which would put $v$ in a component smaller than $2^{i+1}$. Therefore, we have $v \in \vx{t}{2^i}\setminus \vx{t}{2^{i+1}}$ for all $t \in \{T_i,\ldots, T_{i+1}-1\}$. For any step $t$ of the epoch consider the event $\mathcal{A}_t$ that the component of $v$ merges with another component of size at least $2^i$. Notice that $\{v\in \bar{M}_i\} \subset \bigcap_{t}\mathcal{A}_t^c$, where the intersection is over $t \in \{T_i,\ldots, T_{i+1}-1\}$. Indeed, had any $\mathcal{A}_t$ happened the component of $v$ would have been of size at least $2^{i+1}$ and, as $v \notin \bar{S}_i$, it would have survived until the end of the epoch. Denote also the event $\mathcal{E}_t := \{|\vx{t}{2^i}| \geq \delta N/2 \} \cap \{v \in \vx{t}{2^i}\setminus \vx{t}{2^{i+1}}\}$. By assumption (ii) of Lemma \ref{le:Schramm_key_lemma} we have
\begin{equation*}
    \cp(\mathcal{A}_t|\mathcal{F}_t) \geq c_2 2^i \frac{\delta N/2 - 2^{i+1}}{N^2} \id_{\mathcal{E}_t} \ind{\tau > t} \geq c_2 2^i \delta \frac{1/2 - 2\varepsilon}{N} \id_{\mathcal{E}_t} \ind{\tau > t} \geq \frac{c_2}{4} \frac{2^i\delta}{N} \id_{\mathcal{E}_t} \ind{\tau > t}.
  \end{equation*}
Using the facts above we conclude
\begin{align*}
  \cp(\{v \in \bar{M}_i\} &\cap \{ \sigma > i \} \cap \{ \tau \geq T_{i+1} \}) \leq \cE\left( \id_{\bigcap_{t\in \{T_i,\ldots, T_{i+1}-1\}} (\mathcal{A}_t^c \cap \mathcal{E}_t) \cap \{ \tau > t \}} \right) \leq \\
  & \cE\rbr{\id_{\bigcap_{t\in \{T_i,\ldots, T_{i+1}-2\}} (\mathcal{A}_t^c\cap \mathcal{E}_t) \cap \{ \tau > t \}} \id_{\mathcal{E}_{T_{i+1} -1}} \ind{ \tau > T_{i+1} -1 } \cp\big(\mathcal{A}_{T_{i+1}-1}^{c} |\mathcal{F}_{T_{i+1}-1}\big)} \leq \\
  & \Big( 1- \frac{c_2}{4}\frac{2^i\delta}{N}\Big) \cdot\cE\big( \id_{\bigcap_{t \in \{T_i,\ldots, T_{i+1}-2\}} (\mathcal{A}_t^c\cap \mathcal{E}_t) \cap \{ \tau > t \} } \big)\leq \ldots \leq \Big( 1- \frac{c_2}{4}\frac{2^i\delta}{N} \Big)^{m_i}.
\end{align*}
The choice of $m_i$ in \eqref{eq:epoch_lenghts} is such that
\begin{equation*}
  \cp(\{v \in \bar{M}_i\} \cap \{ \sigma > i \} \cap \{ \tau \geq T_{i + 1} \}|\mathcal{F}_{T_i}) \leq   2^i/N.
\end{equation*}
The rest of the proof follows by the estimate
\begin{align*}
\cE \left( |M_{(\sigma - 1)\wedge (K-1)}| \ind{\tau > T_K} \right)\le \sum_{i=j}^{K-1} \cE \left( |\bar{M}_i| \ind{\sigma > i} \ind{\tau \geq T_{i+1}} \right) \le  N  \frac{2^{K}}{N} \le 2\varepsilon \delta N.
\end{align*}
\end{proof}
We are now ready for
\begin{proof}[Proof of Lemma \ref{le:Schramm_key_lemma}]
By \eqref{eq:loss_of_vertices_in_Schramm_argument} on the event $\{\sigma < +\infty \}$  we have $\vx{T_j}{2^j} \setminus \vx{T_{\sigma}}{2^{\sigma}} \subset S_{\sigma - 1} \cup M_{\sigma - 1}$. Consider now $t \in \{T_{\sigma}+1,\ldots,T_{\sigma+1}\}$ such that $|\vx{t}{2^\sigma}| < \delta N/2$ and any $v \in \vx{T_j}{2^j} \setminus \vx{t}{2^\sigma}$. Recall that we work on the event $\{|C_{t_0}(2^j)| \ge \delta N\}$. If $v \notin \vx{T_{\sigma}}{2^{\sigma}}$ then $v \in S_{\sigma - 1} \cup M_{\sigma - 1}$, otherwise $v \in \vx{T_{\sigma}}{2^\sigma} \setminus \vx{t}{2^\sigma}$ and so $v \in \bar{S}_{\sigma}\subset S_{K-1}$. Thus $\vx{T_j}{2^j} \setminus \vx{t}{2^\sigma} \subset M_{(\sigma - 1)\wedge (K-1)}\cup S_{K-1}$. Since $|\vx{T_j}{2^j}| \ge \delta N$, we conclude  that $\{\sigma < +\infty \} \subset \{ |M_{(\sigma - 1)\wedge (K-1)}\cup S_{K-1}|\geq \delta N/2 \}$.

Now let $\Delta :=\cE|\vx{t_0}{2^j}\setminus \vx{t_1}{\varepsilon \delta N}|$. Using \eqref{eq:loss_of_vertices_in_Schramm_argument} we have $ \Delta \leq \cE |M_{K-1} \cup S_{K-1}| $. Furthermore,
\begin{align*}
   \Delta \leq&  \cE \left( |M_{K-1} \cup S_{K-1}|\ind{\sigma = +\infty}\right) + N \cp(\sigma < +\infty)  \\
      &\leq \cE |M_{(\sigma - 1)\wedge(K-1)} \cup S_{K-1}| + N \cp(|M_{(\sigma - 1)\wedge (K-1)}\cup S_{K-1}|\geq \delta N/2) \\
      & \leq (1 + 2/\delta) \cE |M_{(\sigma - 1)\wedge(K-1)} \cup S_{K-1}|\\
	& \leq (1 + 2/\delta) (\cE |M_{(\sigma - 1)\wedge(K-1)}| + \cE | S_{K-1}|),
\end{align*}
where in the second line we used Markov's inequality. Furthermore, we write
\begin{align*}
& \cE |M_{(\sigma - 1)\wedge(K-1)}| + \cE | S_{K-1}| \\
& \leq \cE \left(|M_{(\sigma - 1)\wedge(K-1)}| \ind{\tau > T_K}\right) + \left(\cE | S_{K-1}|\ind{\tau > T_K}\right) + 2 N \cp(\tau \leq T_K)
\end{align*}

Now applying Lemma \ref{lem:number-of-splits} and Lemma \ref{lem:number-of-non-merges} we get (recalling that $T_K = t_1$)
\begin{align*}
& \Delta \leq (1 + 2/\delta)(2 \varepsilon \delta N + c \varepsilon|\log(\varepsilon \delta)|N) + (2 + 4/\delta)N \cp(\tau \leq t_1).
\end{align*}
As $\delta < 1$ and $\varepsilon < 1/8$, one readily checks that the right hand side is bounded from above by
\[
c' \delta^{-1}\varepsilon|\log(\varepsilon \delta)|N + c' \delta^{-1} N \cp(\tau \leq t_1)
\]
for some $c' > 0$, which concludes the proof.
\end{proof}
\end{subsection}

\begin{subsection}{Mesoscopic cycles}\label{sec:mesoscoic cycles}

In this section we will work in the setting of general transposition process, introduced in Section \ref{sec:transposition_process}, for $X$ sampled from $\mu_{\beta, \theta, \mathcal{C}}$. All quantities like $\vx{t}{\cdot}$ are implicitly related to this process. We focus on the supercritical phase $\beta > \Theta/2$. Our aim, formalized in Proposition \ref{le:interval}, is to show that a substantial fraction of vertices belongs to mesoscopic cycles (of size at least $n\log^2 n$). This result will be used in the next section as an input to Lemma \ref{le:Schramm_key_lemma} to prove the existence of macroscopic cycles. The crucial ingredient that we use are the isoperimetric properties of the cycles stated in Proposition \ref{cor:splitting}.

\begin{proposition}\label{le:interval}
Let $\beta >\Theta/2$. There exist $\delta>0$ and sets $K_n \subset \{\lceil n^{11/6}\rceil, \lceil n^{11/6}\rceil + 1,\ldots\}$ such that $\lim_{n\to \infty} \p(|X|\in K_n)=1$ and
\[
   \lim_{n\to \infty} \inf_{k\in K_n } \min_{t \in \{k - \lceil n^{11/6}\rceil, \ldots, k\} } \p \left( |\vx{t}{n\log^2 n}| \geq \delta n^2 \big| |X|=k\right) = 1.
\]
 \end{proposition}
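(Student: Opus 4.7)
The plan is to prove Proposition \ref{le:interval} using the coupling between the transposition process and Bernoulli bond percolation on $H(2,n)$, invoked twice as advertised in the outline of the paper, together with the split--merge estimates of Proposition \ref{cor:splitting}; Lemma \ref{le:Schramm_key_lemma} is reserved for the subsequent passage from mesoscopic to macroscopic cycles and is not used here. I would define $K_n$ as the intersection of the set provided by Proposition \ref{cor:splitting} with a concentration window $\{k\in \mathbb{N} : |k-\beta n^2|\le n^{2-\kappa}\}$ for some small $\kappa>0$; a standard comparison of $\mu_{\beta,\theta,\mathcal{C}}$ with the reference Poisson law $\mathcal{B}$ of total mass $\beta n^2$, using the bounded change of density coming from the Lipschitz bound on $\mathcal{C}$, yields $\p(|X|\in K_n)\to 1$. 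For $k\in K_n$ every $t$ in the window $\{k-\lceil n^{11/6}\rceil,\ldots,k\}$ satisfies $t/n^2 \ge \beta - o(1) > \Theta/2$ with uniform room.

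Fix such $k$ and $t$ and work conditionally on $|X|=k$. The first use of the coupling produces a supercritical percolation cluster: by Lemma \ref{lem:almost_uniform} each discrete transposition $\sigma_{i+1}\sigma_i^{-1}$ is sampled from a law within multiplicative factors $\Theta^{\pm 2}$ of uniform on $E$, so the set of edges used up to step $t$ stochastically dominates a Bernoulli bond percolation on $H(2,n)$ of density $p_- \asymp \Theta^{-2}t/|E|$. Since the mean degree $2(n-1)p_-$ stays uniformly strictly above $1$, a standard second-moment / branching-process argument on the Hamming graph produces a connected component of the edge-graph induced by the used bridges of size at least $\alpha n^2$ with probability $1-o(1)$, for some fixed $\alpha>0$. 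The second use of the coupling converts this percolation giant into large cycles of $\sigma_t$. Conditionally on the set of used edges, the restriction of $\sigma_t$ to a supercritical cluster is distributed essentially as the terminal state of a weighted interchange process on this cluster; comparing this restricted process to the mean-field case once more via Lemma \ref{lem:almost_uniform} and invoking Schramm's result (\cite{Schramm:2011fk}) on the restriction, combined with the isoperimetric control from Proposition \ref{cor:splitting} (which certifies that the resulting cycles respect the Hamming geometry once they reach size $n\log^2 n$), yields that a fraction $\delta>0$ of the vertices of the giant belongs to cycles of $\sigma_t$ of size at least $n\log^2 n$.

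To propagate the bound uniformly over $t$ in the window I would apply Proposition \ref{cor:splitting}(iii) with $\ell=n\log^2 n$: a split producing a piece of size at most $\ell$ happens with probability at most $C\log^2 n/n$ per step and can eject at most $2\ell=2n\log^2 n$ vertices from mesoscopic cycles, for an expected loss of $O(\log^4 n)$ vertices per step and hence $O(n^{11/6}\log^4 n)=o(n^2)$ over the full window; Proposition \ref{cor:splitting}(iv) in turn forces mesoscopic cycles to keep merging, preventing the mesoscopic mass from collapsing. The main obstacle is the second use of the coupling: cycles of $\sigma_t$ are in general strictly finer than components of the edge-graph, and controlling the cycle structure inside a supercritical Hamming-graph cluster requires either a careful reduction to the complete-graph case, where Schramm's result gives cycles of size $\Omega(\alpha n^2)\ge n\log^2 n$ inside the cluster, or a direct combinatorial argument. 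Making this rigorous uniformly over $k\in K_n$ and $t$ in the window, while keeping track of the admissible weighting function $\mathcal{C}$, is the step that relies most critically on the supercriticality $\beta>\Theta/2$ and is where the bulk of the technical work lies.
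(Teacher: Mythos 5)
Your proposal correctly identifies the two high-level ingredients — a percolation comparison to produce a giant component in the edge-graph, and a subsequent step to turn edge-connectivity into genuine cycles — but both steps as you state them have genuine gaps that the paper is specifically built to avoid.

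First, the percolation density. You derive $p_- \asymp \Theta^{-2} t / |E|$ from Lemma~\ref{lem:almost_uniform}, so the mean degree is $\approx 2\Theta^{-2}\beta$, and supercriticality requires $\beta > \Theta^2/2$. For $\Theta>1$ this is strictly stronger than the hypothesis $\beta > \Theta/2$. The paper instead uses Lemma~\ref{lm:poisson-approximation}, which exploits the continuous-time structure of the Poisson measure $\mathcal{B}$ and the single-swap Lipschitz bound to show that the conditional probability of an edge being open in a time-slice $I$ is at least $1-e^{-\beta|I|\Theta^{-1}/(n-1)}$, i.e., only a factor $\Theta^{-1}$ is lost. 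This is exactly what makes $\beta>\Theta/2$ (with $|I|$ close to $1$) sufficient in Lemma~\ref{lemma:pre_giant_component}. Your discrete-step comparison is too lossy for the range of $\beta$ claimed.

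Second, and more seriously, the passage from a giant percolation cluster to a mesoscopic cycle. You suggest that the restriction of $\sigma_t$ to a supercritical cluster ``is distributed essentially as the terminal state of a weighted interchange process on this cluster'' and that one can then invoke Schramm's result. This is not available: the cluster is itself a random object built from the same bridges, Schramm's theorem applies to the complete graph and not to a random subgraph of $H(2,n)$, and there is no conditional distributional identity that would make the restriction into a well-defined interchange process on the cluster. You acknowledge this gap but do not close it. The paper takes a fundamentally different route: it never attempts to find large cycles inside a percolation cluster directly. Instead, it controls the \emph{difference} between graph-process components and permutation cycles through a fragmentation count (Lemma~\ref{lemma:schramm_estimate}), which shows that over a short enough time window only few cycles are split off small pieces. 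This lemma is applied twice: once with a trivially small isoperimetric threshold $k_1 = c_1\log^2 n/2$ to get cycles of size $n^{1/3}\log n$ from scratch, then — after sprinkling these mesoscopic cycles into a new giant component via Lemma~\ref{lemma:sprinkle} — again with $\ell_2 = k_2 = n\log^2 n$ on the short window of length $\lceil n^{11/6}\rceil$, this time using the full isoperimetric control of Lemma~\ref{eq:good_isoperimetry_is_common} to make the fragmentation bound strong enough. Your appeal to Proposition~\ref{cor:splitting}(iii)/(iv) for uniformity over $t$ points in roughly the right direction but is no substitute for this two-stage comparison; on its own it does not explain why any mesoscopic cycles exist in the first place.
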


The proof of the proposition hinges on a coupling between the generalized transposition process and a random graph process. Let $s \in \{0, \ldots, |X|\}$ and \hfff{def:graphProcess}{consider a process $G^{s} = \{G^{s}_u\}_{u \in \{0, \ldots, |X|-s\}}$ of random graphs} on the vertex set $V$ defined as follows. Initially, $G_0^s$ is a graph whose connected components are the cycles of $\sigma_s$. There might be many graphs satisfying this property and for our purposes it will not matter which one is chosen. Next, for any $u \in \{1, \ldots, |X|-s\}$ the edge $e$ corresponding to transposition $e_{s+u}$ (i.e., $\sigma_{s+u} = e_{s+u} \circ \sigma_{s+u-1}$) is added to the edge set of $G^{s}_{u}$.

Recall that $\vx{t}{\ell}$ is the set of vertices which belong to cycles of length at least $\ell$ in $\sigma_{t}$. Correspondingly, let \hfff{def:graphComponents}{$\vg{s, u}{\ell}$ be the set of vertices} of $G^{s}_u$ which belong to connected components of size at least $\ell$. Importantly,  any cycle of $\sigma_{s+u}$ is contained in a connected component $G^{s}_u$. Hence, it follows that $\vx{s+u}{\ell}\subset \vg{s,u}{\ell}$ for any $s,u, \ell \in \N$.

There are two key ingredients in the proof Proposition \ref{le:interval}.  First, $G^{s}$ is monotonic and thus can be studied using standard random graph techniques. In particular Lemma \ref{lemma:sprinkle} below shows that macroscopic clusters emerge quickly in $G^{s}$. Second, on sufficiently short time intervals the difference $\vg{s,u}{\ell}\backslash \vx{s+u}{\ell}$ is small, which is formalized in Lemma \ref{lemma:schramm_estimate}.

Consider an interval $I \subset \{0,\ldots, |X|\}$, $k \in \N$ and let $c_1 > 0$ be the constant from Proposition \ref{cor:splitting}. Let
  \[
   \mathcal I_k(I) := \left\{\sup_{t \in I}\max_{v \in V}\iota(\orb^{k}_t(v)) \leq c_1 \log^2 n\right\}
  \]
denote the event that fragments of permutation orbits for $t \in I$ have good isoperimetric properties. Our first lemma quantifies the quality of the coupling between $\mathcal{C}_{s+u}$ and $G^{s}_{u}$.

\begin{lemma}\label{lemma:schramm_estimate}
Let $s \in \N$ and let $\Delta$ be an $|X|$-measurable $\N$-valued random variable. Suppose that $k, \ell \in \N$ satisfy $k\leq\ell$. Then for some $C>0$ on the event $s+\Delta \leq |X|$ we have
\[
\E\left[\max_{u \in \{0,\ldots, \Delta\}}| \vg{s,u}{\ell}\backslash \vx{s+u}{\ell}| \big| |X|\right]
\leq C\frac{\ell^2\Delta}{k n}\log^2 n + 2\ell\Delta\,\p\left(\mathcal I_k(\{s, \ldots, s+\Delta\})^c\big| |X|\right).
\]
\end{lemma}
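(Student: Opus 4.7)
The plan is to control $\Psi_u := |\vg{s,u}{\ell}\setminus\vx{s+u}{\ell}|$ step by step, and then recover the maximum via a telescoping bound. Because any cycle of $\sigma_{s+u}$ through a vertex $v$ is contained in the $G^s_u$-component of $v$, one has $\vx{s+u}{\ell}\subset\vg{s,u}{\ell}$, hence $\Psi_u = |\vg{s,u}{\ell}| - |\vx{s+u}{\ell}|$ and $\Psi_0 = 0$ by construction of $G^s_0$. Thus
\[
\max_{0\le u\le \Delta}\Psi_u \le \sum_{u=0}^{\Delta-1}(\Psi_{u+1}-\Psi_u)_+,
\]
and the task reduces to bounding the expectation of the right-hand side.

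A step-wise analysis of the effect of $e_{s+u+1}=(a,b)$ on the cycle structure of $\sigma_{s+u}$ and on the component structure of $G^s_u$ shows that $\Psi$ can strictly increase only in two regimes, in each of which the increment is bounded by $2\ell$:
\begin{itemize}
\item[(a)] a cycle of $\sigma_{s+u}$ is split, producing at least one piece of size $<\ell$ --- this is exactly the event $\mathcal{D}_{s+u}$ of Proposition \ref{cor:splitting}, and the increment is bounded by the total size of the small pieces;
\item[(b)] two components of $G^s_u$, at least one of size $<\ell$, are merged to form a component of size $\ge \ell$ --- since the small component has size $<\ell$ and the large side contributes no new bad vertex, the increment is at most $|C^a|+|C^b|<2\ell$.
\end{itemize}
Intra-component merges and splits into two pieces of size at least $\ell$ never increase $\Psi$.

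On the good event $\mathcal{I}_k:=\mathcal{I}_k(\{s,\ldots,s+\Delta\})$ both regimes admit a per-step conditional probability bound of the form $C\ell\log^2 n/(kn)$, given $\mathcal{F}_{s+u}$ and $|X|=k$. For (a) this is a direct application of Lemma \ref{lem:intensityOfSplit}(i) with the isoperimetric bound $m = c_1\log^2 n$ supplied by $\mathcal{I}_k$, combined with Lemma \ref{lem:almost_uniform}. For (b) an analogous argument applies: the $\iota$-control supplied by $\mathcal{I}_k$ bounds the number of edges of $H_n$ realising such a merge by $O(\ell \log^2 n/k)$, and Lemma \ref{lem:almost_uniform} then yields the desired per-edge probability. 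Summing over the $\Delta$ steps and multiplying by the $2\ell$ bound on the increment gives the main term $C\ell^2\Delta\log^2 n/(kn)$. On the complementary event $\mathcal{I}_k^c$ I would invoke the deterministic a priori bound $\Psi_u\le 2\ell u\le 2\ell\Delta$, obtained from the stepwise $2\ell$-increment, producing the error term $2\ell\Delta\,\p(\mathcal{I}_k^c\,|\,|X|)$.

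The principal obstacle is the probability bound for regime (b): Proposition \ref{cor:splitting} is tailored for splits, and $\mathcal{I}_k$ controls $\iota$ only for cycle fragments rather than for the full, possibly multi-cyclic, components involved in the merge, so some careful bookkeeping is required to transfer the isoperimetric bound to the component level. If the direct isoperimetric estimate proves awkward, I would fall back on a charging argument: any inter-component merge strictly increasing $\Psi$ involves a small component $C^a$ already containing at least two cycles, and such multi-cyclicity forces an earlier bad split in the history of $C^a$; assigning each vertex that newly becomes bad through such a merge to that earlier bad split shows that every bad split is debited by at most $O(\ell)$ additional vertices, yielding a bound of the same order as in regime (a).
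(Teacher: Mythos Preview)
Your telescoping decomposition is correct, but it introduces an unnecessary complication: regime (b). The paper's proof dispenses with (b) entirely by establishing the deterministic inequality
\[
\max_{0\le u\le \Delta}\bigl|\vg{s,u}{\ell}\setminus\vx{s+u}{\ell}\bigr|\;\le\;2\ell\,|I|,
\]
where $I$ is the set of times $t\in\{0,\ldots,\Delta-1\}$ at which some cycle of $\sigma_{s+t}$ is split producing a piece of size $<\ell$ (your regime (a)). The combinatorial reason is this: fix $u$ and a bad cycle $\gamma$ (so $|\gamma|<\ell$ but $\gamma$ sits in a $G^s_u$-component $C$ of size $\ge\ell$). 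Then some $v\in\gamma$ must have had its cycle fragmented during $[s,s+u)$. Indeed, if not, then for every $v\in\gamma$ the cycle of $v$ only grows and hence stays inside $\gamma$ at all intermediate times; but $C\supsetneq\gamma$, so there is an edge $(p,q)$ of $G^s_u$ with $p\in\gamma$, $q\notin\gamma$, and the transposition $(p,q)$ at the time it was applied forces $q$ into the cycle of $p$ (whether it is a split or a merge), contradicting $q\notin\gamma$. Taking $t'$ to be the \emph{last} fragmentation time of the cycle of $v$, the piece created at $t'$ containing $v$ lies in $\gamma$ (no later splits), hence has size $<\ell$, so $t'\in I$. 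Distinct bad cycles yield distinct (event,\,piece) pairs, each event has two pieces, so there are at most $2|I|$ bad cycles and at most $2\ell|I|$ bad vertices. After this, only $\E[|I|\mid |X|]$ needs bounding --- which is precisely your regime-(a) estimate via Lemma~\ref{lem:almost_uniform} and part~(i) of Lemma~\ref{lem:intensityOfSplit}.

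Your direct isoperimetric bound for regime (b) does not go through, for exactly the reason you flag: $\mathcal I_k$ controls $\iota$ of cycle fragments, and a $G^s_u$-component may be a union of many cycles, so no useful bound on $|E(C^a,C^b)|$ is available. Your fallback charging argument gestures at the right idea but is phrased step-by-step (``each vertex newly becoming bad is charged to an earlier split, and each split is debited $O(\ell)$''). In that formulation the $O(\ell)$-per-split claim is not obvious: a single split can be followed by arbitrarily many inter-component merges, each feeding new vertices into $\Psi$, so the total upward variation $\sum_u(\Psi_{u+1}-\Psi_u)_+$ is not transparently controlled by $|I|$. The clean execution of the charging is at a \emph{fixed} time $s+u$, as above; once you see that $\Psi_u\le 2\ell|I|$ holds pathwise, the telescoping and regime (b) become superfluous.
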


\begin{proof}
The proof is an adaptation of \cite[Lemma 4.2]{milos-sengul} to the discrete time setting and the case $\Theta \neq 1$. Let $I$ be the set of $u \in [0,\Delta-1]$ such that $\sigma$ experiences a fragmentation at time $s+u$ which splits a cycle and at least one of the resulting cycles has length less than $\ell$.

From Lemma \ref{lem:almost_uniform} and point (i) of Lemma~\ref{lem:intensityOfSplit} we obtain that at any time $u$ the (conditional) probability of a fragmentation in which one piece is smaller than $\ell$ is at most
\[
	\frac{4\Theta^2\ell }{k n} c_{1} \log^2 n + \Ind{\{\mathcal I_k(\{u\})^c\}}.
\]
Hence we see that
\begin{equation}\label{eq:I_schramm}
    \E[|I|\big| |X|]\leq \frac{4\Theta^2\ell\Delta}{k n} c_{1}^{2} \log^2n + \Delta\,\p\left(\mathcal I_k(\{s, \ldots, s+\Delta\})^c\big| |X|\right).
\end{equation}

Let $u \in [1,\Delta]$ and consider any cycle $\gamma$ of $\sigma_{s+u}$ such that $\gamma \subset \vg{s,u}{\ell}\backslash \vx{s+u}{\ell}$, that is, $\gamma$ is contained in a component of $G^{s}_u$ of size at least $\ell$ and $|\gamma|<\ell$.
Then it follows that there must have been a vertex $v\in \gamma$ such that the cycle containing $v$ must have fragmented at some time in $\{s,\ldots, s+u\}$ producing a cycle of size smaller than $\ell$.

For $t\in \{s,\ldots, s+u\}$, let $\gamma_{t}^{(v)}$ be the cycle of $\sigma_{t}$ containing $v$.
Let $t'\in \{s,\ldots, s+u-1\}$ be the maximal time such that the size of $\gamma_{t'}^{(v)}$ jumps downwards, that is, the cycle containing $v$ experiences a fragmentation.
Then at this time $t'$, $\sigma$ experiences a fragmentation which splits a cycle into two and at least one of the resulting cycles has length less than $\ell$. Note that the cycle thus obtained is a part of $\gamma$. It follows that $t'\in I$ and consequently $| \vg{s,u}{\ell}\backslash \vx{s+u}{\ell}| \leq 2\ell |I|$.
Taking suprema and using~\eqref{eq:I_schramm} we obtain the desired result.
\end{proof}

The second lemma quantifies how quickly big clusters emerge in the random graph process $G^{s}_{u}$.
\begin{lemma}\label{lemma:sprinkle}
For any $\delta\in(0,1/8)$ there exists a sequence $\{a_n\}_{n\in \N}$ such that the following holds: $a_n \nearrow 1$ and for any $s, u, \ell, h \in \N$ satisfying $\log^2 n \leq \ell\leq n^2$ and  $u \geq (n^2/\sqrt{\ell})\log n$,  $s + u \leq h$ and  $\p(|\vg{s,0}{\ell}|\geq \delta n^2, |X| = h) > 0$, we have
\[
      \p\left(|\vg{s,u}{\delta n^2/8}| \geq \delta n^2/8 \big| |\vg{s,0}{\ell}|\geq \delta n^2, |X|=h\right)\geq a_n.
\]
\end{lemma}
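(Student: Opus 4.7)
The plan is to reduce Lemma~\ref{lemma:sprinkle} to a Bernoulli-percolation-type analysis on $H_n$, using the initial large components of $G^s_0$ as a seed for a giant cluster. By Lemma~\ref{lem:almost_uniform}, conditional on $|X|=h$ and on the history up to step $s+r-1$, each edge $e_{s+r}$ equals any prescribed $e\in E$ with probability in $[\Theta^{-2}/|E|,\Theta^2/|E|]$. A routine coupling/thinning argument therefore lets us dominate $(G^s_u)_{u\ge 0}$ from below by the graph obtained from $G^s_0$ by adding $u' = \lceil \Theta^{-2} u\rceil$ i.i.d.\ uniform random edges of $H_n$, which, up to constants, is equivalent to performing a Bernoulli bond percolation on $H_n$ with parameter $p$ of order $(\log n)/(n\sqrt{\ell})$ superimposed on $G^s_0$. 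It therefore suffices to prove the desired lower bound in this simpler i.i.d.\ model.

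Let $\mathcal{C}_1,\ldots,\mathcal{C}_K$ denote the components of $G^s_0$ of size $\ge\ell$, so that $K\le n^2/\ell$ and $V^*:=\bigcup_i \mathcal{C}_i$ satisfies $|V^*|\ge \delta n^2$. Introduce a contracted super-graph $\widetilde G$ on $\{\mathcal{C}_1,\ldots,\mathcal{C}_K\}$, joining $\mathcal{C}_i$ and $\mathcal{C}_j$ whenever a chain of at most two new edges (possibly through a vertex of $V\setminus V^*$) connects them in $G^s_u$. Any component of $\widetilde G$ lifts to a connected subgraph of $G^s_u$, so it suffices to show that the largest component of $\widetilde G$ collects $\mathcal{C}_i$'s of total size $\ge \delta n^2/8$ with probability $\ge a_n\to 1$. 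We carry this out by a two-round sprinkling using $u/2$ edges per round. In the first round we exploit the Hamming-graph isoperimetry: any $\mathcal{C}$ of size $\ge\ell$ must span at least $\sqrt{\ell}$ rows or $\sqrt{\ell}$ columns and so has $|E(\mathcal{C},V\setminus\mathcal{C})| \ge c\, n\ell$, whence a first-moment computation shows that each $\mathcal{C}_i$ absorbs an expected $\Omega(\sqrt{\ell}\log n)$ fresh neighbors and that the enlarged clusters $\widetilde{\mathcal{C}}_i$ spread over $\Omega(\sqrt{\ell}\log n)$ distinct rows and columns. In the second round the cross-edge counts $|E(\widetilde{\mathcal{C}}_i,\widetilde{\mathcal{C}}_j)|$ are then large enough that the induced auxiliary random graph on $\{\widetilde{\mathcal{C}}_1,\ldots,\widetilde{\mathcal{C}}_K\}$ has expected mean degree $\gg \log K$, giving an Erd\H{o}s--R\'enyi-style emergence of a giant component covering a $(1-o(1))$-fraction of the super-vertices.

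The principal obstacle lies in pathological "block-diagonal" initial configurations where the $\mathcal{C}_i$ occupy pairwise disjoint rows and columns, so that $|E(\mathcal{C}_i,\mathcal{C}_j)|=0$ and no single random edge can directly merge two seeds. Precisely for this reason we allow routing through $V\setminus V^*$ in the definition of $\widetilde G$: the first sprinkling round enlarges every $\mathcal{C}_i$ so that its support spans many rows and columns, destroying the block structure and ensuring plentiful direct cross-edges in the second round. The exponent $u\sim (n^2/\sqrt{\ell})\log n$ is calibrated so that each round's expected counts exceed the relevant thresholds by a factor $\log n$; standard first- and second-moment estimates together with a union bound over the at most $\binom{K}{2}\le n^4/\ell^2$ super-vertex pairs then yield the probability bound $a_n\nearrow 1$ uniformly in the admissible range of $(s,u,\ell,h)$.
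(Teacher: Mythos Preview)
Your reduction to an i.i.d.\ (Bernoulli) edge model via Lemma~\ref{lem:almost_uniform} is correct and matches the paper's first step. The remainder of your argument, however, takes a genuinely different route from the paper and contains a real gap.

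The difficulty is that your analysis works \emph{pairwise} on the seed components $\mathcal{C}_i$, and this cannot succeed when $\ell$ is near the bottom of its range. Take the block-diagonal configuration you yourself flag, with $\ell=\log^2 n$ and each $\mathcal{C}_i$ a $\sqrt{\ell}\times\sqrt{\ell}$ block. After your first round each $\widetilde{\mathcal{C}}_i$ indeed acquires order $\sqrt{\ell}\log n$ fresh neighbours, but these land in a \emph{random} set of roughly $\sqrt{\ell}\log n$ rows and columns out of $n$; for a fixed pair $i\neq j$ the expected overlap of the spanned row/column sets is therefore $O(\ell\log^2 n/n)=o(1)$, so typically $E(\widetilde{\mathcal{C}}_i,\widetilde{\mathcal{C}}_j)=\emptyset$ in $H_n$ and your second round has nothing to work with. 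There are three further loose ends: (a) an edge between $\widetilde{\mathcal{C}}_i$ and $\widetilde{\mathcal{C}}_j$ gives a chain of \emph{three} new edges, not two, so it does not match your super-edge definition; (b) the super-edges are neither independent nor homogeneous, so an ``Erd\H{o}s--R\'enyi-style'' giant-component claim needs justification; (c) a $(1-o(1))$-fraction of \emph{super}-vertices need not carry $\delta n^2/8$ \emph{original} vertices when component sizes are heterogeneous (one huge $\mathcal{C}_i$ could sit outside the giant).

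The paper sidesteps all of this by never arguing pairwise. It reasons by contradiction: if no component of $G^s_u$ reaches size $\delta n^2/8$, then $\vg{s,0}{\ell}$ can be split into two unions of initial components $A,B$ with $|A|,|B|\ge \delta n^2/4$. The point is that $A$ and $B$ are now \emph{macroscopic regardless of $\ell$}; a count of length-$2$ paths in $H_n$ then yields $\Theta(n^2)$ ``bridge'' vertices having $\Theta(n)$ $H_n$-neighbours in each of $A$ and $B$. For any fixed such partition the probability that no new length-$\le 2$ path crosses it is at most $\exp(-c\,u^2/n^2)=\exp\bigl(-c\,(n^2/\ell)\log^2 n\bigr)$, which beats the union bound over the at most $2^{n^2/\ell}$ admissible partitions. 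Passing from individual components to macroscopic halves is exactly what converts the hopelessly small pairwise connection probabilities into a workable global estimate; your sketch is missing this idea.
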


As the proof is an adaptation of \cite[Lemma 4.3]{milos-sengul} and is of graph-theoretical nature, it is moved to Appendix \ref{ap:intensity}.

In the two subsequent lemmas we show that in the supercritical phase the random graph process has macroscopic clusters for times close to $|X|$.

\begin{lemma}\label{lemma:pre_giant_component}
Let $\beta > \Theta/2$. There exists $\delta > 0$ such that
\[
	\lim_{n\to+\infty}\p\left(|X| \ge 2\lceil n^2/\log n\rceil \text{ and } |\vg{0,|X| - {2\lceil n^2/\log n\rceil}}{\delta n^2}|\geq\delta n^2 \right) = 1.
\]
\end{lemma}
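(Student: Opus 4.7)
First I would show $|X|$ is concentrated near $\beta n^2$, then combine the cyclic random walk analysis of Section \ref{sec:isoperimetry} with the sprinkling Lemma \ref{lemma:sprinkle} to produce a giant component in $G^0_s$ at $s := |X| - 2\lceil n^2/\log n\rceil$. Under the reference Poisson measure $\mathcal{B}$, $|X|$ has Poisson law with mean $\beta n^2$; the weight $\theta^{\mathcal{C}(X)}$ changes the log-density of $|X|$ by at most $|X| \log \Theta$ via the Lipschitz condition \eqref{eq:Lipschitz}. Combined with standard Poisson tail bounds (Lemma \ref{le:auxilliary-poisson}), this yields $|X| \in [c\beta n^2, c^{-1}\beta n^2]$ with probability at least $1 - e^{-c' n^2}$ for some $c, c' > 0$; in particular $|X| \ge 2\lceil n^2 / \log n \rceil$ with high probability.

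Next set $u := \lceil n^{3/2}\rceil$ and $s_0 := s - u$, and aim for three steps: (a) $|\vg{s_0, 0}{n\log^2 n}| \ge \delta_0 n^2$ for some $\delta_0 > 0$, with high probability; (b) application of Lemma \ref{lemma:sprinkle} with $\ell = n\log^2 n$, whose required sprinkling length $(n^2/\sqrt{n\log^2 n})\log n = n^{3/2}$ is at most $u$, to deduce $|\vg{s_0, u}{\delta_0 n^2/8}| \ge \delta_0 n^2/8$; (c) the inclusion $\vg{0, s}{k} \supseteq \vg{s_0, u}{k}$, which holds because the components of $G^{s_0}_0$ are the cycles of $\sigma_{s_0}$---refinements of the components of $G^0_{s_0}$---and this refinement is preserved under the common addition of edges $e_{s_0+1}, \ldots, e_{s_0+u}$. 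These steps together yield the lemma with $\delta = \delta_0/8$.

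For the crucial step (a), I would apply the CRW analysis of Section \ref{sec:isoperimetry} to the restricted configuration $X^{t_{s_0}}$ (whose law is $\mu^{t_{s_0}}_{\beta, \theta, \mathcal{C}^{t_{s_0}}}$ as in the proof of Proposition \ref{prop:isoperimetry-log2n-uniform}), noting that $s_0/|X| \to 1$ and hence $t_{s_0}\beta$ still exceeds $\Theta/2$. For each fixed $v$, the lower bound $2\Theta^{-1}\beta > 1$ on the discovery rate of new vertices (Corollary \ref{cor:intensity} together with Lemma \ref{lem:fresh-jumps-under-good-isoperimetry}) should imply $\p(|\orb_{s_0}(v)| \ge n\log^2 n) \ge c$ for some $c > 0$; summing over $v$ using the symmetry of $H_n$ and a second-moment concentration argument would then yield $|\vx{s_0}{n\log^2 n}| = |\vg{s_0, 0}{n\log^2 n}| \ge \delta_0 n^2$ with high probability.

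The main obstacle will be obtaining the bound $\p(|\orb_{s_0}(v)| \ge n\log^2 n) \ge c$ under the sharp hypothesis $\beta > \Theta/2$. A na\"ive Bernoulli-percolation coupling via Lemma \ref{lem:almost_uniform} alone would succeed only under the strictly stronger $\beta > \Theta^2/2$, since the thinning argument leaves edge density only $\Theta^{-2}\beta/(n-1)$. Closing this gap requires careful use of the isoperimetric machinery of Section \ref{sec:isoperimetry}, tuned precisely so that the CRW's discovery rate exceeds $1$ whenever $\beta > \Theta/2$; a similar subtlety is also why Section \ref{sec:transposition_process} was set up to transfer CRW results to cycles of $\sigma_{s_0}$ rather than just $\sigma_{|X|}$.
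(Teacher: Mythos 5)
Your proposal takes a genuinely different route, and it contains a real gap that the paper's own proof avoids.

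The gap is step (a). You propose to show $|\vg{s_0,0}{n\log^2 n}| \geq \delta_0 n^2$, and since the components of $G^{s_0}_0$ are by definition the cycles of $\sigma_{s_0}$, this is the statement that a positive fraction of vertices lie in \emph{permutation cycles} of length $\geq n\log^2 n$. This is strictly stronger than the statement of Lemma \ref{lemma:pre_giant_component}, which only concerns components of the graph $G^0_s$ (these absorb cycle-to-cycle mergers and are typically much larger than individual cycles). The fact that a positive fraction of vertices are in mesoscopic cycles of $\sigma_t$ is essentially Proposition \ref{le:interval}, and in the paper that proposition is proved \emph{using} Lemma \ref{lemma:pre_giant_component} as the base case for the $G^s$-process comparison. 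So while you do propose an independent CRW argument for (a), you would be attempting the hardest statement in the section as a black box, and you acknowledge yourself that the crucial estimate $\p(|\orb_{s_0}(v)| \geq n\log^2 n) \geq c$ for the sharp range $\beta>\Theta/2$ is an unresolved obstacle. Nothing in Section \ref{sec:isoperimetry} directly supplies such a lower bound; Proposition \ref{prop:isoperimetry-log2n-lower-bound}, for instance, is formulated with the escape clause ``or $|\mathcal{O}(v)| < n\log^2 n$'' precisely because the CRW machinery controls \emph{conditional} isoperimetry, not the probability that the orbit is long.

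The paper sidesteps all of this with a much more elementary argument that never looks at the CRW or at cycles of $\sigma$ at all. It restricts to a time slice $E\times I$ with $|I|<1$ chosen so that $\beta' := \beta|I|\Theta^{-1} > 1/2$, and uses the \emph{lower} bound of Lemma \ref{lm:poisson-approximation} (not Lemma \ref{lem:almost_uniform}) together with Holley's theorem to show $\hat X_{E\times I}$ stochastically dominates Bernoulli bond percolation on $H_n$ with opening probability $p_n \geq 1 - e^{-\beta'/(n-1)} \geq \beta'/(n-1) - o(1/n)$, which is supercritical. The point you flagged as the ``main obstacle''---getting the sharp constant $\Theta/2$ rather than $\Theta^2/2$---is resolved precisely by using Lemma \ref{lm:poisson-approximation} instead of Lemma \ref{lem:almost_uniform}: conditioning on whether $|X_A|\geq 1$ or $|X_A|=0$ changes $\mathcal{C}$ by at most $|X_A|$, costing a single factor of $\Theta$, not $\Theta^2$. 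Known results on supercritical percolation on Hamming graphs then give the giant component, and a trivial Poisson estimate supplies the required $\geq 2\lceil n^2/\log n\rceil$ remaining bridges in $E\times([0,1)\setminus I)$. No isoperimetry, no CRW, and no sprinkling are required for this particular lemma.
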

\begin{proof}
Let $I = [0,a)$, for $a < 1$, be an interval such that $\beta'=\beta|I|\Theta^{-1}>1/2$. We also set $J := [0,1)\setminus I$. Recall the notation used in Lemma \ref{lm:poisson-approximation}. By the monotonicity of the graph process it is enough to show that for some $c > 0$
\begin{equation}\label{eq:percolation_hamming}
	\lim_{n\to+\infty}\p\left(|\vg{0,|X_{E\times I}|}{c n^2}|\geq c n^2\right) = 1, \quad \lim_{n\to+\infty}\p\left(|X_{E\times J}|\geq 2 \lceil n^2/{\log n}\rceil\right) = 1.
\end{equation}
For $X \in \mathfrak{X}$ we set $\hat{X} := \{e: \exists_{t\in[0,1)} \:(e,t) \in X\}$. We intend to compare $\hat X_{E\times I}$ and $\hat X_{E\times J}$ with the Bernoulli percolation on $E$. To this end we use the Holley theorem \cite[Theorem 4.6]{GEORGII20011} with $\mathcal{L}=1$ and $S=\{0,1\}$, where $1$ indicates that $e$ is an open edge (i.e., belongs to a given set). For any $e\in E$ by the first part of Lemma \ref{lm:poisson-approximation} we get
\begin{align*}
 \p(e\in \hat X_{\{e\}\times I}| \hat X_{(E\setminus \{e\}) \times I}) &= \E \left(  \p(| X_{\{e\}\times I}|\geq 1 | X_{(E\times [0,1))\setminus (\{e\}\times I)}) \Big|  \hat X_{(E\setminus \{e\}) \times I} \right) \\
 &\geq 1 - e^{-\frac{\beta |I| \Theta^{-1}}{n-1}}= 1 - e^{-\frac{\beta'}{n-1}}\geq \frac{\beta'}{n-1} - o(1/n) =: p_n.
\end{align*}
This yields that $\hat X_{\{e\}\times I}$ is stochastically bounded from below by the Bernoulli percolation process with the probability of opening $p_n$. As $\beta'>1/2$, for $n$ large enough this process is in the supercritical phase. As a consequence, to get the first convergence in \eqref{eq:percolation_hamming} we can apply known results on the emergence of the giant component in supercritical percolation (see Theorem 1.1 in \cite{vanderHofstad2010} and the discussion therein; we note that the results of \cite{vanderHofstad2010} are formulated only for $p_n$ in the critical window, but the proof techniques carry over to the strictly supercritical case, see e.g., discussion in Section 3 of the cited paper).

Similarly, $\hat X_{E\times J}$ is bounded by a percolation process with the probability of opening $\geq 1 - e^{-\frac{\beta |J| \Theta^{-1}}{n-1}}\geq \frac{\beta |J| \Theta^{-1}}{2(n-1)}$. From this bound and the fact that $|E|=O(n^3)$ we infer that $|X_{E\times J}|\geq |\hat X_{E\times J}|\geq 2\lceil n^2/\log n\rceil$ with probability converging to $1$.
\end{proof}

\begin{lemma}\label{lemma:giant_component}
Let $\beta > \Theta/2$. There exist $\delta>0$, sets $K_n\subset I_n := \{\lfloor n^2 / \log n\rfloor,\ldots, \lfloor n^2 \log n\rfloor \}$ and a sequence $\{a_n\}_{n\geq 1}$ such that the following holds: $\lim_{n\to +\infty}\p(|X|\in K_n) = 1$, $a_n \nearrow 1$ and
	\begin{equation}\label{eq:giant_component_exists}
	 \p\left(|\vg{0,k-\lfloor n^2/\log n\rfloor}{\delta n^2}|\geq \delta n^2 \big| |X| = k \right) \geq a_n \ind{k\in K_n}.
	\end{equation}

\end{lemma}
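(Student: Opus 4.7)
The plan is to deduce Lemma~\ref{lemma:giant_component} from Lemma~\ref{lemma:pre_giant_component} using two ingredients: monotonicity of the graph process $G^{0}$ in time, and a standard disintegration argument that upgrades an unconditional high-probability statement into a conditional one for typical values of $|X|$.

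First I would observe that $G^{0}_u$ is constructed by adding one edge at each step, so sizes of connected components are nondecreasing in $u$, and consequently $\vg{0,u_{1}}{\delta n^{2}} \subseteq \vg{0,u_{2}}{\delta n^{2}}$ whenever $u_{1} \le u_{2}$. Because $\lfloor n^{2}/\log n\rfloor \le 2\lceil n^{2}/\log n\rceil$, the evaluation time $k - \lfloor n^{2}/\log n\rfloor$ is no earlier than $k - 2\lceil n^{2}/\log n\rceil$. Hence, applying Lemma~\ref{lemma:pre_giant_component} together with a standard upper tail bound $\p(|X| \le \lfloor n^{2}\log n\rfloor) \to 1$ (coming from Lemma~\ref{lm:poisson-approximation} and the Lipschitz property of $\mathcal{C}$, which ensures that the Radon--Nikodym derivative of $\mu_{\beta,\theta,\mathcal{C}}$ with respect to $\mathcal{B}$ is well-behaved in the tails), we obtain for some $\varepsilon_{n} \to 0$
\[
\p\bigl(B_{|X|} \cap \{|X| \in I_{n}\}\bigr) \;\ge\; 1 - \varepsilon_{n},
\qquad B_{k} := \bigl\{|\vg{0,\, k - \lfloor n^{2}/\log n\rfloor}{\delta n^{2}}| \ge \delta n^{2}\bigr\}.
\]

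To extract the required conditional bound I would define
\[
K_{n} := \bigl\{k \in I_{n} \colon \p(B_{k} \mid |X|=k) \ge 1 - \sqrt{\varepsilon_{n}}\bigr\}, \qquad a_{n} := 1 - \sqrt{\varepsilon_{n}}.
\]
For $k \in K_{n}$ the inequality \eqref{eq:giant_component_exists} holds by construction. The control on $\p(|X| \in K_{n})$ follows from a direct Markov-type computation:
\[
\varepsilon_{n} \;\ge\; \sum_{k \in I_{n}\setminus K_{n}} \p(B_{k}^{c} \mid |X|=k)\,\p(|X|=k) \;\ge\; \sqrt{\varepsilon_{n}}\,\p(|X| \in I_{n}\setminus K_{n}),
\]
which yields $\p(|X| \in I_{n}\setminus K_{n}) \le \sqrt{\varepsilon_{n}}$ and hence $\p(|X| \in K_{n}) \ge \p(|X| \in I_{n}) - \sqrt{\varepsilon_{n}} \to 1$, as required.

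There is no genuine obstacle here: the essential probabilistic input (supercritical percolation comparison yielding a giant component near time $|X|$) is already contained in Lemma~\ref{lemma:pre_giant_component}, and the remaining steps are bookkeeping --- monotonicity of $G^{0}$ in $u$, a Poisson-type tail bound for $|X|$, and the disintegration above. The only point worth checking carefully is the ordering of the indices $\lfloor n^{2}/\log n\rfloor$ and $2\lceil n^{2}/\log n\rceil$, which makes the monotonicity comparison go through without loss in $\delta$.
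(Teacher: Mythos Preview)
Your proposal is correct and follows essentially the same route as the paper: both invoke Lemma~\ref{lemma:pre_giant_component}, use monotonicity of $G^0$ in $u$ to pass from time $|X|-2\lceil n^2/\log n\rceil$ to $|X|-\lfloor n^2/\log n\rfloor$, apply a Poisson-type tail bound (the paper cites Lemma~\ref{le:auxilliary-poisson} rather than Lemma~\ref{lm:poisson-approximation}, but the content is the same) to restrict to $I_n$, and then run the identical Markov-type disintegration to extract the conditional statement for a set $K_n$ of typical values of $|X|$. Your choice $a_n = 1-\sqrt{\varepsilon_n}$ is just a concrete instance of the paper's unspecified sequence $c_n \nearrow 1$.
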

\begin{proof}
Denote $\mathcal{A}_n := \{|\vg{0, |X| - 2\lceil n^2/\log n\rceil }{\delta n}|\geq \delta n^2\}$, with $\delta$ as in Lemma \ref{lemma:pre_giant_component}, and set $L_n := \{k\in \mathbb{N}:\p\left(\mathcal{A}_n \big| |X| = k \right)\leq c_n\}$, for $c_n\in(0,1)$ to be fixed later. As $\p(\mathcal{A}_n)\leq \p(|X|\in L_n) c_n + \p(|X|\not\in L_n)$, by a simple calculation we get
	\begin{equation*}
		\frac{1-\p(\mathcal{A}_n)}{1 - c_n}\geq\p(|X|\in L_n).
	\end{equation*}
By Lemma \ref{lemma:pre_giant_component} we have $\p(\mathcal{A}_n) \to 1$, so we can find $c_n, \delta$ such that $c_n\nearrow 1$ and the left-hand side converges to $0$. Consequently, we have $\p(|X|\in K_n')\to 1$ for $K_n' := \N \setminus L_n$. We set $K_n := K_n' \cap I_n$. Using Lemma \ref{le:auxilliary-poisson} we see that $\p(|X| \notin I_n) \to 0$ as $n \to \infty$.
Thus we get $\lim_{n\to +\infty}\p(|X|\in K_n) = 1$ and $\p\left(|\vg{0,k-2\lfloor n^2/\log n\rfloor}{\delta n}|\geq \delta n^2 \big| |X| = k \right) \geq c_n \ind{k\in K_n}$ as desired.
\end{proof}

The proof of Proposition \ref{le:interval} follows by making comparisons of the random graph process and the generalized interchange process on appropriate time intervals, as made possible by Lemma \ref{lemma:schramm_estimate}. Below we make only two such comparisons. It is possible to iterate Lemma \ref{lemma:schramm_estimate} more times on shorter and shorter time intervals, thus getting a tighter control on the difference $\vg{s,u}{\ell}\backslash \vx{s+u}{\ell}$. This method was used in \cite{milos-sengul} to prove the existence of cycles of size $n^{2 - \varepsilon}$ for any $\varepsilon > 0$. To the best of our knowledge this method alone cannot be pushed to obtain macroscopic cycles. Instead, in the next section we use modified Schramm's argument presented in Lemma \ref{le:Schramm_key_lemma}, together with Proposition \ref{le:interval} as a prerequisite.

\begin{proof}[Proof of Proposition \ref{le:interval}]
Let $\delta_1, K_n^1, a_n^1$ be $\delta, K_n, a_n$ asserted by Lemma \ref{lemma:giant_component}. We first apply Lemma \ref{lemma:schramm_estimate} with $k_1 = c_1 \log^2 n / 2, \ell_1 = n^{1/3} \log n$, $s_1=0$ and $\Delta_1 = h \leq n^2 \log n$, conditionally on $|X|=h$. Clearly, $\mathcal{I}_{k_1}(I) = \Omega$ for any interval $I$. Thus for $h \in K_n^1$ using Markov's inequality we get for some $C>0$
\[
\p\left[\max_{t \in \{0,\ldots, h\}}| \vg{0,t}{\ell_1}\backslash \vx{t}{\ell_1}|\geq \delta_1 n^{2}/2 \big| |X| = h\right] \leq \frac{C (n^{1/3} \log n)^2 n^2\log n}{(\log^2n/2)n(\delta_1 n^{2}/2)} \log^2 n = \frac{4C}{\delta_1} n^{-1/3} \log^3 n \to 0.
\]
Combining this with Lemma \ref{lemma:giant_component} we get
	  \begin{equation}\label{eq:first_step_to_bootstarp}
	  	\lim_{n\to +\infty}\inf_{h\in K_n^1}\p\left(\min_{t\in \{h-2 \lceil n^{11/6}\rceil, \ldots, h\}}|\vx{t}{\ell_1}|\geq \delta_1 n^2/2 \big|  |X|=h\right) = 1.
	  \end{equation}
Using this result we will be able to repeat the argument above on a short time interval contained in the supercritical phase. Crucially, on this interval we can use Lemma \ref{eq:good_isoperimetry_is_common}, which will let us obtain a much better estimate in Lemma \ref{lemma:schramm_estimate}.

Fix $\tau\in \{h - \lceil n^{11/6} \rceil, \ldots, h\}$, conditionally on $|X| = h$, and set $\Delta_2 := \lceil n^{11/6} \rceil$. Let $s_2 := \tau - \Delta_2$. Starting from \eqref{eq:first_step_to_bootstarp} we first apply Lemma \ref{lemma:sprinkle} with $\ell_1= n^{1/3}\log n$ (note that $\Delta_2 \ge \frac{n^2}{\sqrt{l_1}}\log n$), getting
\begin{equation}\label{eq:second_step_to_bootstrap}
	\lim_{n\to +\infty}\inf_{h\in K_n^1} \p\left(|\vg{s_2,\Delta_2}{\delta_2 n^2}| \geq \delta_2 n^2 \big| |X|=h\right)=1,
\end{equation}
for some $\delta_2>0$. Fix $\ell_2 =k_2=n \log^2 n$. Let  $K_n^2 = K_n^1 \cap K_n$, where $K_n$ is given by Lemma \ref{eq:good_isoperimetry_is_common}. Let $h\in K_n^2$, using  Lemma \ref{lemma:schramm_estimate} we estimate
\begin{align*}
	\sup_{h\in K_n^2}\E\left[|\vg{s_2,\Delta_2}{\ell_2}\backslash \vx{\tau}{\ell_2}| \big| |X|=h\right]
    &\leq \frac{C_1 (n\log^2n)^2 n^{11/6}}{(n \log^2 n) n} \log^2 n + 2(n \log^2 n)n^{11/6} e^{-c \log^2 n} \\
    &= C_2 n^{11/6} \log^4 n.
\end{align*}
for some $C_1, C_2, c>0$. Markov's inequality implies
\[
\sup_{h\in K_n^2}\p\left[|\vg{s_2,\Delta_2}{\ell_2}\backslash \vx{\tau}{\ell_2}| \geq n^{11/6} \log^6 n \big| |X| = h\right] \leq C_2 /\log^{2} n.\]
This combined with \eqref{eq:second_step_to_bootstrap} yields the statement of the proposition.
\end{proof}

\end{subsection}

\begin{subsection}{Macroscopic cycles in the supercritical phase $\beta>\Theta/2$. Proof of Theorem \ref{thm:main_theorem_upperbound}} \label{sec:proof main_theorem_upperbound}

Now we are ready to show our main result. Recall the general transposition process $\{\sigma_t\}$ introduced in Section \ref{sec:transposition_process} and, importantly, that $\sigma_{|X|} = \sigma(X)$, where $\sigma(X)$ defined in \eqref{eq:permutation_associated} is the main object of our study.

\begin{proposition}\label{prop:macroscopic_cycles_proposition}
Let $\beta >\Theta/2$. There exist sets $K_n \subset \mathbb{N}$ such that $\lim_{n\to \infty} \p(|X|\in K_n)=1$ and
\[
     \lim_{\varepsilon \to 0}\liminf_{n\to \infty} \inf_{k\in K_n} \p(\textrm{there exists a cycle of } \sigma(X) \text{ of length at least $\varepsilon n^2$}\big| |X|=k) = 1.
\]
\end{proposition}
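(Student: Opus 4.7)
The plan is to combine Proposition \ref{le:interval}, which provides a positive density of mesoscopic cycles of size at least $n \log^2 n$ at times close to $|X|$, with the abstract Schramm argument (Lemma \ref{le:Schramm_key_lemma}) applied to the discrete transposition process $\{\sigma_t\}$, using the split and merge estimates from Proposition \ref{cor:splitting} to verify its hypotheses. The key observation is that $\{\sigma_t\}$ is a simple split-merge process on $V$ with $|V| = n^2$, since each transposition either splits one cycle into two (when its endpoints lie in the same cycle) or merges two cycles into one.

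Let $K_n$ be the intersection of the sets of admissible $k$ provided by Proposition \ref{cor:splitting} and Proposition \ref{le:interval}; by a union bound we still have $\p(|X| \in K_n) \to 1$. Fix $k \in K_n$ and work conditionally on $|X| = k$. Choose $j \in \N$ to be the smallest integer with $2^j \geq n \log^2 n$, so $n \log^2 n \leq 2^j \leq 2 n \log^2 n$. Let $\Delta t$ be as in \eqref{eq:growing_interval}; since $n^2/2^j = O(n/\log^2 n)$ one has $\Delta t = O(\delta^{-1} n/\log n)$, which is much smaller than $n^{11/6}$. Set $t_0 = k - \lceil \Delta t\rceil$ and $t_1 = k = |X|$. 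Then $t_0$ and every intermediate step belong to the interval $I$ of Proposition \ref{cor:splitting}, so assertion (iii) supplies hypothesis (i) of Lemma \ref{le:Schramm_key_lemma} with $c_1$ (for all $\ell \geq 2^j \geq n\log^2 n$), assertion (iv) supplies hypothesis (ii) with $c_2$ (for cycles of size $\geq 2^j \geq n\log^2 n$), and the stopping time $\tau$ is exactly the one required. Proposition \ref{le:interval} then provides $|\vx{t_0}{n\log^2 n}| \geq \delta n^2$ with probability tending to $1$, uniformly in $k \in K_n$.

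With these ingredients in place, Lemma \ref{le:Schramm_key_lemma} yields
\[
\E\bigl(|\vx{t_0}{2^j}\setminus \vx{k}{\varepsilon \delta n^2}|\bigm|\mathcal{F}_{t_0}\bigr)\, \ind{|\vx{t_0}{2^j}| \geq \delta n^2} \le c_4 \delta^{-1}\varepsilon |\log_2(\varepsilon\delta)|\, n^2 + c_4 \delta^{-1} n^2\, \p(\tau \leq t_1|\mathcal{F}_{t_0}).
\]
By assertion (ii) of Proposition \ref{cor:splitting} the unconditional expectation of the second summand is $O(n^2 e^{-c\log^2 n}) = o(n^2)$. Taking expectations and applying Markov's inequality, one deduces that for $\varepsilon$ small enough (depending only on $\delta$), with probability tending to $1$ uniformly in $k \in K_n$ one has $|\vx{k}{\varepsilon \delta n^2}| \geq \delta n^2/2$. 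In particular $\sigma(X) = \sigma_k$ contains a cycle of length at least $\varepsilon \delta n^2$; rescaling $\varepsilon$ yields the proposition.

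The main technical obstacle is a scale-matching issue: Schramm's lemma is parametrised by the power-of-two threshold $2^j$, whereas Proposition \ref{le:interval} is stated at threshold $n\log^2 n \leq 2^j$, and since $\vx{t_0}{2^j} \subseteq \vx{t_0}{n\log^2 n}$ the required density bound at level $2^j$ does not follow directly. I would handle this either by a cosmetic reformulation of Lemma \ref{le:Schramm_key_lemma} allowing an arbitrary initial threshold $\ell_0$ (the proof is unchanged if one replaces the first milestone $2^j$ by $\ell_0$), or by a short auxiliary step showing that within $O(1)$ transitions the merge estimate (iv) of Proposition \ref{cor:splitting} coagulates most cycles of size in $[n\log^2 n, 2^j]$ into ones of size $\geq 2^j$, thereby propagating the density bound from $n\log^2 n$ up to $2^j$ while losing at most a constant factor in $\delta$.
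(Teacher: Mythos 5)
Your proof follows the same route as the paper — Proposition~\ref{le:interval} for mesoscopic density, Proposition~\ref{cor:splitting} for the split/merge rates, and Lemma~\ref{le:Schramm_key_lemma} for coagulation — and the gap you flag at the end is exactly where the two diverge. With your choice $j := \min\{i \colon 2^i \ge n\log^2 n\}$ one has $\vx{t_0}{2^j} \subseteq \vx{t_0}{n\log^2 n}$, so the conclusion $|\vx{t_0}{n\log^2 n}| \ge \delta n^2$ of Proposition~\ref{le:interval} gives no control on $|\vx{t_0}{2^j}|$, the indicator on the left of~\eqref{eq:Schramm-ineq} may vanish, and as written the argument does not close. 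The paper instead takes $j$ the other way, the \emph{largest} integer with $2^j \le n\log^2 n$, so that $\vx{t_0}{2^j} \supseteq \vx{t_0}{n\log^2 n}$ and the density bound transfers for free; the nominal mismatch this creates in hypotheses (i)--(ii) of Lemma~\ref{le:Schramm_key_lemma} (Proposition~\ref{cor:splitting} supplies them only at threshold $n\log^2 n$, slightly above $2^j$) is absorbed, for the split bound, by monotonicity at the cost of a factor of two in the constant, and for the merge bound matters only at the first epoch. Your fix~(a) — rerunning Lemma~\ref{le:Schramm_key_lemma} with an arbitrary initial milestone $\ell_0 = n\log^2 n$ in place of a power of two, for which its proof goes through unchanged — is the cleanest way to make all three inputs line up exactly; with that adjustment (or simply with the paper's choice of $j$) the rest of your argument is correct.
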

\begin{proof}

Let $\delta$ and $K_n^1$ be respectively $\delta$ and $K_n$ asserted by Proposition \ref{le:interval}. This proposition shows that cycles of size at least $n \log^2 n$ are common. We will use this information to show the existence of macroscopic cycles. The key role in this proof is played by Schramm's argument, encapsulated in Lemma \ref{le:Schramm_key_lemma}, and isoperimetric properties of cycles. The latter imply that the split-merge process behaves similarly to the mean-field (the complete graph) case, which is stated conveniently in Proposition \ref{cor:splitting}. We denote sets $K_n$ from that proposition by $K_n^2$.

In the proof we work conditionally on $|X|=k$, where $k\in K^1_n\cap K^2_n$. Fix $\varepsilon\in(0,1/8)$ and consider the largest $j\in\mathbb{N}$ such that $2^{j}\leq n\log^2n$. Employing the notation from Lemma \ref{le:Schramm_key_lemma}, we set $t_0 = k - \Delta t$, where $\Delta t$ is given by \eqref{eq:growing_interval}. With this choice we have $t_1 = k$. Observe that for $j$ as above we have $\Delta t = o(n)$, in particular $t_{0} \geq k - \lceil n^{11/6}\rceil$. Let $\tau$ be the stopping time from Proposition \ref{cor:splitting}. One easily checks that conditions (i) and (ii) of Lemma \ref{le:Schramm_key_lemma} are fulfilled by assertions (iii) and (iv) of Proposition \ref{cor:splitting}. Consequently we get
\[
	\E \left(| \vx{t_0}{2^j}\setminus \vx{k}{\varepsilon \delta n^2}| \big|\mathcal{F}_{t_0}, |X|=k \right) \ind{|\vx{t_0}{2^j}| \geq \delta n^2} \le c \delta^{-1}\varepsilon |\log_2(\varepsilon\delta)| n^2 + c \delta^{-1} n^2 \p\left(\tau \leq k \big| \mathcal{F}_{t_0}, |X|= k \right),
\]
for some $c>0$.
Using Markov's inequality we get
\[
	\p \Big(| \vx{t_0}{2^j}\setminus \vx{k}{\varepsilon \delta n^2}|\geq \delta n^2/2 \big|\mathcal{F}_{t_0}, |X|=k \Big) \ind{|\vx{t_0}{2^j}| \geq \delta n^2} \le 2c \delta^{-2} \left( \varepsilon |\log_2(\varepsilon\delta)| + \p\left(\tau \leq k \big| \mathcal{F}_{t_0}, |X|= k \right) \right).
\]
Consequently,
\begin{align*}
& \p \Big( |\vx{k}{\varepsilon \delta n^2}|\geq \delta n^2/2 \big| |X|=k \Big) \ge \E \left( \p \Big(| \vx{t_0}{2^j}\setminus \vx{k}{\varepsilon \delta n^2}| < \delta n^2/2 \big|\mathcal{F}_{t_0}, |X|=k \Big) \ind{|\vx{t_0}{2^j}| \geq \delta n^2} \big| |X|=k \right) \\
& \geq \E \left( \ind{|\vx{t_0}{2^j}| \geq \delta n^2}  \left(1 - 2c \delta^{-2} \left( \varepsilon |\log_2(\varepsilon\delta)| + \p\left(\tau \leq k \big| \mathcal{F}_{t_0}, |X|= k \right) \right) \right) \right) \\
& \geq  \p\left({|\vx{t_0}{2^j}| \geq \delta n^2}\big| |X|=k\right)\left( 1 - 2c \delta^{-2} \varepsilon |\log_2(\varepsilon\delta)| \right) - 2c \delta^{-2} \p\left(\tau \leq k \big| |X|= k \right).
\end{align*}
By Proposition \ref{le:interval}, with our choice of $j$ and $t_{0}$ the first probability on the right hand side approaches $1$ as $n \to \infty$ uniformly over $k\in K^1_n\cap K^2_n$. The second probability goes to $0$ uniformly over $k\in K^1_n\cap K^2_n$ by (ii) of Proposition \ref{cor:splitting}. Noticing that $\lim_{n\to+\infty}\p(|X| \in K_n)=1$ and taking the limit $\varepsilon \to 0$ we obtain our result.
\end{proof}

\begin{proof}[Proof of Theorem \ref{thm:main_theorem_upperbound}]
Let $K_{n}$ be the sets claimed in Proposition \ref{prop:macroscopic_cycles_proposition}.
We write
\begin{align*}
	& \p(\text{ there exists a cycle of } \sigma(X) \text{ of length at least $\varepsilon n^2$}) \\
& \geq \sum\limits_{k \in K_n} \p(\text{ there exists a cycle of } \sigma(X) \text{ of length at least $\varepsilon n^2$} \big| |X| = k) \p(|X|=k) \\
& \geq \p(|X| \in K_n) \inf_{k\in K_n} \p(\textrm{there exists a cycle of } \sigma(X) \text{ of length at least $\varepsilon n^2$}\big| |X|=k).
\end{align*}
Since $\p(|X| \in K_n) \to 1$ as $n \to \infty$, by taking $\liminf$ over $n \to \infty$ and then the limit $\varepsilon \to 0$ using Proposition \ref{prop:macroscopic_cycles_proposition} we obtain the statement of the theorem.
\end{proof}

\begin{subsection}{Microscopic cycles in the subcritical phase $\beta<\Theta^{-1}/2$. Proof of Theorem \ref{thm:main_theorem_lowerbound}}\label{sec:proof-of-subcritical}

We will now sketch a proof of the statement about the behavior of cycle lengths in subcritical phase. This is a much easier task than in the supercritical phase. Our main result follows directly from the following
\begin{proposition}\label{prop:macroscopic_cycles_proposition}
Let $\beta <\Theta^{-1}/2$. There exist $C>0$ and sets $K_n \subset \mathbb{N}$ such that $\lim_{n\to \infty} \p(|X|\in K_n)=1$ and
\[
	\lim_{n\to \infty} \inf_{k\in K_n} \p(|\vx{k}{C\log n}|=0\big| |X|=k) = 1.
\]
\end{proposition}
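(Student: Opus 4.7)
The approach will be to leverage the cyclic random walk from Section \ref{sec:setup}, invoking the single piece of Section \ref{sec:isoperimetry} that does not rely on Assumption \ref{lm:isoperimetry} --- namely Lemma \ref{lm:fresh-vertices-are-not-often}, which (per the remark that follows it) is unconditional. The starting observation will be that for any vertex $v \in V$ and any integer $L \ge 1$,
\[
\{|\mathcal{O}(v)| \ge L\} \subset \{|\mathcal{Z}_L(v)| \ge L\}.
\]
Indeed, if the cycle of $\sigma(X)$ through $v$ has at least $L$ elements, then the CRW $\mathcal{X}(v)$ has not closed by time $L$ and therefore visits the $L$ distinct vertices $v, \sigma(X)(v), \ldots, \sigma(X)^{L-1}(v)$ at level $0$ at the integer times $0, 1, \ldots, L-1$; all of them lie in $\mathcal{Z}_L(v)$.

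Since $\beta < \Theta^{-1}/2$ is equivalent to $2\Theta\beta < 1$, I will fix $\delta \in (2\Theta\beta, 1)$ and apply Lemma \ref{lm:fresh-vertices-are-not-often} with $l = 1$ (so that $T_1 = 0$ and $|\mathcal{Z}_{T_1}| = 1$) to obtain a constant $c = c(\delta) > 0$ such that
\[
\p\bigl(|\mathcal{Z}_k(v)| > \lceil \delta k \rceil + 1\bigr) \le e^{-c k}
\]
for every $v \in V$ and every $k \ge 1$. For $k = C \log n$ with $C$ large enough one has $\lceil \delta k\rceil + 1 < k$, so combining this tail bound with the inclusion above yields $\p(|\mathcal{O}(v)| \ge C \log n) \le n^{-cC}$. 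A union bound over the $n^2$ vertices of $H_n$ then gives
\[
\p\bigl(\exists v \in V : |\mathcal{O}(v)| \ge C \log n \bigr) \le n^{2 - cC},
\]
which vanishes whenever $C > 2/c$. This proves the unconditional statement that all cycles of $\sigma(X)$ have length less than $C \log n$ with probability tending to $1$.

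To extract the conditional form the proposition requires, I will apply a standard averaging argument. With $r_n := n^{-(cC-2)/2}$ and
\[
K_n := \bigl\{k \in \N : \p\bigl(|\vx{k}{C\log n}| = 0 \,\big|\, |X| = k\bigr) \ge 1 - r_n \bigr\},
\]
the Markov inequality applied to the $[0,1]$-valued variable $\p(|\vx{|X|}{C\log n}| \ne 0 \mid |X|)$ gives $\p(|X| \notin K_n) \le r_n^{-1} n^{2 - cC} = n^{-(cC-2)/2} \to 0$, while by construction $\inf_{k \in K_n} \p(|\vx{k}{C\log n}| = 0 \mid |X| = k) \ge 1 - r_n \to 1$, exactly as claimed.

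No real obstacle is expected here: the subcritical criterion $\beta < \Theta^{-1}/2$ is precisely what permits the choice $\delta < 1$ in Lemma \ref{lm:fresh-vertices-are-not-often}, and the $\theta$-dependence enters only through the constant $\Theta$ there. In particular, neither the isoperimetry bootstrap from Section \ref{sec:isoperimetry} nor the Schramm-type split-merge analysis of Section \ref{sec:schramm-argument} is required; the subcritical estimate is strictly easier than its supercritical counterpart.
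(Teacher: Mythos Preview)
Your proof is correct, but it follows a genuinely different route from the paper's.

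The paper argues via the random graph process $G^0$ coupled to the transposition process: since every cycle of $\sigma_k$ is contained in a connected component of $G^0_k$, it suffices to show that all components of the multigraph $\bar X=\{e:(e,t)\in X\}$ have size $O(\log n)$. This is done by invoking the Holley domination argument (as in Lemma~\ref{lemma:pre_giant_component}) to stochastically bound $\bar X$ from above by Bernoulli bond percolation on $H_n$ with edge probability $\beta'/(n-1)$ for some $\beta'<1/2$, and then citing the standard branching-process/random-walk exploration bound for subcritical random graphs. The conditioning on $|X|=k$ is handled exactly as you do.

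Your argument bypasses the graph coupling and the external percolation input entirely. You stay inside the CRW framework and use only Lemma~\ref{lm:fresh-vertices-are-not-often}, whose proof needs nothing beyond the intensity bound $\mu_t\le 2\Theta\beta$ from Lemma~\ref{le:intensity}. The subcritical hypothesis $\beta<\Theta^{-1}/2$ is exactly the condition $2\Theta\beta<1$ that allows the choice $\delta\in(2\Theta\beta,1)$, and then the orbit inclusion $\{|\mathcal O(v)|\ge L\}\subset\{|\mathcal Z_L(v)|\ge L\}$ combined with a union bound finishes. This is more self-contained and arguably more elementary: no Holley theorem, no appeal to random-graph literature. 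The paper's route, on the other hand, reuses the $G^s$ machinery already built for the supercritical analysis and makes the connection to percolation explicit, which is conceptually natural given the discussion of the critical value $\beta_c(\theta)$ in the introduction.
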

\begin{proof}
Let $C>0$ and recall that $\vx{t}{C\log n}\subset \vg{0,t}{C \log n}$. For $X\in \mathfrak{X}$ we consider $\bar{X}:= \{e: (e,t)\in X \}$. Similarly as in Lemma \ref{lemma:giant_component} we can prove that $\bar{X}$ is stochastically bounded from above by the Bernoulli percolation process with the probability of opening an edge being $\frac{\beta'}{n-1}$ for some $\beta'<1/2$. Now the result follows by a rather standard argument using coupling with branching processes or a random walk (see e.g., \cite[Theorem 2.3.1]{MR2656427}).
\end{proof}
\end{subsection}

\end{subsection}
\end{section}
\appendix
\begin{section}{Appendix -- Concentration of point processes}\label{sec:appendix_concentation}
The proofs of our auxiliary lemmas concerning counting processes will be all based on the following well known result (see e.g., \cite{almostsure} or \cite[Chapter II.6]{bremaud}).

\begin{theorem}\label{thm:time-change}
Let $Y$ be a counting process with intensity $\lambda$. Let $\Lambda_t = \int_0^t \lambda_s ds$ be the compensator of $Y$. Then (on an enlarged probability space) there exists a Poisson process $N$ with intensity one such that almost surely for all $t \ge 0$, $X_t = N_{\Lambda_t}$.
\end{theorem}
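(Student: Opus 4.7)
The plan is to prove this classical result via the time-change trick for counting processes, which is essentially Watanabe's characterization of the Poisson process combined with a random time change. First, I would focus on the generic case where the intensity $\lambda_s$ is almost surely strictly positive, so that the compensator $\Lambda_t = \int_0^t \lambda_s\,ds$ is continuous and strictly increasing. In that setting define the inverse time change
\[
\tau(u) := \inf\{t\ge 0 \colon \Lambda_t > u\},
\]
which is an $\mathcal{F}_t$-stopping time for each $u$, and set $\mathcal{G}_u := \mathcal{F}_{\tau(u)}$ together with $N_u := Y_{\tau(u)}$. The immediate task is then to verify that $N$ is a counting process adapted to $(\mathcal{G}_u)$ whose $(\mathcal{G}_u)$-compensator is the deterministic function $u$, i.e., that $N_u - u$ is a $(\mathcal{G}_u)$-martingale. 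Once this is established, Watanabe's theorem (a counting process with deterministic continuous compensator $u$ must be a standard Poisson process) yields the conclusion, and by construction $Y_t = N_{\Lambda_t}$.

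The computation of the compensator of $N$ is the heart of the argument. I would use the optional sampling theorem applied to the $\mathcal{F}_t$-martingale $M_t = Y_t - \Lambda_t$ evaluated at the bounded stopping times $\tau(u)\wedge T$ for $T$ large, obtaining that $M_{\tau(u)} = Y_{\tau(u)} - \Lambda_{\tau(u)} = N_u - u$ is a $(\mathcal{G}_u)$-martingale after verifying the usual uniform integrability condition through a localization by the jump times of $Y$. Since $N$ is integer-valued, nondecreasing, c\`adl\`ag with unit jumps, and has a continuous deterministic compensator, Watanabe's characterization (or its direct proof via computing $\E e^{i\theta(N_v - N_u)}\mid \mathcal{G}_u$ by the usual exponential martingale argument) shows it is Poisson with unit rate.

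The main obstacle — and the reason the statement mentions an enlarged probability space — is the case when $\lambda_s$ can vanish on sets of positive Lebesgue measure. Then $\Lambda$ has flat parts and $\tau$ is only a right-continuous inverse, so $N_u = Y_{\tau(u)}$ will in general skip over the ``flat intervals'' of $\Lambda$ and need not be Poisson on its own; rather, one obtains a counting process whose compensator is $u$ only up to $\Lambda_\infty$. The standard fix is to enlarge the probability space by an independent unit-rate Poisson process $N'$ and define
\[
\widetilde N_u := N_u + (N'_{u} - N'_{u\wedge \Lambda_\infty}),
\]
filling in the missing jumps after $\Lambda_\infty$ (and, after a more careful bookkeeping, inside flat intervals as well). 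One then checks that the compensator of $\widetilde N$ is $u$ on the enlarged filtration and concludes as before, with the relation $Y_t = \widetilde N_{\Lambda_t}$ preserved because on flat parts of $\Lambda$ both sides remain constant.

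Finally, I would note that the only use of this theorem in the paper is to transfer sharp Poisson tail bounds (e.g., those underlying Lemmas \ref{lm:concentration-upper-bound} and \ref{lm:concentration-lower-bound-super}) to counting processes with random intensities via the identity $Y_t = N_{\Lambda_t}$, so in practice only the qualitative existence of the time change on the event $\{\Lambda_\infty = \infty\}$ is invoked; the extension by an independent Poisson process is a harmless technical device for stating the theorem without extra hypotheses.
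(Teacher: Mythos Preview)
The paper does not actually prove Theorem~\ref{thm:time-change}; it is stated there as a well-known result and attributed to the references (a blog post and Br\'emaud, \emph{Point Processes and Queues}, Chapter~II.6). So there is no ``paper's own proof'' to compare against. Your sketch is the standard argument one finds in those references: invert the compensator, apply optional sampling to see that the time-changed process has compensator $u$, invoke Watanabe's characterization, and patch the case $\Lambda_\infty < \infty$ by appending an independent unit-rate Poisson process on the enlarged space. That is correct in outline and is exactly how Br\'emaud proceeds.

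One small clarification on your handling of flat intervals of $\Lambda$: when $\lambda$ vanishes on an interval $[a,b]$ with $a<b<\infty$, the counting process $Y$ almost surely does not jump there (this follows from $\E(Y_b - Y_a) = \E(\Lambda_b - \Lambda_a) = 0$ together with $Y$ being nondecreasing), so the identity $Y_t = N_{\Lambda_t}$ is automatically preserved across such intervals without any extra bookkeeping. The only genuine need for the enlargement is to define $N_u$ for $u > \Lambda_\infty$, exactly as you say. Your final remark about how the theorem is used in the paper (to pull back Poisson tail bounds through $Y_t = N_{\Lambda_t}$) is also accurate.
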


\begin{proof}[Proof of Lemma \ref{lm:concentration-upper-bound}]
Let $A \in \mathcal{F}_\sigma$ be any event of nonzero probability. The process $\widetilde{Y}_t = Y_{\sigma+t} - Y_\sigma$ is a counting process with intensity $\widetilde{\lambda}_t = \lambda_{\sigma+t}$ with respect to the filtration $\widetilde{\mathcal{F}}_t = \mathcal{F}_{\sigma+t}$ and the conditional probability $\widetilde{\p} = \p(\cdot|A)$.

Set $\widetilde{\Lambda}_t = \int_0^t \widetilde{\lambda}_s ds$ and note that $\tau-\sigma$ is a stopping time with respect to the filtration $\widetilde{\mathcal{F}}_{t}$. Let $N$ be the Poisson process of intensity one, given for $\widetilde{Y}$ by Theorem \ref{thm:time-change}. We have
\begin{align*}
  \widetilde{\p}(\{Y_{\tau} - Y_\sigma \ge r\}\cap \{\Lambda_\tau - \Lambda_\sigma\le  \ell\} ) & = \widetilde{\p}(\{\widetilde{Y}_{\tau-
  \sigma} \ge r\}\cap \{\widetilde{\Lambda}_{\tau - \sigma} \le  \ell\})
  \le \widetilde{\p}(N_\ell \ge r).
\end{align*}
If $r \geq \ell$, by using the form of the Laplace transform for the Poisson distribution we get
\begin{displaymath}
  \widetilde{\p}(N_\ell \ge r) \le \inf_{u \ge 0} \exp\Big(\ell (e^u-1) - u r \Big) \le \exp\left(-r \log \left( \frac{r}{e \ell}\right) - \ell \right).
\end{displaymath}
Going back to the original probability measure, we conclude that for any $A \in \mathcal{F}_\sigma$,
\begin{displaymath}
  \p(\{Y_{\tau} - Y_\sigma \ge r\}\cap\{\Lambda_\tau - \Lambda_\sigma \le \ell\}\cap A) \le \p(A)\p(X
  \ge r),
\end{displaymath}
and for $r \geq \ell$
\[
\p(X \ge r) \le \exp\left(-r \log \left( \frac{r}{e \ell}\right) - \ell \right),
\]
which implies the lemma.
\end{proof}

\begin{proof}[Proof of Lemma \ref{lm:concentration-lower-bound-super}]
Again, consider any $A \in \mathcal{F}_\sigma$ with positive probability and the process $\widetilde{Y}_t = Y_{\sigma+t} - Y_\sigma$, which is a counting process with intensity $\widetilde{\lambda}_{t} = \lambda_{\sigma+t}$ with respect to the filtration $\widetilde{\mathcal{F}}_t = \mathcal{F}_{\sigma+t}$. Let $\widetilde{\p} = \p(\cdot|A)$. We have $\widetilde{\Lambda}_t = \int_0^t \widetilde{\lambda}_s ds = \Lambda_{\sigma+t} - \Lambda_\sigma$. In particular if $N$ is the Poisson process given for $\widetilde{Y}$ by Theorem \ref{thm:time-change}, we get
\begin{align*}
  \widetilde{\p}(\left\{ Y_{\tau}-Y_{\sigma}\leq\ell(1-\delta)\right\} \cap\left\{ \Lambda_{\tau} - \Lambda_{\sigma}\geq\ell\right\}) &=
  \widetilde{\p}(\{\widetilde{Y}_{\tau-\sigma} \le \ell(1-\delta)\}\cap \{\widetilde{\Lambda}_{\tau-\sigma} \ge \ell\}) \\
  &\le \widetilde{\p}(N_\ell \le \ell(1-\delta)).
\end{align*}
Using the form of the Laplace transform of $N_\ell$ and Chebyshev's inequality we obtain
\begin{align*}
  \widetilde{\p}(N_{\ell} \le \ell(1-\delta))&\leq\inf_{a\geq0}\exp\left((e^{-a}-1)\ell+a\ell(1-\delta)\right)\\
  &\leq\inf_{a\geq0}\exp\left(\frac{1}{2}a^{2}\ell-a\ell\delta\right)=\exp\left(-\frac{1}{2}\delta^{2}\ell\right),
\end{align*}
  where in the second step we have used the elementary inequality $e^{-a}-1+a\leq\frac{1}{2}a^{2}$
  valid for $a\geq0$.
Thus we get
\begin{displaymath}
  \p(\left\{ Y_{\tau}-Y_{\sigma}\leq\ell(1-\delta)\right\} \cap\left\{ \Lambda_{\tau} - \Lambda_{\sigma}\geq\ell\right\}|A) \le \p(X \le (1-\delta)\ell) \le \exp\left(-\frac{1}{2}\delta^{2}\ell\right),
\end{displaymath}
for arbitrary $A \in \mathcal{F}_\sigma$ of positive probability, which implies the lemma.
\end{proof}

\begin{lemma}\label{le:counting-process-half-line}
Let $Y$ be a counting process with bounded intensity $\lambda$. Consider two bounded stopping times $\sigma,\tau$. Then for any $\beta > 1$, with probability one
\begin{displaymath}
  \p(\{\exists_{u \in [0,\tau-\sigma]} Y_{\sigma+u} - Y_\sigma  < u-1\} \cap \{\forall_{u \in [0,\tau-\sigma]} \lambda_{\sigma+u} \ge \beta\}|\mathcal{F}_\sigma) \le 1-q,
\end{displaymath}
for some $q > 0$ depending only on $\beta$.
\end{lemma}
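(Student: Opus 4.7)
The plan is to apply the time-change theorem (Theorem \ref{thm:time-change}) to reduce matters to a fact about a standard Poisson process with a small negative drift subtracted. Fix an arbitrary $A\in\mathcal F_\sigma$ of positive probability and pass to the conditional measure $\widetilde{\p} := \p(\cdot\mid A)$. Doob's optional sampling, as in the proofs of Lemmas \ref{lm:concentration-upper-bound} and \ref{lm:concentration-lower-bound-super}, shows that the shifted process $\widetilde{Y}_u := Y_{\sigma+u} - Y_\sigma$ is a counting process with intensity $\widetilde{\lambda}_u = \lambda_{\sigma+u}$ and compensator $\widetilde{\Lambda}_u = \int_0^u \widetilde{\lambda}_s\,ds$ with respect to $\widetilde{\mathcal F}_u := \mathcal F_{\sigma+u}$. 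Theorem \ref{thm:time-change} then produces, on a possibly enlarged space, a rate-one Poisson process $N$ with $\widetilde{Y}_u = N_{\widetilde{\Lambda}_u}$ almost surely.

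Denote the event whose probability we wish to bound by $E$. On $\{\forall u\in[0,\tau-\sigma]\colon\lambda_{\sigma+u}\ge\beta\}$ one has $\widetilde\Lambda_u \ge \beta u$ for every $u\in[0,\tau-\sigma]$, so by monotonicity of $N$, $\widetilde Y_u \ge N_{\beta u}$ on this range. Consequently, with the change of variables $t=\beta u$,
\[
  E \subset \{\exists u\in[0,\tau-\sigma] \colon N_{\beta u} < u-1\} \subset \{\exists t\ge 0 \colon N_t - t/\beta < -1\}.
\]
It therefore suffices to show that
\[
  q := \p\left( N_t - t/\beta \ge -1 \text{ for all } t\ge 0 \right) > 0
\]
is a universal constant depending only on $\beta$; the lemma then follows, since $\widetilde{\p}(E)\le 1-q$ for every admissible $A\in\mathcal F_\sigma$.

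To prove $q>0$, I would discretize at times $t_k:=\beta k$ and set $S_k := N_{t_k}-k$, which is a random walk with i.i.d.\ increments distributed as Poisson$(\beta)-1$ and positive drift $\beta-1>0$. A brief case analysis on each interval $[t_{k-1},t_k]$ --- where $N_t - t/\beta$ is piecewise affine, decreasing continuously between jumps and going up by one at jump times, with its infimum on the interval attained either at an endpoint or just before a jump --- yields $\inf_{t\in[t_{k-1},t_k]}(N_t-t/\beta)\ge \min(S_{k-1},S_k)-1$. Hence $\inf_{t\ge 0}(N_t-t/\beta)\ge \min_{k\ge 0}S_k - 1$, so $q \ge \p(\min_{k\ge 0}S_k\ge 0)$. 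This last quantity is strictly positive: by the strong law of large numbers $S_k/k\to\beta-1>0$ almost surely, so $\inf_k S_k$ is a.s.\ finite, and a standard conditioning on a sufficiently large initial upward excursion of the walk (using $\p(X_1\ge M)>0$ for any $M$ together with $\p(\inf_k S_k\ge -M)\to 1$ as $M\to\infty$) gives a strictly positive lower bound on $\p(\inf_k S_k\ge 0)$, which depends only on $\beta$.

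The main obstacle is essentially bookkeeping: verifying that the time change applies correctly to the shifted process and that the domination $\widetilde Y_u\ge N_{\beta u}$ survives the reduction to a standard Poisson process. There is no genuine analytic subtlety; the only conceptual input is the classical positivity of the ``never-below-zero'' probability for a random walk with strictly positive drift.
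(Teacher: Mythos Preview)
Your proof is correct and follows essentially the same route as the paper: condition on an arbitrary $A\in\mathcal F_\sigma$, apply the time-change theorem to the shifted process, use $\widetilde\Lambda_u\ge\beta u$ on the intensity event to dominate by $\{\exists_{u\ge 0}\,N_{\beta u}<u-1\}$, and then argue that this Poisson event has probability strictly below one via the law of large numbers. The paper dispatches the last step in a single sentence (``not difficult to see using the law of large numbers for the Poisson process''), whereas you spell out a concrete discretization-and-random-walk argument; your version is more detailed but not genuinely different.
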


\begin{proof}
Fixing $A \in \mathcal{F}_\sigma$ with $\p(A) > 0$ and using notation from the proof of Lemma \ref{lm:concentration-upper-bound}, we have
\begin{align*}
&\widetilde{\p}(\{\exists_{u \in [0,\tau-\sigma]} Y_{\sigma+u} - Y_\sigma < u-1\} \cap \{\forall_{u \in [0,\tau-\sigma]} \lambda_{\sigma+u} \ge \beta\}) \\
&=  \widetilde{\p}(\{ \exists_{u \in [0,\tau-\sigma]} N_{\widetilde{\Lambda}_u}  < u-1\}\cap \{\forall_{u \in [0,\tau-\sigma]} \lambda_{\sigma+u} \ge \beta\}) \\
&\le \widetilde{\p}( \exists_{u \ge 0} N_{\beta u}  < u - 1).
\end{align*}
The law of large numbers and the Markov property for the Poisson process implies that for $\beta > 1$ the last probability is bounded by $1 - q$ for some $q > 0$ depending only on $\beta$. Since $A$ in the above argument is arbitrary, we obtain the lemma.
\end{proof}

\begin{lemma}\label{le:lower-bound-on-interval}
Let $Y$ be a counting process with bounded intensity $\lambda$. Consider two bounded stopping times $\sigma, \tau$. Then for any $\beta > 1$, $s \ge 0$, with probability one,
\begin{displaymath}
  \p(\{\exists_{u \in [s,\tau-\sigma]} Y_{\sigma + u} - Y_\sigma \le u + a s\}\cap \{\forall_{u \in [0,\tau-\sigma]} \lambda_{\sigma+u} \ge \beta\}|\mathcal{F}_\sigma) \le
  e^{-cs},
\end{displaymath}
where $a = \frac{\beta - 1}{2} > 0$ and $c = \frac{1}{2}\Big( 1 - \frac{1+a}{\beta}\Big)^2\beta$.
\end{lemma}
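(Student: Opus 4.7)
The plan is to follow the same template that was used in Lemma \ref{lm:concentration-upper-bound}, Lemma \ref{lm:concentration-lower-bound-super} and Lemma \ref{le:counting-process-half-line}: condition on an arbitrary set $A\in\mathcal{F}_{\sigma}$ of positive probability, pass to the shifted counting process $\widetilde Y_u=Y_{\sigma+u}-Y_\sigma$ with intensity $\widetilde\lambda_u=\lambda_{\sigma+u}$ and compensator $\widetilde\Lambda_u=\Lambda_{\sigma+u}-\Lambda_\sigma$, and apply Theorem \ref{thm:time-change} to write $\widetilde Y_u=N_{\widetilde\Lambda_u}$ for a rate-one Poisson process $N$ (on an enlarged probability space). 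On the event $\{\forall u\in[0,\tau-\sigma]\;\widetilde\lambda_u\ge\beta\}$ we have $\widetilde\Lambda_u\ge\beta u$, so if there exists $u\in[s,\tau-\sigma]$ with $\widetilde Y_u\le u+\delta(\beta-1)s$, then with $v:=\widetilde\Lambda_u\ge\beta s$ we obtain $N_v\le u+\delta(\beta-1)s\le v/\beta+\delta(\beta-1)s$. Thus it suffices to bound
\[
\widetilde\p\bigl(\exists v\ge\beta s\colon\ N_v\le v/\beta+\delta(\beta-1)s\bigr)\le e^{-cs}
\]
with $c=(1-\delta)^2(\beta-1)^2/(2\beta)$, which one checks coincides with the constant in the statement after rewriting $1-(1+\delta(\beta-1))/\beta=(1-\delta)(\beta-1)/\beta$.

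For this I would use the standard exponential martingale $M_v=\exp(-aN_v+v(1-e^{-a}))$, $a>0$, which is a nonnegative martingale with $\E M_0=1$. Define the stopping time
\[
\tau_*:=\inf\bigl\{v\ge\beta s\colon N_v\le v/\beta+\delta(\beta-1)s\bigr\}.
\]
On $\{\tau_*<\infty\}$ we have
\[
M_{\tau_*}\ge\exp\bigl(-a(\tau_*/\beta+\delta(\beta-1)s)+\tau_*(1-e^{-a})\bigr)=\exp\bigl(\tau_* h(a)-a\delta(\beta-1)s\bigr),
\]
where $h(a):=1-e^{-a}-a/\beta$. Applying optional stopping to the bounded stopping time $\tau_*\wedge T$ and letting $T\to\infty$, provided $h(a)>0$ and using $\tau_*\ge\beta s$, gives
\[
\widetilde\p(\tau_*<\infty)\le\exp\bigl(-s[\beta h(a)-a\delta(\beta-1)]\bigr).
\]

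It remains to optimize $a$. Using the elementary inequality $1-e^{-a}\ge a-a^2/2$, one obtains $\beta h(a)-a\delta(\beta-1)\ge a(1-\delta)(\beta-1)-\beta a^2/2$, whose maximum over $a>0$ is attained at $a^{\ast}=(1-\delta)(\beta-1)/\beta$ and equals exactly $(1-\delta)^2(\beta-1)^2/(2\beta)=c$. For this choice $a^{\ast}>0$ and the quadratic lower bound is positive, so in particular $h(a^{\ast})>0$, validating the use of $\tau_*\ge\beta s$. Since $A\in\mathcal{F}_{\sigma}$ was arbitrary, this yields the claim in the form stated in the lemma. I do not expect any genuine obstacle here; the only mildly technical point is handling $\{\tau_*=\infty\}$ in the optional stopping step, which is dealt with by the standard truncation $\tau_*\wedge T$ together with monotone convergence on the event $\{\tau_*\le T\}$, exactly as in the proof of Lemma \ref{le:counting-process-half-line}.
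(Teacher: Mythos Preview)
Your argument is correct and yields exactly the constant $c=\tfrac12\rho^2\beta$ with $\rho=(1-\delta)(\beta-1)/\beta$, as in the statement. The reduction to a one-sided boundary-crossing problem for a rate-one Poisson process is the same as in the paper, but the way you control the crossing probability is genuinely different.

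The paper observes that $(N_u/u)_{u>0}$ is a reversed martingale (since $\E(N_u/u\mid \mathcal{G}_t)=N_t/t$ for $u<t$), so $(\exp(-aN_u/u))_{u>0}$ is a reversed submartingale; it then applies Doob's maximal inequality for reversed submartingales to bound $\p(\exists_{u\ge\beta s}\,N_u/u\le 1-\rho)$ directly in terms of the moment generating function of $N_{\beta s}$. You instead use the forward exponential martingale $M_v=\exp(-aN_v+v(1-e^{-a}))$ together with optional stopping at the first-passage time $\tau_*$, exploiting $\tau_*\ge\beta s$ and $h(a)>0$ to turn the lower bound on $M_{\tau_*}$ into an upper bound on $\p(\tau_*<\infty)$. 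Both routes lead to the same Chernoff-type optimization and the same quadratic lower bound via $1-e^{-a}\ge a-a^2/2$. Your approach is arguably more elementary in that it avoids the reversed-submartingale machinery, while the paper's approach packages the ``for all $u\ge\beta s$'' neatly into a single maximal inequality without introducing a stopping time. A minor point: your reduction lands on the event $\{N_v\le v/\beta+\delta(\beta-1)s\}$, which is contained in the paper's $\{N_v/v\le 1-\rho\}$ (since $s\le v/\beta$); this slightly tighter intermediate event does not change the final bound.
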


\begin{proof}
  Denote the event in question by $\mathcal{E}$ and fix $A \in \mathcal{F}_\sigma$ with positive probability. Using the notation from the proof of Lemma \ref{lm:concentration-upper-bound} and arguments from the proof of Lemma \ref{le:counting-process-half-line} we get
  \begin{displaymath}
    \widetilde{\p}(\mathcal{E})\le \p(\exists_{u \ge s} N_{\beta u} \le u +  a s) \le \p\Big(\exists_{u \ge \beta s} \frac{N_u}{u} \le 1-\rho\Big),
  \end{displaymath}
where $N$ is a Poisson process with intensity one and $\rho = 1 - \frac{1 + a}{\beta} \in (0,1)$. Let $\mathcal{G}_u$ be the $\sigma$-field generated by $\{N_t\colon t\ge u\}$ and note that for any
  $0 < u < t$ we have $\E (N_u/u|\mathcal{G}_t) = N_t/t$.
Thus for any $a \in \R$ the process $(\exp(a N_u/u))_{u > 0}$ is a reversed submartingale. Using Doob's maximal inequality for reversed submartingales we get
\begin{align*}
&\p\Big(\exists_{u \ge \beta s} \frac{N_u}{u} \le (1-\rho)\beta\Big) = \inf_{b>0} \p\Big( \sup_{u \ge \beta s} \exp(-b N_u/u) \ge \exp(-b (1-\rho))\Big) \\
&\le \inf_{b > 0} \E \exp\left(- b \frac{N_{\beta s}}{\beta s} + b(1-\rho)\right) = \inf_{b > 0} \exp\left(\beta s(e^{-b/(\beta s)}-1) + b(1-\rho)\right)\\
& = \inf_{b > 0} \exp(s\beta (e^{-b} - 1) + b\beta s(1-\rho)) \le \exp\Big(-\frac{1}{2}\rho^2  \beta s\Big).
\end{align*}
As in previous lemmas, since $A$ is arbitrary, this implies the assertion.
\end{proof}

\end{section}

\begin{section}{Appendix -- Estimates on the number of bridges}\label{ap:poisson}

Here we collect useful estimates enabling us to compare the distribution of bridges for general $\theta > 0$ with the i.i.d. case, i.e., $\theta = 1$.

In the first lemma we show that for $\theta$ not necessarily equal to $1$ the number of bridges using any subset of edges can still be approximated by a Poisson distribution (with parameter depending on $\theta$).

For any measurable $A\subset E\times[0,1)$ and a configuration $X\in \mathfrak{X}$ we denote $X_A := \{x \in X \colon x\in A\}$.

\begin{lemma}\label{lm:poisson-approximation}
	Let $\underline{\lambda}_n := \frac{\beta \Theta^{-1}}{n-1}$, $\bar{\lambda}_n := \frac{\beta \Theta}{n-1}$. Then for any measurable $A \subset E\times[0,1)$ we have
  \begin{equation}\label{eq:stochastic_domination_bounds}
	\p(|X_A|\geq 1 | X_{E\times[0,1)\setminus A}) \in \left[ 1- e^{-|A|\underline{\lambda}_n}, 1- e^{-|A|\bar{\lambda}_n} \right],
  \end{equation}
  and
  \begin{equation}\label{eq:stochastic_domination_bounds2}
	\p(|X_A|\geq k | X_{E\times[0,1)\setminus A}) \leq e^{|A|(\bar{\lambda}_n - \underline{\lambda}_n)} \pr{ Y \geq k},
  \end{equation}
where $Y$ has Poisson distribution with parameter $\bar{\lambda}_n |A|$.

\end{lemma}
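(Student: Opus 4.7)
My plan is to reduce the problem to a change-of-measure computation on the restriction of the Poisson point process to $A$, exploiting the Lipschitz property \eqref{eq:Lipschitz} of $\mathcal{C}$. Under $\mathcal{B}$, the restrictions $X_A$ and $X_{A^c}:=X\setminus X_A$ are independent, and $X_A$ is itself Poisson with intensity $\frac{\beta}{n-1}$ restricted to $A$. Write $\mathcal{B}_A$ for its law, $N_A:=|X_A|$, and $\lambda:=\frac{\beta}{n-1}|A|$, so $N_A$ is Poisson$(\lambda)$ under $\mathcal{B}_A$. Then, directly from definition \eqref{eq:measure}, the conditional law of $X_A$ given $X_{A^c}$ under $\mu_{\beta,\theta,\mathcal{C}}$ has Radon--Nikodym density proportional to
\begin{equation*}
f(X_A)\;:=\;\theta^{\mathcal{C}(X_A\cup X_{A^c})-\mathcal{C}(X_{A^c})}
\end{equation*}
with respect to $\mathcal{B}_A$.

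The crucial point is that by \eqref{eq:Lipschitz} we have $|\mathcal{C}(X_A\cup X_{A^c})-\mathcal{C}(X_{A^c})|\leq N_A$, hence $\Theta^{-N_A}\leq f(X_A)\leq \Theta^{N_A}$. Computing the Laplace transform of the Poisson law gives $\E_{\mathcal{B}_A}[\Theta^{\pm N_A}]=e^{\lambda(\Theta^{\pm 1}-1)}$, and therefore
\begin{equation*}
e^{\lambda(\Theta^{-1}-1)}\;\leq\;\E_{\mathcal{B}_A}[f(X_A)]\;\leq\;e^{\lambda(\Theta-1)}.
\end{equation*}
For the first bound, I observe that $f(\emptyset)=1$, so
\begin{equation*}
\p(|X_A|=0\,|\,X_{A^c})\;=\;\frac{e^{-\lambda}}{\E_{\mathcal{B}_A}[f(X_A)]}\;\in\;[e^{-\lambda\Theta},\,e^{-\lambda\Theta^{-1}}],
\end{equation*}
which, after noting that $\lambda\Theta=|A|\bar{\lambda}_n$ and $\lambda\Theta^{-1}=|A|\underline{\lambda}_n$, rearranges to the claimed two-sided bound on $\p(|X_A|\geq 1\,|\,X_{A^c})$.

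For the second bound I estimate the numerator of $\p(|X_A|\geq k\,|\,X_{A^c})$ by the upper density $\Theta^{N_A}$, namely
\begin{equation*}
\E_{\mathcal{B}_A}[\ind{N_A\geq k}f(X_A)]\;\leq\;\E_{\mathcal{B}_A}[\ind{N_A\geq k}\Theta^{N_A}]\;=\;e^{\lambda(\Theta-1)}\,\p(Y\geq k),
\end{equation*}
where $Y$ is Poisson with parameter $\Theta\lambda=|A|\bar{\lambda}_n$ (this identification is simply an exponential tilt: the tilted probabilities $\Theta^{k}e^{-\lambda}\lambda^{k}/k!$ normalize to the Poisson$(\Theta\lambda)$ law). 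Dividing by the lower bound $e^{\lambda(\Theta^{-1}-1)}$ on $\E_{\mathcal{B}_A}[f(X_A)]$ yields the factor $e^{\lambda(\Theta-\Theta^{-1})}=e^{|A|(\bar{\lambda}_n-\underline{\lambda}_n)}$, as desired.

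There is no real obstacle here: once one recognises that conditioning on $X_{A^c}$ leaves a tilted Poisson process on $A$ whose tilt $f$ is controlled pointwise by $\Theta^{\pm N_A}$, both inequalities are immediate consequences of the Poisson Laplace transform. The only minor subtlety is to sanity-check the use of Fubini in passing from $\mu_{\beta,\theta,\mathcal{C}}$ to regular conditional probabilities on the Poisson space; this is standard since $f$ is bounded on sets of bounded cardinality.
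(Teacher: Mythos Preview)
Your proof is correct and follows essentially the same approach as the paper: both exploit the independence of $X_A$ and $X_{A^c}$ under $\mathcal{B}$, bound the tilt $\theta^{\mathcal{C}(X_A\cup X_{A^c})-\mathcal{C}(X_{A^c})}$ by $\Theta^{\pm|X_A|}$ via the Lipschitz property, and reduce everything to Poisson Laplace transforms. Your Radon--Nikodym formulation is slightly more streamlined than the paper's direct computation of the ratio $\p(U_A^{\ge k}|V)/\p(U_A^{=0}|V)$, but the ideas are identical.
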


\begin{proof}

For $X \in \mathfrak{X}$ and $A \subset E\times[0,1)$ let $U^{ = 0}_{A} = \{X\in \mathfrak{X} \colon |X_A| = 0 \}$ and $U^{\geq k}_A = \{X\in \mathfrak{X} \colon |X_A| \geq k \}$. Furthermore, let $V$ be any event measurable with respect to $X_{E\times[0,1) \setminus A}$ such that $\mathcal{B}(V)>0$. We have
\[
	\frac{\p(U^{\geq k}_A|V)}{\p(U^{=0}_A|V)} = \frac{\mu_{\beta, \theta, \mathcal{C}}(U^{\geq k}_{A}\cap V)}{\mu_{\beta, \theta, \mathcal{C}}(U^{=0}_A \cap V )} = \frac{\int_{\mathfrak{X}}\Ind{U_{A}^{\geq k}}(X_A) \Ind{V} \theta^{\mathcal{C}(X_A \cup X_{E\times[0,1) \setminus A})} \mathcal{B}(dX)}{\int_{\mathfrak{X}}\Ind{U_{A}^{=0}}(X_A) \Ind{V} \theta^{\mathcal{C}(X_{E\times[0,1) \setminus A})} \mathcal{B}(dX)}.
\]
By the Lipschitz property of $\mathcal{C}$ we have $|\mathcal{C}(X_A \cup X_{E\times[0,1) \setminus A}) - \mathcal{C}(X_{E\times[0,1) \setminus A})|\leq |X_A|$. Furthermore, using independence of $X_A$ and $X_{E\times[0,1) \setminus A}$ under $\mathcal{B}$ we get
\begin{equation}\label{eq:tmp_poisson}
	\frac{\p(U^{\geq k}_A|V)}{\p(U^{=0}_A|V)}\leq \frac{\int_{\mathfrak{X}}\Ind{U_{A}^{\geq k}}(X_A)\Theta^{|X_A|} \mathcal{B}(dX)}{\int_{\mathfrak{X}}\Ind{U_{A}^{=0}}(X_A) \mathcal{B}(dX)} = \sum_{\ell = k}^{\infty} \frac{(\bar{\lambda}_n|A|)^{ \ell}}{\ell!},
\end{equation}
where in the last equality we used the fact that under $\mathcal{B}$ the random variable $|X_A|$ is Poisson with parameter  $\beta|A|/(n-1)$. As $\p(U^{\geq 1}_A|V) + \p(U^{=0}_A|V)=1$, by elementary calculations we get
\[
	\p(U^{\geq 1}_A|V)\leq 1- e^{-|A| \bar{\lambda}_n}.
\]
Thus we obtain the upper bound in \eqref{eq:stochastic_domination_bounds}. The lower bound can be proven analogously. Now from \eqref{eq:stochastic_domination_bounds} and \eqref{eq:tmp_poisson} we infer \eqref{eq:stochastic_domination_bounds2}.
\end{proof}

The second lemma gives tail bounds on the number of bridges in terms of tails of Poisson variables with parameters depending on $\theta$ and $\beta$.

\begin{lemma}\label{le:auxilliary-poisson}
Let $X \in \mathfrak{X}$ be distributed according to $\mu_{\beta, \theta, \mathcal{C}}$. For any $k \in \N$, $t \in [0,1)$ and $s \in (0,1]$ such that $t + s \leq 1$ we have
\begin{align*}
	& \pr{|X \cap (E \times [t, t + s])| \geq k} \leq e^{s\beta \left( \Theta - \frac{1}{\Theta} \right) n^2} \pr{Y \geq k}, \\
	& \pr{|X \cap (E \times [t, t + s])| \leq k} \leq e^{s\beta \left( \Theta - \frac{1}{\Theta} \right) n^2} \pr{Y \leq k},
\end{align*}
where $Y$ is a Poisson variable with parameter $s \Theta \beta n^2$.

In particular for $k_n = o(n^2)$ we have for sufficiently large $n$
\[
\pr{|X \cap (E \times [t, t + s])| \leq k_n} \leq Ce^{- c n^2}
\]
for some $C,c > 0$ depending on $\beta$, $\theta$ and $s$.
\end{lemma}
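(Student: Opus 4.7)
The plan is to reduce to the underlying Poisson point process $\mathcal{B}$ by an explicit tilting, leveraging the Lipschitz property of $\mathcal{C}$. Set $A = E\times[t,t+s]$ and decompose $X = X_A \sqcup X_{A^c}$. Under $\mathcal{B}$ these two pieces are independent, and $|X_A|$ is Poisson with parameter $\lambda := \frac{\beta}{n-1}|A| = s\beta n^2$ (using $|E| = n^2(n-1)$). The Lipschitz condition \eqref{eq:Lipschitz} applied to configurations differing by $X_A$ gives
\[
\Theta^{-|X_A|}\theta^{\mathcal{C}(X_{A^c})} \le \theta^{\mathcal{C}(X_A\cup X_{A^c})} \le \Theta^{|X_A|}\theta^{\mathcal{C}(X_{A^c})},
\]
which will be the only input beyond elementary Poisson calculus.

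Using this bound from above in the numerator defining $\mu_{\beta,\theta,\mathcal{C}}(\{|X_A|\ge k\})$ and from below in the partition function $Z_{\beta,\theta,\mathcal{C}}$, Fubini together with the independence of $X_A$ and $X_{A^c}$ under $\mathcal{B}$ will cancel the $\theta^{\mathcal{C}(X_{A^c})}$ factors, leaving
\[
\pr{|X_A| \ge k} \le \frac{\E[\ind{N\ge k}\Theta^N]}{\E[\Theta^{-N}]}, \qquad N \sim \Poiss(\lambda).
\]
A direct computation gives $\E[\Theta^{-N}] = e^{\lambda(\Theta^{-1}-1)}$, while the standard exponential tilt of the Poisson distribution yields $\E[\ind{N\ge k}\Theta^N] = e^{\lambda(\Theta-1)}\pr{Y\ge k}$ with $Y\sim\Poiss(\lambda\Theta) = \Poiss(s\Theta\beta n^2)$. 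Taking the ratio produces exactly the prefactor $e^{\lambda(\Theta-\Theta^{-1})} = e^{s\beta(\Theta-\Theta^{-1})n^2}$ appearing in the lemma. The second inequality follows verbatim by replacing $\ind{N\ge k}$ with $\ind{N\le k}$ throughout.

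For the concentration consequence, when $k_n = o(n^2)$ the standard Chernoff bound for the lower Poisson tail gives $\pr{Y \le k_n} \le \exp(-(1-o(1))s\Theta\beta n^2)$. Multiplying by the prefactor $e^{s\beta(\Theta-\Theta^{-1})n^2}$ will still leave exponential decay in $n^2$, since $s\Theta\beta - s\beta(\Theta - \Theta^{-1}) = s\beta/\Theta > 0$. There is no real obstacle in this lemma; the whole argument is a one-step measure tilt combined with an elementary Chernoff estimate, and the Lipschitz property \eqref{eq:Lipschitz} is precisely what makes the tilt incur only a multiplicative cost of $\Theta^{\pm|X_A|}$ per bridge.
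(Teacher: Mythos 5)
Your proof is correct and follows essentially the same route as the paper: decompose $X = X_A \sqcup X_{A^c}$, apply the Lipschitz bound to get $\Theta^{\pm|X_A|}$ control, bound the numerator from above and the partition function from below, factorize by independence under $\mathcal{B}$, and reduce to two explicit Poisson expectations whose ratio produces the stated prefactor. The only cosmetic difference is in the final corollary, where the paper invokes Bennett's inequality by name while you use the generic Chernoff lower-tail bound and then check that the net exponent $s\beta\Theta - s\beta(\Theta-\Theta^{-1}) = s\beta\Theta^{-1}$ remains positive; both give the same $e^{-cn^2}$ decay.
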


\begin{proof}
Let $X_{t,t+s} = X \cap (E \times [t, t + s])$. Observe that by the Lipschitz condition \eqref{eq:Lipschitz} we have
\begin{equation}\label{eq:Lipschitz-slice}
\theta^{\mathcal{C}(X \setminus X_{t,t+s})} \Theta^{-|X_{t,t+s}|} \leq \theta^{\mathcal{C}(X)} \leq \theta^{\mathcal{C}(X \setminus X_{t,t+s})} \Theta^{|X_{t,t+s}|}.
\end{equation}
To prove the first estimate we write
\[
\Pp\left( |X_{t,t+s}| \geq k \right) =
\frac{\int\limits_{\mathfrak{X}} \ind{|X_{t,t+s}| \geq k} \theta^{\mathcal{C}(X)} \mathcal{B}(dX)}{\int\limits_{\mathfrak{X}} \theta^{\mathcal{C}(X)} \mathcal{B}(dX)}.
\]
By employing \eqref{eq:Lipschitz-slice} we can bound the right hand side from above by
\begin{align*}
&\frac{\int\limits_{\mathfrak{X}} \ind{|X_{t,t+s}| \geq k} \theta^{\mathcal{C}(X \setminus X_{t,t+s})} \Theta^{|X_{t,t+s}|} \mathcal{B}(dX)}{\int\limits_{\mathfrak{X}} \theta^{\mathcal{C}(X \setminus X_{t,t+s})} \Theta^{-|X_{t,t+s}|} \mathcal{B}(dX)} = \frac{\int\limits_{\mathfrak{X}} \ind{|X_{t,t+s}| \geq k}  \Theta^{|X_{t,t+s}|} \mathcal{B}(dX)}{\int\limits_{\mathfrak{X}} \Theta^{-|X_{t,t+s}|} \mathcal{B}(dX)},
\end{align*}
where in the equality we used the fact that the integrals factorize due to the independence property of the Poisson point process $\mathcal{B}$ for disjoint time intervals.

To estimate the integrals, we observe that under $\mathcal{B}$ the variable $|X_{t,t+s}|$ has Poisson distribution with parameter $\lambda = s \beta n^2 $. Thus we can write
\[
\int\limits_{\mathfrak{X}} \Theta^{-|X_{t,t+s}|} \mathcal{B}(dX) = \E \Theta^{- |X_{t,t+s}|} = e^{\lambda \left( \frac{1}{\Theta} - 1 \right)}.
\]
and
\begin{equation}\label{eq:poisson-upper-bound}
\int\limits_{\mathfrak{X}} \ind{|X_{t,t+s}| \geq k}  \Theta^{|X_{t,t+s}|} \mathcal{B}(dX) = \sum\limits_{i = k}^{\infty} e^{-\lambda} \frac{(\Theta\lambda)^{i}}{i!} = e^{\lambda (\Theta - 1)} \pr{Y \geq k}.
\end{equation}
where $Y$ is a Poisson variable with parameter $\Theta \lambda = s \Theta \beta n^2$. Thus we obtain
\[
\Pp\left( |X_{t,t+s}| \geq k \right) \leq e^{\lambda \left( \frac{1}{\Theta} - 1 \right)} e^{\lambda (\Theta - 1)} \pr{Y \geq k} = e^{\beta \left( \Theta - \frac{1}{\Theta} \right) s n^2} \pr{Y \geq k}.
\]

The proof of the second estimate is analogous.

For the case of $k_n = o(n^2)$ we use Bennett's inequality -- if $Y$ is a Poisson variable with parameter $\lambda$, then for any $0 \leq x \leq \lambda$ we have
\[
\pr{X \leq \lambda - x} \leq \exp\left\{ - \frac{x^2}{2 \lambda} \psi\left( - \frac{x}{\lambda} \right)\right\},
\]
where
\[
\psi(t) = \frac{(1+t)\log(1+t) - t}{t^2 / 2}
\]
for $t \geq -1$.

Writing $k_{n} = \varepsilon_n \lambda$, with $\lambda = s \beta \Theta n^2$ and $\varepsilon_{n} \to 0$ as $n \to \infty$, we have
\[
\pr{Y \leq \varepsilon_{n} \lambda} \leq e^{ - \lambda \left( \varepsilon_{n} \log \varepsilon_{n} + 1 - \varepsilon_{n} \right) }
\]
and thus
\[
\pr{|X \cap (E \times [t, t + s])| \leq k_n} \leq e^{\lambda - \frac{\lambda}{\Theta^2}} e^{ - \lambda \left( \varepsilon_{n} \log \varepsilon_{n} + 1 - \varepsilon_{n} \right) } = e^{- \lambda \left( \frac{1}{\Theta^2} - \varepsilon_n + \varepsilon_{n} \log \varepsilon_{n}  \right)}.
\]
Since $\varepsilon_n - \varepsilon_{n} \log \varepsilon_{n} \to 0$ as $n \to \infty$, the right hand side is at most $C e^{- c n^2}$ for some $C, c > 0$.
\end{proof}

The following lemma will be useful in the proof of Lemma \ref{le:intensity}.

\begin{lemma}\label{le:auxiliary-integrability}
As in the previous lemma, let $X_{t,t+h} = X \cap (E \times [t,t+h])$. For any $k \in \N$ and $t \in [0,1)$ we have
\begin{displaymath}
\int_\mathfrak{X} \ind{|X_{t,t+h}| \ge k} |X_{t,t+h}| \Theta^{\left|X_{t,t+h}\right|} \mathcal{B}(dX) = O(h^k),
\end{displaymath}
where the implicit constant may depend on $\Theta$, $\beta, n$ and $k$.
\end{lemma}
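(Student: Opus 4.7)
The plan is to rewrite the integral as an expectation under $\mathcal{B}$ and then use the fact that $|X_{t,t+h}|$ is a Poisson random variable whose parameter is linear in $h$, so that truncating the sum at level $k$ produces the claimed order.

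More concretely, recall that under $\mathcal{B}$ the intensity measure on $E \times [0,1)$ is $\tfrac{\beta}{n-1}\#\otimes\mathrm{Leb}$, and $|E| = n^2(n-1)$. Hence $|X_{t,t+h}|$ is a Poisson random variable with parameter $\lambda := \beta n^2 h$. The integral in the statement is therefore equal to
\begin{displaymath}
\E\!\left[\ind{|X_{t,t+h}|\ge k}\,|X_{t,t+h}|\,\Theta^{|X_{t,t+h}|}\right]
= e^{-\lambda}\sum_{j=k}^{\infty} j\,\Theta^j\,\frac{\lambda^j}{j!}.
\end{displaymath}

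The next step is to pull out one factor of $\lambda\Theta$ from each term (using $j/j! = 1/(j-1)!$) and reindex $m = j-1$, which gives
\begin{displaymath}
e^{-\lambda}\sum_{j=k}^{\infty} j\,\Theta^j\,\frac{\lambda^j}{j!}
= \Theta\lambda\, e^{-\lambda}\sum_{m=k-1}^{\infty}\frac{(\Theta\lambda)^m}{m!}.
\end{displaymath}
The tail of the exponential series starting at $m=k-1$ is bounded by $\tfrac{(\Theta\lambda)^{k-1}}{(k-1)!}\,e^{\Theta\lambda}$, so the whole expression is at most
\begin{displaymath}
\frac{\Theta^{k}}{(k-1)!}\,\lambda^{k}\,e^{(\Theta-1)\lambda}.
\end{displaymath}
Substituting $\lambda = \beta n^2 h$ gives the bound $C(\Theta,\beta,n,k)\,h^{k}\,e^{(\Theta-1)\beta n^2 h}$, and since $h\in[0,1]$ the exponential is bounded by a constant depending only on $\Theta,\beta,n$, yielding the claimed $O(h^k)$.

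There is no real obstacle here: the argument is a direct computation once one identifies the Poisson distribution of $|X_{t,t+h}|$ under $\mathcal{B}$. The only thing to be slightly careful about is keeping track of the indicator $\ind{|X_{t,t+h}|\ge k}$, which is exactly what forces the sum to start at $j=k$ and produces the $k$-th power of $\lambda$ (and hence of $h$) after the single reindexing.
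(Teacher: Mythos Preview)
Your proof is correct and follows essentially the same approach as the paper: both identify $|X_{t,t+h}|$ as Poisson with parameter $\beta n^2 h$ under $\mathcal{B}$, write the integral as the truncated sum $e^{-\lambda}\sum_{i\ge k} i\,\Theta^i \lambda^i/i!$, and extract $h^k$. The paper simply bounds $h^i \le h^k$ for $i\ge k$ and $h\le 1$, whereas you reindex and bound the exponential tail; these are cosmetically different but equivalent one-line estimates.
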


\begin{proof}
As under $\mathcal{B}$ the variable $|X_{t,t+h}|$ has Poisson distribution with parameter $h \beta n^2$, we have for small enough $h > 0$
\[
\int_\mathfrak{X} \ind{|X_{t,t+h}| \ge k} |X_{t,t+h}| \Theta^{\left|X_{t,t+h}\right|} \mathcal{B}(dX) = \sum\limits_{i = k}^{\infty} e^{- h \beta n^2} i \frac{(\Theta h \beta n^2)^{i}}{i!} \leq h^k e^{- h \beta n^2} \sum\limits_{i=k}^{\infty} \frac{(\Theta \beta n^2)^{i}}{(i-1)!} = O(h^k)
\]
\end{proof}

\end{section}

\begin{section}{Appendix -- Proof of Lemma \ref{le:intensity}}\label{ap:intensity}

\begin{proof}[Proof of Lemma \ref{le:intensity}]
Recall from the introduction the definition of the canonical probability space $(\mathfrak{X},\mathcal{S},\mathcal{B})$. Fix $A \in \Ff_s$ and define for $t > s$
\begin{displaymath}
f(t) = \E J_t \Ind{A}.
\end{displaymath}
Note that by Lebesgue's dominated convergence theorem and the fact that with probability one there are no jumps at a prescribed deterministic moment in time,  $f$ is a continuous function.

In what follows we will denote $t' = t \mod 1$.
Consider any $t_1,t_2$ such that $t_1 < t_2 < \lceil t_1 \rceil$ . Note that $f(t_2) - f(t_1)$ is bounded from above by the mean number of bridges in $E\times (t_1',t_2']$. Using \eqref{eq:Lipschitz} and independence properties of the Poisson process we thus get

\begin{align*}
  &|f(t_2) - f(t_1)| \le \frac{1}{Z_{\beta,\theta,\mathcal{C}}}\int_\mathfrak{X} |X \cap (E\times(t_1',t_2'])| \theta^{\mathcal{C}(X)} \mathcal{B}(dX)\\
& \le \frac{1}{Z_{\beta,\theta,\mathcal{C}}}\int_\mathfrak{X} |X \cap (E\times(t_1',t_2'])|\Theta^{|X \cap (E\times(t_1',t_2'])|}\Theta^{\mathcal{C}(\emptyset) + |X\cap (E\times(t_1',t_2']^c)|}\mathcal{B}(dX)\\
& = \frac{1}{Z_{\beta,\theta,\mathcal{C}}}\int_\mathfrak{X} \Theta^{\mathcal{C}(\emptyset) + |X\cap (E\times(t_1',t_2']^c)|}\mathcal{B}(dX) \int_\mathfrak{X} |X \cap (E\times(t_1',t_2'])\Theta^{|X \cap (E\times(t_1',t_2'])|} \mathcal{B}(dX)\\
&\le K \int_\mathfrak{X} |X \cap (E\times(t_1',t_2'])|\Theta^{|X \cap (E\times(t_1',t_2'])|} \mathcal{B}(dX)\\
&= K \Theta \beta n^2|t_2-t_1|e^{(\Theta-1)\beta n^2 |t_2-t_1|},
\end{align*}
where $K$ is some constant (depending on $n$ and the parameters of the process but not on $t_i$).
Thus $f$ is locally Lipschitz which implies that $f'$ exists almost everywhere and $f$ satisfies the fundamental theorem of calculus.

Consider any differentiability point $t > s$ of $f$  and small $h > 0$, in particular small enough so that $t'+h<1$. As with probability one there is no jump at time $t$, by using Lemma \ref{le:auxiliary-integrability}  and exploiting the properties of the process $Q$ (specifically the fact that it may jump only when $\crw$ jumps and that it is c\`adl\`ag) we can write for $h \searrow 0$
 \begin{align*}
f(t+h) - f(t)&=\E (J_{t+h} - J_t)\Ind{A} = \E\Ind{A}\ind{J_{t+h} - J_t = 1}\ind{|X\cap (E\times (t',t'+h])| =1} + o(h)\\
&= \sum_{v,w\in V} \p(A \cap \{\X_{t} = v\}  \cap B_{v,w,h} \cap \{w \in Q_t\}) + o(h),
\end{align*}
where
\begin{align*}
B_{v,w,h} = &\{ \textrm{there is a unique bridge in $E\times (t',t'+h]$, it is unexplored at time $t$}\\
& \textrm{ and joins $v$ with $w$}\}.
\end{align*}
Consider an additional event
\begin{displaymath}
C_{v,w,h} =  \{\{v,w\} \times [t',t'+h]\; \textrm{has not been visited by $\crw$ before time t}\}.
\end{displaymath}

Note that $B_{v,w,h} \cap \{\X_t = v\} \cap C_{v,w,h}^c = \emptyset$. Moreover $B_{v,w,h} \cap \{\X_t = v\} \subset \{w \in A_t\}$ (recall that $A_t$ is the set of vertices which at time $t$ are available to the CRW by a fresh jump). Thus, we have
\begin{align*}
\E (J_{t+h} &- J_t)\Ind{A} = \sum_{v,w \in V} \E \ind{\X_t = v,  w \in A_t\cap Q_t}  \Ind{A\cap B_{v,w,h}\cap C_{v,w,h}} +o(h)\\
                          &= Z_{\beta,\theta,\mathcal{C}}^{-1}\sum_{v,w \in V} \int_\mathfrak{X} \ind{\X_t = v,  w \in A_t\cap Q_t}  \Ind{A\cap B_{v,w,h}\cap C_{v,w,h}} \theta^{\mathcal{C}(X)}\mathcal{B}(dX) +o(h).
\end{align*}
Denote a summand above by $I_{v,w,h}$. Since for a while we will be working with fixed $v,w$ denote for simplicity $e = \{v,w\}$.

Note that for any $U \in \mathcal{F}_{t}$, the event $C_{v,w,h}\cap U$ is measurable with respect to the restricted process $X'_h := X\setminus (\{e\}\times (t',t'+h])$.

Denote also
\begin{displaymath}
D_{v,w,h} = \{ X \cap ((E\setminus \{e\})\times (t',t'+h]) = \emptyset\}.
\end{displaymath}

Recall that conditionally on having just one point of a Poisson process in an interval, its position is distributed uniformly. Combining this with the independence properties of Poisson processes we get
\begin{align*}
	& I_{v,w,h} \\
	& = \int_\mathfrak{X} \ind{\X_t = v,  w \in A_t\cap Q_t}  \Ind{A\cap D_{v,w,h}\cap C_{v,w,h}} \kappa e^{-\kappa h}  \int_{t'}^{t'+h} \theta^{\mathcal{C}(X'_h\cup \{(e,u)\})}du \, \mathcal{B}(dX),
\end{align*}
where $\kappa = \frac{\beta}{n-1}$.
Note that on $D_{v,w,h}\cap\{X\cap (E\times \{t'\}) = \emptyset\}$, the function $[t',t'+h] \ni u \mapsto \mathcal{C}((X'_h\cup \{(e,u)\})$ is constant, so using the fact that almost surely there are no bridges at height $t'$ we can further write
\begin{displaymath}
I_{v,w,h} = \int_\mathfrak{X} \ind{\X_t = v,  w \in A_t\cap Q_t}  \Ind{A\cap D_{v,w,h}\cap C_{v,w,h}} \kappa e^{-\kappa h}  h \theta^{\mathcal{C}(X'_h\cup \{(e,t')\})}\mathcal{B}(dX).
\end{displaymath}
By Lemma \ref{le:auxiliary-integrability} we also have
\begin{align*}
  \int_\mathfrak{X} \Ind{D_{v,w,h}^c} \theta^{\mathcal{C}(X'_h\cup \{(e,t')\})}\mathcal{B}(dX)
  \le \int_\mathfrak{X} \Ind{D_{v,w,h}^c} \Theta^{\mathcal{C}(\emptyset) + |X| + 1}\mathcal{B}(dX) = O(h),
\end{align*}
so we get
\begin{displaymath}
  I_{v,w,h} = \int_\mathfrak{X} \ind{\X_t = v,  w \in A_t\cap Q_t}  \Ind{A\cap C_{v,w,h}} \kappa e^{-\kappa h}  h \theta^{\mathcal{C}(X'_h\cup \{(e,t')\})}\mathcal{B}(dX) + o(h).
\end{displaymath}

Similarly, up to an error of order $o(h)$ we can restrict the integration to the set $\{X'_h = X\}$, replace $\theta^{\mathcal{C}(X'_h\cup \{(e,t')\})}$ by $\theta^{\mathcal{C}(X\cup\{(e,t')\})}$ and then again return to integration over the whole space $\mathfrak{X}$, obtaining
\begin{displaymath}
  I_{v,w,h} = \int_\mathfrak{X} \ind{\X_t = v,  w \in A_t\cap Q_t}  \Ind{A\cap C_{v,w,h}} \kappa e^{-\kappa h}  h \theta^{\mathcal{C}(X\cup\{(e,t')\})}\mathcal{B}(dX) + o(h).
\end{displaymath}

Note also that on the event $\{w \in A_t\}\cap\{\X_t = v\}$ neither $(v,t')$ nor $(w,t')$ could have been visited by $\crw$ before time $t$. Since with probability one there are only finitely many bridges, this implies that
up to a set of probability zero $\{w \in A_t\}\cap\{\X_t = v\} \cap C_{v,w,h} \nearrow \{w \in A_t\}\cap\{\X_t = v\}$ as $h \searrow 0$. Thus we get
\begin{displaymath}
  \lim_{h \to 0+} \frac{I_{v,w,h}}{h} =\kappa \int_\mathfrak{X} \ind{\X_t = v,  w \in A_t\cap Q_t}  \Ind{A} \theta^{\mathcal{C}(X\cup\{(e,t')\})} \mathcal{B}(dX),
\end{displaymath}
which implies that
\begin{displaymath}
  f'(t) = \E \Ind{A}\kappa \sum_{w \in A_t\cap Q_t} Y_t^w = \E \Ind{A} \kappa\sum_{w \in A_t\cap Q_t} \E(Y_t^w|\mathcal{F}_t),
\end{displaymath}
where $Y_t^w = \ind{\{\X_t, w\}\in E}\theta^{\mathcal{C}(X\cup \{(\{\X_t,w\},t')\}) - \mathcal{C}(X)}$.

Now for fixed $w$ we have that $Y_t^w \colon [0,\infty) \times \mathfrak{X} \to \R$ is measurable with respect to $Bor([0,\infty)) \otimes \mathcal{S}$, so by Corollary 2 in \cite{MR3037218} we obtain that there is a choice of
$\E(Y_t^w|\mathcal{F}_t)$ which as a stochastic process is $\mathcal{F}_t$-progressively measurable.
Set $S_t = \kappa\sum_{w \in A_t\cap Q_t} \E(Y_t^w|\mathcal{F}_t)$ and define the progressively measurable process $\lambda_t = (\kappa |A_t\cap Q_t|\Theta^{-1}) \vee ( S_t \wedge(\kappa |A_t\cap Q_t|\Theta))$. Note that by the Lipschitz condition \eqref{eq:Lipschitz} on $\mathcal{C}$, for every $t$ we have $S_t = \lambda_t$ almost surely.

Thus, by Fubini's theorem,  we have for $t > s$,
\begin{displaymath}
  \E(J_t - J_s)\Ind{A} = \int_s^t f'(u)du = \int_s^t \E \Ind{A} S_u du = \E \Ind{A} \int_0^t \lambda_u du   - \E \Ind{A} \int_0^s \lambda_u du,
\end{displaymath}
which proves that $J_t - \int_0^t \lambda_u du$ is indeed a martingale with respect to $(\mathcal{F}_t)_{t\ge 0}$.
\end{proof}

\end{section}

\begin{section}{Appendix -- Proof of Lemma \ref{lemma:sprinkle}}\label{ap:missing-proofs}

\begin{proof}[Proof of Lemma \ref{lemma:sprinkle}]

By Lemma \ref{lem:almost_uniform} $p_{u,e}$, the conditional probability of an edge $e$ being added to the graph process in the transition from  $u$ to $u+1$ belongs to $[\Theta^{-2}/|E|,\Theta^{2}/|E|]$. Let $\{U_{u,e}\}_{u\in \mathbb{N}, e\in E}$ be i.i.d. random variables uniformly distributed on $[0,1]$ which are also independent of $G^s$. We define a coupled random graph process $\tilde{G}^s$. First, we set $\tilde{G}^s_0 := {G}^s_0$, then in the transition from $u$ to $u+1$ an edge $e$ is added to the edge set of $\tilde{G}^s_u$ if and only if it is added to ${G}^s_{u}$ and $U_{u,e}\leq (\Theta^2|E| p_{u,e})^{-1}$. Note that in the new process at each step there is probability $1 - \Theta^{-2}$ of no new edge being added, and if a new edge is added, each one is chosen with probability $1/|E|$, independently of the previous steps.

\newcommand{\tvg}[2]{\tilde{\mathcal{G}}_{#1}(#2)}
Clearly, for any $s,u,\ell$ we have $\tvg{s,u}{\ell}\subset \vg{s, u}{\ell}$. As the process $\tilde{G}^s$ is monotonic it is enough to prove the statement for the process $\tilde{G}^s$ and $u = (n^2/ \sqrt{\ell})\log n$.

The proof is an implementation of the classical sprinkling argument, introduced in \cite{ajtai-komlos-szemeredi}. We will work conditionally on $\tilde{G}^s_0$. To shorten the notation we denote $\mathbb{Q}(\cdot) = \p\left(\cdot \big| \tilde{G}^s_0 , |\vg{s,0}{\ell}|\geq \delta n^2, |X|=h\right)$. Also, for any two sets $A,B \subset V$ by $E(A,B)$ we will denote the number of edges $\{v,w\} \in E$ such that $v \in A, w \in B$.

The event $\{ | \tvg{s,u}{\delta n^2/8} | < \delta n^2/8\}$ (i.e., there is no component of size at least $\delta n^2/8$ in $\tilde{G}^{s}_{u}$) implies that $\tvg{s,0}{\ell}$ can be partitioned into two sets $A$ and $B$ such that each of them has size at least $\delta n^2/4$, each of them is a union of some connected components of $\tilde{G}^{s}_{0}$, and there are no paths joining $A$ and $B$ in $\tilde{G}^s_u$. We will show that such a partition is unlikely to exist in $\tvg{s,0}{\ell}$.

Fix two sets $A$ and $B$ which partition $\tvg{s,0}{\ell}$ as above, each of size at least $\delta n^2 / 4$, and let $\mathcal{C}_{A,B}(G)$ be the event that no path in a graph $G \subset H_n$ has one endpoint in $A$ and the other endpoint in $B$. We write simply $\mathcal{C}_{A,B}$ for $\mathcal{C}_{A,B}(\tilde{G}^s_u)$. Let
\[
D_{A, B} := \{v \in V \colon E(\{v\},A) \geq \delta^2 n / 64 \text{ and } E(\{v\},B) \geq \delta^2 n / 64 \}
\]
be the set of vertices that in the Hamming graph $H_{n}$ have at least $\delta^2 n / 64$ neighbors both in $A$ and in $B$. Let $D_{1} := \{v \in D_{A,B} \colon v \in A \cup B\}$ and $D_{2} := D_{A,B} \setminus D_{1}$.

First we bound the size of $D_{A,B}$. Note that there are at least $\delta^2 n^4 /16$ paths of length $2$ in $H_{n}$ between $A$ and $B$, since in $H_{n}$ all vertices are connected by a path of length at most $2$ and we assumed $|A|, |B| \geq \delta n^2 / 4$. On the other hand, for every $v \notin D_{A,B}$, there are at most $\delta^2 n / 64 \cdot 2 (n-1) \leq \delta^2 n^2 / 32$ paths of length $2$ between $A$ and $B$ with $v$ as the midpoint. Every $v \in D_{A,B}$ can be a midpoint in at most $ 4 n^2$ paths of length $2$ between $A$ and $B$.

Hence the total number of paths of length $2$ between $A$ and $B$ is bounded from above by $(\delta^2 n^2/32)|D_{A,B}^c|+4n^2|D_{A,B}|$.
Combining this with the lower bound we get that
\[
	\frac{\delta^2 n^2}{32}(n^2 - |D_{A,B}|) + 4n^2 |D_{A,B}| \geq \frac{\delta^2 n^4}{16}
\]
and thus there exists a constant $\rho > 0$ (depending only on $\delta$) such that $|D_{A,B}|\geq \rho n^2$.

Throughout the rest of the proof it will be convenient to work with a random graph which has edges chosen independently. Since in the graph process $\tilde{G}^{s}$ at each step with probability $\Theta^{-2}$ a uniformly random edge is added, the graph $\tilde{G}^{s}_{u}$ is obtained by adding $k$ uniformly random edges (with multiple edges allowed) to $\tilde{G}^{s}_{0}$, where $k$ has binomial distribution corresponding to $u$ trials with success probability $\Theta^{-2}$. Now let $\bar{G}_{s,u}$ be a random graph obtained by adding each edge $e \in E$ to $\tilde{G}^{s}_{0}$ independently with probability $p = \frac{u \Theta^{-2}}{2|E|}$.

Let $\bar{\mathbb{Q}}(P)$ denote the probability that the graph $\bar{G}_{s,u}$ satisfies property $P$. By using the second moment method one can see that with high probability after removing multiple edges from $\tilde{G}^{s}_{u}$ we will still have (for $n$ large enough) at least $\frac{u \Theta^{-2}}{2}$ edges in the graph, distributed uniformly. Indeed, as $u$ is small compared to $|E|$, the expected number of edges chosen at least once in $\tilde{G}^{s}_{u}$ is at least, say, $\frac{3}{4} u \Theta^{-2}$. As the edges present in $\tilde{G}^{s}_{u}$ are negatively correlated, the variance can be bounded from above by $u \Theta^{-2}$, which implies that with high probability we have at least $\frac{u \Theta^{-2}}{2}$ distinct edges. Therefore by the equivalence of $G(n,p)$ and $G(n,M)$ random graph models with respect to monotone properties (see e.g., \cite[Section 1.4]{janson2011random}) $\bar{\mathbb{Q}}(P) \to 0$ as $n \to \infty$ will imply $\mathbb{Q}(P) \to 0$ for any decreasing graph property $P$.

From now on we will work with the graph $\bar{G}_{s,u}$. Let $\bar{\mathcal{G}}_{s,u}(k)$ denote the set of vertices of $\bar{G}_{s,u}$ contained in connected components of size at least $k$. Let
\[
E_{1} = \{\{v,w\} \in E \colon v \in D_{1}, \, \{v,w\} \in E(A,B)\}
\]
and let $\mathcal{C}(E_1)$ denote the event that none of the edges from $E_1$ are in $\bar{G}_{s,u}$. Let $\mathcal{C}(D_2)$ denote the event that none of the vertices in $D_2$ have neighbors both in $A$ and in $B$ in $\bar{G}_{s,u}$. Clearly we have
\begin{equation}\label{eq:minimum-prob}
\bar{\mathbb{Q}}(\mathcal{C}_{A,B}(\bar{G}_{s,u})) \leq \bar{\mathbb{Q}}(\mathcal{C}(E_1) \cap \mathcal{C}(D_2) ) = \bar{\mathbb{Q}}(\mathcal{C}(E_1)) \bar{\mathbb{Q}}(\mathcal{C}(D_2)).
\end{equation}
We first estimate $\bar{\mathbb{Q}}(\mathcal{C}(E_1))$. Because of independence of the edges in $\bar{G}_{s,u}$ we have
\[
\bar{\mathbb{Q}}(\mathcal{C}(E_1)) \leq (1-p)^{|E_1|}.
\]
Since $|E(\{v\}, B)| \geq \delta^2 n /64$ for each $v \in D_1 \cap A$ and likewise $|E(\{v\}, A)| \geq \delta^2 n /64$ for each $v \in D_1 \cap B$, we easily get $|E_1| \geq \frac{1}{2} |D_1| \delta^2 n / 64$, so
\[
\bar{\mathbb{Q}}(\mathcal{C}(E_1)) \leq (1-p)^{|D_1| \delta^2 n / 128} \leq e^{- \frac{p |D_1| \delta^2 n}{128} }.
\]
For the upper bound on $\bar{\mathbb{Q}}(\mathcal{C}(D_2))$, we note that by independence of the edges in $\bar{G}_{s,u}$
\begin{align*}
    \bar{\mathbb{Q}}(\mathcal{C}(D_2)) &= \prod_{v \in D_2}\left(1 - \bar{\mathbb{Q}}(v\text{ has neighbors both in }A\text{ and }B)\right)\nonumber\\
    &=  \left(1 - \bar{\mathbb{Q}}(v\text{ has a neighbor in }A) \bar{\mathbb{Q}}(v\text{ has a neighbor in }B)\right)^{|D_2|}.\nonumber
  \end{align*}
Since $|E(\{v\}, A)| \geq \delta^2 n /64$ for $v \in D_2$, we have
\[
	\bar{\mathbb{Q}}(v\text{ has a neighbor in }A) \geq 1 - (1 - p)^{\delta^2 n /64} \geq 1 - e^{- \frac{p \delta^2 n}{64}}.
\]
An analogous estimate holds for $B$, which gives
\[
\bar{\mathbb{Q}}(\mathcal{C}(D_2)) \leq \left(1 - \left(1 - e^{- \frac{p \delta^2 n}{64}} \right)^2 \right)^{|D_2|}.
\]
Recall that $p = \frac{u}{2\Theta^2 |E|}$ and $|E| = n^2(n-1)$. Since $|D_{A,B}| \geq \rho n^2$, we have $|D_1| \geq \frac{\rho}{2} n^2$ or $|D_2| \geq \frac{\rho}{2} n^2$. In the first case we get
\[
\bar{\mathbb{Q}}(\mathcal{C}(E_1)) \leq e^{- \frac{\delta^2}{128}  \frac{u}{2\Theta^2 |E|} \frac{\rho n^2}{2} n } \leq e^{- cu}
\]
for some $c > 0$ depending on $\delta$ and $\Theta$. In the second case we have (exploiting $u \leq n^2$)
\[
\bar{\mathbb{Q}}(\mathcal{C}(D_2)) \leq \left(1 - \left(1 - e^{- \frac{\delta^2}{64} n \frac{u}{2\Theta^2 |E|}} \right)^2 \right)^{\frac{\rho}{2} n^2} \leq C e^{-c' \frac{u^2}{n^2}}
\]
for some $C,c' >0$ depending on $\delta$ and $\Theta$.

As $u = (n^2 / \sqrt{\ell}) \log n$ and $\log n \leq \sqrt{\ell}$, we have $u^2 / n^2 = (n^2 / \ell) \log^2 n \leq u$. Coming back to \eqref{eq:minimum-prob}, we obtain for some $C, c >0$
\[
	\bar{\mathbb{Q}}(\mathcal{C}_{A,B}(\bar{G}_{s,u})) \leq C e^{-c \frac{n^2}{\ell} \log^2 n}.
\]
Let $\mathcal{C}(G)$ denote the event that $\mathcal{C}_{A,B}(G)$ holds for some partition $A,B$ of the set $\vg{s,0}{\ell}$. Notice that there are at most $2^{n^2/\ell}$ such partitions, so by performing a union bound we obtain
\[
	\bar{\mathbb{Q}}(\mathcal{C}(\bar{G}_{s,u})) \leq 2^{\frac{n^2}{\ell}} \cdot C e^{-c \frac{n^2}{\ell} \log^2 n}.
\]
Recalling that $\{ |\bar{\mathcal{G}}_{s,u}(\delta n^2/8)| < \delta n^2/8\} \subset \mathcal{C}(\bar{G}_{s,u})$, we have that there exist constants $C_1, c_1 > 0$ (depending only on $\delta$, $\Theta$ and $u$) such that
\[
\bar{\mathbb{Q}}(|\bar{\mathcal{G}}_{s,u}(\delta n^2/8)| < \delta n^2/8) \leq C_1 \exp\left\{\frac{n^2}{\ell}\log 2 - c_1\frac{n^2 \log^2 n}{\ell}\right\}.
\]
As $\ell\leq n^2$ and $n \to\infty$, we have that the probability above converges to zero. Since the property of having a component of size at least $\delta n^2 / 8$ is increasing, the same holds with $\bar{\mathbb{Q}}(\cdot)$ and $\bar{\mathcal{G}}_{s,u}$ replaced by $\mathbb{Q}(\cdot)$ and $\tilde{\mathcal{G}}_{s,u}$. Thus we have
\[
\mathbb{Q}(|\vg{s, u}{\delta n^2/8}| < \delta n^2/8) = \p\left(|\vg{s, u}{\delta n^2/8}| < \delta n^2/8 \big| G^s_0 , |\vg{s,0}{\ell}|\geq \delta n^2, |X|=h\right)  \leq 1 - a_n
\]
for some $a_n \nearrow 1$. By integrating this bound over all $G^{s}_{0}$ satisfying $|\vg{s,0}{\ell}|\geq \delta n^2$ we obtain the statement of the lemma.
\end{proof}

\end{section}

\bibliography{bibliography}{}
\bibliographystyle{amsalpha}

\paragraph{E-mails:}
R. Adamczak: \href{mailto:r.adamczak@mimuw.edu.pl}{r.adamczak@mimuw.edu.pl}, M. Kotowski: \href{mailto:michal.kotowski@mimuw.edu.pl}{michal.kotowski@mimuw.edu.pl}, P. Miłoś: \href{mailto:pmilos@mimuw.edu.pl}{pmilos@mimuw.edu.pl}

\end{document}